\numberwithin{equation}{section}
\numberwithin{figure}{section}
\theoremstyle{plain}
\newtheorem{thm}{\protect\theoremname}[section]
\theoremstyle{remark}
\newtheorem{claim}[thm]{\protect\claimname}
\theoremstyle{plain}
\newtheorem{lem}[thm]{\protect\lemmaname}
\newtheorem{cor}[thm]{Corollary}
\newtheorem{rem}[thm]{Remark}
\newtheorem{assumption}{Assumption}
\newtheorem{prop}[thm]{Proposition}
\theoremstyle{definition}
\newtheorem{defn}[thm]{Definition}
\renewcommand{\limsup}{\varlimsup}
\newcommand{\cC}{\mathcal{C}}
\newcommand{\cM}{\mathscr{M}}
\newcommand{\cN}{\mathcal{N}}
\newcommand{\cP}{\mathcal{P}}
\newcommand{\cZ}{\mathcal{Z}}
\newcommand{\C}{\mathbb{C}}
\newcommand{\R}{\mathbb{R}}
\newcommand{\prob}{\mathbb{P}}
\newcommand{\E}{\mathbb{E}}
\newcommand{\eps}{\epsilon}
\newcommand{\eqdist}{\stackrel{(d)}{=}}
\newcommand{\tensor}{\otimes}
\newcommand{\MP}{{\textsc{mp}}}
\newcommand{\NN}{{\textsc{nn}}}
\newcommand{\REM}{{\textsc{rem}}}
\newcommand{\Eekt}{E_{\eps,K,\tau}}
\newcommand{\abs}[1]{\lvert#1\rvert}
\newcommand{\norm}[1]{\lvert\lvert#1\rvert\rvert}
\newcommand{\supp}{\operatorname{supp}}
\newcommand{\diag}{\operatorname{diag}}
\newcommand{\barq}{\bar{q}}
\newcommand{\wt}{\widetilde}
\newcommand{\bfG}{{\mathbf G}}
\newcommand{\bfO}{{\mathbf O}}
\newcommand{\Dll}{D_{\ell\ell}}
\newcommand{\bfx}{\mathbf x}
\newcommand{\bfGd}{\bfG^{(d)}}
\newcommand{\tSd}{\widetilde{S}^{(d)}}
\newcommand{\tS}{\widetilde{S}}
\newcommand{\tFd}{\widetilde F^{(d)}}
\newcommand{\dist}{\operatorname{dist}}
\newcommand{\mfrp}{\mathfrak{p}}
\newcommand{\mfrq}{\mathfrak{q}}
\newcommand{\bmu}{\boldsymbol{\mu}}
\renewcommand{\Im}{\operatorname{Im}}
\renewcommand{\Re}{\operatorname{Re}}
\providecommand{\claimname}{Claim}
\providecommand{\lemmaname}{Lemma}
\providecommand{\theoremname}{Theorem}
\title{Local geometry of high-dimensional mixture models: \\ Effective spectral theory and dynamical transitions}
\author{Gerard Ben Arous\thanks{Courant Institute, New York University. Email: \url{gba1@nyu.edu}}, Reza Gheissari\thanks{Department of Mathematics, Northwestern University. Email: \url{gheissari@northwestern.edu}}, Jiaoyang Huang\thanks{Department of Statistics and Data Science, University of Pennsylvania. Email: \url{huangjy@wharton.upenn.edu}}, Aukosh Jagannath\thanks{Department of Statistics and Actuarial Science, Department of Applied Mathematics, and Cheriton School of Computer Science, University of Waterloo. Email: \url{a.jagannath@uwaterloo.ca}}}
\begin{document}
 \date{}
\maketitle

\begin{abstract}
We study the local geometry of empirical risks in high dimensions via the spectral theory of their Hessian and information matrices.
We focus on settings where the data, $(Y_\ell)_{\ell =1}^n\in \R^d$, are i.i.d. draws of  a $k$-Gaussian mixture model, and the loss depends on the projection of the data into a fixed number of vectors, namely $\bfx^\top Y$, where $\bfx\in \R^{d\times \cC}$ are the parameters, and $\cC$ need not equal $k$. This setting captures a broad class of problems such as classification by one and two-layer networks and regression on multi-index models. We provide exact formulas for the limits of the empirical spectral distribution and outlier eigenvalues and eigenvectors of such matrices in the proportional asymptotics limit, where the number of samples and dimension $n,d\to\infty$ and $n/d=\phi \in (0,\infty)$. These limits depend on the parameters $\bfx$ only through the summary statistic of the $(\cC+k)\times (\cC+k)$  Gram matrix of the parameters and class means, $\bfG = (\bfx,\bmu)^\top(\bfx,\bmu)$.    

It is known that under general conditions, when $\bfx$ is trained by online stochastic gradient descent, the evolution of these same summary statistics along training converges to the solution of an autonomous system of ODEs, called the \emph{effective dynamics}. This enables us to connect the training dynamics to the spectral theory of these matrices generated with test data. We demonstrate our general results by analyzing the \emph{effective spectrum} along the effective dynamics
in the case of multi-class logistic regression. In this setting, the empirical Hessian and information matrices have substantially different spectra, each with their own static and even dynamical spectral transitions. 
\end{abstract}

\section{Introduction}
\subsection{Background}
 What is the geometry of the loss landscape in statistical tasks?  In particular, what does the geometry look like locally from the point of view of the learning algorithm? There is a rich and classical literature on these questions in the fixed dimensional, or large sample size, regime (see, e.g.,~\cite{amari2016information, marriott2002local,nielsen2022geometry}). 
    The modern, high-dimensional setting where the sample size and the dimension of the data and parameter space are proportionately large, is comparatively less understood. 
    In this setting, despite the fact that the loss landscape is often high-dimensional and non-convex, simple, first-order methods such as stochastic gradient descent (SGD) seem to perform remarkably well.

Towards an explanation for this, a rich line of research has emerged focusing on the geometry of the loss landscape (curvature/flatness and directions of steepest descent), in particular as seen from the trajectory taken by the training algorithm.  Much of this work has focused on tracking two natural random matrices: the Hessian of the empirical risk and the empirical second moment matrix for the gradient of the loss (i.e., the empirical Fisher Information matrix when the loss is the log-likelihood). Studying these objects for this reason was proposed going back more than two decades to the article of LeCun et al~\cite{LeCun-EfficientBackprop}. 
Around a decade ago, a belief emerged in the machine learning literature that the spectra of these random matrices, evaluated along the SGD trajectory, describes a local geometry that has many ``flat'' directions in which the SGD diffuses, and a hidden low-dimensional structure spanned by outlier eigenvectors, within which the bulk of training occurs. 
More precisely, the work of Sagun--Bottou--LeCun ~\cite{Sagun-Singularity-and-beyond} proposed that the spectrum of the Hessian of the empirical risk after training has the following structure:
\begin{enumerate}
    \item It has a \emph{bulk} that is dependent on the network architecture, and is concentrated around $0$, becoming more-so as the model becomes more overparametrized;
    \item It has a few \emph{outlier eigenvalues} that are dependent on the data, and evolve non-trivially along training while remaining separated from the bulk. 
\end{enumerate}
Following those works, this paradigm has been demonstrated in subsequently more refined and large-scale experimentation, including ones that showed  a similar picture for the Fisher information matrix: see e.g.,~\cite{SagunEtAl,Ghorbani-Hessian-eigenvalue-density,Papyan-Class-CrossClass,Li-etal-Hessian-based-analysis,MartinMahoney,EdgeOfStability-GD,xie2023on}. These works studied both the setting where these matrices were computed using the training data and where they were computed using the test data. The work of Papyan~\cite{Papyan-3-level} refined the above decomposition, noticing a three-level hierarchical decomposition of the Hessian for deep networks after training, with a bulk, the emergent outliers, and a class-cross-class \emph{minibulk} in between. This also served as a motivation for the observed neural collapse phenomenon of Papyan--Han--Donoho~\cite{PapyanDonoho}. 

In certain settings, these empirical studies have been supported by theoretical analyses. A set of such works focus on special points in the landscape like the limiting spectra at initialization, or at critical points (often towards computing expected numbers of saddle points and local minima)~\cite{PenningtonBahri,PenningtonWorah-FIM,maillard-landscape-complexity} and the very recently posted~\cite{asgari2025localminimaempiricalrisk}. Liao and Mahoney~\cite{liao2021hessian} derived deterministic equivalents for the Hessian spectrum of generalized linear models, and described conditions under which there may or may not be outliers. Recently, Garrod and Keating~\cite{garrod2024unifyinglowdimensionalobservations} proved results on the spectra of deep linear networks. In terms of relating the spectral theory to training dynamics, Gur-Ari--Roberts--Dyer ~\cite{GD-in-tiny-subspace} rigorously considered a classification of a mixture of two atoms (the zero variance limit of a binary Gaussian mixture) and showed that gradient descent aligns with a \emph{low-dimensional} principal subspace of the Hessian matrix. They postulated that this alignment may be key to understanding the training dynamics in many classification tasks in machine learning.  We also refer the reader to Jacot--Gabriel--Hongler~\cite{Jacot-NTK-Hessian} and Fan and Wang~\cite{ZhouWang} for study of the Hessian spectrum over the course of training in the neural tangent kernel (infinite width) limit. 

Closest to the context of this paper, \cite{BGJH23} established the structure of 1--2 above, and the alignment of the SGD  with the outlier eigenvectors, in high-dimensional Gaussian mixture classification tasks when the variance of the Gaussians was $\Omega(1)$ but a sufficiently small constant, and the ratio of samples to dimension was $O(1)$ but a sufficiently large constant. Taking these parameters to be small and large respectively, allowed perturbative analysis about simpler matrices and training dynamics trajectories.  

In this paper, we seek to understand the spectra sharply, i.e., for all order-$1$ variances $\lambda^{-1}$, and all order-$1$ sample complexities $\phi$, with exact characterizations of limiting (as $n,d\to\infty$) bulk distributions, outlier locations, and outlier eigenvector subspaces. 
We focus, in particular, on the interplay between the bulk and outliers of matrices formed from the loss landscape, with the low-dimensional subspace of summary statistics of the parameter, through which we also describe the typical trajectories taken by SGD. 
Mathematically, this requires refinements of arguments from random matrix theory, to understand the spectral theory of a family of self-coupled empirical matrices, which will be described in Section~\ref{subsec:general-random-matrix-result} below.

\subsection{Our contributions}
Many common models of high-dimensional statistical tasks, in both classification and regression settings, can be described as follows: We are given data $Y\in \mathbb R^d, Y\sim \cP_Y$, possibly class labels $y$, and parameters $\bfx\in\R^{d\times \cC}$, and we perform risk based learning where the loss function, $L$, depends only on the class label and the inner product of the data and parameter matrix $\bfx^\top Y$. 
Consider the empirical Hessian matrix and what we will refer to as the Gradient matrix (the generalization of the Fisher information beyond log-likelihoods), formed out of $n$ i.i.d.\ data points $(Y_\ell)_{\ell=1}^n$, as 
\begin{align}\label{eq:test-Hessian-Gram}
    \widehat H(\mathbf{x}) = \frac{1}{n} \sum_{\ell=1}^n \nabla^2 L(\mathbf{x},Y_{\ell})\,, \qquad \text{and} \qquad \widehat G(\mathbf{x}) = \frac{1}{n} \sum_{\ell=1}^n \nabla L (\mathbf{x},Y_\ell)^{\otimes 2}\,,
\end{align}
at parameter value $\mathbf{x} \in \mathbb R^{d\times \cC}$. In problems of this type, these matrices are highly structured. They are $\cC \times \cC$ block matrices, where each block corresponds to one of the $x^\alpha \in \mathbb R^d$ for $\alpha \in [\cC]$. (For example in classification tasks, this is the block for the $\alpha$'th one-vs-all classifier.) Each block is then of the form
\begin{align}\label{eq:A-D-A^t}
\frac{1}{n} A D A^\top \quad \text{where $A = \left[\begin{array}{ccc} \vert &   &   \vert \\ Y_1 & \cdots &  Y_n \\ \vert &  & \vert \end{array}\right]\in \mathbb R^{d\times n}$}  \quad \text{and} \quad \text{$D = \diag(D_{\ell\ell})\in \mathbb R^{n\times n}$} 
\end{align}
where the $\ell$'th entry $D_{\ell \ell}$ depends only on the $\ell$'th data point's class label and the inner products $Y_\ell^\top\bfx$. This yields a family of self-coupled empirical matrices,  that are of Wishart-type but complicated by the (often non-linear) dependencies between $D$ and $A$ and the potential non-positivity of $D$. In this paper, we will focus on the setting where the data is drawn from a $k$-component Gaussian mixture model where each component has covariance $I/\lambda$, and with $k$ not necessarily equal to $\cC$.  Here $k, \cC,$ and $\lambda$ are fixed numbers, which do not depend on $d, n$, and we call $\lambda>0$ the signal-to-noise ratio.  Such data models occur in many well-studied examples as classifying Gaussian mixture models
with one and two layer neural networks, regression for single and multi-index models, among many others. It is important to note that this model allows for scenarios where the classes may not be linearly classifiable.

Our goal in this paper is to study the high-dimensional (as $n,d,p\to\infty$ together) spectral theory of the Hessian and Gradient matrices of such empirical risks, and in particular, how it varies across the parameter space. We are particularly interested in the spectra of these objects along the training trajectory, $(\bfx_t)_{t\ge 0}$, where the training algorithm is online SGD. (From now on, SGD will only refer to its online form.)
To do so, we prove a general random matrix result expressing the limiting bulk spectrum and outlier locations for matrices of the form ~\eqref{eq:A-D-A^t}: see Theorem~\ref{thm:main-A-D-A^T}--\ref{thm:main-A-D-A^T-outliers}. Notably, these limits, and hence the local loss landscape geometry, depend on the  point $\bfx$ in parameter space \emph{only} through a finite-dimensional family of \emph{summary statistics} of $\bfx$. These summary statistics are given by the $\cC+k\times \cC+k$-dimensional Gram matrix $\mathbf{G}(\bfx) = (\mathbf{x},\bmu)^\top (\bfx,\bmu)$, where $\bmu= (\mu_1,...,\mu_k)$ are the class means.  
Summarizing, 
\begin{itemize}
    \item Despite the dependencies, the bulk spectrum converges to the free multiplicative convolution of a Marchenko--Pastur distribution with the law of the entries of $D$ (which only depends on the summary statistics): Theorem~\ref{thm:main-A-D-A^T};
    \item Outlier locations are given by fixed point equations expressible using only the summary statistics; the number of solutions to the outlier equations are bounded by the number of projections of $Y$ on which the law of the entries of $D$ may depend: Theorem~\ref{thm:main-A-D-A^T-outliers}
\end{itemize}  

In statistical tasks of this form, if $\mathbf{x}_\ell$ follows SGD, then the time-rescaled summary statistic values, $(\mathbf{G}(\mathbf{x}_{\lfloor tn\rfloor}))_{t\in [0,T]}$ are typically asymptotically autonomous, meaning they converge to the solution of an autonomous system of ODEs/SDEs in the high-dimensional limit (in particular this is the case in all the examples mentioned earlier, per e.g.,~\cite{BGJ22}). Together, this gives a dimension-free characterization of the bulk spectrum and outlier locations of the empirical Hessian and empirical Gradient matrix in parameter space in a way that can be tracked along the typical trajectories traced out by SGD. 

With that, returning to our motivation from the more empirical literature, as well as from classic spectral transitions encountered in random matrix theory, our aim is to characterize signal-to-noise (inverse variance) thresholds, and thresholds along the SGD path, at which outliers pop out from the bulk, and/or separate from one another.  Per Theorems~\ref{thm:main-A-D-A^T}--\ref{thm:main-A-D-A^T-outliers}, the evolution of the bulk spectrum and outliers along the typical SGD trajectory are described by evolutions of solutions to finite-dimensional fixed point equations as $\mathbf{G}_t$ evolves by some finite  dimensional ODE system.  Thus, deducing concrete consequences can be analytically very complicated. 

Due to this complexity, we will begin our presentation by carefully analyzing the simplest, and most canonical classification task in high-dimensions,  namely classifying a Gaussian mixture model in high-dimensions via multi-class logistic regression.  We will allow for the number of classes to be different from the number of mixture components and for the classes to be non-linearly separable.
We present the effective spectral theory (bulk and outliers) of high-dimensional logistic regression in Theorems~\ref{mainthm:logistic-regression}--\ref{mainthm:logistic-regression-outliers} as a function only of the summary statistics. We then combine this with ODEs for the evolution of summary statistics under SGD from~\cite{BGJH23} to derive the following consequences on the spectra of the empirical Hessian and Gradient matrices~\eqref{eq:test-Hessian-Gram} generated with test data at fixed variance and sample complexities, at initialization and over the course of order $n$ steps of training. 
\begin{itemize}
    \item\emph{Global spectral transitions}: We find that for any point in parameter space with summary statistic matrix $\bfG$, there is a threshold in signal-to-noise ratio, such that for all signal-to-noise ratios above this, there are outliers. See Corollary~\ref{cor:BBP-at-all-points}. 

    \item \emph{Sharp spectral transitions at initialization}: We give critical signal-to-noise ratios at which different outliers emerge from the bulk at initialization, and demonstrate that these differ between different natural uninformative initializations. See Corollaries~\ref{cor:exact-distribution-at-initialization}--\ref{cor:Gaussian-init-distribution}. 
    
    \item \emph{Dynamical spectral transitions}: The evolution instantaneously induces eigenvalue splitting. More precisely, in the early stages of training (the first $\epsilon n$ steps), to first order in $\epsilon$, the bulk does not change. Over the same period of time, the outlier of multiplicity $k$ at initialization splits into one increasing eigenvalue, with eigenvector correlated with the class mean, and a minibulk of the other $k-1$ which decrease and have eigenspace that is anti-correlated with the class mean. See Theorem~\ref{thm:effective-bulk-along-effective-dynamics}. 
\end{itemize}
Since much of our motivation are these latter interplays between the spectral theory and the SGD trajectory through parameter space, our presentation of the formal results begins with the special case of logistic regression for classifying high-dimensional Gaussian mixtures, where the expressions are more explicit: Section~\ref{subsec:kGMM-main-results}. We then move to the general random matrix theory result in Section~\ref{subsec:general-random-matrix-result}, and explain other high-dimensional statistical tasks to which it can be applied in Section~\ref{subsec:further-applications}.

\subsection{Logistic regression for supervised classification of Gaussian mixtures}\label{subsec:kGMM-main-results}
Let us explore our results in the context of one of the most canonical supervised classification tasks namely, logistic regression for a $k$-component Gaussian mixture model, which is as follows.  In this setting, the limiting spectral theory and training dynamics are more explicit and the interplay between them can be more easily examined analytically. 

Suppose that we are given  $n$ i.i.d.\ samples from a $k$-component mixture of $d$-dimensional isotropic Gaussian distributions with  distinct means $\mu_1,...,\mu_k$ with $\|\mu_i\| = O(1)$\footnote{When we use asymptotic notation like $O(1)$, we mean as compared to $n,d,p$ which will be large together.} and weights $(p_c)_{c\in[k]}$ for some fixed $k$. That is, the data are i.i.d. draws $(Y_\ell)_{\ell=1}$ of the form
\begin{align}\label{eq:data-distribution}
Y \sim \mu_a + Z_\lambda \qquad \text{where} \quad a\sim \sum_{c\in [k]} p_c \delta_c \qquad \text{and} \quad Z_\lambda \sim \mathcal N(0,I_d/\lambda)\,.
\end{align}
Here the parameter $\lambda$ is the inverse variance, or alternatively a \emph{signal-to-noise ratio} (SNR), and we will refer to the random variable $a$ as the \emph{hidden label}. 
In addition, each data point $Y$ comes with a \emph{class label} $y \in [\cC]$, where $y$ is a deterministic function of the corresponding hidden class label. (In the following, we abuse notation and denote this function by $\cC(a)$.) \footnote{In particular, there may be more than one mean corresponding to the same class, but the choice of mean dictates the class fully.} We sometimes naturally identify the label $y$ with a one-hot vector $y\in \{0,1\}^\cC$.  We denote the distribution of the labeled data $(y,Y)$ by $\mathcal P_Y$. 

Given this data, we seek $\cC$ distinct $1$-vs-all hyperplane classifiers, whose normal vectors are encoded by $\bfx = (x^a)_{a\in\cC}$. We find these classifiers by optimizing the cross-entropy loss
\begin{align}\label{eq:loss-function-1-layer}
    L(\mathbf{x},(y,Y)) = - \sum_{c\in [\cC]} y_c x^c\cdot Y  + \log \sum_{c\in [\cC]} \exp( x^c \cdot Y)\,,  \qquad\text{where} \quad \mathbf{x}= (x^a)_{a\in [\cC]}\,,\, x^a \in \mathbb R^d\,.
\end{align}
(Note that since the means are in general position, one may also include a bias parameter for each of the $\cC$ classes. We omit this in our discussion to minimize notational burden, but note that this can be handled by a trivial modification of what follows.)

\subsubsection{Effective spectral theory}
We begin by characterizing the spectra of the Hessian and Gradient matrices~\eqref{eq:test-Hessian-Gram} and their dependence on the parameter $\bfx$ in the high-dimensional regime of $n,d$ getting large proportionately to one another. 
In the following, for a subset $I$ of parameter coordinates, we let $\widehat H_{I,I}$ and $\widehat G_{I,I}$ denote the corresponding blocks of the Hessian/Gradient matrices. When it is clear from context, we will use the shorthand $H$ and $G$ to refer to these matrices.

By computing the derivatives of~\eqref{eq:loss-function-1-layer} in the block $x^\alpha$, it is clear that these matrices are of the form of~\eqref{eq:A-D-A^t} where the diagonal entries only depend on $Y$ through its label $y$ and its inner products with $\bfx=(x^\alpha)_{\alpha=1}^\cC$ and $\bmu=(\mu_i)_{i=1}^k$ (see Section~\ref{sec:logistic-regression} for the relevant calculations). To be explicit, if we define the soft-max function $\varphi_\alpha(\vec{z}) = \frac{\exp(z_\alpha)}{\sum_{j} \exp(z_j)}$, then the $bc$-block of the Hessian and gradient matrices are  
\begin{align*}
            \widehat G_{bc} &=  \frac{1}{n} A D A^\top \,\, \text{where} \,\, A = [Y_1 \cdots Y_n] \,, \quad D= \text{diag}((y_{\ell,b} - \varphi_b(\bfx^\top Y_\ell)  ) (y_{\ell,c} - \varphi_c(\bfx^\top Y_\ell))\,.\\
        \label{eq:Hessian-as-A-D-A^t}
    \widehat H_{bc} &=  \frac{1}{n} A D A^\top \,\, \text{where} \,\, A = [Y_1 \cdots Y_n] \,, \quad D= \text{diag}(\varphi_b(\bfx^\top Y_\ell) \delta_{bc} - \varphi_b(\bfx^\top Y_\ell)\varphi_c(\bfx^\top Y_\ell))\,.
\end{align*}

Therefore, by isotropy of $Y- \mu_a$, the \emph{law} of $D_{\ell\ell}$ only depends on $\mathbf{x}$ through the Gram matrix of the parameters and class means
\begin{equation}\label{eq:define_G}
    \mathbf{G} = (\mathbf{x},\bmu)^\top (\mathbf{x},\bmu) \in \mathbb R^{(\cC+k) \times (\cC+k)}\,
\end{equation}
Following the terminology of ~\cite{BGJ22}, we refer to this as the \emph{summary statistic} matrix. In \cite{BGJH23},  it was shown that $\bfG$ evolves asymptotically autonomously under SGD (see~\cite[Theorem 5.7]{BGJH23}).

We begin our discussion by describing the high-dimensional empirical Hessian and Gradient matrices' spectra (bulk and outliers) in terms only of the summary statistic matrix $\mathbf{G}$. 
Let 
\begin{align}\label{eq:q-qbar}\bar{q}:=\dim(\text{Span}(x^1,...,x^\cC,\mu_1,...,\mu_k))\leq q:=\cC+k\,.
\end{align}
For a summary statistic matrix $\mathbf{G}$, define its unique positive semi-definite square root $\sqrt{\mathbf{G}}\succeq 0$. The map from $\mathbf{G}$ to $\sqrt{\mathbf{G}}$ is continuous, so if we have a sequence $\mathbf{G}^{(d)}\to \mathbf{G}$, then $\sqrt{\mathbf{G}^{(d)}}\to \sqrt{\mathbf{G}}$.

In both the Hessian and Gradient matrix cases, there is a coupled i.i.d.\ family of Gaussians $\vec{g}$ such that the entry $D_{\ell \ell}$ with hidden label $b\in [k]$ is given by a function of the form
\begin{align*}
    f_b(\lambda^{-1/2} \vec{g}; \mathbf{G})\,, \qquad \text{where}\quad \vec{g} = (g_1,...,g_q) \sim \mathcal N(0,I_q) \,.
\end{align*}
To be concrete, define the soft-max function $\varphi_\alpha(\vec{z}) = \frac{\exp(z_\alpha)}{\sum_{j} \exp(z_j)}$, and define
\begin{equation}
\begin{aligned}\label{eq:f_a-kGMM-case}
    f_b^H(\lambda^{-1/2} \vec{g}; \mathbf{G}) &= [\varphi_\alpha(1-\varphi_\alpha)](\bfG_{{[\cC]} b}+\bfG_{[\cC,\cC]}^{1/2}\lambda^{-1/2} \vec  g_{[\cC]})\,,\\ 
    f_b^G(\lambda^{-1/2} \vec{g}; \mathbf{G}) &=  (\mathbf 1_{\alpha = b} - \varphi_\alpha(\bfG_{{[\cC]} b}+\bfG_{[\cC,\cC]}^{1/2}\lambda^{-1/2} \vec  g_{[\cC]}) )^2\,, 
\end{aligned}
\end{equation}
which will correspond to the $(\alpha,\alpha)$-block of the empirical Hessian matrix $\widehat H$ or empirical Gradient matrix $\widehat G$ respectively.
Here we are using the subscript $[\cC]$ to pick out the sub-vector with indices in $[\cC]$ and $[\cC,\cC]$ to pick out the principal sub-matrix with both indices in $[\cC]$.

Recall that the Stieltjes transform of a probability measure, $\nu\in\cM_1(\R)$, is defined by 
\begin{align}\label{eq:Stieltjes-transform}
    S_\nu(z) := \int_{\mathbb R} \frac{1}{t-z} d\nu(t)\,, \qquad z\in \mathbb C\,.
\end{align}
We can now define the \emph{effective bulk}.

\begin{defn}\label{def:effective-bulk}
   Fix $\lambda,\phi>0$. The \emph{effective bulk} for the $(\alpha,\alpha)$-block of the Hessian at summary-statistic matrix $\mathbf{G}\in \mathbb R^{q\times q}$ is the unique probability measure $\nu^{H,\alpha}_{\mathbf{G}}\in \mathscr{M}_1(\mathbb R)$ whose Stieltjes transform $S^{H,\alpha}_\bfG(z)$ solves 
    \begin{align}\label{e:defmz-with-rho}
        1+ zS(z) = \phi \sum_{b=1}^{k} p_b \mathbb E\Big[ \frac{S(z) f_b^H(\lambda^{-1/2}\vec g;\mathbf G)}{\lambda \phi + S(z) f_b^H(\lambda^{-1/2}\vec g;\mathbf G)}\Big]\,,
    \end{align}
    where the expectation on the right-hand side is with respect to the Gaussian vector $\vec g$.
    The \emph{effective bulk} for the $(\alpha,\alpha)$-block of the Gradient matrix is the unique probability meausure, $\nu^{G,\alpha}_\bfG$, whose Stieltjes transform, $S^{G,\alpha}_\bfG$, solves the same equation with $f^G_b$ in place of $f^H_b$.
\end{defn}

To understand this definition, it helps to observe that \eqref{e:defmz-with-rho} defines the free multiplicative convolution of the Marchenko--Pastur distribution with aspect ratio $\phi^{-1}$ and variance $\lambda^{-1}$, which we denote by  $\nu_{\MP}$, with the distribution of $f_J^{H}(\lambda^{-1/2}\vec{g},\bfG)$ for $J\sim \text{Cat}(p_j)_{j=1}^k$, meaning $\mathbb P(J=j) = p_j$ for each $j\in [k]$. That is, if $\rho_\bfG$ denotes the distribution of $f_J^H$ then $\nu_\bfG^{H,\alpha}=\rho_\bfG\boxtimes\nu_{\MP}$. (Here and in the following $\boxtimes$ denotes free multiplicative convolution.) Recall that this is well-defined \cite{marchenko1967distribution}. For $\nu\in\cM_1(\R)$, let $\supp(\nu)$ denote its support, and $\supp^+(\nu) = \sup \{x\in \supp(\nu)\}$ be its \emph{right edge}. 

We now turn to the effective outliers.
To this end, for any $\lambda,\phi>0$, and $M\in\{ H, G\}$ define the matrix valued function $F_\alpha^M(z,\mathbf{G}) \in \mathbb R^{q\times q}$, 
\begin{equation}\label{eq:F-matrix}
     F_{ij,\alpha}^M(z,\bfG)= \lambda\phi\sum_{b=1}^k p_b \mathbb E\left[\frac{f_b^M(\lambda^{-1/2}\vec g; \mathbf{G})}{\lambda \phi+S^M(z)f_b^M(\lambda^{-1/2}\vec g;\mathbf{G})} (\frac{g_i}{\sqrt{\lambda}}+(\sqrt{\mathbf G})_{ib})(\frac{g_j}{\sqrt{\lambda}}+(\sqrt{\mathbf G})_{jb})\right]\,,
\end{equation}
where $S^M(z)=S_{\bfG}^{M,\alpha}(z)$ for $M\in \{H,G\}$ are as in Definition~\ref{def:effective-bulk}. 
We then have the following.

\begin{defn}\label{def:effective-outliers}
    Fix $\lambda,\phi>0$ and $\alpha\in[\cC]$. For the $(\alpha,\alpha)$-block of the Hessian, the \emph{effective outliers} $\mathcal Z_\alpha^H=(z^H_{*,i})_{i}$ are the real roots  (with multiplicity)  of the equation 
\begin{align}\label{eq:outlier-equation-general}
    \det(z I_q - F^H_\alpha(z,\bfG)) =0\,.
\end{align}
For any effective eigenvalue $z\in \mathcal Z_\alpha^H$, call $u\in \mathbb R^q$ a solution to the \emph{effective eigenvector equations} if
\begin{align}
(z I_q - F^H_\alpha(z,\bfG))u &  =0 \label{eq:effective-evect}\\
\norm{u}^2 -\langle u,\partial_z F^H_\alpha(z,\bfG)u\rangle - 1&  =0.\label{eq:effective-evect-proj}
\end{align}
Call $u\in \mathbb R^q$ an \emph{$\epsilon$-approximate solution} if the left-hand sides are at most $\epsilon$ in norm. 
Define the notions for the Gradient matrix analogously with $F^G$ in place of $F^H$.
\end{defn}

Solutions to~\eqref{eq:outlier-equation-general} may or may not exist. Lemma~\ref{l:num-solutions} shows that when they do exist, the number of solutions in each connected component of $\mathbb R\setminus \supp(\nu_{\mathbf{G}}^H)$ is at most $\bar q$ from~\eqref{eq:q-qbar}. The number of such connected components will depend on the spectrum of $D$, and in particular number of connected components  in its limiting support.

Our main theorem says that the high-dimensional behavior of the spectrum of the empirical Hessian and Gradient matrices at point $\mathbf{x}$ with summary statistics $\mathbf{G}$ in parameter space are given by the effective bulk $\nu_{\mathbf{G}}$ and the effective outliers. To state this we need a little more notation.

In the following, let $d_{\text{BL}}$ denote the Bounded Lipschitz metric on $\cM_1(\R)$. 
For each $\alpha\in[\cC]$,  let $\hat\nu^{H,(d)}$ and $\hat\nu^{G,(d)}$ denote the empirical spectral measures of the $(\alpha,\alpha)$-block of the Hessian and Gradient matrix respectively at parameter value $\mathbf{x}\in \mathbb R^{\cC d}$.  (We suppress the $\alpha$ dependence to reduce notation.) 
We then have the following which says that the empirical spectral measure of the Hessian or Gradient matrix can be approximated by the solution of~\eqref{e:defmz-with-rho}, that is, by the effective bulk.

\begin{thm}[Bulk]\label{mainthm:logistic-regression}
Fix $M\in\{H,G\}$, $\alpha\in[\cC]$, and $\lambda,\phi>0$. 
Let $n,d$ get large with $n/d=\phi$. For any $\mathbf x\in \mathbb R^{\cC d}$ with summary statistics matrix $\bfG \in \mathbb R^{q\times q}$, we have that with probability $1-o(1)$, 
\[
d_{\text{BL}}(\hat\nu^{M,(d)},\nu^{M}_\bfG) =o(1)\,.
\]
Furthermore, given a sequence of parameters $\mathbf{x}^{(d)}$ with summary statistic matrices $\bfGd\to \bfG$ as $n,d\to\infty$ at rate $\|\bfGd - \bfG\| = o(1/(\log d))$, we have that $\hat\nu^{M,(d)}\to\nu^M_\bfG$ weakly a.s. 
\end{thm}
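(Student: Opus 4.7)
The overall strategy is to derive Theorem~\ref{mainthm:logistic-regression} by direct application of the general random-matrix result Theorem~\ref{thm:main-A-D-A^T}. The first step is a short calculation: differentiating the cross-entropy loss~\eqref{eq:loss-function-1-layer} twice in $x^\alpha$ yields
$\widehat H_{\alpha\alpha}=\tfrac{1}{n}\sum_{\ell}[\varphi_\alpha(1-\varphi_\alpha)](\bfx^\top Y_\ell)\, Y_\ell Y_\ell^\top$, while the outer product of the $\alpha$-block of $\nabla L$ gives $\widehat G_{\alpha\alpha}=\tfrac{1}{n}\sum_{\ell}(y_{\alpha,\ell}-\varphi_\alpha(\bfx^\top Y_\ell))^2 Y_\ell Y_\ell^\top$. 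Both are thus of the canonical form $\tfrac{1}{n}ADA^\top$ from~\eqref{eq:A-D-A^t}, with $D_{\ell\ell}$ a uniformly bounded, measurable function of the hidden label $a_\ell$ and of the low-dimensional projection $(\bfx,\bmu)^\top Y_\ell$. Writing $Y_\ell=\mu_{a_\ell}+\lambda^{-1/2}Z_\ell$ with $Z_\ell\sim\cN(0,I_d)$ and using $\cov(\bfx^\top Z_\ell)=\bfG_{[\cC,\cC]}$, I identify the conditional law of $D_{\ell\ell}$ given $a_\ell=b$ with the law of $f^M_b(\lambda^{-1/2}\vec g;\bfG)$ from~\eqref{eq:f_a-kGMM-case}. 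In particular this law depends on $\bfx$ only through $\bfG$, and Theorem~\ref{thm:main-A-D-A^T} then identifies the bulk limit as the free multiplicative convolution $\rho_\bfG\boxtimes\nu_\MP$ of the mixture $\rho_\bfG=\sum_b p_b\,\cL(f^M_b(\lambda^{-1/2}\vec g;\bfG))$ with the Marchenko--Pastur law, whose Stieltjes transform is exactly characterized by~\eqref{e:defmz-with-rho}. This delivers the fixed-$\bfG$ statement $d_{\text{BL}}(\hat\nu^{M,(d)},\nu^M_\bfG)=o(1)$ in probability.

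For the sequence version with $\|\bfG^{(d)}-\bfG\|=o(1/\log d)$, the target measure $\nu^M_{\bfG^{(d)}}$ itself drifts with $d$, so the fixed-parameter statement is insufficient. The plan is to prove Lipschitz continuity of the map $\bfG\mapsto S^M_\bfG(z)$ on compact subsets of the upper half-plane, uniformly for $\Im z\ge\eta$ with a constant that degrades only polynomially as $\eta\downarrow 0$; this continuity follows from the defining fixed-point equation~\eqref{e:defmz-with-rho} via a contraction argument using the boundedness of $f^M_b$. Combining this with a quantitative, almost-sure form of the fixed-$\bfG$ convergence extracted from the proof of Theorem~\ref{thm:main-A-D-A^T} (which, via a Borel--Cantelli argument on a fine net of spectral parameters, allows tracking the empirical Stieltjes transform down to imaginary scale $\eta\asymp 1/\log d$), the admissible rate $o(1/\log d)$ exactly dominates the drift, and a direct Stieltjes-inversion argument yields the weak almost sure convergence $\hat\nu^{M,(d)}\to\nu^M_\bfG$.

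The principal obstacle is the dependence between $A$ and $D$ in~\eqref{eq:A-D-A^t}: $D_{\ell\ell}$ is a nonlinear function of $\bfx^\top Y_\ell$ and hence not independent of the column $Y_\ell$ of $A$. This obstacle is, however, entirely absorbed by the general theorem, which exploits that the dependence factors through the projection of $Y_\ell$ onto the at-most-$\bar q$-dimensional subspace spanned by $\bfx$ and $\bmu$: the orthogonal complement provides a genuinely independent isotropic Gaussian which drives the Marchenko--Pastur-type bulk, while the low-dimensional projection produces only an $O(1)$-rank perturbation (the same one responsible for the outliers in the companion Theorem~\ref{thm:main-A-D-A^T-outliers}). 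Consequently, within the proof of Theorem~\ref{mainthm:logistic-regression} itself the remaining technical work is reduced to verifying the boundedness and smoothness of the $f^M_b$ and quantifying the Lipschitz dependence of~\eqref{e:defmz-with-rho} on $\bfG$ sharply enough to tolerate a drift at rate $1/\log d$; the genuine random-matrix difficulty is deferred to the general theorem.
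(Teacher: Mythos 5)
Your first paragraph is correct and matches the paper: both compute the $(\alpha,\alpha)$-blocks of $\widehat H$ and $\widehat G$ to expose the $\tfrac1n ADA^\top$ structure, identify the conditional law of $D_{\ell\ell}$ with $f^M_b(\lambda^{-1/2}\vec g;\bfG)$, observe that this law depends on $\bfx$ only through $\bfG$, and then invoke Theorem~\ref{thm:main-A-D-A^T}. (Note that for the bulk statement only Assumption~\ref{assumption:meas} is required, so you are right that the anti-concentration computation is not needed here.)

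The second paragraph, however, misidentifies both the mechanism for the almost-sure sequence statement and the reason the rate $o(1/\log d)$ appears, and this is a genuine gap. Weak convergence of empirical spectral measures never requires resolving the Stieltjes transform at small imaginary scale $\eta\asymp 1/\log d$; convergence on any fixed $z\in\C_+$ already suffices, so the claim that the admissible rate must ``dominate'' a $1/\log d$ resolution scale is misdirected. The real issue is that for a drifting sequence $\bfG^{(d)}$, the entries $D_{\ell\ell}$ are functions of the drifting parameters and the $n\asymp d$ Gaussians $\vec g_\ell$, and to invoke the Bai--Silverstein almost-sure machinery (Proposition~\ref{p:free-mult-conv-wishart}) one must verify Assumption~\ref{assumption:strong-convergence}, i.e.\ that the empirical law $\hat\rho^{(d)}$ of the $D_{\ell\ell}$ converges weakly a.s.\ to $\rho_\bfG$. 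The paper's Lemma~\ref{lem:bounded-differentiable-implies-strong-convergence} does precisely this: since $\max_{\ell\le n}\|\vec g_\ell\|\lesssim\sqrt{\log n}$ a.s., the uniform continuity of the softmax together with $\|\bfG^{(d)}-\bfG\|^{1/2}\sqrt{\log n}\to 0$ forces $\sup_\ell|D_{\ell\ell}^{(d)}-D_{\ell\ell}^{(\infty)}|\to 0$; the exponent $1/2$ is why $o(1/\log d)$ is the right threshold. Your proposal, which instead asks for a quantitative a.s.\ rate ``extracted from the proof of Theorem~\ref{thm:main-A-D-A^T},'' faces an obstacle: that theorem is stated for a fixed $\bfG$ and gives only an in-probability conclusion, and the empirical matrix at step $d$ is built with $\bfG^{(d)}$, not $\bfG$, so a fixed-$\bfG$ rate does not directly apply. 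The clean route (and the one the paper takes) is to verify Assumption~\ref{assumption:strong-convergence} via the max-of-Gaussians bound and then invoke the first bullet of Theorem~\ref{thm:main-A-D-A^T-limit}; the Lipschitz-in-$\bfG$ continuity of the effective bulk that you propose is a byproduct of that, not a replacement for it.
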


Let us now turn to the outliers. To this end, it helps to introduce further notation. Let $\sigma_{\alpha}^{H,(d)}=\sigma(\widehat H_{[\alpha,\alpha]})$ be the spectrum of the Hessian and define $\sigma_\alpha^{G,(d)}$ analogously for the Gradient matrix.  Furthermore, given $[x^1,...,x^\cC,\mu_1,...,\mu_k]$, let $L$ denote any $d\times q$ matrix with orthonormal columns satisfying 
\begin{equation}\label{e:L-defining-property}
L^\top [x^1 \,...\,x^\cC\, \mu_1\, ...\, \mu_k] = \sqrt{\bfG}.
\end{equation}
That such a matrix exists can be shown via the compact singular value decomposition (see Section~\ref{sec:isolating-dependences} for more details).
We then have the following which says, informally, that the collection of  eigenvalues outside of the bulk are close to the effective outliers (with multiplicity) and the projection of their corresponding (unit) eigenvectors into the summary statistic space approximately solve~\eqref{eq:effective-evect}--\eqref{eq:effective-evect-proj}.

\begin{thm}[Outliers]\label{mainthm:logistic-regression-outliers}
Fix $M\in\{H,G\}$, $\alpha\in[\cC]$, and $\lambda,\phi>0$. Recall that $\mathcal Z^M$ is the set of effective outliers. 
For any $\bfG$ and any $[a,b]\subseteq \supp(\nu^M_\bfG)^c$ with $\{a,b\}\cap \mathcal{Z}^M =\emptyset$, we have that for $n,d$ sufficiently large with $n/d=\phi$ and any $\mathbf x$ with $\bfG(x)=\bfG$, with probability $1-o(1)$, 
\[
|\sigma^{M,(d)} \cap [a,b]| = |\mathcal {Z}^M\cap[a,b]|,
\]
Moreover, for every $\delta>0$ small enough and every $L$ satisfying \eqref{e:L-defining-property}, with probability $1-o(1)$, 
for all $z\in\cZ$, if $z^{(d)}\in\sigma^{M,(d)}\cap B_\delta(z)$ then any corresponding eigenvector $v^{(d)}$ is such that $u^{(d)}=L^\top v^{(d)}$  is a $o(1)$-approximate solution to the effective eigenvector equation~\eqref{eq:effective-evect}. If further $z\neq0$, then it also is a $o(1)$-approximate solution \eqref{eq:effective-evect-proj}.  
Finally, given a sequence of parameters, $(\mathbf x^{(d)})$, with summary statistic matrices $\bfGd\to\bfG$ as $d\to\infty$ at rate $\|\bfGd - \bfG\| = o(1/(\log d))$, the analogous convergence statements holds as $n,d\to\infty$ with $n/d= \phi$ almost surely. 
\end{thm}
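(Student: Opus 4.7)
The plan is to reduce Theorem~\ref{mainthm:logistic-regression-outliers} to the general random matrix result Theorem~\ref{thm:main-A-D-A^T-outliers} by verifying that the Hessian and Gradient blocks for multi-class logistic regression fit the framework of~\eqref{eq:A-D-A^t}. Direct computation from~\eqref{eq:loss-function-1-layer} yields $\widehat H_{\alpha,\alpha} = \frac{1}{n} A D^H A^\top$ and $\widehat G_{\alpha,\alpha} = \frac{1}{n} A D^G A^\top$ where the diagonal entry $D^M_{\ell\ell}$ depends on $Y_\ell$ only through the hidden label $a_\ell$ and the inner products $(x^\beta\cdot Y_\ell)_{\beta\in[\cC]}$. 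Writing $Y_\ell = \mu_{a_\ell} + \lambda^{-1/2} Z_\ell$ with $Z_\ell\sim\mathcal N(0,I_d)$, and using $L$ satisfying~\eqref{e:L-defining-property}, the vector $(x^\beta\cdot Y_\ell, \mu_j\cdot Y_\ell)_{\beta,j}$ is a deterministic function of $L^\top Z_\ell$ and $a_\ell$, and the law of $D^M_{\ell\ell}$ matches the functions $f^M_b$ in~\eqref{eq:f_a-kGMM-case}. Since $L^\top Z_\ell$ and $(I-LL^\top)Z_\ell$ are independent Gaussians, $D$ depends on the data only through the finite-rank projection captured by $L$, which is exactly the structure needed by the hypotheses of Theorems~\ref{thm:main-A-D-A^T}--\ref{thm:main-A-D-A^T-outliers}.

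Given this reduction, Theorem~\ref{thm:main-A-D-A^T-outliers} supplies the outlier locations as real roots of $\det(zI_q - F^M_\alpha(z,\bfG))=0$ with $F^M_\alpha$ as in~\eqref{eq:F-matrix}, matching Definition~\ref{def:effective-outliers}. The count statement then follows from a standard continuity-of-spectrum argument: on any $[a,b]\subseteq\supp(\nu^M_\bfG)^c$ with endpoints disjoint from $\mathcal Z^M$, the determinant $z\mapsto \det(zI_q - F^M_\alpha(z,\bfG))$ is bounded away from zero, so by the deterministic equivalent for the resolvent $(\widehat M -z)^{-1}$ provided by Theorem~\ref{thm:main-A-D-A^T}, a contour integral of the resolvent around $[a,b]$ equates the number of random eigenvalues in $[a,b]$ to the number of roots in $\mathcal Z^M\cap[a,b]$, counted with multiplicity; Lemma~\ref{l:num-solutions} ensures multiplicities are uniformly bounded by $\bar q$ so no spurious double-counting arises.

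For the eigenvector claim, start from $\widehat M v^{(d)} = z^{(d)} v^{(d)}$, decompose $v^{(d)} = L u^{(d)} + v_\perp^{(d)}$ with $u^{(d)} = L^\top v^{(d)}$, and project onto the column space of $L$. The deterministic equivalent for the resolvent of the ``bulk part'' of $\widehat M$ allows one to solve for $v_\perp^{(d)}$ in terms of $u^{(d)}$ via a Schur complement identity, and substituting back yields the approximate eigenvector equation $(z^{(d)} I_q - F^M_\alpha(z^{(d)},\bfG))u^{(d)} = o(1)$. Continuity of $F^M_\alpha$ in $z$ and the assumption $z^{(d)}\in B_\delta(z)$ then upgrade this to~\eqref{eq:effective-evect} at $z$ itself. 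The normalization~\eqref{eq:effective-evect-proj} follows from $\|v^{(d)}\|^2=1$: expanding $\|v^{(d)}_\perp\|^2 = 1 - \|u^{(d)}\|^2$ via differentiation of the resolvent identity gives $\|v^{(d)}_\perp\|^2 = -\langle u^{(d)}, \partial_z F^M_\alpha(z,\bfG) u^{(d)}\rangle + o(1)$; the derivative expansion is well-controlled only when $z\neq 0$, which is why~\eqref{eq:effective-evect-proj} is stated only away from the origin.

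The main obstacle is the self-consistent dependence between $A$ and $D$ and obtaining a deterministic equivalent for the resolvent that is uniform in a neighborhood of each candidate outlier; both are handled inside Theorems~\ref{thm:main-A-D-A^T}--\ref{thm:main-A-D-A^T-outliers}, so at the level of this theorem the main remaining task is propagating those estimates through the projection argument above while keeping track of the $q\times q$ matrix structure. Finally, upgrading to almost-sure convergence under $\|\bfGd-\bfG\|=o(1/\log d)$ follows by a Borel--Cantelli argument combined with Lipschitz continuity of $F^M_\alpha(z,\cdot)$ and of the root locations in $\bfG$, using that the quantitative resolvent concentration in Theorem~\ref{thm:main-A-D-A^T} is strong enough to absorb the $o(1/\log d)$ perturbation.
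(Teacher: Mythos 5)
Your high-level strategy --- reduce to the general random matrix result by recognizing the Hessian/Gradient blocks as matrices of type $\tfrac{1}{n}ADA^\top$ with $D$ depending on the data only through the $q$ inner products --- is the same one the paper takes. However, the proposal skips the part of the reduction that actually constitutes the technical content of the paper's proof.

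Theorem~\ref{thm:main-A-D-A^T-outliers} requires Assumptions~\ref{assumption:meas}--\ref{assumption:D}, and Assumption~\ref{assumption:D} is not automatic: you must show that the law of $D_{\ell\ell}$ is uniformly compactly supported \emph{and} that $\mathbb P(|D_{\ell\ell}|<\varepsilon)\le a_\varepsilon$ with $a_\varepsilon\downarrow 0$ as $\varepsilon\downarrow 0$. Compact support is immediate from $\varphi_\alpha\in[0,1]$, but the anti-concentration of $D_{\ell\ell}$ near $0$ is a genuine calculation: for the Hessian one must bound $\mathbb P(\varphi_\alpha(\vec z)\in[0,2\epsilon)\cup(1-2\epsilon,1])$, and for the Gradient matrix $\mathbb P((\mathbf 1_{\alpha=j}-\varphi_\alpha(\vec z))^2<\epsilon)$, via Gaussian tail estimates as in \eqref{eq:softmax-prob-of-near-zero}. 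Without this check the invertibility of $D$ and the linearization $\det\begin{bmatrix}z&\sqrt{\lambda/d}A\\\sqrt{\lambda/d}A^\top&D^{-1}\end{bmatrix}$ used throughout Theorem~\ref{thm:deterministic-equivalent} is not available, and the entire outlier machinery fails. Your proposal never verifies this.

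For the final almost-sure claim under $\|\bfGd-\bfG\|=o(1/\log d)$, the mechanism in the paper is not ``Borel--Cantelli plus Lipschitz continuity of $F$ in $\bfG$'' applied directly; rather, one invokes Theorem~\ref{thm:main-A-D-A^T-limit}, which requires verifying Assumption~\ref{assumption:strong-convergence} (strong convergence of $\hat\rho_{\bfGd}$ to $\rho_{\bfG}$ together with Hausdorff convergence of supports). This is done via Lemma~\ref{lem:bounded-differentiable-implies-strong-convergence}: uniform continuity of the map $h$ with $D_{\ell\ell}=h(\bfx^\top Y_\ell)$, combined with the bound $\max_{\ell\le n}\|\vec g_\ell\|\lesssim\sqrt{\log n}$ eventually a.s., shows $\sup_\ell d(D_{\ell\ell},\supp\rho_\bfG)\to0$ precisely when the Gram matrices converge at rate $o(1/\log d)$. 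This is where the $o(1/\log d)$ rate is actually consumed; it is not an input to a resolvent-perturbation bound. Finally, the two paragraphs sketching the contour integral, Schur-complement, and normalization arguments are internal to the proof of Theorem~\ref{thm:main-A-D-A^T-outliers} and do not need to be re-established here; restating them is not incorrect but substitutes for the assumption-checking that is the real work.
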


\begin{figure}
\includegraphics[width=0.32 \textwidth]{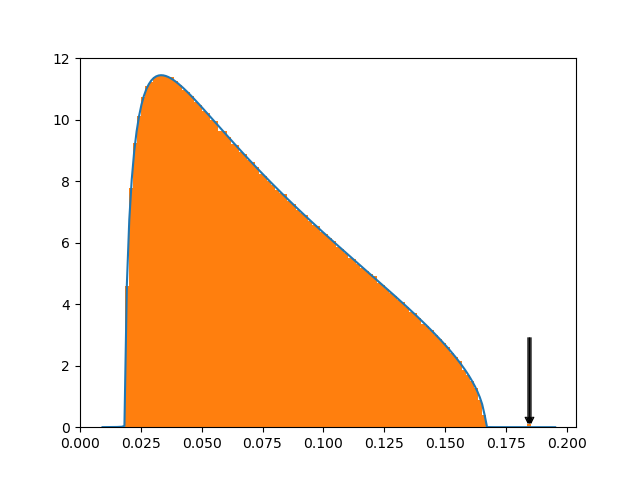}
\includegraphics[width=0.32 \textwidth]{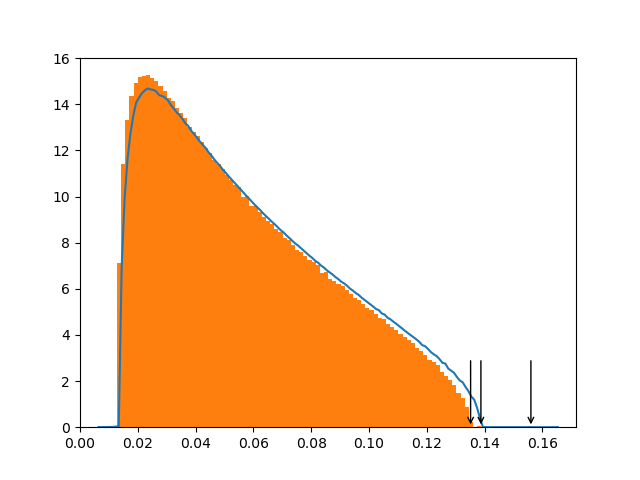}
\includegraphics[width=0.32 \textwidth]{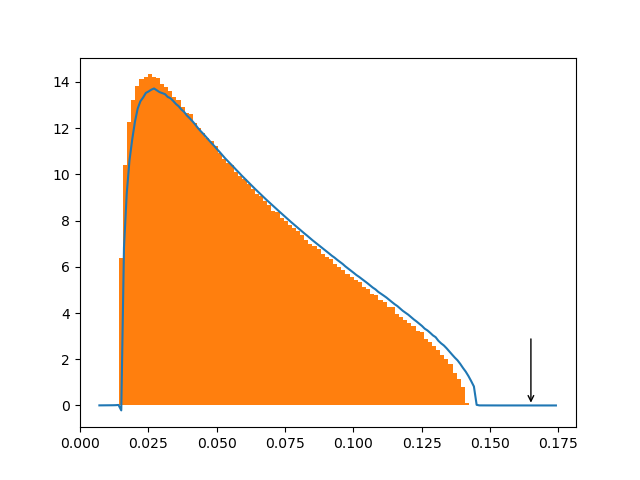}
    \caption{Histogram of the empirical Hessian spectrum (orange) at various points in parameter space in $d=20,000$ $k=3$, $\lambda =3$ and $\phi=4$. The parameter values plotted are, from left to right, at $\mathbf{x} \equiv 0$, at $\mathbf{x} \equiv \mu+  \mathcal N(0,I_d/d)$, and at the optimal classifier $\mathbf {x}^\alpha = (1-\frac{1}{k})\mu^\alpha - \sum_{j\ne \alpha}\frac{1}{k} \mu_j$. Arrows point to the empirical locations of the outlier eigenvalues for visibility, and the blue curve is the theoretically predicted bulk spectrum at those summary statistic values.}\label{fig:bulk-spectra}
\end{figure}

In words this is saying that in any interval outside the support of the limiting bulk, the numbers of eigenvalues (counting multiplicity) approximates the number of effective outlier solutions in that interval. Moreover, the outlier eigenvectors approximately solve the corresponding effective eigenvector equations which described the eigenvectors' projection into summary statistic space.  See Figure~\ref{fig:bulk-spectra} for numerical realizations of the spectrum at $d=20,000$, together with its effective $n,d\to\infty$ limits, at several natural points in parameter space.

 Having fully characterized the limiting spectral theory as the summary statistics vary, we now relate this to the spectra seen over the course of training by following the effective bulk and effective outliers as $\mathbf{G}$ evolves per the limiting evolution of summary statistics under SGD. 

\subsubsection{The effective spectrum at initialization and along the training trajectory}
The (online) SGD with respect to the loss~\eqref{eq:loss-function-1-layer} with $\ell^2$ regularization with parameter $\beta$, and with step-size $\eta = c_\eta/n$, initialized at $\mathbf{x}_0$, is defined recursively by 
\begin{align}\label{eq:SGD-def}
    \mathbf{x}_\ell = \mathbf{x}_{\ell -1} - \eta \nabla L(\mathbf{x}_{\ell-1}, (y,Y)_{\ell}) - \eta \beta \mathbf{x}_{\ell-1}\,.
\end{align}

It was shown in~\cite{BGJH23}, following the general framework of~\cite{BGJ22}, that as $n,d \to\infty$ and $\eta\to0$ proportionately , the evolution of the summary statistics under SGD converge to an autonomous system of ODEs called their \emph{effective dynamics}. More precisely  there is a function $\mathsf{F} = \mathsf{F}_{\lambda ,\phi,\beta,c_\eta}$ such that for each $T>0$, the limit in probability
\begin{align}\label{eq:effective-dynamics}
    (\mathbf{G}_t)_{t\in [0,T]} \stackrel{(\textsc{p})}= \lim_{d\to\infty} (\mathbf{G}(\mathbf{x}_{\lfloor t/\eta\rfloor}))_{t\in [0,T]} \qquad \text{exists and solves the ODE} \qquad d\mathbf{G}_t = \mathsf{F}(\mathbf{G}_t) dt\,.
\end{align}
initialized from $\mathbf{G}_0$ which is the limit (assuming it exists) of $\mathbf{G}(\mathbf{x}_0)$. Moreover, quantitative rates on $\sup_{t\in [0,T]} \|\mathbf{G}(\mathbf{x}_{\lfloor t/\eta\rfloor}) - \bfG_t\|$ could be obtained depending on the number of moments assumed on the loss (which in the logistic regression case are arbitrarily good). 
See Section~\ref{sec:logistic-regression-over-training} for the precise form of the effective drift function $\mathsf{F}$ for logistic regression. Note that the bottom-right $k\times k$ block (corresponding to the Gram matrix of the hidden means) is constant, and the bottom left and top right must be symmetric, so~\eqref{eq:effective-dynamics} is really only a dynamics on the top $(\cC\times q)$-submatrix of $\bfG$.
The approach of finding finite families of observables whose evolutions are autonomous for analyzing high-dimensional limits of SGD dates back at least to~\cite{saad1995line,saad1995dynamics}, and has been fruitful in many similarly structured problems in the past several years: see e.g.,~\cite{goldt2019dynamics,veiga2022phase,paquette2021sgd,pmlr-v195-arnaboldi23a,TanVershynin,BGJ22,DamianLeeSoltanolkotabi-22,mousavi-hosseini2023neural} for a necessarily small sampling. In particular, the other examples we will discuss in Section~\ref{subsec:further-applications} similarly admit their own effective dynamics.

By plugging this evolution of summary statistics $\mathbf{G}_t$ into our main result on the empirical Hessian and Gradient matrices at parameter values with different $\mathbf{G}$, we can study the effective spectrum, in particular the effective bulk and outlier locations, along the trajectory of the effective dynamical system resulting from SGD. This uses the crucial point that the expressions in~\eqref{e:defmz-with-rho}--\eqref{eq:outlier-equation-general} only depend on the value of $\mathbf{x}$ through $\mathbf{G}$. (All our statements are after the limit $d\to\infty$. See Remark~\ref{rem:test-vs-train} for the interpretation in finite $d$.)  From this viewpoint, possible high-dimensional spectral transitions (emergence and splitting of outliers) that occur as one varies the relevant parameters $\lambda, \phi$, and over the course of training, have all been reduced to finite-dimensional  problems.  

It is important to note that all our statements evaluating the effective spectra on the effective dynamics are after the limit $d\to\infty$. One can interpret these results in finite-$d$, by considering the case where the these matrices are computed using test data.  We expand on this more in Remark~\ref{rem:test-vs-train}, where we also include numerical evidence of a similar qualitative picture when the matrices are generated using the same set of training data: see Figures~\ref{fig:Hessian-spectrum-evolution-train}--\ref{fig:G-spectrum-evolution-train}.

Of course, even having reduced the analysis to finite-dimensional fixed point equations, solving these analytically can be very challenging, so we proceed to describe a few concrete implications we can derive. These will be exact descriptions of the bulk and outlier spectra at initialization, and then in the early stages of training (corresponding to the first $\epsilon n$ steps of the SGD for $\epsilon$ small). We will present all results in this subsection under the simplified setting of $\cC = k$ and $\mu_1,...,\mu_k$ orthonormal with equal weights $(p_j)_{j=1}^{k} = (\frac{1}{k},...,\frac{1}{k})$, though in Section~\ref{sec:logistic-regression-over-training} we state and prove the analogous results in greater generality.

We begin with the following description of the effective spectrum at natural initializations for $\mathbf{x}_0$ (e.g., either the all-zero or isotropic Gaussian parameter initialization). In these situations, the bulk distribution and its BBP-type spectral transitions (in $\lambda,\alpha$) are exactly solvable.

\begin{cor}\label{cor:exact-distribution-at-initialization}
    At the initialization $\mathbf{x} \equiv 0$, as $n,d\to\infty$ with $n/d=\phi$, and $\mu_1,...,\mu_k$ orthonormal with $(p_j)_{j=1}^{k} = (\frac{1}{k},...,\frac{1}{k})$, we have the following characterization of the effective spectra: 

    \smallskip
    \noindent \textbf{Hessian matrix}. The empirical Hessian $\widehat H_{\alpha\alpha}$'s effective bulk spectrum $\nu_{\mathbf{G}}^{H}$ is the scaled Marchenko--Pastur, with left and right edges at $z_{\pm} = \frac{k-1}{k^2}\cdot\frac{1}{\lambda}\cdot(1\pm \frac{1}{\sqrt{\phi}})^2$
    Further, if we define the critical $\lambda$, 
    \begin{align}\label{eq:lambda-c-H}\lambda_{c}^{H} := \frac{k}{\sqrt{\phi}}\,,\end{align}
    then there is a spectral transition at $\lambda_c^H$, whereby: 
    \begin{itemize}
        \item If $\lambda<\lambda_c^H$, then there are no effective outliers to the right of $\supp^+(\nu_{\mathbf{G}}^H)$;
        \item If $\lambda>\lambda_c^H$, there is a single effective outlier $z_{*} >\supp^+(\nu_{\mathbf{G}}^H)$ of multiplicity $k$; each of the corresponding $k$ eigenvectors has projection into $\text{Span}(\mu_1,...,\mu_k)$ of the form $c_j \mu_j +o(1)$ for some $j\in [k]$, where $c_j>0$ is given by~\eqref{eq:outlier-eigenvector-size-zero-init}. 
    \end{itemize} 

    \smallskip
    \noindent \textbf{Gradient matrix}. The Gradient matrix $\widehat G_{\alpha \alpha}$ at the $\mathbf{x} \equiv 0$ initialization has effective bulk,   
    \begin{align*}
        \nu_{\mathbf{G}}^G = \nu_{\MP} \boxtimes \Big(\frac{1}{k}\delta_{(k-1)/k^2} + \frac{k-1}{k} \delta_{1/k^2}\Big)\,,
    \end{align*}
    and the possible solutions of the effective outlier equations are $z_{*,1} >z_{*,2}$ (in the $\lambda$ regimes where they both exist), with $z_{*,1}$ being a simple effective outlier whose  prelimit eigenvector has $\Omega(1)$ inner product with $\mu_\alpha$, and $o(1)$ inner product with any of $(\mu_b)_{b\ne \alpha}$, and $z_{*,2}$ having multiplicity $k-1$ and corresponding eigenvectors with $\Omega(1)$ projection in $\text{Span}(\mu_b)_{b\ne \alpha}$ and $o(1)$ projection on $\mu_\alpha$. Explicit formulas defining $z_{*,1},z_{*,2}$ are given in~\eqref{eq:G-matrix-outlier-1}--\eqref{eq:G-matrix-outlier-2}.
\end{cor}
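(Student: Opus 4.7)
The plan is to specialize Theorems~\ref{mainthm:logistic-regression}--\ref{mainthm:logistic-regression-outliers} to $\mathbf{x}\equiv 0$ and exploit the drastic simplifications that occur there. At initialization the summary statistic matrix becomes $\bfG = \diag(0_{\cC\times \cC},I_k)$ (so $\sqrt{\bfG}=\diag(0,I_k)$) since $\mathbf{x}=0$ kills the parameter and cross blocks while orthonormality of the means gives the $I_k$. Plugging into~\eqref{eq:f_a-kGMM-case} makes the softmax argument identically zero, so $\varphi_\alpha\equiv 1/k$ and
\[
f_b^H \equiv \tfrac{k-1}{k^2},\qquad f_b^G = \bigl(\mathbf{1}_{\alpha=b}-\tfrac1k\bigr)^2.
\]
Thus $\rho_\bfG^H=\delta_{(k-1)/k^2}$ and $\rho_\bfG^G$ is the claimed two-atom mixture. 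Theorem~\ref{mainthm:logistic-regression} then identifies $\nu_\bfG^H = \delta_{(k-1)/k^2}\boxtimes \nu_{\MP}$, which is a rescaled Marchenko--Pastur law; reading off the standard edges with variance $(k-1)/(k^2\lambda)$ and aspect ratio $\phi^{-1}$ gives $z_\pm$.

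Next I would evaluate $F^M_\alpha(z,\bfG)$ from~\eqref{eq:F-matrix} using the explicit form of $\sqrt\bfG$. The Gaussian cross terms vanish in expectation, and since the $b$-th mean column of $\sqrt\bfG$ equals $e_{\cC+b}$, one obtains $\sum_b p_b (\sqrt\bfG)_{\cdot b}(\sqrt\bfG)_{\cdot b}^\top = \tfrac{1}{k}\bfG$. For the Hessian, the constancy of $f^H$ lets one pull the corresponding scalar outside the expectation, giving
\[
F_\alpha^H(z,\bfG) = M^H(z)\bigl[\tfrac{1}{\lambda}I_q + \tfrac{1}{k}\bfG\bigr]
\]
with $M^H(z) = \lambda\phi c/(\lambda\phi + c S^H(z))$ and $c=(k-1)/k^2$. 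The outlier determinant factors: the parameter block contributes $(z - M^H(z)/\lambda)^k$ and the means block contributes $(z - M^H(z)(1/\lambda + 1/k))^k$. For the first factor, substituting the implied relation $S^H(z) = \phi/z - \lambda\phi/c$ back into the defining Stieltjes equation~\eqref{e:defmz-with-rho} produces an algebraic inconsistency, so no outliers ever emerge from the parameter block. The second factor yields an explicit rational expression for $z_*$ in $\lambda,\phi,k$; equating to the right edge $z_+$ isolates $\lambda_c^H$, and the $k$-fold multiplicity is inherited from the means-block degeneracy. The effective eigenvector equation~\eqref{eq:effective-evect} forces the corresponding $u$ to live in the means subblock, which via the identity $[\mathbf{x},\bmu]=L\sqrt\bfG$ translates into $\Omega(1)$ projection of the true eigenvector onto $\Span(\mu_1,\ldots,\mu_k)$.

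The Gradient case follows the same template but with one symmetry-breaking twist: since $f^G_b$ distinguishes $b=\alpha$ from $b\neq\alpha$, the matrix $F^G_\alpha$ is no longer a scalar multiple of $\tfrac{1}{\lambda}I+\tfrac{1}{k}\bfG$. Instead it decomposes into three invariant subspaces --- the $\cC$-dimensional parameter block, the distinguished line $\Span(e_{\cC+\alpha})$, and its $(k-1)$-dimensional mean-orthogonal complement $\Span(e_{\cC+b})_{b\neq\alpha}$ --- each yielding its own scalar outlier equation. The parameter-block equation is again ruled out by the same algebraic inconsistency. The $\mu_\alpha$ block and the orthogonal block each produce at most one effective outlier, namely $z_{*,1}$ (simple) and $z_{*,2}$ (multiplicity $k-1$), and the eigenvector-projection claims follow immediately from the invariant-subspace decomposition.

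The main technical obstacle is verifying that the parameter-block factors never produce outliers: this requires substituting the candidate relation between $S(z)$ and $z$ into the defining Stieltjes equation and showing that the resulting identity forces a nonzero constant to equal zero. The algebra is elementary but must be organized carefully to separate the parameter-block contribution cleanly. The remaining calculations --- solving for $z_*$ explicitly in the Hessian case and recognizing the perfect-square factorization that pinpoints $\lambda_c^H$, together with the analogous Gradient identities for $z_{*,1}$ and $z_{*,2}$ --- are then straightforward.
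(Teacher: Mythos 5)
Your proposal follows the same route as the paper's: specialize the bulk equation~\eqref{e:defmz-with-rho} and the outlier equation~\eqref{eq:outlier-equation-general} to $\mathbf{x}\equiv 0$ (so $\bfG = 0_\cC\oplus I_k$, $f^H\equiv\xi := (k-1)/k^2$ is deterministic, and $f^G$ is two-valued), identify the bulk as a free multiplicative convolution of $\nu_\MP$ with the resulting deterministic law, and use the block-diagonal structure of $F$ so the outlier determinant factors over the parameter and means blocks. Your ``algebraic inconsistency'' argument for discarding the parameter block is correct, but the paper's cleaner route (and the one used to justify this reduction in general) is Lemma~\ref{l:num-solutions}: with $\bar q = k < q$, the parameter-block factor of $\det(zI-F)$ is, via the bulk identity $\tfrac{1}{\lambda}M^H(z) = z + 1/S(z)$, exactly $(-1/S(z))^{q-\bar q}$, which never vanishes. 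This is why the paper's Lemma~\ref{lem:Hessian-effective-outliers-0-init} can work directly with the $k\times k$ means block.

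A few steps you call straightforward deserve more care. First, you never actually isolate $\lambda_c^H$: you should carry out the computation and cross-check against~\eqref{eq:lambda-c-H}. The paper does so by solving the means-block outlier equation for the value of $S(z_*)$ it forces and comparing to $S(z_+^+)$ from~\eqref{eq:right-edge-value}; your route (solve for $z_*$, equate to $z_+$, and spot the perfect-square factorization) is equivalent, but be aware that the algebraic rearrangement leading from~\eqref{eq:zero-init-outlier-equation} to a closed form for $S(z_*)$, as in~\eqref{eq:effective-outlier-locations-zero-init}, is exactly the kind of place a sign can slip, so it is worth checking both routes agree and agree with the stated formula. Second, the ordering $z_{*,1}>z_{*,2}$ in the Gradient case is asserted but not shown; it needs the monotonicity of $\xi\mapsto \xi/(\lambda\phi+S(z)\xi)$ together with $\xi_1>\xi_0$, which in fact holds only for $k\ge 3$ (at $k=2$ the two atoms coincide). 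Third, the $\Omega(1)$ eigenvector-projection claims do not ``follow immediately'' from the invariant-subspace decomposition of $F$; they also use the normalization~\eqref{eq:effective-evect-proj} and the identification $u^{(d)}=L^\top v^{(d)}$ in Theorem~\ref{mainthm:logistic-regression-outliers}, as the paper makes explicit in Remark~\ref{rem:outlier-eigenvectors-zero-init}.
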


\begin{figure}
\includegraphics[width=0.32 \textwidth]{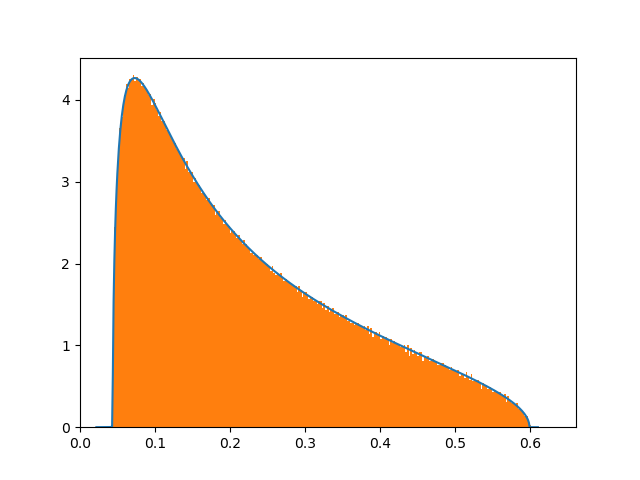}
\includegraphics[width=0.32 \textwidth]{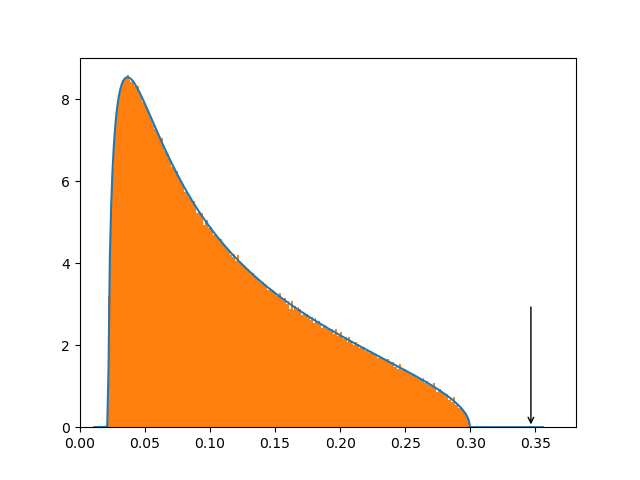}
\includegraphics[width=0.32 \textwidth]{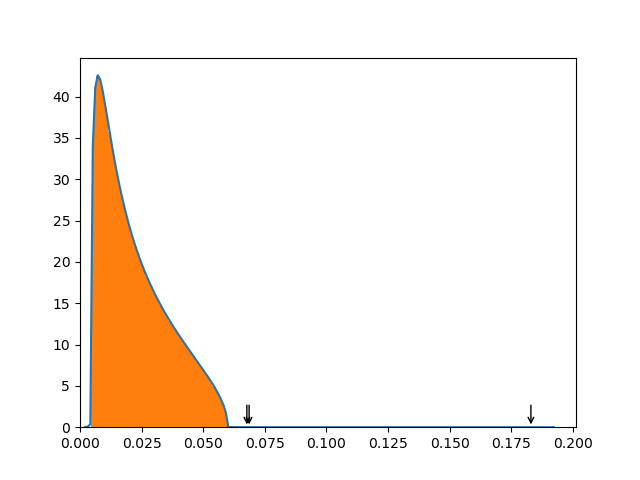}
    \caption{From left to right: the histogram of the empirical Gradient matrix (orange) at $\lambda = 1,2,20$ respectively, in $d= 20,000$ and $k=3$ and $\phi =4$ at initialization, with arrows pointing at the empirical locations of the outlier eigenvalues. This demonstrates the existence of distinct transition SNRs, $\lambda_{c,1}<\lambda_{c,2}$, for existence of one, then $k$ outlier eigenvalues as predicted by Corollary~\ref{cor:exact-distribution-at-initialization}.}\label{fig:Gradient-matrix-multi-BBP}
\end{figure}

See Figure~\ref{fig:Gradient-matrix-multi-BBP} for a depiction of the Gradient matrix at initialization at different SNR values, demonstrating two BBP-type transitions, one where $z_{*,1}$ emerges to the right of the bulk, and another where both $z_{*,2}<z_{*,1}$ are to the right of the bulk.

Another equally natural initialization is a standard Gaussian initialization with unit norm, i.e., $\mathbf{x} \sim \mathcal N(0, I_d/d)$. There, the expressions are more complicated, but can still be worked out fairly explicitly. For readability of the introduction, we state the following corollary with references to the relevant expressions for the fixed point equations defining the effective bulk and effective outliers. Let $\pi_{\REM}$ be a ``Random Energy Model"~\cite{Derrida-REM} on $\cC$ points, which is to say a Gibbs measure on $[\cC]$ with each element being assigned an independent $\mathcal N(0,\lambda^{-1})$ energy.
    
\begin{cor}\label{cor:Gaussian-init-distribution}
    If $\mathbf{x} \sim \mathcal N(0,I_d/d)$, for the Hessian $\widehat H_{\alpha\alpha}$, its effective bulk $\nu_{\mathbf{G}}^H$ will converge to 
    \begin{align*}
        \nu_{\mathbf{G}}^H = \nu_{\MP}\boxtimes \nu_{\REM}\qquad \text{where}\qquad \nu_{\REM} = \text{Law}(\pi_{\REM}(\alpha)(1-\pi_{\REM}(\alpha)))\,.
    \end{align*}
    There can be up to $q = \cC+ k$ many effective outliers. One possible effective outlier is of multiplicity $k$ and solves~\eqref{eq:Gaussian-init-fixed-pt-1} and the others consist of a simple effective outlier solving~\eqref{eq:Gaussian-init-fixed-pt-3} and an effective outlier of multiplicity $\cC-1$ solving~\eqref{eq:Gaussian-init-fixed-pt-2}.
\end{cor}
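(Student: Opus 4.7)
The plan is to reduce to an application of Theorems~\ref{mainthm:logistic-regression}--\ref{mainthm:logistic-regression-outliers} at the limiting summary statistic $\bfG_\infty = I_q$ corresponding to Gaussian initialization, and then to exploit the rich symmetries of $F^H_\alpha(z,I_q)$ to factor the effective outlier determinantal equation into its orbit-structured pieces. For the concentration step: if $\mathbf x_0 \sim \mathcal N(0,I_d/d)^{\otimes \cC}$, standard Gaussian concentration of bilinear forms gives $\|x^i\|^2=1+O(d^{-1/2})$, $x^i\cdot x^j=O(d^{-1/2})$ for $i\neq j$, and $x^i\cdot \mu_b=O(d^{-1/2})$, jointly with probability $1-o(1)$. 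Together with the orthonormality of $(\mu_b)$, this gives $\bfG(\mathbf x_0)\to \bfG_\infty=I_q$ at rate $o(1/\log d)$, so the main theorems apply. For the bulk, plugging $\bfG_\infty=I_q$ into~\eqref{eq:f_a-kGMM-case} yields $f_b^H=\varphi_\alpha(1-\varphi_\alpha)(\lambda^{-1/2}\vec g_{[\cC]})$, independent of $b$ and distributed as $\pi_{\REM}(\alpha)(1-\pi_{\REM}(\alpha))$; Definition~\ref{def:effective-bulk} then identifies the effective bulk Stieltjes equation as that defining $\nu_{\MP}\boxtimes\nu_{\REM}$.

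For the outliers, I would use~\eqref{eq:F-matrix} together with (i) independence of the mean-coordinate Gaussians from $f^H$, (ii) $S_{\cC-1}$-symmetry of $\varphi_\alpha$ in $(g_i)_{i\neq\alpha}$, and (iii) shift invariance $\varphi_\alpha(\vec g+c\mathbf 1_{[\cC]})=\varphi_\alpha(\vec g)$ of the softmax, to show that $F^H_\alpha(z,I_q)$ has the block form
\[
F^H_\alpha(z,I_q) = \begin{pmatrix} A(z) & v(z)\mathbf 1_k^\top \\ \mathbf 1_k v(z)^\top & c(z) I_k \end{pmatrix},
\]
with $A$ commuting with the $S_{\cC-1}$-action. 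Applying Stein's lemma to the shift identity $\sum_i \partial_i h\equiv 0$, where $h:=f^H/(\lambda\phi+S^H(z)f^H)$, gives $A\mathbf 1_{[\cC]}=\phi\,\E[h]\mathbf 1_{[\cC]}$ and $\mathbf 1_{[\cC]}^\top v=0$. Combining the Schur determinant identity on the mean block with the spectral decomposition of $A$ under $S_{\cC-1}$ (with eigenvalues $\phi\,\E[h]$ along $\mathbf 1_{[\cC]}/\sqrt\cC$, $\mu_A(z)$ along the orthogonal trivial-rep direction, and $a-b$ of multiplicity $\cC-2$ on the standard-rep subspace) factors the determinantal equation as
\[
\det(zI_q-F^H_\alpha(z,I_q)) = (z-c)^{k-1}(z-\phi\,\E[h])(z-(a-b))^{\cC-2}\bigl[(z-c)(z-\mu_A)-kc_v^2\bigr],
\]
for $z$-dependent scalars and $c_v:=\|v\|$. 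Grouping these factors by orbit under $S_{\cC-1}\times S_k$ (the $k$-dimensional mean orbit, the $(\cC-1)$-dimensional non-$\alpha$-parameter orbit, and the $1$-dimensional $\alpha$-parameter orbit) yields the three fixed-point equations in the statement, with the claimed multiplicities $k$, $\cC-1$, and $1$.

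The delicate step is this final grouping: namely, pairing the two roots of the quadratic factor $(z-c)(z-\mu_A)=k c_v^2$ with the polynomial factors $(z-c)^{k-1}$ and $(z-(a-b))^{\cC-2}$ so that the total multiplicities combine to $k$ and $\cC-1$ as claimed in the corollary. This requires establishing identities between the expectation quantities $c$, $\mu_A$, $a-b$, and $c_v$ (all viewed as functions of $z$ through $S^H(z)$), which follow from further iterated applications of Stein's lemma combined with the softmax derivative identities $\partial_i^{\text{raw}}\pi_\alpha=\pi_\alpha(\delta_{i\alpha}-\pi_i)$ and the shift and permutation symmetries noted above.
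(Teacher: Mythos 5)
The bulk piece of your argument is correct and matches the paper's Lemma~3.3: concentration gives $\bfG^{(d)}\to I_q$ at rate $O(d^{-1/2})=o(1/\log d)$, and plugging $\bfG=I_q$ into~\eqref{eq:f_a-kGMM-case} yields $f_b^H=\pi_{\REM}(\alpha)(1-\pi_{\REM}(\alpha))$ independent of $b$, so Definition~\ref{def:effective-bulk} gives $\nu^{H}_{\bfG}=\nu_{\MP}\boxtimes\nu_{\REM}$.

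For the outliers, your structural analysis of $F^H_\alpha(z,I_q)$ is in fact \emph{more careful} than the paper's Lemma~3.5: $\varphi_\alpha$ is only $S_{\cC-1}$-symmetric in $(g_i)_{i\neq\alpha}$, not fully $S_\cC$-symmetric, so the top-left block $A_{ij}=\phi\,\E[H\,g_ig_j]$ ($H=\frac{\Pi(1-\Pi)}{\lambda\phi+S\Pi(1-\Pi)}$) has three, not two, distinct off-diagonal values and a distinguished $\alpha$-row/column; the paper's representation $A=c_z\mathbf 1_{\cC\times\cC}+d_zI_\cC$ (and its constant cross-block $b_z$) implicitly assumes the full $S_\cC$ symmetry, which fails for $\cC\geq 3$. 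Your Stein/shift identities $A\mathbf 1_{[\cC]}=\phi\E[H]\mathbf 1_{[\cC]}=(z+1/S(z))\mathbf 1_{[\cC]}$ and $\mathbf 1_{[\cC]}^\top v=0$ are correct, and they do force the factorization $(z-c)^{k-1}(z-(z+1/S(z)))(z-(a-b))^{\cC-2}[(z-c)(z-\mu_A)-\kappa\|v\|^2]$. But this is where the genuine gap lies: the multiplicities produced are $(1,\cC-2,k-1,2)$ (and the $\mathbf 1_{[\cC]}$-factor $1/S(z)=0$ is never an outlier), not the $(k,\cC-1,1)$ asserted in the corollary. The ``delicate pairing'' you hope to achieve via further Stein identities cannot work generically: for the quadratic factor to contribute a root at $z=c$ (to raise $k-1$ to $k$) one would need $\|v\|^2=\|\E[H\vec g_{[\cC]}]\|^2=0$, but $\E[Hg_\alpha]=-(\cC-1)\E[Hg_\beta]$ with neither side vanishing in general, so $v\neq 0$. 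In other words, your own (correct) symmetry reduction shows that the step you flag as ``delicate'' is not a technicality to be verified but an obstruction; the factorization simply does not organize into multiplicities $(k,\cC-1,1)$ for $\cC\geq 3$, and the paper's own derivation avoids seeing this only because it overclaims $S_\cC$-symmetry of $A$ and constancy of the cross-block.
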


One comment on Corollary~\ref{cor:Gaussian-init-distribution} is that the two ``uninformative" initializations $\mathbf{x} \equiv 0$ and $\mathbf{x}\sim \mathcal N(0,I_d/d)$ have different effective spectral theories and thresholds for outlier emergence. Notably, the Gaussian initialization induces ``spurious" outliers with eigenvectors correlated with $\text{Span}(x^1,...,x^\cC)$ but orthogonal to all informative directions $\mu_1,...,\mu_k$. 

Though Corollaries~\ref{cor:exact-distribution-at-initialization}--\ref{cor:Gaussian-init-distribution} capture interesting spectral transitions as the SNR varies, the real strength of our results is the ability to track the spectra along the SGD trajectories in the high-dimensional limit, because each only depend on the same family of summary statistics $\mathbf{G}_{t}$ which we recall are the $n,d\to\infty$ limit of $\mathbf{G}(\mathbf{x}_{\lfloor t n\rfloor})$ over training. To demonstrate this, we describe a result on the evolution of the bulk and outliers along the SGD trajectory in the early stages of training.  
 Namely, we will implicitly differentiate the effective bulk and effective outlier equations~\eqref{e:defmz-with-rho}--\eqref{eq:outlier-equation-general} along the trajectory~\eqref{eq:effective-dynamics} at their initialization. Note that the infinitesimal behavior of the effective spectra at initialization captures the evolution of the empirical spectra after use of a macroscopic $\epsilon$ fraction of the data points, and is only infinitesimal in the dimension-free scale. 

Recall $\mathbf{G}_t$ is the summary statistic evolution solving~\eqref{eq:effective-dynamics}. For ease of the linear algebra calculations and to arrive at easily describable conclusions, we take the small-step size limit $c_\eta \to 0$ in the effective dynamics. 

\begin{figure}
\includegraphics[width = .32\textwidth]{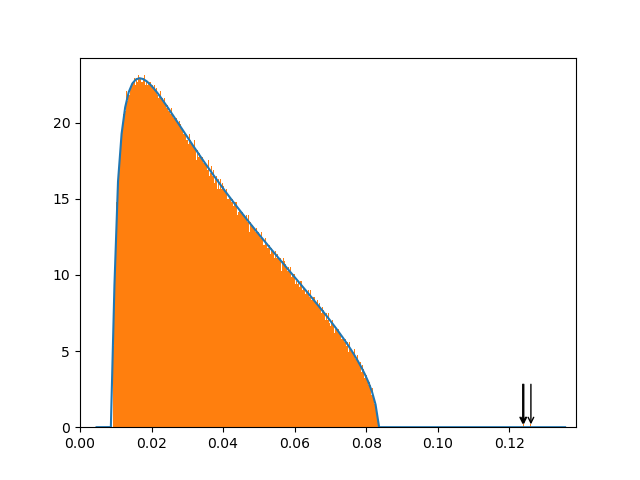}
\includegraphics[width = .32\textwidth]{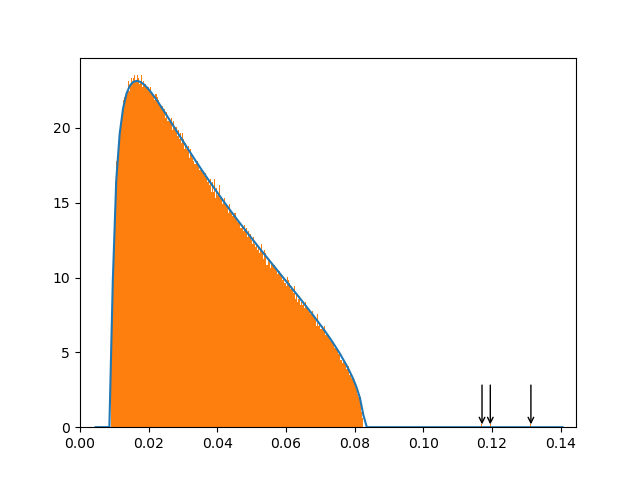}
\includegraphics[width = .32\textwidth]{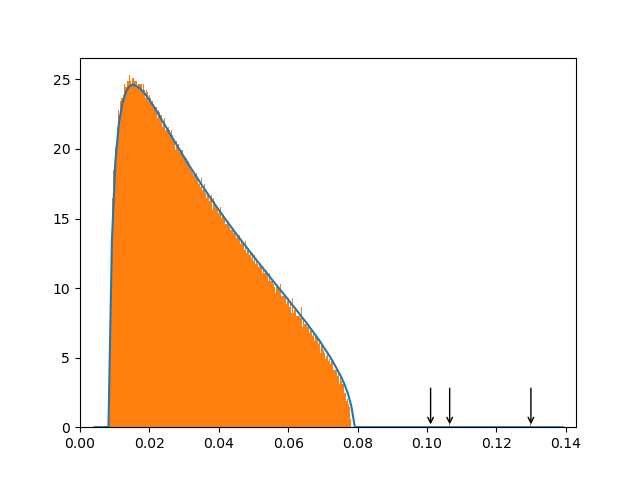}
    \caption{From left to right: spectra of the empirical Hessian computed with test data (orange histogram) overlaid with the predicted bulk spectrum given its summary statistic values (blue curve) after $0$, $n/5$ and $3n/5$ steps of online SGD, in $d=20,000$ with $k=3, \phi=4$. The figures demonstrate the splitting of the multiplicity-$k$  outlier eigenvalue, into a more pronounced outlier corresponding to the mean being learned by the classifier being trained, and the remaining $k-1$, as predicted by Theorem~\ref{thm:effective-bulk-along-effective-dynamics}.}\label{fig:Hessian-spectrum-evolution-test}
\end{figure}

\begin{figure}[h]
\includegraphics[width = .32\textwidth]{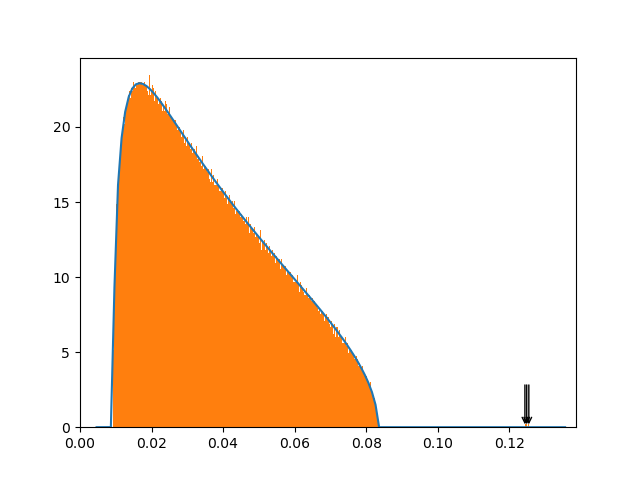}
\includegraphics[width = .32\textwidth]{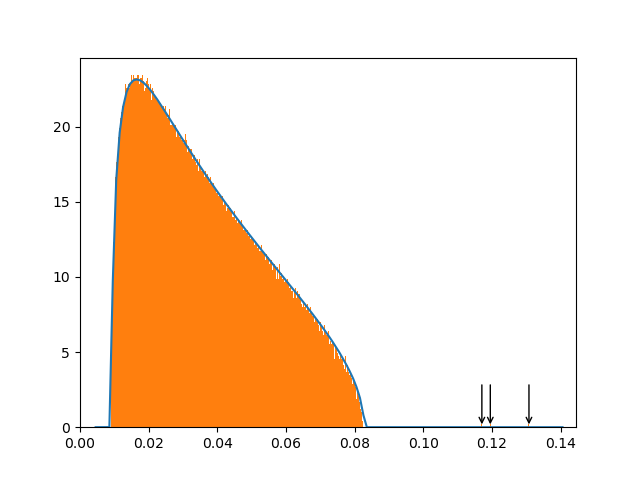}
\includegraphics[width = .32\textwidth]{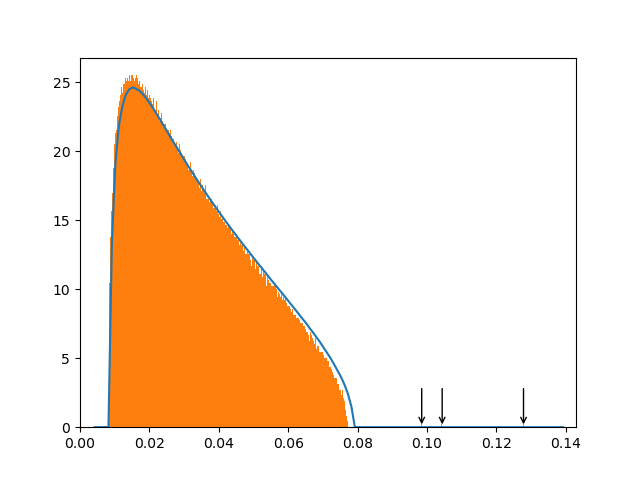}
    \caption{The analogue of Figure~\ref{fig:Hessian-spectrum-evolution-test} with train data used to generate the empirical Hessian matrix rather than test data. The similarity to Figure~\ref{fig:Hessian-spectrum-evolution-test} shows a good match regardless of which data set is used to generate the matrices.}\label{fig:Hessian-spectrum-evolution-train}
\end{figure}

\begin{thm}\label{thm:effective-bulk-along-effective-dynamics}
     Consider the effective bulk spectrum $\nu_{\mathbf{G}_t}^H$ of the empirical Hessian matrix $\widehat H_{\alpha \alpha}$, and the effective outliers $(z_{*,i}(\mathbf{G}_t))_i$ (with multiplicity), where $\mathbf{G}_t$ is as in~\eqref{eq:effective-dynamics} initialized from $\mathbf{x}_0\equiv 0$. 
    \begin{itemize}
        \item For any $\lambda>0$, the time derivative of the bulk is zero:  $\frac{d}{dt}S_{\nu_{\bfG_t}^H}(z)\vert_{t=0}=0$ for $z\in \supp(\nu_{\mathbf{G}_0}^H)^c$. 
        \item When $\lambda>\lambda_c^H$ from~\eqref{eq:lambda-c-H}, one effective outlier infinitesimally grows, i.e., $\frac{d}{dt} z_{*,1}(\bfG_t)\vert_{t=0}>0$\,.
        \item When $\lambda>\lambda_c^H$ from~\eqref{eq:lambda-c-H}, the other $k-1$ effective outliers have negative time derivatives $\frac{d}{dt} z_{*,i}(\bfG_t) \vert_{t=0} <0$ for $i= 2,...,k$. 
    \end{itemize}
\end{thm}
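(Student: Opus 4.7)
The plan is to implicitly differentiate the effective bulk equation~\eqref{e:defmz-with-rho} and the effective outlier equation~\eqref{eq:outlier-equation-general} in $t$ along the initial direction of the effective dynamics~\eqref{eq:effective-dynamics} started from $\mathbf{x}_0\equiv 0$. First I would identify $\dot{\bfG}_t|_{t=0}$: in the $c_\eta\to 0$ limit the effective dynamics reduces to a population gradient flow, and at $\mathbf{x}=0$ the softmax equals $1/k$, so a direct calculation gives $\dot{x}^c|_{t=0}=p_c\mu_c - \bar\mu/k = (\mu_c-\bar\mu)/k$ in the symmetric case. Since $\bfG_{[\cC,\cC]}=\mathbf{x}^\top\mathbf{x}$ is quadratic in $\mathbf{x}$ and $\bfG_{[k,k]}=\bmu^\top\bmu$ is constant, only the mixed block has nonzero first derivative: $\dot{\bfG}_{[\cC],[k]}|_{t=0}$ has $(c,b)$-entry $\langle\dot{x}^c,\mu_b\rangle=(\delta_{cb}-1/k)/k$.

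For Part 1, at $t=0$ we have $f_b^H\equiv (k-1)/k^2$ deterministically, so~\eqref{e:defmz-with-rho} degenerates to the scaled Marchenko--Pastur equation. Implicit differentiation in $t$ yields $\dot S(z)(z - \phi\partial_S F) = \phi\partial_t F$, where $\partial_t F$ is proportional to $\sum_b p_b \mathbb{E}[\dot f_b^H|_0]$. A Taylor expansion of $f_b^H$ about its $t=0$ argument gives $\mathbb{E}[\dot f_b^H|_0] = v^\top \dot{\bfG}_{[\cC]b}|_0$, where $v:=\nabla[\varphi_\alpha(1-\varphi_\alpha)](0)=\tfrac{k-2}{k^2}(e_\alpha-\mathbf{1}_\cC/k)$, since the Gaussian-fluctuation contribution has mean zero. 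Summing with $p_b=1/k$ and using $\sum_c v_c = 0$ together with $\langle\mu_c,\bar\mu\rangle=\|\bar\mu\|^2=1/k$ for orthonormal means yields $\sum_b p_b \mathbb{E}[\dot f_b^H|_0] = 0$, so $\dot S(z)=0$ for every $z$ outside the bulk support.

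For Parts 2 and 3, evaluating~\eqref{eq:F-matrix} at $t=0$ with $\sqrt{\bfG_0}=\mathrm{blockdiag}(0_\cC,I_k)$ yields $F^H_\alpha(z,\bfG_0)=\mathrm{blockdiag}(\tau(z)I_\cC,\tau(z)(1+\lambda/k)I_k)$, where $\tau(z):=\phi f/(\lambda\phi+Sf)$. Thus the outlier equation~\eqref{eq:outlier-equation-general} has a $k$-fold degenerate solution $z_*$ satisfying $z_*=\tau(z_*)(1+\lambda/k)$, which lies above the bulk precisely when $\lambda>\lambda_c^H$. To track how this $k$-fold outlier splits, I apply degenerate perturbation theory on the $[k]$-block eigenspace. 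Using (i) Part 1 to drop $\dot S(z_*)$-terms, (ii) the block identity $\dot{\sqrt{\bfG}}\sqrt{\bfG_0}+\sqrt{\bfG_0}\dot{\sqrt{\bfG}}=\dot{\bfG}_0$, which forces the $[k,k]$ block of $\dot{\sqrt{\bfG}}$ to vanish, and (iii) independence of $\vec g_{[\cC]}$ from $\vec g_{[k]}$ to factor expectations, a direct calculation gives $(\dot F^H_\alpha)_{[k,k]}(z_*,\bfG_0) = M\,\diag(\delta_{1,\alpha}-1/k,\dots,\delta_{k,\alpha}-1/k)$, with $M = \lambda^2\phi^2(k-2)/[k^4(\lambda\phi+Sf)^2]>0$ for $k\ge 3$. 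Its eigenvalues are $M(k-1)/k>0$ (simple, eigenvector aligned with $\mu_\alpha$) and $-M/k<0$ (multiplicity $k-1$, eigenspace inside $\Span\{\mu_b:b\ne\alpha\}$).

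Finally, differentiating $\lambda_{i_j}(t,z_{*,j}(t))=z_{*,j}(t)$ at $t=0$ gives $\dot z_{*,j}=\partial_t\lambda_{i_j}/(1-\partial_z\lambda_{i_j})|_{t=0,\,z=z_*}$, and the denominator is positive because $\partial_z\tau(z_*)<0$ (Stieltjes transforms are strictly increasing outside the support), so $\mathrm{sign}(\dot z_{*,j})=\mathrm{sign}(\partial_t\lambda_{i_j})$. This delivers one positive derivative and $k-1$ negative derivatives, as claimed. The main technical obstacles will be: carefully making sense of $\dot{\sqrt{\bfG}}$ near the square-root singularity (its $[\cC]$-block is only one-sidedly differentiable in $t$, but it does not enter the $[k,k]$ block we need); and justifying that off-diagonal mixing with the non-degenerate eigenvalue $\tau(z_*)$ of the $[\cC]$-block of $F^H_\alpha(z_*,\bfG_0)$ contributes only at $O(t^2)$, which follows from the separation $\tau(z_*)\ne z_*$ between the two unperturbed eigenvalues.
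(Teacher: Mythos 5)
Your overall strategy---implicitly differentiating the effective bulk equation~\eqref{e:defmz-with-rho} and the effective outlier equation~\eqref{eq:outlier-equation-general} along the effective dynamics, then evaluating at the all-zero initialization---matches the paper's. Part~1 is essentially correct: $\sum_b p_b\,\mathbb E[\dot f_b^H]=0$ by the computation you give (the paper reaches the same zero via~\eqref{eq:partial_t-equation}). The one thing you should also say is that the implicit-function denominator is nonvanishing outside the support; the paper derives the explicit identity~\eqref{e:important-identity} which gives $-S/S'>0$ there.

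However, step~(iii) of your Parts~2--3 has a genuine gap. You claim that ``independence of $\vec g_{[\cC]}$ from $\vec g_{[k]}$'' lets the expectation defining $(\dot F^H_\alpha)_{[k,k]}$ factor, but $\partial_t f_b|_{t=0}$ depends on $\vec g_{[k]}$, not $\vec g_{[\cC]}$. In the coupling that~\eqref{eq:F-matrix} actually uses (built from~\eqref{eq:Y-g-coupling} and the decomposition of Section~\ref{sec:isolating-dependences}), $f_b$ depends on $\vec g$ through $(\sqrt{\bfG}\vec g)_{[\cC]}$, and $\partial_t(\sqrt{\bfG_t}\vec g)_{[\cC]}\big|_{t=0}=(\dot{\sqrt\bfG})_{[\cC],[k]}\vec g_{[k]}$ --- exactly the nonzero off-diagonal block you yourself identify in~(ii); meanwhile $(\dot{\sqrt\bfG})_{[\cC],[\cC]}=0$ because $\bfG_{[\cC,\cC],t}=O(t^2)$. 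The representation~\eqref{eq:f_a-kGMM-case} in terms of $\bfG_{[\cC,\cC]}^{1/2}\vec g_{[\cC]}$ reproduces the law of $f_b$ but not its \emph{coupling} with $\vec g_{[k]}$, and it is the coupling that enters $F$. Consequently the cross-moments $\mathbb E[\partial_t f_b\cdot g_i g_j]$ for $i\neq j$ in $[k]$ are nonzero (by Stein's lemma on the part of $\partial_t f_b$ linear in $\vec g_{[k]}$), and the true $\partial_t F_{[k,k]}$ is $\psi(z)\mathcal M$ with $\mathcal M = \diag(\xi,\xi',\dots,\xi') + \tfrac1\lambda\bigl(\xi_i+\xi_j\bigr)_{ij}$ (a rank-two correction), not your diagonal matrix. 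This changes the fine structure of the split: the paper's $\mathcal M$ has eigenvalues $\mathfrak c_+$ (simple), $\xi'$ (multiplicity $k-2$), $\mathfrak c_-$ (simple), so the $k-1$ decreasing outliers further split into a $(k-2)$-fold minibulk and a singleton, rather than being degenerate as your diagonal approximation suggests. It is a coincidence of the $\bfx\equiv0$ initialization that your (incorrect) matrix and the true $\mathcal M$ both have exactly one positive and $k-1$ negative eigenvalues, so the stated signs of the derivatives happen to be right; but your argument as written does not establish them, because the cross-moment contribution is dropped without justification.
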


Recall from Corollary~\ref{cor:exact-distribution-at-initialization} that when $\lambda>\lambda_c^H$, there is a single multiplicity $k$ effective outlier of $\widehat H_{\alpha \alpha}$ at initialization. In words, Theorem~\ref{thm:effective-bulk-along-effective-dynamics} is describing a splitting of this outlier spectrum, as a single outlier dictating the direction of greatest curvature quickly points in the direction of the class mean $\mu_\alpha$. Shortly into training, therefore, the Hessian demonstrates a single informative outlier, a minibulk of size $k-1$, followed by a narrower bulk. See Figure~\ref{fig:Hessian-spectrum-evolution-test} for the depiction of the spectra at several time snapshots, demonstrating this splitting of the outlier spectrum, and initial lack of drift for the bulk.  
We note that we also derive exact fixed point equations for the time derivatives of Theorem~\ref{thm:effective-bulk-along-effective-dynamics} in~\eqref{eq:psi-def}. 

Moreover, with continuity of the effective outlier equations' solutions in $\lambda$ and the fact that the derivative bounds of bullet points 2--3 above hold for all $\lambda$ (only they don't actually correspond to an effective outlier when $\lambda\le \lambda_c^H$), there can even exist a regime of SNRs $\lambda<\lambda_c^H$ but sufficiently close to $\lambda_c^H$ such that at initialization, there are no effective outliers; but there exists a small $t_*(\lambda)>0$ such that at the summary statistic values $\bfG_t$ corresponding to $t_*n$ steps of training by SGD, there emerges a single effective outlier eigenvalue, whose prelimit eigenvector has $\Omega(1)$ correlation with~$\mu_\alpha$. 

The above discussions form \emph{dynamical versions} of the BBP-type~\cite{baik2005phase} spectral transitions wherein outliers emerge from a bulk, that traditionally occur from varying the SNR, but here are occurring with fixed SNR as one varies the training time. Emergence of outlier eigenvalues over the course of training should be even more pronounced (with delayed onsets) in more complicated problems, like single-index models whose first non-zero Hermite exponents are at least $2$, described in Subsection~\ref{subsec:further-applications}.

The above types of emergence of effective outliers as the SNR varies and/or SGD moves through the parameter space are a feature of a more general phenomenon that holds all throughout the parameter space. Associated to each point in parameter space with summary statistics $\mathbf{G}$, we expect there to be an associated $\lambda_c(\mathbf{G})$ such that for $\lambda$ below it there are no outlier eigenvalues, and above it there is at least one outlier eigenvalue. The following  establishes that everywhere in parameter space, for every $\phi \in (0,\infty)$, at least at sufficiently large $\lambda$ there are outlier eigenvalues. 

\begin{cor}\label{cor:BBP-at-all-points}
    Fix any $\mathbf{G}$, there exists $\lambda_0(\mathbf{G})$ such that for every $\lambda>\lambda_0$, a largest effective outlier (in either the Hessian or Gradient matrix cases) exists and satisfies $z_{*,1} >\supp^+(\nu_{\mathbf{G}})$. 
\end{cor}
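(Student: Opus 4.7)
The plan is to send $\lambda\to\infty$ and show that the bulk $\nu^M_\bfG$ collapses onto $\{0\}$ while the outlier equation~\eqref{eq:outlier-equation-general} develops a real root bounded away from zero, so the outlier separates from the bulk for all $\lambda$ sufficiently large. I will handle both cases $M\in\{H,G\}$ in parallel, since by~\eqref{eq:f_a-kGMM-case} both $f_b^H$ and $f_b^G$ are uniformly bounded, smooth in the Gaussian argument, and strictly positive at $\vec g=0$ (the softmax values are interior to $(0,1)$ for any finite input).

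First, to control the bulk, I will rescale via $\tilde z = \lambda z$ and $\tilde S = S/\lambda$ in~\eqref{e:defmz-with-rho}, which removes the explicit $\lambda$ and yields
\[
1 + \tilde z\, \tilde S = \phi\sum_{b=1}^k p_b\, \mathbb E\!\left[\frac{\tilde S\, f_b^M(\lambda^{-1/2}\vec g; \bfG)}{\phi + \tilde S\, f_b^M(\lambda^{-1/2}\vec g; \bfG)}\right].
\]
Because $f_b^M(\lambda^{-1/2}\vec g;\bfG)$ is uniformly bounded and converges a.s.\ to the deterministic value $f_b^M(0;\bfG)$, dominated convergence identifies the limiting equation as the Stieltjes equation of $\rho_\infty\boxtimes \nu_{\MP}$, where $\rho_\infty$ is the law of $f_J^M(0;\bfG)$ with $J$ categorical with weights $(p_b)$. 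Since $\rho_\infty$ is compactly supported, so is this free convolution, giving $\supp^+(\nu^M_\bfG)\le C(\bfG,\phi)/\lambda$ for all $\lambda$ large.

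Next I will compute the limit of $F^M_\alpha(z,\bfG)$ for $z$ bounded away from zero. Fix any $z_0>0$; the concentration in Step~1 implies $S^M(z)\to -1/z$ uniformly on compacts in $[z_0,\infty)$, so $S^M(z)f_b^M$ stays bounded while $\lambda\phi\to\infty$ in the denominator of~\eqref{eq:F-matrix}. Expanding $(g_i/\sqrt\lambda+(\sqrt\bfG)_{ib})(g_j/\sqrt\lambda+(\sqrt\bfG)_{jb})$ and taking expectations, the $\vec g$-dependent terms contribute at most $O(1/\lambda)$ while the constant part gives
\[
F^M_\alpha(z,\bfG)\xrightarrow{\lambda\to\infty} M_\infty\;:=\;\sum_{b=1}^k p_b\, f_b^M(0;\bfG)\,(\sqrt\bfG)_{\cdot b}(\sqrt\bfG)_{\cdot b}^\top,
\]
uniformly in $z$ on compact subsets of $(0,\infty)$. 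The limit $M_\infty$ is positive semidefinite, and since $f_b^M(0;\bfG)>0$ and $\|(\sqrt\bfG)_{\cdot b}\|^2 = \|\mu_b\|^2>0$ for at least one $b$, its largest eigenvalue $\rho_\star:=\lambda_{\max}(M_\infty)>0$ is a fixed positive constant depending only on $\bfG$.

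Finally I will conclude via an intermediate-value argument. The determinant $g_\lambda(z):=\det(zI_q-F^M_\alpha(z,\bfG))$ is continuous in $z$ and converges uniformly on compact subsets of $(0,\infty)$ to $\det(zI_q-M_\infty)=\prod_i(z-\rho_i(M_\infty))$, which changes sign at $z=\rho_\star$. So for every $\delta<\rho_\star/2$, for $\lambda$ sufficiently large $g_\lambda$ admits a real root $z_\star(\lambda)\in(\rho_\star-\delta,\rho_\star+\delta)$; enlarging $\lambda_0$ further so that $C(\bfG,\phi)/\lambda<\rho_\star/2$, Step~1 yields $z_\star(\lambda)>\supp^+(\nu^M_\bfG)$, so $z_\star(\lambda)$ is an effective outlier strictly above the bulk and the largest effective outlier satisfies $z_{*,1}\ge z_\star(\lambda)>\supp^+(\nu^M_\bfG)$. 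The main obstacle will be the uniform-in-$z$ convergence of $S^M$ and $F^M_\alpha$ on compacts in $(0,\infty)$; these reduce to standard dominated convergence estimates using boundedness of $f_b^M$ together with the separation $z\ge z_0>0$ established in Step~1.
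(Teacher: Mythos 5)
Your proof follows essentially the same route as the paper's: send $\lambda\to\infty$, show the bulk support is $O(1/\lambda)$, and show that $F^M_\alpha$ stabilizes to a nonzero positive-semidefinite matrix so that an effective outlier persists strictly above the bulk for all $\lambda$ large. Your bulk bound via rescaling the Stieltjes equation is a valid alternative to the paper's operator-norm bound on $\frac{1}{n}WDW^\top$, and your limiting matrix $M_\infty = \sum_b p_b f_b^M(0;\bfG)(\sqrt\bfG)_{\cdot b}^{\otimes 2}$ is in fact the correct general form (the paper's displayed $\sum_j p_j f_j^\infty e_j^{\otimes 2}$ is the simplification valid at $\bfx=0$ with orthonormal means).

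One gap, as written, lies in the intermediate-value step on $g_\lambda(z)=\det\bigl(zI_q-F^M_\alpha(z,\bfG)\bigr)$. If $\rho_\star=\lambda_{\max}(M_\infty)$ has even algebraic multiplicity (which occurs at symmetric $\bfG$, e.g.\ the all-zero initialization with equal weights, where $M_\infty$ has a multiplicity-$k$ top eigenvalue), then $\det(zI_q-M_\infty)$ does not change sign at $\rho_\star$, and uniform convergence of $g_\lambda$ to a limit with a zero of even order does not by itself produce a nearby real root of $g_\lambda$ (compare $x^2+\epsilon\to x^2$). The fix is routine: for real $z$ outside the bulk, $F^M_\alpha(z,\bfG)$ is real symmetric, so apply the intermediate-value theorem to the continuous scalar $z\mapsto\lambda_{\min}\bigl(zI_q-F^M_\alpha(z,\bfG)\bigr)$, which by your uniform convergence of $F$ to $M_\infty$ is negative at $z=\rho_\star/2$ and positive at $z=2\rho_\star$ once $\lambda$ is large and $\supp^+(\nu^M_\bfG)<\rho_\star/2$. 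Equivalently, use that $z\mapsto zI_q-F^M_\alpha(z,\bfG)$ is strictly increasing in the positive-definite order (as shown in the proof of Lemma~\ref{l:num-solutions}), so each eigenvalue curve is strictly increasing in $z$ and crosses zero exactly once, yielding real roots near the spectrum of $M_\infty$.
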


We also comment that Corollary~\ref{cor:BBP-at-all-points} leaves open the possibility of bizarre scenarios like non-monotonicity of the existence of effective right-outliers as one varies $\lambda$: some of the subtlety arises from the fact that the law of entries of $D$ depend on $\lambda$ in complicated, possibly non-monotonic ways. Indeed, the reason we do not have a complementary result that at $\lambda$ small, there are no outliers, is that while small $\lambda$ widens the support of the Marchenko--Pastur from $\frac{1}{n} AA^\top$, it concentrates the soft-max in $D$ towards $\{0,1\}$ (increasing the temperature of the corresponding random energy model) so that the entries of $D$ are simultaneously approaching zero. In the context of single-index models,~\cite[Section 4.3]{lu2017phase} observed such a non-monotonicity and multiple phase transitions as the parameter $\phi$ is varied.

\begin{rem}\label{rem:test-vs-train}
These results are stated in terms of the effective dynamics and effective spectrum which are limit objects. It is natural to ask about the corresponding interpretation at finite-$d$. In this case, one must be precise about whether
these matrices are generated using an independent (e.g., test) data set, or the same data set as used for training. Since we are in the online setting it is not clear which scenario is more relevant than the other. For the first scenario, the results hold as stated on the SGD trajectory itself (with high probability). For the second scenario, however, this is a very interesting mathematical challenge as the SGD trajectory on timescales of order $d$ may have non-trivial correlations with the empirical matrices. That said, we find surprisingly good agreement numerically in this scenario as well, by comparing Figure~\ref{fig:Hessian-spectrum-evolution-train} to Figure~\ref{fig:Hessian-spectrum-evolution-test} above, as well comparing Figure~\ref{fig:G-spectrum-evolution-test} and Figure~\ref{fig:G-spectrum-evolution-train} in Section~\ref{sec:supplementary-figures}. 
\end{rem}

\subsection{Effective spectral theory for random matrices of the form~\eqref{eq:A-D-A^t}}\label{subsec:general-random-matrix-result}
Theorem~\ref{mainthm:logistic-regression}--\ref{mainthm:logistic-regression-outliers}, from which all the conclusions in Section~\ref{subsec:kGMM-main-results} follow, is a special case of a general random matrix result which describes the high-dimensional spectral theory of matrices of the form of~\eqref{eq:A-D-A^t} precisely in terms of the summary statistic matrix. 

We consider general matrices of the form \eqref{eq:A-D-A^t} where the $d\times n$ matrix $A= [Y_1,...,Y_n]$ has columns that are given by $n$  data points $(Y_\ell)$, where  $Y_\ell\sim \mathcal P_Y$ are i.i.d.\ and $\cP_Y$ is a $k$-component Gaussian mixture~\eqref{eq:data-distribution}.  This matrix is coupled to a diagonal matrix $D=\diag(\Dll)$ where $D_{\ell \ell}$ only depends on $Y_\ell$ through its inner products with a fixed $O(1)$ collection of vectors, $\bfx =(x_1,...,x_\cC)$ and the class means $(\mu_1,...,\mu_k)$, whose indices $[k]$ are called the \emph{hidden labels}. 
 Observe that the cases where $D$ is the identity, and more generally, where $D$ is positive semi-definite but independent of $A$, have a long history in the random matrix literature on \emph{empirical covariance matrices}. See e.g., the book of~\cite{BaiSilversteinBook}, in particular the chapter on sample covariance matrices, for more. The key distinction for us is that $D$ is coupled to $A$; also it is real, but we allow it to have negative entries.  
 
 As described in the introduction, matrices of this form arise naturally as empirical Hessian and Gradient matrices of loss landscapes when the loss only depends on the data through its labels and inner products with some $O(1)$ many vectors.  This class of problems is quite broad and extends beyond the setting of classification tasks. 
We give examples of other well-known high-dimensional statistical tasks in Section~\ref{subsec:further-applications} to which this result applies after stating the general result. Let us spell out our exact assumptions on the matrix $D$ and how it is coupled to the data matrix $A$.

\begin{assumption}\label{assumption:meas}
The matrix $D$ is diagonal, and $D_{\ell\ell}$ is a measurable function of the hidden label $b(\ell)$ and the $q= \cC+k$ many inner products, 
$((\langle Y_\ell ,x_a\rangle)_{a\in [\cC]}, (\langle Y_\ell , \mu_j\rangle)_{j\in [k]})$. 
\end{assumption}

\begin{assumption}\label{assumption:D}
The law of $D_{\ell\ell}$ is uniformly (in $d$) compactly supported, and there exists a sequence $a_\varepsilon$ going to $0$ as $\varepsilon \downarrow 0$ such that $\mathbb P(|D_{\ell\ell}| <\varepsilon ) \le a_\varepsilon$ for all $d$ and all $\varepsilon>0$.
\end{assumption}
We observe here that the latter assumption ensures $D$ is  a.s.\ invertible for all $d$, and doesn't accumulate mass at $0$ in the $d\to\infty$ limit.

We may now define the matrix $\mathbf{G}= \mathbf{G}(\mathbf{x})$ as in~\eqref{eq:define_G}.
Note here, that we can work in a probability space in which there are additional i.i.d.\ vectors $(\vec g_\ell)$ which are standard Gaussians in $\R^q$ such that, conditionally on the hidden labels $b(\ell)$, we have for each $\ell\leq n$, 
 \begin{equation}\label{eq:Y-g-coupling}
 ((\langle Y_\ell ,x_a\rangle)_{a\in [\cC]}, (\langle Y_\ell , \mu_j\rangle)_{j\in [k]}) =  \mathbf{G}_{[q]b(\ell)}(\mathbf x)+ \sqrt{\bfG}_{[q]b(\ell )}(\bfx)\lambda^{-1/2}\vec{g}_\ell.
 \end{equation}
In particular, Assumption~\ref{assumption:meas} implies existence of a family of functions $(f_j)_{j\in [k]}$ such that 
\begin{align}\label{eq:f-in-terms-of-G}
\Dll = f_{b(\ell)}(\lambda^{-1/2}\vec g;\bfG).
\end{align}
With this shorthand, we define the matrix $F(z,\bfG)$ as in~\eqref{eq:F-matrix} and
 define the effective bulk, $\nu_\bfG$, and effective outliers, $\mathcal Z_{\bfG}$, as in Definitions~\ref{def:effective-bulk}--\ref{def:effective-outliers} with respect to these quantities. 
As above let $\hat\nu^{(d)}$ denote the empirical spectral measure of $ADA^\top/n$. We then have the following two theorems which are the analogues of Theorems~\ref{mainthm:logistic-regression}--\ref{mainthm:logistic-regression-outliers}. 

\begin{thm}[Bulk]\label{thm:main-A-D-A^T}
Suppose that Assumption~\ref{assumption:meas} holds. Fix $\lambda,\phi>0$. For $n,d$ sufficiently large with $n/d=\phi$, we have that for any $\mathbf x$ with summary statistics matrix $\bfG$, with probability $1-o(1)$,
\[
d_{\text{BL}}(\hat\nu^{(d)},\nu_\bfG) =o(1)\,.
\]
\end{thm}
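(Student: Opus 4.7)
The plan is to reduce the coupled random-matrix problem to a classical sample-covariance problem by projecting away the finite-dimensional subspace on which $D$ depends. By Assumption~\ref{assumption:meas} and~\eqref{eq:f-in-terms-of-G}, each $\Dll$ is a function only of the hidden label $b(\ell)$ and the $q=\cC+k$ inner products of $Y_\ell$ with $x_1,\ldots,x_\cC,\mu_1,\ldots,\mu_k$, so the dependence between $D$ and $A$ is essentially rank-$q$ and can be disentangled by an explicit orthogonal decomposition.

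First, choose $L\in\R^{d\times q}$ with orthonormal columns spanning (or extending) $\text{Span}(x_1,\ldots,x_\cC,\mu_1,\ldots,\mu_k)$, and decompose $A=LB+A'$ where $B=L^\top A\in\R^{q\times n}$ and $A'=(I-LL^\top)A$. Conditional on the hidden labels, each $Y_\ell$ is Gaussian with mean $\mu_{b(\ell)}\in\text{Range}(L)$ and covariance $\lambda^{-1}I_d$, so $A'$ has i.i.d.\ columns that are centered Gaussians on $\text{Range}(L)^\perp$ of variance $\lambda^{-1}$, independent of $B$. Since $D$ is a function of $B$ and the labels alone, $D$ is independent of $A'$ conditional on the hidden labels. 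Expanding,
\[
\frac{1}{n}ADA^\top = \frac{1}{n}A'DA'^\top + R,
\]
where $R$ has rank at most $2q$. By the Weyl/interlacing inequality for empirical spectral measures---a rank-$r$ perturbation shifts each quantile by at most $r/d$---one obtains $d_{\text{BL}}(\hat\nu^{(d)},\hat\nu^{(d)}_{\tilde M})=O(q/d)=o(1)$, where $\tilde M:=A'DA'^\top/n$. So it suffices to analyze the bulk of $\tilde M$.

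Next, conditional on $(b(\ell),B)$, the matrix $A'$ has i.i.d.\ isotropic Gaussian columns on the $(d-q)$-dimensional subspace $\text{Range}(L)^\perp$, and $D$ is a deterministic, bounded, real diagonal matrix. Rotating to an orthonormal basis of $\text{Range}(L)^\perp$, $\tilde M$ is exactly a sample-covariance matrix of dimension $d-q\sim d$ with $n\sim\phi d$ samples and a diagonal population weight $D$ that is \emph{independent} of the data. I would then invoke the Marchenko--Pastur/Silverstein self-consistent equation for general population, extended to real bounded (possibly signed) weights via the leave-one-out/Schur-complement derivation of the resolvent equation, to obtain
\[
1 + z S(z) = \phi \int \frac{S(z)\,t}{\lambda\phi + S(z)\,t}\, d\rho^{(d)}(t), \qquad \rho^{(d)} := \frac{1}{n}\sum_{\ell=1}^n \delta_{\Dll},
\]
for the Stieltjes transform of $\hat\nu^{(d)}_{\tilde M}$. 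A law of large numbers applied block-wise over the hidden labels, using the compactness in Assumption~\ref{assumption:D}, gives $\rho^{(d)}\to \rho_\bfG := \sum_{b=1}^k p_b\,\text{Law}(f_b(\lambda^{-1/2}\vec g;\bfG))$. Passing to the limit in the fixed-point equation and inverting the Stieltjes transform yield~\eqref{e:defmz-with-rho} and $\hat\nu^{(d)}\to\nu_\bfG$ in bounded-Lipschitz metric, in probability.

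The main obstacle is combining two non-standard features: the coupling between $D$ and $A$, and the possibly signed nature of $D$. The projection step above handles the coupling by trading $\frac{1}{n}ADA^\top$ for the independent-population model $\tilde M$ at the cost of a controllable rank-$q$ perturbation. Signed $D$ is handled either by splitting $D=D_+-D_-$ and studying $ADA^\top$ through a $(d+n)\times(d+n)$ Hermitization of $A|D|^{1/2}$, or by running Silverstein's Schur-complement argument directly with real bounded $D$; in either route, Assumption~\ref{assumption:D} supplies the boundedness and, via the $\varepsilon$-control near zero, enough regularity to pass to the limit in the self-consistent equation.
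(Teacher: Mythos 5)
Your proposal follows essentially the same route as the paper's proof: decompose $A$ orthogonally into the projection onto the $q$-dimensional space spanned by $\bfx$ and $\bmu$ (which carries the dependence on $D$) plus an independent isotropic Gaussian remainder $A'$; observe that $\frac1n ADA^\top$ differs from $\frac1n A' D A'^\top$ by a matrix of rank $O(q)$; use rank-inequality/interlacing to conclude the bulk is unaffected; then invoke the Marchenko--Pastur/Silverstein result for $\frac1n A'DA'^\top$, where $D$ and $A'$ are now independent. This is exactly the paper's decomposition $\sqrt{\lambda/d}\,A = LR^\top + UW$ together with Proposition~\ref{p:free-mult-conv-wishart}.

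Two small remarks. First, you invoke Assumption~\ref{assumption:D} for compactness in the law-of-large-numbers step, but Theorem~\ref{thm:main-A-D-A^T} only grants Assumption~\ref{assumption:meas}; this is harmless because for a fixed summary statistic matrix $\bfG$ the entries $D_{\ell\ell}$ are genuinely i.i.d., so $\hat\rho^{(d)}\to\rho_\bfG$ weakly a.s.\ by Glivenko--Cantelli with no moment condition, which is all Silverstein's theorem requires. Second, your suggestion to handle signed $D$ via a Hermitization of $A|D|^{1/2}$ is not needed: the Silverstein--Bai self-consistent equation is already proved for bounded Hermitian $T_n$ of arbitrary sign, which is exactly what the paper cites. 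With those clarifications your argument is correct and matches the paper's.
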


Let us now turn to the outliers. As above let $\sigma^{(d)}$ denote the full spectrum of the matrix $ADA^\top /n$. and let $L$ be as in \eqref{e:L-defining-property}. We then have the following.

\begin{thm}[Outliers]\label{thm:main-A-D-A^T-outliers}
Suppose that Assumptions~\ref{assumption:meas}-\ref{assumption:D} hold. Fix $\lambda,\phi>0$. 
For any $\bfG$ and any $[a,b]\subseteq \supp(\nu_\bfG)^c$ with $\{a,b\}\cap \mathcal Z_{\bfG} =\emptyset $, we have that for $n,d$ sufficiently large with $n/d=\phi$ and any $\mathbf x$ with $\bfG(x)=\bfG$,
\begin{equation}\label{eq:outlier-count-general}
   |\sigma^{(d)} \cap [a,b]| = |\mathcal Z_{\bfG}\cap[a,b]|\,, 
\end{equation}
with probability $1-o(1)$. 
Moreover, for every $\delta>0$ small enough and every $L$ satisfying \eqref{e:L-defining-property}, with probability $1-o(1)$, for all $z\in\cZ_{\bfG}$, if $z^{(d)}\in\sigma^{(d)}\cap B_\delta(z)$ then any corresponding eigenvector $v^{(d)}$ is such that $u^{(d)}=L^\top v^{(d)}$  is a $o(1)$-approximate solution to the effective eigenvector equation~\eqref{eq:effective-evect}. If further $z\neq0$, then it also is an $o(1)$-approximate solution \eqref{eq:effective-evect-proj}.  
\end{thm}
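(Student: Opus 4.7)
\textbf{Proof plan for Theorem~\ref{thm:main-A-D-A^T-outliers}.}
Fix an isometry $L \in \R^{d \times q}$ satisfying~\eqref{e:L-defining-property}, let $B = L^\top A \in \R^{q \times n}$, and write $A = LB + A_\perp$ with $A_\perp := (I_d - LL^\top) A$. The key structural observation is that, conditional on the hidden labels $(b(\ell))_\ell$, the columns of $A_\perp$ lie in the $(d-q)$-dimensional subspace $\text{Range}(L)^\perp$ as i.i.d.\ Gaussians, and are independent of $B$ and hence of $D$ by Assumption~\ref{assumption:meas}. The matrix $M := \tfrac{1}{n} A_\perp D A_\perp^\top$ is therefore a weighted Wishart in which the weights have been decoupled from the data, and the proof of Theorem~\ref{thm:main-A-D-A^T} applies to show that with probability $1-o(1)$ its empirical spectral measure is close to $\nu_\bfG$ and it has no eigenvalues outside $\supp(\nu_\bfG)$ up to the same $o(1)$ margin.

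Writing a prospective eigenpair $(z, v)$ of $\tfrac{1}{n} A D A^\top$ with $z\notin \sigma(M)$ as $v = Lu + w$, where $u = L^\top v$ and $w = (I_d - LL^\top)v$, and using the identities $L^\top A_\perp = 0$ and $(I_d - LL^\top)L = 0$, the eigenvalue equation projected onto $\text{Range}(L)^\perp$ reads $(M - zI)w = -\tfrac{1}{n} A_\perp D B^\top u$, so $w = -(M-zI)^{-1}\tfrac{1}{n} A_\perp D B^\top u$. Substituting into the projection onto $\text{Range}(L)$ yields the finite-dimensional Schur-complement identity
\begin{equation}\label{eq:proof-schur}
 (zI_q - \tilde F^{(d)}(z))u = 0, \qquad \tilde F^{(d)}(z) := \tfrac{1}{n}B D B^\top - \tfrac{1}{n^2} B D A_\perp^\top (M-zI)^{-1} A_\perp D B^\top.
\end{equation}
The count in~\eqref{eq:outlier-count-general} thus reduces to counting zeros of the analytic function $z \mapsto \det(zI_q - \tilde F^{(d)}(z))$ in $[a,b]$. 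The main effort is to show that $\tilde F^{(d)}(z) \to F(z, \bfG)$ uniformly on a complex neighborhood of $[a,b] \cup \cZ_\bfG$ at positive distance from $\supp(\nu_\bfG)$, after which Hurwitz's theorem transfers the count from the prelimit to the limit (using $\{a,b\} \cap \cZ_\bfG = \emptyset$). The required convergence combines (i) a law of large numbers for $\tfrac{1}{n} B D B^\top = \tfrac{1}{n}\sum_\ell D_{\ell\ell} B_{\cdot \ell} B_{\cdot \ell}^\top$, using that the column $B_{\cdot \ell}$ and the coefficient $D_{\ell\ell}$ have joint law prescribed by~\eqref{eq:Y-g-coupling}--\eqref{eq:f-in-terms-of-G}, and (ii) the anisotropic local law for the weighted sample covariance $M$, which via a standard rank-one removal argument gives, uniformly in $\ell$,
\[
\tfrac{1}{n}(A_\perp^\top (M-zI)^{-1} A_\perp)_{\ell \ell} \;\approx\; \tfrac{S(z)}{\lambda\phi + S(z) D_{\ell\ell}},
\]
with the off-diagonal entries negligible in the relevant bilinear form. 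Substituting and re-summing reproduces~\eqref{eq:F-matrix} exactly.

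For the eigenvector equations, $u^{(d)} = L^\top v^{(d)}$ satisfies~\eqref{eq:proof-schur} by construction, which the convergence $\tilde F^{(d)} \to F$ turns into the asymptotic equation~\eqref{eq:effective-evect}. When $z \neq 0$, the normalization~\eqref{eq:effective-evect-proj} follows from $1 = \|v^{(d)}\|^2 = \|u^{(d)}\|^2 + \|w^{(d)}\|^2$ together with
\[
\|w^{(d)}\|^2 = u^{(d)\top}\,\tfrac{1}{n^2} B D A_\perp^\top (M - zI)^{-2} A_\perp D B^\top u^{(d)} = -\langle u^{(d)}, \partial_z \tilde F^{(d)}(z^{(d)}) u^{(d)}\rangle,
\]
obtained by differentiating the second summand of $\tilde F^{(d)}$ in $z$ and using symmetry of $(M-zI)^{-1}$. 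The main obstacle is proving the anisotropic local law and the associated bilinear concentration \emph{uniformly} in $z$ on a neighborhood of $[a,b] \cup \cZ_\bfG$: classical local laws for sample covariance matrices assume positive-definite weights, whereas here $D$ is signed, and Assumption~\ref{assumption:D} (which keeps $|D_{\ell\ell}|$ bounded away from $0$ uniformly in $d$) is precisely what prevents resolvent degeneracy of $M$ near $z = 0$ and allows one to extract the uniform rates near the outlier locations.
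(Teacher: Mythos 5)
Your proposal is correct in outline but takes a genuinely different route from the paper. The paper passes through the $(d+n)\times(d+n)$ linearization in~\eqref{c:eq}, conjugates by $U\oplus I$, and factors the determinant as $\mfrp_n\cdot\mfrq_n$ with $\mfrq_n(z)=\det(zI_q - \bar R^\top M^{-1}\bar R)$ for the linearized block $M$, then applies the anisotropic local law Theorem~\ref{t:local_law} to get $\bar R^\top M^{-1}\bar R \to \tilde F$. You instead Schur-complement the $d\times d$ matrix $\tfrac{1}{n}ADA^\top$ directly across $\mathrm{Range}(L)$ versus its orthogonal complement, which produces the $q\times q$ pencil~\eqref{eq:proof-schur}. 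I verified your Schur-complement is algebraically correct, and that the rank-one-removal heuristic $\tfrac{1}{n}(A_\perp^\top(M-zI)^{-1}A_\perp)_{\ell\ell}\approx S(z)/(\lambda\phi + S(z)D_{\ell\ell})$, once proved together with negligibility of off-diagonals, recombines with $\tfrac{1}{n}BDB^\top$ to reproduce~\eqref{eq:F-matrix} exactly. A genuine advantage of your route is the eigenvector normalization: $1=\|u^{(d)}\|^2+\|w^{(d)}\|^2$ with $\|w^{(d)}\|^2=-\langle u^{(d)},\partial_z\tilde F^{(d)}u^{(d)}\rangle$ avoids the division by $z^{(d)}$ that forces the paper to assume $z\neq 0$; you could in fact drop that restriction.

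Two caveats and one slip. First, as a $d\times d$ matrix $M=\tfrac{1}{n}A_\perp DA_\perp^\top$ always has a $q$-dimensional kernel containing $\mathrm{Range}(L)$, so $(M-zI)^{-1}$ literally fails at $z=0$ even when $0\notin\supp(\nu_\bfG)$; you should state the Schur complement for $M$ restricted to $\mathrm{Range}(L)^\perp$, which is orthogonally equivalent to $\tfrac{1}{n}\tilde W D\tilde W^\top$ and is invertible throughout $\supp(\nu_\bfG)^c$ by Proposition~\ref{prop:no-outliers}. Second, your parenthetical description of Assumption~\ref{assumption:D} is wrong: it does \emph{not} keep $|D_{\ell\ell}|$ bounded away from $0$ uniformly in $d$; it only controls $\prob(|D_{\ell\ell}|<\varepsilon)\le a_\varepsilon$, a $\tau$-regularity condition (cf.\ Lemma~\ref{lem:tau-reg}) ensuring a.s.\ invertibility and no accumulation of mass at $0$ in the limit, but individual $D_{\ell\ell}$ can still be small. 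Third, the step you correctly flag as the crux — a uniform local law and bilinear concentration for the signed-weight Wishart $M$ — is not materially easier or harder than the paper's Theorems~\ref{t:local_law-0} and~\ref{t:tfd-to-f}; indeed the resolvent $A_\perp^\top(M-zI)^{-1}A_\perp$ is a Schur-complement transform of the paper's linearized Green's function, so the two local laws carry essentially the same information.
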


    We emphasize once again that the defining equations~\eqref{e:defmz-with-rho}--\eqref{eq:outlier-equation-general} of the effective bulk and outliers are dimension independent, only depending on the relevant vectors $\mathbf{x}$ and the mean vectors $\mu_1,...,\mu_k$ through their  summary statistic matrix $\mathbf{G} \in \mathbb R^{q\times q}$ for $q= \cC+k = O(1)$.

    Though in our examples, we mostly discuss  the behavior of right-outliers, we note that left outliers can have interesting implications for optimization. For example, in non-linear least squares \cite{bartlett2021deep,oymak2019overparameterized,chizat2019lazy}, it is well known that in certain regimes the smallest eigenvalue governs the speed of convergence. Since Theorem~\ref{thm:main-A-D-A^T-outliers} can rule out outliers in intervals, one could pursue analogous types of implications in future work.

     We finally introduce a stronger assumption on the entries of $D_{\ell \ell}$ that allows us to pass to limits as $d\to\infty$ in the cases where the summary statistic matrices are not constant, but only converging to some limit $\mathbf{G}$. This is needed to study evolutions along the SGD where there are necessarily $O(d^{-1/2})$ corrections to the effective dynamics. Recall that $\rho_{\mathbf{G}}$ denotes the law of $ f_J(\lambda^{-1/2}\vec{g};\mathbf{G})$ where $J\sim \text{Cat}(p_i)_{i=1}^k$ for $f_j$ from~\eqref{eq:f-in-terms-of-G}, and let $\hat \rho_{\mathbf{G}^{(d)}}$ be the empirical measure of $(D_{\ell\ell})_{\ell=1}^d$. 
    \begin{assumption}\label{assumption:strong-convergence}
        As $\bfGd\to \bfG$, it also holds that $\hat \rho_{\bfGd}\to \rho_{\bfG}$ weakly almost surely, and
        \begin{align*}
            d_H(\supp(\hat \rho_{\bfGd}) , \supp(\rho_\bfG)) \to 0\,,
        \end{align*}
       almost surely, where $d_H$ denotes the Hausdorff distance. 
    \end{assumption}

\noindent In the free probability literature, this is sometimes referred to as strong convergence in distribution of $D\to \rho_{\bfG}$. It can be checked under mild regularity assumptions: see  Lemma~\ref{lem:bounded-differentiable-implies-strong-convergence}, which shows that if the function $h$ from Assumption~\ref{assumption:meas} for which $D_{\ell\ell} = h(\mathbf{x}^\top Y_{\ell})$ is uniformly continuous, and $\|\bfGd - \bfG\| = o(1/\log d)$ then Assumption~\ref{assumption:strong-convergence} holds. 

Under this assumption, we then have the following. 
    \begin{thm}\label{thm:main-A-D-A^T-limit}
        In the setting of Theorems~\ref{thm:main-A-D-A^T}--\ref{thm:main-A-D-A^T-outliers}, if furthermore $\mathbf{x}^{(d)}$ is a sequence of parameters with summary statistic matrices $\mathbf{G}^{(d)} \to \mathbf{G}$ as $n,d\to\infty$ which satisfies Assumption~\ref{assumption:strong-convergence} then 
        \begin{itemize}
            \item We have that $\hat \nu^{(d)} \to \nu_{\mathbf{G}}$ weakly a.s.\
            \item Eq.~\eqref{eq:outlier-count-general} holds eventually a.s., and every limit point of $u^{(d)}$ is a solution of the effective eigenvector equation~\eqref{eq:effective-evect}, and also a solution to \eqref{eq:effective-evect-proj}, provided the corresponding $z\neq0$.
        \end{itemize}
    \end{thm}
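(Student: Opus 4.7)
The plan is to upgrade Theorems~\ref{thm:main-A-D-A^T}--\ref{thm:main-A-D-A^T-outliers}, which are in-probability statements at fixed $\bfG$, into almost-sure statements along a converging sequence $\bfGd\to\bfG$. Two ingredients are combined: (i)~the quantitative concentration rates implicit in the proofs of Theorems~\ref{thm:main-A-D-A^T}--\ref{thm:main-A-D-A^T-outliers}, which decay super-polynomially in $d$ and are therefore summable, so that Borel--Cantelli converts the in-probability conclusions at each $\bfGd$ into almost-sure conclusions; and (ii)~deterministic continuity of the effective bulk $\nu_\bfG$, the effective outliers $\cZ_\bfG$, and the effective eigenvector equations as functions of $\bfG$, which Assumption~\ref{assumption:strong-convergence} supplies.

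For the bulk, I would first establish a deterministic continuity lemma: if $\hat\rho_{\bfGd}\to\rho_\bfG$ weakly and $d_H(\supp(\hat\rho_{\bfGd}),\supp(\rho_\bfG))\to 0$, then the Stieltjes transform $S_{\nu_{\bfGd}}(z)$ defined by the self-consistent equation~\eqref{e:defmz-with-rho} converges to $S_{\nu_\bfG}(z)$ locally uniformly on $\C\setminus\R$. This is the standard stability argument for Marchenko--Pastur-type fixed-point equations, since the integrand is a bounded continuous functional of the law $\rho_\bfG$ along which one tests weak convergence. Weak convergence $\nu_{\bfGd}\to\nu_\bfG$ follows, and Hausdorff convergence of the supports transfers to the edges. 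Applying Theorem~\ref{thm:main-A-D-A^T} at each $\bfGd$ with its summable failure probability, Borel--Cantelli then yields $\hat\nu^{(d)}\to\nu_\bfG$ weakly almost surely.

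For the outliers, the central step is continuity of $F^M_\alpha(z,\bfG)$ in $\bfG$ on any compact set $K\subset\R\setminus\supp(\nu_\bfG)$: under Assumption~\ref{assumption:strong-convergence}, $S^M(z;\bfGd)$ extends analytically to a complex neighborhood of $K$ and converges uniformly there to $S^M(z;\bfG)$, so the Gaussian expectation defining $F^M_\alpha$ converges uniformly as well. Choosing $[a,b]\subset\R\setminus\supp(\nu_\bfG)$ with $\{a,b\}\cap\cZ_\bfG=\emptyset$, Hurwitz's theorem applied to the analytic function $z\mapsto\det(zI_q-F^M_\alpha(z,\bfGd))$ on an interval slightly enlarging $[a,b]$ yields $|\cZ_{\bfGd}\cap[a,b]|=|\cZ_\bfG\cap[a,b]|$ for all $d$ large. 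Combined with the eventually-almost-sure version of~\eqref{eq:outlier-count-general} at the $\bfGd$-level (Borel--Cantelli applied again to Theorem~\ref{thm:main-A-D-A^T-outliers}), this proves the outlier count. For the eigenvector statement, along any convergent subsequence $u^{(d_j)}\to u^*$ with $z^{(d_j)}\to z\in\cZ_\bfG$, each $u^{(d_j)}$ is an $o(1)$-approximate solution to~\eqref{eq:effective-evect}--\eqref{eq:effective-evect-proj} at $(z^{(d_j)},\bfGd)$; the joint continuity of $(z,\bfG)\mapsto F^M_\alpha(z,\bfG)$ and its $z$-derivative on compacta disjoint from the bulk lets us pass to the limit, so $u^*$ solves~\eqref{eq:effective-evect}, and also~\eqref{eq:effective-evect-proj} when $z\ne 0$.

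The main obstacle will be the continuity in $\bfG$ of the outlier structure near the edge of the bulk. The map $\bfG\mapsto S^M(\,\cdot\,;\bfG)$ is defined only implicitly, and the control needed on $\R\setminus\supp(\nu_\bfG)$ is delicate because $S^M(z;\bfGd)$ could in principle develop a singularity close to $[a,b]$ for finite $d$. The Hausdorff convergence of supports in Assumption~\ref{assumption:strong-convergence} is precisely what rules this out: it yields a uniform lower bound $\dist([a,b],\supp(\nu_{\bfGd}))\ge c>0$ for all $d$ large, which gives equicontinuity and uniform boundedness of $S^M(z;\bfGd)$ on a complex neighborhood of $[a,b]$. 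From this, both the uniform convergence of $F^M_\alpha(z,\bfGd)\to F^M_\alpha(z,\bfG)$ and the Hurwitz step go through, and the remaining argument is a straightforward combination of these deterministic facts with the Borel--Cantelli upgrade of the fixed-$\bfG$ theorems.
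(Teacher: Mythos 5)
Your plan runs in the opposite direction from the paper's logical structure, and this introduces a genuine gap that cannot be patched without essentially redoing the main argument. In the paper, the master result is Theorem~\ref{thm:deterministic-equivalent}, which is stated and proved \emph{directly} for a convergent sequence $\bfGd\to\bfG$ under Assumptions~\ref{assumption:meas}--\ref{assumption:strong-convergence}; Theorem~\ref{thm:main-A-D-A^T-limit} then follows from it by an elementary Stieltjes-transform rescaling ($\tS(z)=(1/\lambda\phi)S(z/\lambda\phi)$, $\wt F(\lambda\phi w)/\lambda\phi = F(w)$, etc.), and the fixed-$\bfG$ theorems~\ref{thm:main-A-D-A^T}--\ref{thm:main-A-D-A^T-outliers} are deduced afterwards as the special case $\bfGd\equiv\bfG$, where the i.i.d.\ structure of $D_{\ell\ell}$ hands you Assumption~\ref{assumption:strong-convergence} for free via the strong law of large numbers. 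Your proposal tries to reverse this: take the fixed-$\bfG$ theorems as given, then upgrade to the sequence setting via Borel--Cantelli plus deterministic continuity in $\bfG$.

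The gap is in the Borel--Cantelli step. Theorems~\ref{thm:main-A-D-A^T}--\ref{thm:main-A-D-A^T-outliers} are asymptotic statements about the full sequence $n,d\to\infty$ at a \emph{fixed} $\bfG$; they are not statements about a single $(d,\bfGd)$ pair, and the ``for $n,d$ sufficiently large'' threshold can depend on $\bfG$. When $\bfGd$ varies with $d$, you cannot simply ``apply Theorem~\ref{thm:main-A-D-A^T} at $\bfGd$'' without a uniformity-in-$\bfG$ argument, which would require re-opening the proofs. You assert that ``the quantitative concentration rates implicit in the proofs\ldots decay super-polynomially in $d$,'' but this is not what the paper establishes: the bulk convergence (Proposition~\ref{p:free-mult-conv-wishart}) and the no-outliers result (Proposition~\ref{prop:no-outliers}) are imported from Bai--Silverstein as almost-sure convergence statements, not as explicit $n^{-100}$-type tail bounds; only the anisotropic local law (Theorem~\ref{t:local_law-0}) is stated with a polynomial failure probability, and even there the extension to the real axis (Lemma~\ref{l:greens-function-lipschitz}) is conditional on the no-outliers event. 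So the summable-failure-probabilities-plus-Borel--Cantelli route is not available off the shelf, and making it work uniformly over the sequence $\bfGd$ would amount to redoing the work in Theorem~\ref{thm:deterministic-equivalent}, at which point one might as well prove the sequence version directly as the paper does.

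The deterministic-continuity ingredients of your proposal are sound in spirit and do appear in the paper, though embedded differently: $\nu_{\bfGd}\to\nu_\bfG$ and $\tSd\to\tS$ locally uniformly (Lemma~\ref{lem:Stieltjes-ucas}), the Hausdorff convergence of supports from Assumption~\ref{assumption:strong-convergence} giving a uniform spectral gap, and the locally uniform convergence $\tFd\to\wt F$ (Theorem~\ref{t:tfd-to-f}) which substitutes for your $F^M_\alpha(\cdot,\bfGd)\to F^M_\alpha(\cdot,\bfG)$ step. What is missing is a rigorous mechanism for transferring the random-matrix concentration from fixed $\bfG$ to the moving sequence; the paper avoids the issue by formulating the concentration (local law, no-outliers, etc.) for the moving sequence from the start.
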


The first bullet point in Theorem~\ref{thm:main-A-D-A^T-limit} only requires the first part of Assumption~\ref{assumption:strong-convergence} on weak convergence. 
    Recall that these limiting consequences for convergent sequences of summary statistic matrices were already present in the logistic regression case's main theorems. That is because the specific forms of the $f_j$ from~\eqref{eq:f_a-kGMM-case} are checked to satisfy Assumption~\ref{assumption:strong-convergence}.

\subsection{Further applications}\label{subsec:further-applications}
We now describe some example statistical tasks (data distributions together with loss functions) whose Hessian and Gradient matrices also fall under the purview of Theorems~\ref{thm:main-A-D-A^T}--\ref{thm:main-A-D-A^T-limit}. Notably, these include first-layer Hessians in bounded width multi-layer neural networks for classifying Gaussian mixture data, and for learning single and multi-index models, under mild conditions on the activation and link functions. In all of these cases, under the SGD training, the same set of summary statistics solve asymptotically autonomous ODEs in the high-dimensional limit, so in principle one can track the effective bulk and outliers along the effective dynamics trajectory $\mathbf{G}_t$ as we did for the logistic regression task in Section~\ref{subsec:kGMM-main-results}.

\subsubsection{Multi-layer GMM classification}
When the class labels corresponding to the $k$-GMM distribution are such that the class means are not linearly separable, optimizing the (empirical or population) loss function~\eqref{eq:loss-function-1-layer} will not result in a good classifier. A well-studied example of this is the XOR data distribution~\cite{minsky1969introduction} where the four means are $\pm \mu,\pm \nu$, and one class corresponds to hidden label $\pm \mu$, while the other class corresponds to hidden label $\pm \nu$. 
In general, consider the case that the means are $\mu_1,...,\mu_k$ (i.e., the data distribution is still $\mathcal P_Y$ of~\eqref{eq:data-distribution} and the labels are assigned to subsets of $[k]$ which are the classes, $I_1,...,I_\cC$ partitioning $[k]$ the distribution of $Y$ is the same as above, and the label $y$ is the class among $I_1,...,I_\cC$ that the mean belongs to. 

In such tasks where the means from different classes are not linearly separable, one needs a multi-layer neural network to even express a good classifier.  Let us consider a simple two-layer architecture, with $K= O(1)$ hidden neurons, activation function $g$ on the hidden neurons, and sigmoid activation at the output layer. That is, the loss function takes the form 
\begin{align}\label{eq:loss-function-2-layer}
    L(\mathbf{x},(y,Y)) = -\sum_{c\in [\cC]} y_c v^c \cdot g(\mathbf{W}^c Y) + \log \sum_{c\in [\cC]} \exp(v^c \cdot g(\mathbf{W}^c Y))\,.
\end{align}
For each $c\in [\cC]$ parameter $\mathbf{x}^c = (\mathbf{W}^c,v^c)$ generates the $c$'th one-vs-all classifier, where $\mathbf{W}^c\in \mathbb R^{K\times d}$ is the first layer weights $(\mathbf{W}_1^c,...,\mathbf{W}_K^c)$, and $v^c\in \mathbb R^K$ is the second layer weights in the $c$'th one-vs-all classifier being trained. The activation $g$ is applied entry-wise.

Let us introduce a non-degeneracy assumption on how a function composes with a Gaussian random variable, that will come up throughout our following examples. 
\begin{assumption}\label{assumption:Gaussian-non-degeneracy}
    We say a function $h: \mathbb R^m\to \mathbb R$ satisfies the non-degeneracy assumption if, whenever $Z \sim \mathcal N(0,I_m)$, the random variable $h(Z)$ has bounded density (w.r.t.\ Lebesgue). 
\end{assumption}

\begin{cor}\label{cor:multi-layer-GMM}
    Suppose the network activation function $g \in C^2_b(\mathbb R)$ is such that any $h\in \{g, g', g''\}$ satisfies the non-degeneracy condition of Assumption~\ref{assumption:Gaussian-non-degeneracy}. 
        
    If the neural network weights $\mathbf{x} = (\mathbf{W},v)$ have $v^\alpha, W_i^\alpha \ne 0$ and have $(q=K+k)$-by-$q$ summary statistic matrix $\mathbf{G} = (\mathbf{W},\bmu)^{\otimes 2}$ then, Assumptions~\ref{assumption:meas}--\ref{assumption:D} apply. Therefore Theorems~\ref{thm:main-A-D-A^T}--\ref{thm:main-A-D-A^T-outliers} are applicable to the $(i,\alpha)(i, \alpha)$-blocks of both the empirical Hessian and Gradient matrix of the multi-layer $k$-GMM classification task.
    Furthermore, if $g\in C^3_b$ and $\mathbf{G}^{(d)}$ is the sequence of summary statistic values for $\mathbf{x}$, if $\|\bfG^{(d)} - \bfG\| = o(1/\log d)$, then Assumption~\ref{assumption:strong-convergence} applies and Theorem~\ref{thm:main-A-D-A^T-limit} is applicable. 
\end{cor}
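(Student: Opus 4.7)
The plan is to verify Assumptions~\ref{assumption:meas}--\ref{assumption:D} for the $(W_i^\alpha, W_i^\alpha)$-block of $\widehat H$ and $\widehat G$ so that Theorems~\ref{thm:main-A-D-A^T}--\ref{thm:main-A-D-A^T-outliers} apply; for the last sentence of the corollary, I would check the input of Lemma~\ref{lem:bounded-differentiable-implies-strong-convergence}.

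First I would carry out the chain-rule calculation. Writing $z_c = v^c\cdot g(\mathbf W^c Y)$ and $\varphi_c = e^{z_c}/\sum_{c'} e^{z_{c'}}$, a direct computation gives
\begin{align*}
\nabla_{W_i^\alpha} L &= (\varphi_\alpha - y_\alpha)\, v_i^\alpha\, g'(W_i^\alpha\cdot Y)\, Y,\\
\nabla^2_{W_i^\alpha W_i^\alpha} L &= \bigl[\varphi_\alpha(1-\varphi_\alpha)(v_i^\alpha)^2(g'(W_i^\alpha\cdot Y))^2 + (\varphi_\alpha - y_\alpha)\,v_i^\alpha\, g''(W_i^\alpha\cdot Y)\bigr]\, YY^\top,
\end{align*}
so the $(i,\alpha)(i,\alpha)$-block of $\widehat H$ has the form $\tfrac{1}{n}ADA^\top$ with $D_{\ell\ell}=D^H(Y_\ell)$ the bracketed scalar, and the corresponding block of $\widehat G$ is $\tfrac{1}{n}ADA^\top$ with $D_{\ell\ell}=D^G(Y_\ell)=(\varphi_\alpha-y_\alpha)^2(v_i^\alpha)^2(g'(W_i^\alpha\cdot Y_\ell))^2$. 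In each case $D_{\ell\ell}$ depends on $Y_\ell$ only through the inner products $(W_j^c\cdot Y_\ell)_{c,j}$ and the class label $y_\ell$, a deterministic function of the hidden label; identifying the $\bfx$-vectors of the general framework with the collection of all first-layer weight vectors then verifies Assumption~\ref{assumption:meas}.

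Next I would check Assumption~\ref{assumption:D}. Uniform compact support of the law of $D_{\ell\ell}$ is immediate from $g\in C^2_b$ and $\varphi_\alpha\in[0,1]$. For non-accumulation near zero, the deterministic bound $|z_c|\le \|v\|\sqrt{K}\|g\|_\infty$ forces $\varphi_\alpha\in[\varphi_{\min}, 1-\varphi_{\min}]$ for some $\varphi_{\min}>0$ depending only on $g, v, \cC$, so $|\varphi_\alpha-y_\alpha|\ge \varphi_{\min}$ almost surely. For $D^G$ this reduces the problem to bounding $\mathbb P(|g'(W_i^\alpha\cdot Y_\ell)|<\varepsilon')$; since $W_i^\alpha\neq 0$, the projection $W_i^\alpha\cdot Y_\ell$ is a nondegenerate Gaussian (conditional on the hidden label) and the non-degeneracy hypothesis on $g'$ delivers the required bound. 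For $D^H$ I would condition on $(W_j^c\cdot Y_\ell)_{(c,j)\neq(\alpha,i)}$ and the hidden label; after passing to the nondegenerate subspace (replacing $q$ by $\bar q$), the conditional law of $t=W_i^\alpha\cdot Y_\ell$ is a nondegenerate Gaussian with bounded density, and $D^H$ becomes a polynomial in $g'(t), g''(t)$ whose coefficients are smooth functions of $t$ and the conditioned values. Since $g'$ and $g''$ each have bounded density under Gaussian composition, the set $\{t: |D^H(t)|<\varepsilon\}$ has Lebesgue measure $o_\varepsilon(1)$ pointwise in the conditioning; integrating against the Gaussian density yields the desired uniform bound $a_\varepsilon\to 0$.

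Finally, for the strong-convergence conclusion invoking Theorem~\ref{thm:main-A-D-A^T-limit}: if $g\in C^3_b$, then $g', g''$ are Lipschitz, so through the coupling~\eqref{eq:Y-g-coupling} the maps $\vec g\mapsto D^H(\vec g;\bfG)$ and $\vec g\mapsto D^G(\vec g;\bfG)$ are uniformly continuous in $\bfG$ with a Lipschitz constant controlled by $\|g\|_{C^3_b}$. Combined with the hypothesis $\|\bfGd - \bfG\| = o(1/\log d)$, this is precisely the input of Lemma~\ref{lem:bounded-differentiable-implies-strong-convergence}, yielding Assumption~\ref{assumption:strong-convergence} and hence Theorem~\ref{thm:main-A-D-A^T-limit}. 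I expect the hardest step to be the non-accumulation of $D^H$ at zero: since it is a sum of two terms of possibly opposite signs, ruling out cancellation on a positive-measure set requires the separate non-degeneracy hypotheses on $g'$ and $g''$, which together ensure their ``analytic independence'' as Gaussian composites. The remaining pieces are essentially bookkeeping once the explicit chain-rule formulas are in hand.
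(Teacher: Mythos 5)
Your proposal is correct and follows essentially the same route as the paper: compute the chain-rule formulas for the $(W_i^\alpha,W_i^\alpha)$-blocks, observe the diagonal multiplier of $Y^{\otimes 2}$ depends on $Y$ only through the $q$ relevant inner products and the label (Assumption~\ref{assumption:meas}), verify Assumption~\ref{assumption:D} via the non-degeneracy of $g',g''$, and invoke Lemma~\ref{lem:bounded-differentiable-implies-strong-convergence} for the $C^3_b$ case. One genuine, and welcome, improvement over the paper's write-up: you observe that because $g\in C^2_b$ is bounded, the softmax scores $z_c = v^c\cdot g(\mathbf W^c Y)$ are deterministically bounded, so $\widehat y_\alpha\in[\varphi_{\min},1-\varphi_{\min}]$ almost surely; the paper instead imports the Gaussian-tail estimate around~\eqref{eq:softmax-prob-of-near-zero} from the logistic-regression setting, which is needed there (unbounded linear scores) but is overkill here. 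Your deterministic bound makes the factors $|y_\alpha-\widehat y_\alpha|$ and $\widehat y_\alpha(1-\widehat y_\alpha)$ trivially bounded below and reduces the whole small-probability estimate to the single factor $g'(W_i^\alpha\cdot Y)$ in the Gradient case. For the Hessian case your conditioning argument is at the same level of informality as the paper's: both assert, without fully working it out, that non-degeneracy of $g'$ and $g''$ prevents the two terms of $D^H$ from conspiring to cancel on a positive-measure set of $t=W_i^\alpha\cdot Y$; a fully careful proof would still need to rule out (for instance) $a(t)g''(t)$ tracking $-b(t)g'(t)^2$ over an interval when the conditional law of $t$ is degenerate given the other projections. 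This is a gap shared with the paper's own proof, not one you introduced.
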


\begin{figure}
\includegraphics[width = .32\textwidth]{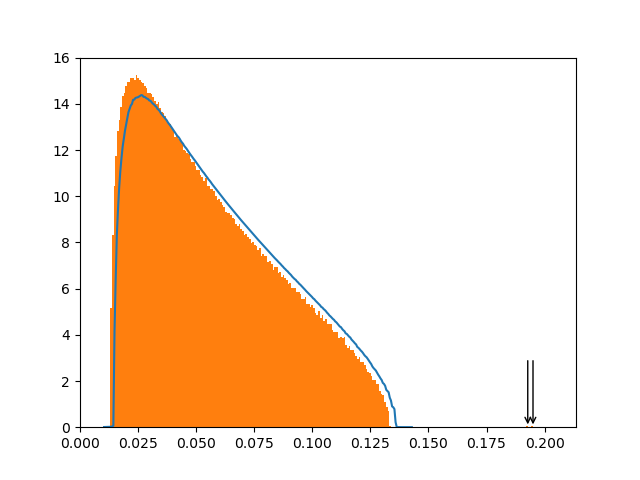}
\includegraphics[width = .32\textwidth]{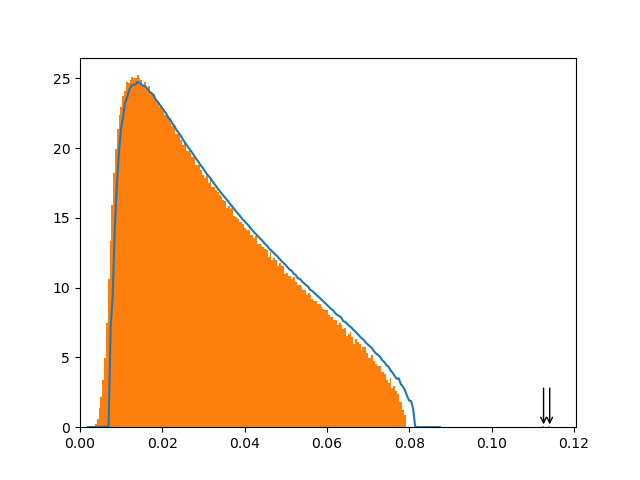}
\includegraphics[width = .32\textwidth]{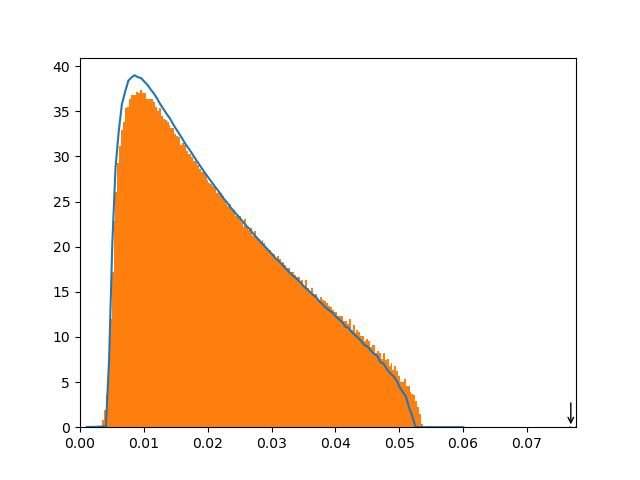}
    \caption{The evolution of the empirical (first-layer) Hessian spectrum (orange histogram) for the XOR GMM task, together with the predicted bulk spectrum (blue curve) and outlier locations (arrows) after $0$, $n/3$ and $2n/3$ steps of online SGD. $d=20,000$, $k=3$, and $\lambda = \phi =4$.}\label{fig:Hessian-spectrum-evolution-XOR}
\end{figure}

In the specific case of the XOR data distribution with four means $\pm \mu, \pm \nu$ and the high-dimensional training dynamics in this XOR case were studied in detail in~\cite{refinetti2021classifying,BGJ22}. In particular, the latter showed that the summary statistics $(v^b)_{b\in [\cC]}, \mathbf{G}^{\alpha}$ solve a limiting autonomous effective dynamics that has multiple attractors. Figure~\ref{fig:Hessian-spectrum-evolution-XOR} shows snapshots of the first layer empirical Hessian in the XOR task over the course of training, together with the predicted effective bulk at the induced summary statistic values. 
In that task, there is uniformly positive probability (with respect to a Gaussian initialization $\mathbf{x} \sim \mathcal N(0,I_d/d)$) of success/failure at classification. It was shown in~\cite{BGJH23} that at sufficiently large SNR $\lambda \gg 1$, the number of outlier eigenvalues after a burn-in period of training serves as a distinguishing statistic for success/failure at the classification task. We believe it would be of much interest to understand the sharp SNR thresholds and points at which effective outliers emerge from the effective bulk along the effective dynamics path for this problem. 

\begin{rem}\label{rem:regularity-on-activation}
Let us say about Assumption~\ref{assumption:Gaussian-non-degeneracy} for $h\in \{g,g',g''\}$ that since these are functions from $\mathbb R \to \mathbb R$, it is well-known that strict monotonicity, or piecewise strict monotonicity are sufficient to ensure the non-degeneracy condition. Thus, the sigmoid/softmax and tanh activations satisfy it, though  ReLU for instance will not due to the zero derivative on the negative half line. To extend the result to settings like ReLU activation, the zero-entries in $D$ would have to be handled separately since the invertibility of $D$ is used at several points in the proofs. 
\end{rem}

\medskip
In the fixed-width $K=O(1)$ regime, the second layer matrices are  $K\times K$ in the $n,d\to\infty$ limit, so there are not good notions of bulk and outliers. The following lemma 
says that still, the asymptotic dependence of this $K\times K$ matrix on the parameters is through their summary statistics.  

\begin{lem}\label{lem:second-layer-only-summary-statistic}
    In the setting of Corollary~\ref{cor:multi-layer-GMM}, if we consider the $v^\alpha v^\alpha$-blocks of both the empirical Hessian and Gradient matrix, their $n,d\to\infty$ limiting law only depends on the second layer weights $(v^\alpha)_{\alpha}$, and the limiting summary statistic matrix $\mathbf{G} = \lim_{d\to\infty} \mathbf{G}^{(d)}$. 
\end{lem}

These same summary statistics $(v^\alpha)_{\alpha},\mathbf{G}$ were shown in~\cite{BGJ22} to evolve autonomously under an effective dynamics as the $n,d\to\infty$ limit of SGD. Thus, the limiting $K\times K$ second layer matrix can also be tracked along the effective dynamics. In the $\lambda,\alpha$ large regimes of the XOR problem,~\cite{BGJ22,BGJH23} showed connections between the rank (deficiency) of this matrix and how many of the hidden means have been learned. 

This also raises the question of what can be said about the limiting spectral theory of the output layer in the \emph{narrow overparameterized} regime, where the width $K\to\infty$ after $n,d\to\infty$. We leave this to future investigation, noting that this limiting matrix may exhibit serious degeneracies.

\subsubsection{Parametric regression for the multi-index model}

A quite different set of extensively studied statistical learning problems come from single-index models (sometimes generalized linear models)~\cite{hastie2009elements,bishop}. This family include phase retrieval as a central example, studied in high dimensions, see e.g., in~\cite{CCFM,CandesLS14,LeSuNIPS16,lu2017phase,TanVershyninKaczmarz,ALBZZ19,JeongGunturk}. More broadly, the family of multi-index models (multi-variate generalizations of single index models) have recently seen much attention in the context of success of SGD training, with a particular focus on what types of functions can be learned with finite v.s.\ diverging sample complexity. For a sampling of this literature for learning with single and multi-index models, see e.g.,~\cite{pmlr-v75-dudeja18a,LALspectral18,BGJ21,NEURIPS2023_02763667,NEURIPS2023_e21955c9,abbe2023sgd,simsek2024learninggaussianmultiindexmodels,bietti2023learninggaussianmultiindexmodels,Paquettes-high-dimensional-notes}.

In particular, several of the aforementioned works studied the spectral theory of matrices like those in~\eqref{eq:A-D-A^t} in the context of optimal spectral estimators in the single-index setting~\cite{mondelli2018fundamental,lu2017phase,zhang2022precise,LALspectral18}. Concurrent to this paper,~\cite{defilippis2025optimalspectraltransitionshighdimensional,mondelli-spectral-estimators} investigated the multi-index settings. The general framework of our results closely overlaps with these latter papers. We allow for non-centered data matrices (in particular with random class means) and are especially interested in the changes to the spectrum as one varies the parameter values $\mathbf{X}$ through space. They were more interested in the optimal processing function giving their diagonal matrix $D$, with no spatial dependence.

The multi-index model is the data model where one gets access to observations of $k$ ground truth vectors through a non-linear \emph{link function} of inner products of the ground truth with certain \emph{sensing vectors} or \emph{covariates}. To be a bit more precise, the ground truth is 
$\Theta_* \in \mathbb R^{k\times d}$, the sensing vector $Y\in \mathbb R^{d}$ is typically standard Gaussian, 
the link is $g: \mathbb R^k \to \mathbb R$, and the observation is 
\begin{align}\label{eq:multi-index-regression-data}
y = g(\Theta_* Y) = g(\Theta_*^1 \cdot Y,...,\Theta_*^k\cdot Y)\,. 
\end{align}
In the corresponding regression problem, one tries to optimize the following $L^2$ loss function over the parameter $\mathbf{X} \in \mathbb R^{k\times d}$:
\begin{align}\label{eq:multi-index-regression-loss}
    L(\mathbf{X}; (y,Y))= |g(\mathbf{X}Y) - y|^2\,.
\end{align}
Evidently, if $\mathbf{X} = \Theta_*$ then the loss is zero. 

\begin{cor}\label{cor:multi-index-regression}
    Suppose the link function $g$ is in $C^2_b(\mathbb R^k)$ and that $h\in \{g,\partial_\alpha g, \partial_{\alpha \alpha } g\}$ satisfy the non-degeneracy condition of Assumption~\ref{assumption:Gaussian-non-degeneracy}.  
    If the ground truth and parameters have $2k\times 2k$ summary statistic matrix $\mathbf{G} = (\mathbf{X},\Theta_*)^{\otimes 2}$ that is full rank, then, Assumptions~\ref{assumption:meas}--\ref{assumption:D} apply, and therefore Theorems~\ref{thm:main-A-D-A^T}--\ref{thm:main-A-D-A^T-outliers} is applicable to the $(\alpha, \alpha)$-blocks of both the empirical Hessian and Gradient matrix of the parametric regression problem described above.     Furthermore, if $g\in C^3_b$ and $\|\bfG^{(d)} - \bfG\| = o(1/\log d)$, then Assumption~\ref{assumption:strong-convergence} applies and Theorem~\ref{thm:main-A-D-A^T-limit}. 
\end{cor}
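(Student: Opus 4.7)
The plan is to view the $(\alpha,\alpha)$-blocks of the Hessian and Gradient of~\eqref{eq:multi-index-regression-loss} as matrices of the form $\tfrac{1}{n}ADA^\top$ in~\eqref{eq:A-D-A^t} and verify Assumptions~\ref{assumption:meas}--\ref{assumption:D} (and, under $g\in C^3_b$, Assumption~\ref{assumption:strong-convergence}); the corollary then follows from Theorems~\ref{thm:main-A-D-A^T}--\ref{thm:main-A-D-A^T-limit}. Writing $u_\ell=\mathbf{X}Y_\ell$, $u_{*,\ell}=\Theta_*Y_\ell$ and $r_\ell=g(u_\ell)-g(u_{*,\ell})$, a direct differentiation of $L=r^2$ in the row $\mathbf{X}^\alpha$ gives
\begin{align*}
D^H_{\ell\ell}&=2\bigl[(\partial_\alpha g(u_\ell))^2+r_\ell\,\partial_{\alpha\alpha}g(u_\ell)\bigr],\\
D^G_{\ell\ell}&=4\,r_\ell^2\,(\partial_\alpha g(u_\ell))^2,
\end{align*}
so in both cases $D_{\ell\ell}$ is a measurable function of the $2k$ inner products encoded in $(\mathbf{X}Y_\ell,\Theta_*Y_\ell)$. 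Applying the framework with the $\cC=2k$ reference vectors $(\mathbf{X}^1,\ldots,\mathbf{X}^k,\Theta_*^1,\ldots,\Theta_*^k)$ and the trivial $1$-component GMM ($\mu\equiv 0$, $\lambda=1$), Assumption~\ref{assumption:meas} is immediate, and the uniform boundedness half of Assumption~\ref{assumption:D} follows from $g\in C^2_b$.

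The main obstacle is the non-accumulation-at-zero half of Assumption~\ref{assumption:D}. The full-rank hypothesis on $\bfG$ ensures the joint $2k$-dimensional Gaussian $(u_\ell,u_{*,\ell})$ has non-degenerate covariance, so both $u_\ell$ and the conditional law of $u_{*,\ell}$ given $u_\ell$ are non-degenerate Gaussians. For $D^G$, the inequality $|D^G_{\ell\ell}|<\varepsilon$ forces either $|\partial_\alpha g(u_\ell)|\le\varepsilon^{1/4}$ or $|r_\ell|\le\varepsilon^{1/4}$; the first has probability $O(\varepsilon^{1/4})$ by bounded-density non-degeneracy of $\partial_\alpha g$ against $u_\ell$, and the second follows by conditioning on $u_\ell$ and invoking non-degeneracy of $g$ against the conditional Gaussian $u_{*,\ell}\mid u_\ell$. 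For $D^H$ one must additionally control the potential cancellation between the two summands. I would split on (i) $|\partial_\alpha g(u_\ell)|<\delta$; (ii) $|\partial_\alpha g(u_\ell)|\ge\delta$ but $|\partial_{\alpha\alpha}g(u_\ell)|<\delta'$; and (iii) both bounded below. Case (i) has probability $O(\delta)$. In case (ii), using $|r_\ell|\le 2\|g\|_\infty$ yields $|D^H_{\ell\ell}|\ge 2\delta^2-4\|g\|_\infty\delta'$, which strictly exceeds $\varepsilon$ once we set $\delta=\varepsilon^{1/4}$ and $\delta'=\varepsilon^{1/2}/(8\|g\|_\infty)$, so $\{|D^H_{\ell\ell}|<\varepsilon\}$ is empty there. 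In case (iii), the bound $|D^H_{\ell\ell}|<\varepsilon$ pins $g(u_{*,\ell})$ to an interval of length $O(\varepsilon/\delta')$ centered at $-(\partial_\alpha g(u_\ell))^2/\partial_{\alpha\alpha}g(u_\ell)+g(u_\ell)$, which by non-degeneracy of $g$ against $u_{*,\ell}\mid u_\ell$ has conditional probability $O(\varepsilon^{1/2})$. Summing the three contributions gives $\mathbb{P}(|D^H_{\ell\ell}|<\varepsilon)=O(\varepsilon^{1/4})=:a_\varepsilon\to 0$, as required.

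Finally, when $g\in C^3_b$ the map $(\mathbf{X}Y,\Theta_*Y)\mapsto D^{H/G}_{\ell\ell}$ is $C^1$ with bounded derivative, hence uniformly continuous, so Lemma~\ref{lem:bounded-differentiable-implies-strong-convergence} delivers Assumption~\ref{assumption:strong-convergence} under $\|\bfGd-\bfG\|=o(1/\log d)$, and Theorem~\ref{thm:main-A-D-A^T-limit} applies. The only substantively new work is the cancellation analysis for $D^H$ in the preceding paragraph; the verification of Assumption~\ref{assumption:meas}, of boundedness, and of strong convergence are bookkeeping within the general framework.
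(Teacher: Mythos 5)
Your proposal matches the paper's proof: the same computation of the diagonal entries $D^H_{\ell\ell}$ and $D^G_{\ell\ell}$ of $L=|g(\mathbf{X}Y)-y|^2$, the same verification of Assumption~\ref{assumption:meas} via the $2k$ inner products, the same use of $g\in C^2_b$ for compact support, and the same appeal to Lemma~\ref{lem:bounded-differentiable-implies-strong-convergence} under $g\in C^3_b$. The one place you go beyond the paper is the explicit three-case split ($|\partial_\alpha g|$ small; $|\partial_\alpha g|$ bounded below but $|\partial_{\alpha\alpha}g|$ small; both bounded below) to rule out cancellation between $2(\partial_\alpha g)^2$ and $2r\partial_{\alpha\alpha}g$ in $D^H_{\ell\ell}$, where the paper simply asserts that non-degeneracy of $g,\partial_\alpha g,\partial_{\alpha\alpha}g$ together with full rank of $\bfG$ suffices; your case analysis is a welcome tightening of that step (modulo a harmless sign typo in the interval center in case~(iii), which should be $g(u_\ell)+(\partial_\alpha g(u_\ell))^2/\partial_{\alpha\alpha}g(u_\ell)$, and the same implicit use, shared with the paper, that non-degeneracy transfers from $\cN(0,I_k)$ to the non-degenerate conditional Gaussian $u_{*,\ell}\mid u_\ell$).
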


\begin{rem}\label{rem:boundedness-of-link-or-activation}
    The boundedness requirement on the link function may seem strong (in particular it rules out the example of phase retrieval, where $g (x)  = |x|$ or more smoothly $g(x) = |x|^2$), but without it discussion of outlier eigenvalues may not be especially pertinent at high enough $d$, since $\frac{1}{n}ADA^\top$ with unbounded entries in $D$ can have an effective bulk whose support is unbounded. Indeed, if we drop the uniform boundedness from Assumption~\ref{assumption:D}, our effective bulk proof and result would still apply, but the proof of the outlier equations (which assume things like being to the right of the support of the bulk) would cease to apply due to this unbounded-ness. In practice, it is common to truncate the link function at some large threshold. 
\end{rem}

\subsubsection{Multi-layer learning for multi-index models}

An extension of the multi-index model task is in the situation where even the link function $g$ is unknown to the statistician, and is to be learned using a multi-layer neural network. We follow the formulation found for example in the paper~\cite{bietti2023learninggaussianmultiindexmodels}, which studied the effective dynamics for this problem.  
In this case, the goal is to learn the function $h_*:Y \mapsto y = g(\Theta_* Y)$ by  a two-layer neural network with bounded width in its hidden layer. Namely, the loss function will now be given by 
\begin{align*}
    L(\mathbf{x};(h_*(Y),Y)) = \big|h_{\NN}(Y) - h_*(Y)\big|^2 \qquad\text{where} \qquad h_{\NN}(Y) = v\cdot \sigma(\mathbf{W}Y)
\end{align*}
where the parameters $\mathbf{x} = (\mathbf{W},v)$ are the first and second layer weights of a fixed width neural network, $\mathbf{W} \in \mathbb R^{K\times d}$ and $v\in \mathbb R^K$, and where $\sigma$ is the \emph{activation function} applied entrywise. 

\begin{cor}\label{cor:multi-layer-multi-index}
    Suppose the link function $g$ is in $C_b(\mathbb R^k)$, and the activation function $\sigma \in C^2_b(\mathbb R)$ is such that any $h\in \{\sigma, \sigma', \sigma''\}$ satisfies the non-degeneracy condition of Assumption~\ref{assumption:Gaussian-non-degeneracy}. 
        
    If the neural network weights $\mathbf{x} = (\mathbf{W},v)$ and ground truth vectors $\Theta_*$ are such that $v_c\ne 0$,  
    \begin{align}\label{eq:not-ground-truth-requirement}
        \mathbb P(|h_{\NN}(Y) - h_*(Y)|<\epsilon) = o_\epsilon(1)\,,
    \end{align}
    and it has full rank $(q=K+k)$-by-$q$ summary statistic matrix $\mathbf{G} = (\mathbf{W},\Theta_*)^{\otimes 2}$, then, Assumptions~\ref{assumption:meas}--\ref{assumption:D} apply. Therefore Theorems~\ref{thm:main-A-D-A^T}--\ref{thm:main-A-D-A^T-outliers} is applicable to both the $(\alpha, \alpha)$-blocks of the empirical Hessian and Gradient matrices of the multi-layer learning of multi-index model described above. Furthermore, if $\sigma \in C_b^3$ and $\|\bfG^{(d)} - \bfG\| = o(1/\log d)$, then Assumption~\ref{assumption:strong-convergence} applies and Theorem~\ref{thm:main-A-D-A^T-limit}. 
\end{cor}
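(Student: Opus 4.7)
The plan is to compute the Hessian and Gradient matrices with respect to the first-layer weights $\mathbf{W}$, identify the diagonal entries of $D$ in the representation~\eqref{eq:A-D-A^t}, and verify each of Assumptions~\ref{assumption:meas}--\ref{assumption:D} (and, for the stronger conclusion, Assumption~\ref{assumption:strong-convergence}). Writing $s_\ell^c = W^c \cdot Y_\ell$, $t_\ell^j = \Theta_*^j \cdot Y_\ell$, and $r_\ell = h_{\NN}(Y_\ell) - h_*(Y_\ell) = \sum_c v_c \sigma(s_\ell^c) - g(t_\ell^1,\ldots,t_\ell^k)$, a direct differentiation of $L = r^2$ in the block $W^\alpha$ yields the $(\alpha,\alpha)$-blocks
\begin{align*}
\widehat H_{W^\alpha W^\alpha} = \frac{1}{n}\sum_\ell D^H_{\ell\ell} Y_\ell Y_\ell^\top, \qquad \widehat G_{W^\alpha W^\alpha} = \frac{1}{n}\sum_\ell D^G_{\ell\ell} Y_\ell Y_\ell^\top,
\end{align*}
with $D^H_{\ell\ell} = 2v_\alpha\bigl[v_\alpha\sigma'(s_\ell^\alpha)^2 + r_\ell \sigma''(s_\ell^\alpha)\bigr]$ and $D^G_{\ell\ell} = 4 v_\alpha^2 r_\ell^2 \sigma'(s_\ell^\alpha)^2$. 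Both are measurable functions of the $q = K+k$ inner products $(s_\ell^c, t_\ell^j)$, verifying Assumption~\ref{assumption:meas}, and both are uniformly bounded since $\sigma, \sigma', \sigma'' \in C_b$, $g\in C_b$, and $v$ is a fixed finite vector.

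The main technical work is the anti-concentration at zero required by Assumption~\ref{assumption:D}. For $D^G$, a union bound gives
\begin{align*}
\mathbb P\bigl(|D^G_{\ell\ell}| \le \epsilon\bigr) \le \mathbb P\bigl(|r_\ell| \le \epsilon^{1/4}\bigr) + \mathbb P\bigl(|\sigma'(s_\ell^\alpha)| \le C\epsilon^{1/4}\bigr),
\end{align*}
where the first term vanishes as $\epsilon\to 0$ by hypothesis~\eqref{eq:not-ground-truth-requirement}, and the second by Assumption~\ref{assumption:Gaussian-non-degeneracy} applied to $h = \sigma'$ (using that $s_\ell^\alpha$ is a scalar multiple of a standard Gaussian by full-rankness of $\bfG$, so that $\sigma'(s_\ell^\alpha)$ has bounded density). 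For $D^H$, condition on all inner products other than $s_\ell^\alpha$; conditionally, $s_\ell^\alpha$ is a non-degenerate Gaussian and $r_\ell = v_\alpha \sigma(s_\ell^\alpha) + R$ for a constant $R$ depending on the conditioning. Thus $D^H_{\ell\ell} = 2v_\alpha\,\psi_R(s_\ell^\alpha)$ with $\psi_R(s) = v_\alpha \sigma'(s)^2 + (v_\alpha\sigma(s)+R)\sigma''(s)$. A one-dimensional level-set argument, using that the zero sets of $\sigma'$ and $\sigma''$ are locally isolated as a consequence of the bounded density of $\sigma'(Z)$ and $\sigma''(Z)$ under Gaussian $Z$, yields $\mathbb P(|\psi_R(s_\ell^\alpha)| \le \epsilon \mid R) \to 0$ as $\epsilon\to 0$ uniformly in $R$; averaging over $R$ completes the verification.

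Given the above, Theorems~\ref{thm:main-A-D-A^T}--\ref{thm:main-A-D-A^T-outliers} apply immediately. For the final sentence of the corollary, when $\sigma\in C_b^3$ the map from the $q$-vector of inner products to $D_{\ell\ell}$ is uniformly continuous, so together with the rate $\|\bfG^{(d)} - \bfG\| = o(1/\log d)$, Lemma~\ref{lem:bounded-differentiable-implies-strong-convergence} delivers Assumption~\ref{assumption:strong-convergence}, whence Theorem~\ref{thm:main-A-D-A^T-limit} applies. The principal obstacle in the plan is the one-dimensional anti-concentration of $\psi_R$ near zero: although $\sigma'(Z)$ and $\sigma''(Z)$ individually have bounded density, $\psi_R$ is a nonlinear combination and one must carry out the level-set argument carefully to handle the interplay of the zero sets of $\sigma'$ and $\sigma''$ with the shift $R$, quantitatively enough to yield a modulus $a_\epsilon$ that is uniform in $R$.
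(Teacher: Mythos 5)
Your approach matches the paper's: you differentiate the loss in the $W_\alpha$ block to get exactly the diagonal entries $D^{H}_{\ell\ell}=2v_\alpha(\sigma''(s^\alpha)r+v_\alpha\sigma'(s^\alpha)^2)$ and $D^{G}_{\ell\ell}=4v_\alpha^2 r^2\sigma'(s^\alpha)^2$, verify Assumption~\ref{assumption:meas} by noting dependence only through the $q=K+k$ inner products, get boundedness from $g\in C_b$ and $\sigma\in C^2_b$, handle the Gradient anti-concentration by a union bound splitting the event into $|r_\ell|$ small (controlled by~\eqref{eq:not-ground-truth-requirement}) and $|\sigma'(s^\alpha)|$ small (controlled by non-degeneracy of $\sigma'$), and obtain Assumption~\ref{assumption:strong-convergence} from Lemma~\ref{lem:bounded-differentiable-implies-strong-convergence} with $\sigma\in C^3_b$. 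That is precisely the paper's structure.

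The only divergence is the Hessian anti-concentration, which you address by conditioning on all inner products except $s^\alpha$ and studying $\psi_R(s)=v_\alpha\sigma'(s)^2+(v_\alpha\sigma(s)+R)\sigma''(s)$. You correctly flag this as the delicate step; however your stated justification — that the zero sets of $\sigma'$ and $\sigma''$ are ``locally isolated as a consequence of bounded density'' of $\sigma'(Z)$, $\sigma''(Z)$ — is false: bounded density of $h(Z)$ only forces $\{h=0\}$ to have Lebesgue measure zero, not to be discrete (e.g.\ zeros can accumulate). So the ``one-dimensional level-set argument'' as phrased would not go through, and the uniformity in $R$ is a genuine further obstruction that conditioning alone does not resolve, since $\psi_R$ is a nonlinear combination in which the zeros of $\sigma'$, $\sigma''$, and $v_\alpha\sigma+R$ interact. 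For what it is worth, the paper's own proof is terser still at this exact point: after invoking~\eqref{eq:not-ground-truth-requirement} it simply asserts that bounded densities of the pieces $\sigma',\sigma''$ (transferred from $Z$ to $W^\alpha Y$ by full rank of $\bfG$) yield the anti-concentration of the combined coefficient. So your proposal is no less rigorous than the paper here, and your flagging of the difficulty is well placed; but the specific ``locally isolated zeros'' lemma you lean on should be removed, as it is both incorrect and insufficient to close the argument.
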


Let us briefly comment on~\eqref{eq:not-ground-truth-requirement} which is needed to ensure that the diagonal matrix in the Gradient matrix is invertible. E.g., if $h_*$ is perfectly learned, $D$ would be the zero matrix. Therefore, similar to the earlier comment on the ReLU activation, if Assumption~\ref{assumption:D} does not hold and $D$ can have zero entries, these entries need to be handled separately, and then once the matrix is full rank our methods can apply.

\subsection*{Acknowledgments}
The authors are grateful to the anonymous referees for their careful reading of the manuscript and helpful suggestions. 
The research of G.B.A.\ is supported in part by NSF grant 2134216. The research of R.G.\ is supported in part by NSF CAREER grant 2440509 and NSF DMS grant 2246780.  
The research of J.H.\ is supported by NSF grants DMS-2331096 and DMS-2337795, and the Sloan Research Award. 
A.J.\ acknowledges the support of the Natural Sciences and Engineering Research Council of Canada (NSERC), the Canada Research Chairs programme, and the Ontario Research Fund. Cette recherche a \'et\'e enterprise gr\^ace, en partie, au 
soutien financier du Conseil de Recherches en Sciences Naturelles et en G\'enie du Canada (CRSNG),  [RGPIN-2020-04597, DGECR-2020-00199], et du Programme des chaires de recherche du Canada.

\section{Spectral theory for self-coupled empirical matrices} 
In this section we establish our general random matrix theory result which will imply Theorems~\ref{thm:main-A-D-A^T}--\ref{thm:main-A-D-A^T-limit}. Rather than working directly with $(1/n)ADA^\top$ from~\eqref{eq:A-D-A^t}, it helps to work instead with the rescaling $(\lambda/d)ADA^\top$.

Throughout this section, we consider the matrix model $(\lambda/d)ADA^\top$ and denote its empirical spectral measure by $\hat\nu^{(d)}$. Furthermore, given a sequence $[x_1^{(d)},...,x_\cC^{(d)},\mu_1^{(d)},...,\mu_k^{(d)}]$ with summary statistic matrix $\bfG^{(d)}$, and square root $\sqrt{\bfGd}$, let $L^{(d)}$ denote the $d\times q$ matrix with orthonormal columns satisfying 
\begin{equation}\label{e:L-defining-property-2}
L^{(d)\top} [x_1^{(d)},...,x_\cC^{(d)},\mu_1^{(d)},...,\mu_k^{(d)}] = \sqrt{\bfGd}.
\end{equation}
That such a matrix exists can be shown via the compact singular value decomposition. See \eqref{e:decomp} and the surrounding discussion  below.
For concreteness, let us also define the analogues of the quantities defining the effective bulk and outliers. To this end, let $\nu=\nu_\bfG$ be the probability measure whose Stieltjes transform $\tS(z)$ solves 
    \begin{align}\label{e:ADA_eig}
        \frac{1}{ \tS(z)}+ z= \phi\sum_{a=1}^{k} p_a \mathbb E\Big[ \frac{f_a(\lambda^{-1/2}\vec{g};\mathbf G)}{1 +  \tS(z) f_a(\lambda^{-1/2}\vec{g};\mathbf G) }\Big].
    \end{align}
where $f_a$ is as in \eqref{eq:f-in-terms-of-G}. (In the following, we drop the subscript $\bfG$ when it is clear from context.) Define the matrix $\wt F$ by
\begin{equation}\label{eq:general-f-def}
\wt F(z, \bfG) =\lambda \phi\sum_{a=1}^k p_a \mathbb E\left[\frac{f_{a}(\lambda^{-1/2}\vec g, \mathbf G)}{1+ \tS(z)f_a(\lambda^{-1/2}\vec g, \mathbf G)}(\frac{\vec{g}}{\sqrt{\lambda}}+(\sqrt{\bfG}_{\cdot,a}))(\frac{\vec{g}}{\sqrt{\lambda}}+(\sqrt{\bfG}_{\cdot,a}))^\top\right],
\end{equation}
with these definitions we have the notion of effective bulk, effective outlier, and effective eigenvector equations for $(\lambda/d) A D A^\top$. In particular let $\cZ_{\bfG}$ be the collection of effective outliers.

\begin{thm} 
    \label{thm:deterministic-equivalent}
Consider the matrix model $(\lambda/d)ADA^\top$. Suppose that Assumptions~\ref{assumption:meas}-\ref{assumption:strong-convergence} holds. Fix $\lambda,\phi>0$ and suppose the summary statistic matrices of $(\bfx,\bmu)$ form the sequence $\bfGd\to\bfG$. Then as $n,d\to\infty$ with $n/d=\phi$, the empirical spectral measure satisfies
\begin{equation}\label{eq:main-prop-bulk}
\hat\nu^{(d)}\to\nu_\bfG,
\end{equation}
weakly a.s., where $\nu_\bfG$ is the measure whose Stieltjes transform solves the effective bulk equation \eqref{e:ADA_eig}.
Furthermore, for any $[a,b]\in \supp(\nu_\bfG)^c$ such that $\{a,b\}\cap \cZ_{\bfG}=\emptyset$, we have that,
\begin{equation}\label{eq:main-prop-outliers}
|\sigma^{(d)}\cap[a,b]| = |\cZ_{\bfG}\cap [a,b]|    
\end{equation}
eventually a.s. Moreover, if $z\in\cZ_{\bfG}$ and $z^{(d)}$ is a sequence of eigenvalues with $z^{(d)}\to z$, then for any $L$ satisfying \eqref{e:L-defining-property} and $u^{(d)}=L^\top v^{(d)}$, we have that every limit of point of  $u^{(d)}$ is a solution of the effective eigenvector equations
\begin{align}\label{eq:main-prop-eigenpairs-1}
(z-\wt F(z)) u &= 0\\
\norm{u}^2 - \langle u , \partial_z\wt F(z) u\rangle &= 1\label{eq:main-prop-eigenpairs-2}
\end{align}
almost surely.
\end{thm}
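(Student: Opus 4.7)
The plan is to use rotational invariance of the isotropic Gaussian noise in \eqref{eq:data-distribution} to separate the part of $A$ that is coupled to $D$ (a fixed $q$-dimensional subspace) from the rest. Fix a sequence $(\bfx,\bmu)$ with summary statistic $\bfGd$, let $L=L^{(d)}$ be as in \eqref{e:L-defining-property-2}, and extend it to an orthogonal $[L\;L^\perp]$ with $L^\perp$ a $d\times(d-q)$ isometry. Set $B:=L^\top A\in\mathbb R^{q\times n}$ and $C:=L^{\perp\top}A\in\mathbb R^{(d-q)\times n}$. By \eqref{eq:Y-g-coupling} together with Assumption~\ref{assumption:meas}, $D$ is a measurable function of $B$ and the hidden labels $(b(\ell))$, while rotational invariance forces, conditionally on the labels, $C$ to have i.i.d.\ $\mathcal N(0,1/\lambda)$ entries and be \emph{independent} of $(B,D)$. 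In the basis $[L\;L^\perp]$, this gives
\[
\tfrac{\lambda}{d}\,ADA^\top \;=\; \begin{pmatrix} K & E \\ E^\top & M_0\end{pmatrix}, \qquad K:=\tfrac{\lambda}{d}BDB^\top,\; E:=\tfrac{\lambda}{d}BDC^\top,\; M_0:=\tfrac{\lambda}{d}CDC^\top .
\]

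\textbf{Bulk.} The matrix $M_0$ is a classical sample-covariance matrix with i.i.d.\ Gaussian data \emph{independent} of the diagonal weights $D$. Conditional on $D$, Silverstein-type theorems on deformed Marchenko--Pastur distributions (e.g., \cite{BaiSilversteinBook}) give that the empirical spectral distribution of $M_0$ converges weakly a.s.\ to the measure whose Stieltjes transform satisfies a self-consistent equation involving the empirical distribution $\hat\rho_{\bfGd}$ of $(\Dll)$. Under Assumption~\ref{assumption:strong-convergence}, $\hat\rho_{\bfGd}\to\rho_{\bfG}$ a.s., and this limiting equation reduces to \eqref{e:ADA_eig}, identifying the limit as $\nu_{\bfG}$. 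The blocks $K$ and $E$ contribute a rank-$2q=O(1)$ perturbation relative to the $M_0$-block, so Weyl's inequalities imply $\hat\nu^{(d)}\to\nu_{\bfG}$ weakly a.s., proving \eqref{eq:main-prop-bulk}.

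\textbf{Outliers and eigenvectors.} For $z\notin\supp(\nu_{\bfG})\cup\cZ_{\bfG}$, Hausdorff convergence of $\supp(\hat\rho_{\bfGd})$ (Assumption~\ref{assumption:strong-convergence}) forces $(M_0-zI)^{-1}$ to have uniformly bounded operator norm for large $d$. A Schur-complement argument applied to the eigenvalue equation with $v=Lu+L^\perp w$ yields $w=-(M_0-zI)^{-1}E^\top u$ and the reduced $q\times q$ equation $\widehat F^{(d)}(z)u=zu$, where
\[
\widehat F^{(d)}(z) \;:=\; K \;-\; E\,(M_0-zI)^{-1}E^\top .
\]
Isotropic local laws for sample-covariance matrices let us replace the bilinear forms $c_\ell^\top(M_0-zI)^{-1}c_{\ell'}$ by their deterministic equivalents ($\propto\tS(z)$); averaging over labels and $\vec g_\ell$, the Sherman--Morrison formula turns the denominators into $1+\tS(z)f_{b(\ell)}$, and one reads off $\widehat F^{(d)}(z)\to\wt F(z,\bfG)$ uniformly on compacts in $\supp(\nu_{\bfG})^c$. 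A Hurwitz/Rouch\'e argument applied to the analytic function $z\mapsto\det(zI_q-\widehat F^{(d)}(z))$ on a contour enclosing $[a,b]$ then gives the eigenvalue count \eqref{eq:main-prop-outliers}, and passing to the limit in $(zI_q-\widehat F^{(d)}(z))u^{(d)}=0$ yields \eqref{eq:main-prop-eigenpairs-1}. For \eqref{eq:main-prop-eigenpairs-2}, the normalization $1=\|v\|^2=\|u\|^2+\|w\|^2$ combined with $\|w\|^2 = \langle u,E(M_0-zI)^{-2}E^\top u\rangle = -\langle u,\partial_z\widehat F^{(d)}(z)u\rangle$ (using $\partial_z(M_0-zI)^{-1}=(M_0-zI)^{-2}$ and $\partial_z K=0$) gives, in the limit, $\|u\|^2-\langle u,\partial_z\wt F(z)u\rangle=1$.

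\textbf{Main difficulty.} The whole coupling between $D$ and $A$ factors through the $q$-dimensional $L$-subspace, while $C$ is an unconditioned Gaussian block independent of $D$; this is the mechanism that reduces the problem to a weighted sample-covariance matrix with independent data. The principal technical task is to obtain the isotropic local law for $(M_0-zI)^{-1}$ with concentration against rows of $B$ (which live in the $L$-subspace but remain coupled to $D$), uniformly in $z$ just outside $\supp(\nu_{\bfG})$. This is exactly where the strong convergence/edge portion of Assumption~\ref{assumption:strong-convergence} enters: it guarantees that the resolvent estimates survive up to and including the spectral edge that controls the outlier analysis, and that no spurious eigenvalues of $M_0$ pollute intervals $[a,b]\subset\supp(\nu_{\bfG})^c$.
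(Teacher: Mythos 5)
Your decomposition into the $L$ and $L^\perp$ blocks is exactly the paper's Section~\ref{sec:isolating-dependences} decomposition $\sqrt{\lambda/d}A = LR^\top + UW$, and you correctly identify the key structural fact that the block $C$ (the paper's $\wt W$) is, by isotropy, independent of $(B,D)$. Your bulk argument matches the paper's. For the outliers you work directly with the $d\times d$ matrix via a one-step Schur complement, whereas the paper linearizes to a $(d+n-q)\times(d+n-q)$ matrix; these are algebraically identical, since by Woodbury $\widehat F^{(d)}(z) = K - E(M_0-z)^{-1}E^\top = R^\top\bigl(D^{-1} - \tilde W^\top\tilde W/z\bigr)^{-1}R$, which is exactly the paper's $\bar R^\top M(z)^{-1}\bar R$. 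So the architecture of the proof is the same. That said, the proposal glosses over three points that constitute the bulk of the technical effort in the paper and which cannot be cited as off-the-shelf results.

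First, you invoke ``isotropic local laws for sample-covariance matrices'' as if they are directly available, but the entries of $D$ are allowed to be \emph{negative} (e.g., in the Hessian and in the off-diagonal Gradient blocks), while the standard anisotropic local laws of Knowles--Yin and their descendants are formulated for population covariances $T T^\top\succeq 0$; the paper's Theorem~\ref{t:local_law-0} has to reprove this, modifying the stability step of the self-consistent equation to avoid positivity. Second, the claim that Assumption~\ref{assumption:strong-convergence} ``forces $(M_0 - zI)^{-1}$ to have uniformly bounded operator norm on $[a,b]\subset\supp(\nu_{\bfG})^c$'' is the conclusion of a no-outliers theorem for $\tilde W D\tilde W^\top$; this is Proposition~\ref{prop:no-outliers}, which again has to be rederived from Bai--Silverstein because the original proof uses $D\succeq 0$ (it passes through $D^{1/2}$). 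Third, the convergence $\widehat F^{(d)}(z)\to\wt F(z,\bfG)$ is not a straightforward ``averaging over labels and $\vec g_\ell$'': the summand $D_{\ell\ell}/(1+\tS(z)D_{\ell\ell})$ can blow up because $\tS(z)$ is negative to the right of the bulk and $1+\tS(z)D_{\ell\ell}$ can be arbitrarily close to $0$. The paper's Theorem~\ref{t:tfd-to-f} handles this with a smooth cutoff $\chi_\delta$ near the singular set and an estimate (Lemma~\ref{l:num-small}, built from $\tau$-regularity) showing that only an $O(\delta^2)$ fraction of indices fall near the singularity, followed by a uniform-on-compacta law of large numbers. Without this control your deterministic equivalent for $\widehat F^{(d)}$ does not converge. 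So: same approach, correct strategy, but these three steps are the theorem, not footnotes to it.
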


Theorems~\ref{thm:main-A-D-A^T}--\ref{thm:main-A-D-A^T-limit} follows immediately from Theorem \ref{thm:deterministic-equivalent} by standard rescaling identities for Stieltjes transforms. For the reader's convenience this is outlined in the following subsection.

\subsection{Proofs of Theorems~\ref{thm:main-A-D-A^T}--\ref{thm:main-A-D-A^T-limit}}
We begin by deducing Theorem~\ref{thm:main-A-D-A^T-limit} from Theorem~\ref{thm:deterministic-equivalent} as both assume the same Assumptions~\ref{assumption:meas}--\ref{assumption:strong-convergence}. To see the equivalence of the two theorems, we make the following observation regarding rescaling the effective bulk, outlier, and eigenvector equations. 
Denote the Stieltjes transform of the limiting empirical eigenvalues of $ADA^\top/n$ by $S(z)$ and that of $(\lambda/ d)ADA^\top = \lambda \phi (1/n)ADA^\top$ by $ \wt S(z)$. Then by definition~\eqref{eq:Stieltjes-transform},  $\wt S(z)= (1/\lambda \phi)S(z/\lambda\phi)$. 
If we let  $w= z/\lambda \phi$, we can rewrite \eqref{e:ADA_eig} as 
    \begin{align}
        \frac{1}{ (1/\lambda \phi) S(w)}+ \lambda \phi w= \phi\sum_{a=1}^{k} p_a \mathbb E\Big[ \frac{f_a(\lambda^{-1/2}\vec{g};\mathbf G)}{1 +  (1/\lambda \phi) S(w) f_a(\lambda^{-1/2}\vec{g};\mathbf G) }\Big]
    \end{align}
By \eqref{eq:main-prop-eigenpairs-1} ($zI_q-\widetilde F(z)$ is not full rank, so its determinant is zero), the effective outliers of $ADA^\top/n$ satisfy 
    \begin{align}
    \det ( wI_q - \wt F(\lambda \phi w)/\lambda \phi) =0 \,.
\end{align}
which can then be rearranged as  
\begin{align*}
\frac{\wt F(\lambda \phi w)}{\lambda \phi}
&=\sum_{a=1}^k p_a \mathbb E\left[\frac{f_{a}(\lambda^{-1/2}\vec g, \mathbf G)}{1+ \wt S(\lambda \phi w)f_a(\lambda^{-1/2}\vec g, \mathbf G)}(\frac{\vec{g}}{\sqrt{\lambda}}+\sqrt{\bfG}_{\cdot,a})(\frac{\vec{g}}{\sqrt{\lambda}}+\sqrt{\bfG}_{\cdot,a})^\top\right]\\
&= \lambda\phi\sum_{a=1}^k p_a \mathbb E\left[\frac{f_a(\lambda^{-1/2}\vec g; \mathbf{G})}{\lambda \phi+S(w)f_a(\lambda^{-1/2}\vec g;\mathbf{G})} (\frac{\vec{g}}{\sqrt{\lambda}}+\sqrt{\bfG}_{\cdot,a})(\frac{\vec{g}}{\sqrt{\lambda}}+\sqrt{\bfG}_{\cdot,a})^\top\right]=F(w) 
\end{align*}
to give~\eqref{eq:F-matrix}, and therefore~\eqref{eq:effective-evect}. The equivalence of~\eqref{eq:main-prop-eigenpairs-2} and ~\eqref{eq:effective-evect-proj} follows from the above by $\partial_w F(w) =  \tilde F'(\lambda \phi w) = \partial_z \tilde F(z)$. 

To deduce Theorems~\ref{thm:main-A-D-A^T}--\ref{thm:main-A-D-A^T-outliers}, notice that if $\bfGd \equiv \bfG$ for all $d$, then $D_{\ell\ell}$ are i.i.d.\ over $\ell$ and $d$ by Assumption~\ref{assumption:meas}. Therefore, the almost sure weak convergence follows from the strong law of large numbers for the empirical spectral measure. The strong convergence for the empirical measure for i.i.d.\  samples is similarly straightforward. 
\qed

\subsection{Estimates for the Stieltjes transform of $D$ and $\wt S$}
In this section, we show that the Stieltjes transform of the empirical spectral measure of $D$ and its limit satisfy the $\tau$-regularity condition and thus the estimates from the preceding section. To this end, let $\hat \rho^{(d)}=\frac{1}{n}\sum \delta_{D_{\ell\ell}}$, and $\rho^{(d)}$ the law of the entries $D_{\ell \ell}$.
Let $\tSd$ denote Stieltjes transform of 
\begin{equation}\label{eq:}
    \nu_d = \hat\rho^{(d)}\boxtimes\nu_{\MP}.
\end{equation} 
Observe that this satisfies the self-consistent equation, 
\begin{align}\label{eq:defmz-sec2}
\frac{1}{\widetilde S^{(d)}(z)}=-z+\phi \int \frac{x}{1+\widetilde S^{(d)}(z) x}\hat\rho^{(d)} (d x),
\end{align}
for $\phi=\frac{n}{d-q} \asymp \frac{n}{d}$. 

Suppose that $\bfG^{(d)}\to\bfG$,  let $\rho$ denote the law of $f_J(\lambda^{-1/2}\vec g,\bfG)$ where $J\sim \text{Cat}((p_i)_{i\in [k]})$ and $\vec g\sim N(0,I_q)$ as above. Observe that by construction, $\tS$ is the Stieltjes transform of the measure $\nu=\nu_\bfG$ with 
\[
\nu_\bfG=\rho\boxtimes\nu_\MP.
\]

Let us now introduce the following definition, which will be important in the coming sections.
\begin{defn}
    We say that $\mu\in \cM_1(\R)$ is \emph{$\tau$-regular} if $\mu([-\tau^{-1},\tau^{-1}]^c)=0$ and 
    \[
    \mu(\abs{x}\leq \tau) \leq 1-\tau.
    \]
\end{defn}
We now observe the following, whose proof is elementary.

\begin{lem}\label{lem:tau-reg}
   If $D$ satisfies Assumption~\ref{assumption:D}, then there is a $0<\tau_0\leq 1$ such that the following holds:
   \begin{enumerate}
       \item $D$ is a.s. invertible.
       \item For each $d\geq 1,0<\tau<\tau_0$,
   $\prob(\hat\rho^{(d)}\text{ is } \tau\text{-regular})\geq 1-C\exp(-c(\tau,\phi)d).$ 
   \item For each $0<\tau<\tau_0$, $\rho$ is $\tau$-regular.
   \end{enumerate}
\end{lem}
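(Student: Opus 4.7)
The plan is to exploit two structural facts. First, under Assumption~\ref{assumption:meas} the diagonal entries $D_{11},\ldots,D_{nn}$ are i.i.d.\ real random variables (measurable functions of the i.i.d.\ pairs $(b(\ell),Y_\ell)$), each with common law $\rho^{(d)}$; this is what makes classical concentration available. Second, Assumption~\ref{assumption:D} furnishes, uniformly in $d$, a constant $M$ with $\supp(\rho^{(d)})\subseteq[-M,M]$ and a function $\varepsilon\mapsto a_\varepsilon\to 0$ bounding $\rho^{(d)}(|x|<\varepsilon)$. After replacing $a_\varepsilon$ by its non-decreasing right-continuous majorant (still tending to $0$), I would fix $\tau_0\in(0,1]$ satisfying $\tau_0\leq 1/M$ and $a_{\tau_0}<(1-\tau_0)/2$; this single choice drives all three items.

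For (1), $\prob(D_{\ell\ell}=0)\leq \prob(|D_{\ell\ell}|<\varepsilon)\leq a_\varepsilon$ for every $\varepsilon>0$, so $\prob(D_{\ell\ell}=0)=0$, and a union bound over $\ell\in[n]$ gives invertibility a.s. For (3), the support bound $\rho([-\tau^{-1},\tau^{-1}]^c)=0$ for $\tau<\tau_0\leq 1/M$ follows from $\supp(\rho)\subseteq[-M,M]$, which is inherited from $\supp(\rho^{(d)})\subseteq[-M,M]$ via Portmanteau on the closed set $[-M,M]^c$. The same Portmanteau argument, applied now to the closed set $\{|x|\leq\tau\}$, transfers $\rho^{(d)}(|x|\leq\tau)\leq a_\tau$ to $\rho(|x|\leq\tau)\leq a_\tau$, and since $a_\tau\leq a_{\tau_0}<1-\tau_0<1-\tau$ the required anticoncentration is immediate.

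The only probabilistic step is (2). The support condition holds deterministically since every $D_{\ell\ell}$ lies in $[-M,M]\subseteq[-\tau^{-1},\tau^{-1}]$ almost surely. For the anticoncentration condition I would apply Hoeffding's inequality to the i.i.d.\ Bernoulli variables $\xi_\ell:=\mathbf 1\{|D_{\ell\ell}|\leq\tau\}$, whose mean equals $\rho^{(d)}(|x|\leq\tau)\leq a_\tau$:
\[
\prob\bigl(\hat\rho^{(d)}(|x|\leq\tau)>1-\tau\bigr) \leq \prob\Bigl(\tfrac{1}{n}\textstyle\sum_\ell(\xi_\ell-\E\xi_\ell) > 1-\tau-a_\tau\Bigr) \leq \exp\!\bigl(-2n(1-\tau-a_\tau)^2\bigr).
\]
For $\tau<\tau_0$ the gap $1-\tau-a_\tau$ is bounded below by a positive constant depending only on $\tau$, and $n\asymp\phi d$ then yields the claimed bound $C\exp(-c(\tau,\phi)d)$.

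The only thing requiring any care is confirming that the $D_{\ell\ell}$ really are i.i.d.\ with law $\rho^{(d)}$ and that Assumption~\ref{assumption:D} passes to the limit to control $\rho$; both are routine consequences of the measurable representation in \eqref{eq:f-in-terms-of-G} and weak continuity (or, more directly, Assumption~\ref{assumption:strong-convergence}), and no genuine obstacle is expected.
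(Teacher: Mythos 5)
The paper itself gives no proof, only the remark that the claim is elementary; so there is nothing to compare against, and the question is simply whether your argument is correct and complete. It essentially is: the i.i.d.\ structure of $(D_{\ell\ell})_\ell$ from Assumption~\ref{assumption:meas}, the deterministic support bound, the right-continuous majorant of $a_\varepsilon$, the choice $\tau_0 \le 1/M$ with $a_{\tau_0} < (1-\tau_0)/2$, and Hoeffding with $n\asymp\phi d$ is exactly the kind of elementary argument the authors had in mind, and it gives all three items.

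One small slip worth correcting: you invoke Portmanteau for the limit law $\rho$ in the wrong direction. First, $[-M,M]^c$ is open, not closed; the open-set form $\rho(U)\le\liminf_d\rho^{(d)}(U)$ is what gives $\rho([-M,M]^c)=0$. More substantively, for the anticoncentration bound you apply Portmanteau to the \emph{closed} set $\{|x|\le\tau\}$, but the closed-set inequality $\limsup_d\rho^{(d)}(F)\le\rho(F)$ bounds $\rho$ from \emph{below} by the $\rho^{(d)}$'s, the opposite of what you need. The fix is to use the open-set form on $\{|x|<\tau'\}$ for $\tau'>\tau$, which yields $\rho(|x|<\tau')\le\liminf_d\rho^{(d)}(|x|<\tau')\le a_{\tau'}$, and then send $\tau'\downarrow\tau$ using the right-continuous majorant you already constructed to conclude $\rho(|x|\le\tau)\le a_\tau < 1-\tau$. (Alternatively, one can just read Assumption~\ref{assumption:D} as also applying directly to the limiting summary statistic $\bfG$, in which case (3) needs no limiting argument at all.) With that repaired, the proof is complete.

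One presentational point: item (1) and especially the i.i.d.\ structure you use in (2) rely on Assumption~\ref{assumption:meas}, not merely Assumption~\ref{assumption:D}; since Assumption~\ref{assumption:meas} is standing throughout the section this is harmless, but it is worth flagging given that the lemma statement cites only Assumption~\ref{assumption:D}.
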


\subsection{Regularity of Stieltjes transforms of free multiplicative convolutions with the Marchenko-Pastur law }
We start by developing deterministic estimates on the Stieltjes transform of the free multiplicative convolution of $\tau$-regular measures with the Marchenko-Pastur law. In particular, in light of Lemma~\ref{lem:tau-reg}, all of the following estimates will apply to $\nu$ and $\nu_d$.

Let $\mu\in \cM_1(\R)$ be a probability measure with compact support.  Recall that its free multiplicative convolution with the Marchenko-Pastur law is the probability measure $\mu\boxtimes\nu_{\MP}$ whose Stieltjes transform is the solution to the self-consistent equation
\begin{equation}\label{eq:def-FMC}
    \frac{1}{S_{\mu}(z)}=-z+\phi \int \frac{x}{1+S_\mu(z) x}d\mu(x)\,,
\end{equation}
for some $\phi>0$. (Here and in this section, $\nu_{\MP}$ is the Marchenko-Pastur distribution with parameters $\phi$ and $\lambda=1$.) That these objects are well-defined is classical, going back to the original work of Marchenko-Pastur \cite{marchenko1967distribution}. It is also easy to see that since $\mu,\nu_\MP$ are compactly supported, so is $\mu\boxtimes\nu_\MP$. Note that we are suppressing the dependence of $S_\mu$ on $\phi$ for readability. 

We will assume throughout this section that $\mu$ is $\tau$-regular for some $0<\tau<1$. Let us note that in this case, one immediately obtains (e.g., by using the probabilistic interpretation of the free multiplicative convolution)
\begin{equation}\label{e:support-bound}
    \max\{|z|:z\in \supp (\mu\boxtimes\nu_\MP )\} \lesssim_\phi \tau^{-1}\,.
\end{equation}
In the following, it helps to define
\begin{equation}\label{eq:E-eps}
E_\eps(\mu )=\{ z\in \C: \dist(z,\supp (\mu\boxtimes \nu_{\MP}))\geq \eps\}.
\end{equation}
(Let us pause to note here that in the subsequent, if we write $E_\eps$ we specifically mean $E_\eps=E_\eps(\nu)$ where $\nu$ solves~\eqref{e:ADA_eig}.)
We then have the following. (We use the convention $0!=1$.t)
\begin{lem}\label{l:partial_S_bound} 
    For any $K,\eps, k_0>0$ and $0<\tau<1$, there is an $\eta=\eta(K,\eps,k_0,\tau)$ such that the following holds: For any $\tau$-regular $\mu$, the derivatives of $S_\mu(z)$ are upper and lower bounded as 
   \begin{equation}\label{e:dSzbound}
   k!\Big(\frac{\tau}{1+K}\Big)^{k+1}\lesssim_{\phi} \abs{\partial^{k}_z S_\mu(z)}\lesssim_\phi \frac{k!}{\eps^{k+1}}
   \end{equation}
   uniformly for all $z\in ([-K,K]\times[-\eta,\eta]) \cap E_\eps(\mu)$ for all $0\leq k\leq k_0$. In particular, the upper bound holds on $z\in B(K)\cap E_\eps(\mu)$, where $B(K)= \{z\in \mathbb C: |z|\le K\}$ is the ball of radius $K$ in $\mathbb C$.

\end{lem}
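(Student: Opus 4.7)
The plan is to handle the upper and lower bounds on $|\partial_z^k S_\mu(z)|$ separately. Set $\sigma = \mu\boxtimes\nu_{\MP}$. By~\eqref{e:support-bound}, $\supp(\sigma)\subset[-M,M]$ for some $M\lesssim_\phi 1/\tau$, and differentiating $S_\mu(z)=\int(t-z)^{-1}d\sigma(t)$ under the integral sign yields the representation
\[
\partial_z^k S_\mu(z) = k!\int (t-z)^{-(k+1)} d\sigma(t).
\]

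The upper bound is then immediate: since $|t-z|\geq\eps$ on $\supp(\sigma)$ for $z\in E_\eps(\mu)$ and $\sigma$ is a probability measure, $|\partial_z^k S_\mu(z)|\le k!/\eps^{k+1}$ uniformly on $E_\eps(\mu)$, and in particular on $B(K)\cap E_\eps(\mu)$, with no $\phi$-dependence needed.

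For the lower bound, I would first treat the base case $k=0$ by a dichotomy on the self-consistent equation~\eqref{eq:def-FMC}. By $\tau$-regularity, $\supp(\mu)\subset[-1/\tau,1/\tau]$, so whenever $|S_\mu(z)|\le\tau/2$ we have $|1+S_\mu(z)x|\ge 1/2$ on $\supp(\mu)$; substituting into~\eqref{eq:def-FMC} together with $|z|\leq K$ gives $|1/S_\mu(z)|\le K+2\phi/\tau$, i.e., $|S_\mu(z)|\gtrsim_\phi \tau/(1+K)$; otherwise $|S_\mu(z)|\ge \tau/2$ directly. For $k\ge1$ I would next handle $z\in \mathbb R\cap E_\eps(\mu)$ lying in an \emph{unbounded} connected component of $\mathbb R\setminus\supp(\sigma)$: there, the integrand $(t-z)^{-(k+1)}$ has constant sign in $t$, so
\[
|\partial_z^k S_\mu(z)| = k!\int |t-z|^{-(k+1)} d\sigma(t) \ge \frac{k!}{(K+M)^{k+1}} \gtrsim_\phi k!\Bigl(\frac{\tau}{1+K}\Bigr)^{k+1}.
\]
For $z$ in bounded complementary gaps of $\supp(\sigma)$, where sign cancellation in the integrand precludes the direct estimate, I would differentiate~\eqref{eq:def-FMC} inductively in $k$ to write $\partial_z^k S_\mu$ as a rational expression in $S_\mu$ and its lower-order derivatives, with denominators of the form $1/S_\mu^2 + \phi\int x^2/(1+S_\mu x)^2 d\mu$ and iterates thereof. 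These can be bounded uniformly for $z\in E_\eps(\mu)$ because $|1+S_\mu(z)x|$ stays bounded away from $0$ on $\supp(\mu)$ (otherwise $z$ would be a boundary point of $\supp(\sigma)$, by the standard Silverstein-type characterization), and combined with the $k=0$ bound on $|S_\mu|$ this propagates the lower bound up to order $k_0$. Finally, I would extend from real $z$ to the strip $[-K,K]\times[-\eta,\eta]\cap E_\eps(\mu)$ by choosing $\eta = \eta(K,\eps,k_0,\tau)$ small and invoking continuity of $\partial_z^k S_\mu$ against the uniform upper bound from Step~1.

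The main obstacle is the lower bound when $z$ lies in a bounded complementary gap of $\supp(\sigma)$ for $k\ge 1$: direct integral comparison fails there due to possible sign cancellations in $(t-z)^{-(k+1)}$, so one must rely on the algebraic structure of~\eqref{eq:def-FMC} and the spectral characterization of $\supp(\sigma)$ to control the denominators appearing in the inductive formulas, with care to track the $\tau$- and $\phi$-dependencies so that they combine into the stated $\tau/(1+K)$ scaling.
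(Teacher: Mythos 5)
Your upper bound argument is identical to the paper's and is correct. For the lower bound, you have correctly identified a genuine issue that the paper's own proof glosses over: the paper estimates $\bigl|\int (t-z)^{-(k+1)}\,d\sigma(t)\bigr|$ from below using the pointwise bound $|t-z|\lesssim_\phi (1+K)/\tau$, but this passage requires the integrand to have constant sign, which holds for $z$ in an unbounded component of $\supp(\sigma)^c$ (or for $k$ odd, where $k+1$ is even), but fails for $z$ in a \emph{bounded} gap when $k$ is even. The paper's proof as written only covers these cases and does not address bounded gaps; so your concern is real. Your workaround for $k=0$ (the dichotomy via the self-consistent equation~\eqref{eq:def-FMC}) is sound and is in fact exactly the argument the paper later runs inside Lemma~\ref{l:Sd_est}; and for odd $k$ in bounded gaps the direct integral estimate also works, which your Step~3 could be extended to cover.

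However, the inductive strategy you sketch for even $k\ge 2$ in a bounded gap cannot succeed, because the stated lower bound is actually \emph{false} there. Concretely, for $z$ real in a bounded gap $(b,c)$ of $\supp(\sigma)$, the derivative $\partial_z S_\mu(z)=\int (t-z)^{-2}\,d\sigma(t)$ is positive, strictly convex (its second derivative is $3!\int(t-z)^{-4}d\sigma>0$), and blows up at both endpoints of the gap (a square-root edge density gives $\int_{b-\delta}^{b}(b-t)^{1/2}(t-z)^{-2}\,dt\to+\infty$ as $z\downarrow b$). Hence $\partial_z S_\mu$ attains an interior minimum in $(b,c)$, at which $\partial_z^2 S_\mu$ vanishes identically. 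The same phenomenon recurs for higher even $k$. No rearrangement of~\eqref{eq:def-FMC}, no bound on $\inf_{x\in\supp\mu}|1+S_\mu(z)x|$, and no choice of constants can produce a nonzero lower bound for a quantity that is exactly zero, so the rational-expression approach you outline will inevitably produce a numerator that itself vanishes at the minimizer.

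The resolution is that the general-$k$ lower bound in Lemma~\ref{l:partial_S_bound} is only needed --- and is only invoked downstream --- for $k\in\{0,1\}$: the $k=0$ bound on $|S_\mu|$ is re-derived independently via the dichotomy in Lemma~\ref{l:Sd_est}, and the $k=1$ bound on $|\partial_z S_\mu|$ holds everywhere by the constant-sign argument because $(t-z)^{-2}\ge 0$. All uses of $\eta=\eta(K,\eps,k_0,\tau)$ with larger $k_0$ only enter through the upper bound and the perturbation term $|\Im[z]|\max_w|\partial_z^{k+1}S_\mu(w)|$ in the extension off the real axis. So if you restrict the lower bound in~\eqref{e:dSzbound} to $k\in\{0,1\}$ (or, more generally, to odd $k$ and to all $k$ in unbounded gaps) and prove the $k=0$ case via your dichotomy, the argument is complete and matches what the rest of the paper actually requires.
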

\begin{proof}
Observe that 
    \begin{align*}
        \partial_z^{(k)} S_\mu(z) = \int \frac{ k!}{(t-z)^{k+1}} \mu\boxtimes\nu_\MP (dt).
    \end{align*}
    By our assumption that $\dist(z,\supp( \mu\boxtimes \nu_{\MP}))\geq \epsilon$, the above derivative is bounded as
     \[
        |\partial_z^{(k)}  S_\mu(z)| \leq  \int \frac{ k!}{\epsilon^{k+1}} \mu\boxtimes\nu_\MP(dt)\leq \frac{k!}{\epsilon^{k+1}}\,,
    \]
    as desired. For the lower bound, observe that by \eqref{e:support-bound}, we have that for $z\in B(K)\cap \R$ and $t\in\supp \mu$,
    \[
    |t-z|\lesssim_\phi \tau^{-1}+K\leq (1+K)/\tau,
    \] from which it follows that 
    \[
        |\partial_z^{(k)} S_\mu(z)|  \geq  \frac{ k!\tau^{k+1}}{(1+K)^{k+1}} \,.
    \]
    Then we can extend the above estimate off the real line by observing that 
    \begin{align*}
    |\partial_z^{(k)}  S_\mu(z)| 
    &\geq 
    |\partial_z^{(k)}  S_\mu(\Re[z])| 
    -|\Im[z]|\max_{w\in[\Re[z], z]]}|\partial_z^{(k+1)}  S_\mu(w)|\\ 
    &\geq \frac{k!}{(\tau^{-1}+K)^{k+1}}-|\Im[z]|\frac{(k+1)!}{\eps^{k+2}}\,,
    \end{align*}
    where $[\Re[z],z]$ is the line segment connecting $\Re[z]$ and $z$, and taking $\Im[z]$ small enough.
\end{proof}

Throughout the following, it helps to have the following set
\begin{equation}\label{eq:E-eps-eta}
E_{\epsilon,K,\tau,k}(\mu)= E_\eps(\mu)\cap ([-K,K]\times[-\eta,\eta])
\end{equation}
where $E_\eps(\mu)$ is as in \eqref{eq:E-eps} and $\eta$ is chosen as in Lemma~\ref{l:partial_S_bound}.

\begin{lem}\label{l:Sd_est}
 For any $K,\eps,\tau,k_0>0$, and any $\tau$-regular $\mu$, we have that  $| S_\mu(z)|\geq \tau/2(1+K+\phi)$, and for $2\leq k\leq k_0$, 
    \begin{align}\label{e:high_moment}
        \Big|\phi \int \frac{x^k}{(1+ S_\mu(z) x)^k} \mu(dx)\Big| \lesssim_{\phi,k} \left(\frac{1+\phi}{\tau}\right)^k
   +\frac{(1+K)^{2k}}{(\tau \epsilon^2)^{2k}} 
    \end{align}
    uniformly for all $z\in E_{\eps,K,\tau,k}(\mu)$ 
\end{lem}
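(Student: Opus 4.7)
The proof separates into the lower bound on $|S_\mu(z)|$ and the high-moment estimate; I would attack them in that order since the first feeds into the second.

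For the lower bound, I would argue by contradiction using the self-consistent equation \eqref{eq:def-FMC}. Suppose $|S_\mu(z)| < \tau/(2(1+K+\phi))$. Since $\mu$ is $\tau$-regular, $|x|\le \tau^{-1}$ on $\supp(\mu)$, so $|S_\mu(z)x|\le 1/2$ on the support, hence $|1+S_\mu(z)x|\ge 1/2$, and $|x/(1+S_\mu(z)x)|\le 2/\tau$. Substituting into \eqref{eq:def-FMC} and applying the triangle inequality yields
\[
\frac{1}{|S_\mu(z)|}\le |z|+\frac{2\phi}{\tau}\le\frac{K\tau+2\phi}{\tau},
\]
which rearranges to $|S_\mu(z)|\ge\tau/(K\tau+2\phi)\ge\tau/(2(1+K+\phi))$, a contradiction.

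For the high-moment bound, I would iteratively differentiate the self-consistent equation in $z$. Writing $\Phi_k(z):=\int x^k/(1+S_\mu(z)x)^k \mu(dx)$, the identity $\partial_S\Phi_k=-k\Phi_{k+1}$ together with the chain rule $\partial_S=\partial_z/\partial_z S_\mu$ gives the recursion $\Phi_{k+1}=-\partial_z\Phi_k/(k\,\partial_z S_\mu)$. The base case $\phi\Phi_1=1/S_\mu+z$ is exactly \eqref{eq:def-FMC}, and iterating yields the compact formula
\[
\phi\,\Phi_k(z)=\frac{1}{S_\mu(z)^k}+\frac{(-1)^{k-1}}{(k-1)!}\,\partial_S^{k-1} z(S),
\]
where $z(S)$ is the local inverse of $S_\mu$; in particular $\phi\Phi_2 = 1/S_\mu^2 - 1/\partial_z S_\mu$ and $\phi\Phi_3 = 1/S_\mu^3 - \partial_z^2 S_\mu/(2(\partial_z S_\mu)^3)$. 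By Fa\`a di Bruno (or direct induction on $k$), $\partial_S^{k-1} z(S)$ is a polynomial in $1/\partial_z S_\mu$ and the higher derivatives $\partial_z^j S_\mu$ for $2\le j\le k-1$, of appropriate weighted degree.

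To derive the claimed bound, I would then substitute the estimates from Lemma~\ref{l:partial_S_bound}: for $z\in E_{\eps,K,\tau,k}(\mu)$ we have the upper bound $|\partial_z^j S_\mu|\lesssim_\phi j!/\eps^{j+1}$ and the lower bound $|\partial_z S_\mu|\gtrsim_\phi \tau^2/(1+K)^2$, together with $|S_\mu|\gtrsim_\phi \tau/(1+K+\phi)$ from the first part. The $1/S_\mu^k$ term produces the $((1+\phi)/\tau)^k$ piece, with the residual $K$-dependence absorbed into the second term (legitimate for $\tau,\eps\le 1$), while each monomial appearing in $\partial_S^{k-1} z(S)$ contributes at most a polynomial in $1/\eps$ and $(1+K)/\tau$ falling under the umbrella $(1+K)^{2k}/(\tau\eps^2)^{2k}$.

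The main obstacle I anticipate is the combinatorial bookkeeping: verifying by induction on $k$ that every monomial produced by the iteration fits under the claimed two-term bound. The induction tracks both the number of $1/\partial_z S_\mu$ factors and the maximum order of a $\partial_z^j S_\mu$ appearing in each monomial; since each differentiation step adds at most one extra $1/\partial_z S_\mu$ factor and raises the highest derivative order by one, the powers of $1/\eps$, $1/\tau$, and $(1+K)$ in each term are controlled in a manner compatible with the stated estimate after straightforward (if tedious) algebra.
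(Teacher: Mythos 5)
Your proposal is correct and takes essentially the same route as the paper. The lower bound on $|S_\mu(z)|$ proceeds by the identical contradiction argument from the self-consistent equation, and the high-moment bound comes from the same iterated differentiation of \eqref{eq:def-FMC} followed by an appeal to the derivative estimates of Lemma~\ref{l:partial_S_bound}. Your reformulation of the iterate as $\phi\Phi_k = 1/S_\mu^k + \frac{(-1)^{k-1}}{(k-1)!}\partial_S^{k-1}z(S)$ and subsequent appeal to Fa\`a di Bruno is a cleaner way to organize the combinatorics than the paper's explicit double sum over compositions $\sum_p\sum_{q_1+\cdots+q_r=p-1}$, but the estimates that feed into the final bound (upper bounds $|\partial_z^j S_\mu|\lesssim j!/\eps^{j+1}$, lower bounds on $|\partial_z S_\mu|$ and $|S_\mu|$) are used in exactly the same way, and the absorption of the residual $K$-dependence from the $1/S_\mu^k$ term into the second summand — which the paper does silently — is correctly flagged and justified by you for $\tau,\eps\le 1$.
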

\begin{proof}
We prove $| S_\mu(z)|\geq \tau/2(1+K+\phi) $ by contradiction. Suppose for contradiction that $|S_\mu^{(d)}(z)|< \tau/2(1+K+\phi) $. Then for $x\in \supp(\mu)$, $|x|\leq \tau^{-1}$  and $|S_\mu(z)||x|< 1/(2(1+K+\phi))\leq 1/2$, and
 \begin{align*}
        \frac{2(1+K+\phi)}{\tau}&< \left|\frac{1}{S_\mu(z)}\right|\leq |z|+\phi \int \frac{|x|}{|1+ S_\mu(z) x|}\mu(dx)\\
        &< K+\int 2\phi|x|\mu(dx)\leq K+\frac{2\phi}{\tau}\leq 2(K+\phi)/\tau ,
    \end{align*}
which is impossible. This proves the first claim. 

We now turn to the second. By taking a $z$ derivative on both sides of \eqref{eq:def-FMC} we conclude
\begin{align*}
    \frac{\partial_z  S_\mu(z)}{( S_\mu(z))^2}=1+\phi \int \frac{\partial_z  S_\mu(z) x^2}{(1+ S_\mu(z) x)^2}\mu(dx),
\end{align*}
It follows by rearranging and recalling $|\partial_z ( S_\mu(z))|\gtrsim_\phi \tau^2/(1+K)^2$ from Lemma \ref{l:partial_S_bound}, that 
\begin{align*}
     \phantom{{}={}}\left|\phi\int \frac{ x^2}{(1+S_\mu(z) x)^2}\mu(dx)\right|
     & = \left|\frac{1}{(S_\mu(z))^2} -\frac{1}{\partial_z S_\mu(z)}\right|\\
     &\leq \left|\frac{1}{(S_\mu(z))^2}\right|+\left|\frac{1}{\partial_z S_\mu(z)}\right|
     \lesssim \frac{(1+\phi+K)^2}{\tau^2}+(\frac{\tau}{1+K})^2.
\end{align*}
We can take higher derivatives, and repeat the above computations to get
\begin{align*}
   \left|\phi\int \frac{ x^k}{(1+S_\mu(z) x)^k}\mu(dx)\right| 
   & \lesssim \left|\frac{1}{S_\mu(z)^k}\right|+\sum_{p=1}^{2k-1} \sum_{q_1+q_2+\cdots+q_{r}=p-1}\left|\frac{\partial_z^{q_1} S_\mu(z)\cdots \partial_z^{q_r}S_\mu(z)}{|\partial_z S_\mu(z)|^{p}}\right|\\
   &\lesssim_\phi \left(\frac{2(1+\phi+K)}{\tau}\right)^k+\sum_{p=1}^{2k-1} \sum_{q_1+q_2+\cdots+q_{r}=p-1} \frac{q_1!}{\epsilon^{q_1+1}}\cdots \frac{q_r!}{\epsilon^{q_r+1}}\frac{1}{p!}\Big(\frac{1+K}{\tau}\Big)^p\\
   &\lesssim_k \left(\frac{1+\phi}{\tau}\right)^k
   +\frac{(1+K)^{2k}}{(\tau \epsilon^2)^{2k}},
\end{align*}
and the claim \eqref{e:high_moment} follows. 
\end{proof}

\begin{lem}\label{l:num-small}
For any $K,\eps,\tau>0$, there is an $\eta=\eta(K,\eps,\tau)$ such that the following holds: For any $\tau$-regular $\mu$ and $\delta<1/2$, we have that uniformly over all $z\in E_{\eps,K,\tau,8}(\mu)$,
    \[
    \mu(\{x:\abs{1+S_\mu(z) x}\leq \delta\}) \lesssim_{\eps,\tau,K,\phi} \delta^2.
    \]
\end{lem}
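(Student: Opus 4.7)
The plan is to obtain a uniform $L^{2}$-type bound of the form
\begin{equation}\label{e:L2bound-plan}
\phi\int \frac{x^{2}}{|1+S_{\mu}(z)x|^{2}}\,d\mu(x)\;\leq\; \frac{1}{|S_{\mu}(z)|^{2}}
\end{equation}
valid uniformly on $E_{\eps,K,\tau,8}(\mu)$, and then deduce the lemma by a one-line Markov inequality. Note that, setting $s = S_{\mu}(z)$, on the event $\{|1+sx|\leq \delta\}$ one has $|sx|\geq 1-\delta\geq 1/2$, so $x^{2}\geq 1/(4|s|^{2})$ and hence $\tfrac{x^{2}}{|1+sx|^{2}}\geq \tfrac{1}{4|s|^{2}\delta^{2}}$. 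Plugging this pointwise lower bound into \eqref{e:L2bound-plan} immediately yields
\[
\mu(\{x:|1+S_{\mu}(z)x|\leq \delta\})\;\leq\; \frac{4\delta^{2}}{\phi},
\]
which is the claim.

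It remains to derive \eqref{e:L2bound-plan}. The argument splits into two cases according to whether $z$ lies on the real axis or off it. When $\Im z > 0$, $S_{\mu}(z)$ is the Stieltjes transform of a probability measure, so $\Im S_{\mu}(z) > 0$. Taking the imaginary part of the self-consistent equation \eqref{eq:def-FMC}, and using $\Im\!\big(\tfrac{x}{1+sx}\big) = -\tfrac{x^{2}\,\Im s}{|1+sx|^{2}}$ for real $x$, gives
\[
-\frac{\Im s}{|s|^{2}} \;=\; -\Im z \;-\; \Im s\cdot \phi\int\frac{x^{2}}{|1+sx|^{2}}\,d\mu(x).
\]
Dividing by $\Im s > 0$ and rearranging yields $\phi\int\frac{x^{2}}{|1+sx|^{2}}d\mu = |s|^{-2} - \Im z/\Im s \leq |s|^{-2}$. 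The case $\Im z < 0$ is symmetric. For $z\in E_{\eps,K,\tau,8}(\mu)\cap\R$, $s=S_{\mu}(z)$ is real, so I instead differentiate \eqref{eq:def-FMC} in $z$, obtaining
\[
\frac{1}{s^{2}}-\frac{1}{\partial_{z}s}\;=\;\phi\int\frac{x^{2}}{(1+sx)^{2}}\,d\mu(x).
\]
Both sides are real; moreover $\partial_{z}s(z) = \int (t-z)^{-2}\,d(\mu\boxtimes\nu_{\MP})(t) > 0$ and, by Cauchy--Schwarz, $s(z)^{2}\leq \partial_{z}s(z)$, so the left-hand side lies in $[0, s^{-2}]$, giving \eqref{e:L2bound-plan}.

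The main obstacle is essentially bookkeeping across the real and non-real cases: one must verify that $\Im S_{\mu}(z)$ stays nonzero on the part of $E_{\eps,K,\tau,8}(\mu)$ with $\Im z \neq 0$, and that the Cauchy--Schwarz step in the real case is valid (both amount to $S_{\mu}$ being a Stieltjes transform of a genuine probability measure). The parameters $K,\eps,\tau$ enter only to ensure $|S_{\mu}(z)|$ is bounded and bounded away from $0$ via Lemmas~\ref{l:partial_S_bound}--\ref{l:Sd_est}, which makes the final constant dimension-independent and uniform as stated; the index $8$ in $E_{\eps,K,\tau,8}(\mu)$ is only used to ensure $\eta$ is small enough for these derivative bounds to hold throughout the strip. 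No higher-moment bound from \eqref{e:high_moment} is actually required beyond $k=2$.
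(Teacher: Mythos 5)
Your proof is correct, but it takes a genuinely different route from the paper's, so a comparison is worth recording. The paper also runs an $L^2$-plus-Markov argument, but obtains the key $L^2$ bound $\phi\int \frac{x^2}{(1+S_\mu(z)x)^2}\,d\mu \lesssim_{\eps,\tau,K,\phi} 1$ by invoking Lemma~\ref{l:Sd_est} (which in turn rests on the derivative bounds of Lemma~\ref{l:partial_S_bound}); this is where the $\eps,\tau,K$ dependence enters. You instead extract the $L^2$ bound \emph{exactly} from the self-consistent equation itself: taking the imaginary part when $\Im z\neq 0$, and the $z$-derivative when $z$ is real, yields the identity $\phi\int \frac{x^2}{|1+S_\mu(z)x|^2}\,d\mu = \frac{1}{|S_\mu(z)|^2} - r(z)$ with $r(z)\geq 0$, so the $L^2$ integral is bounded by $|S_\mu(z)|^{-2}$. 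The dividend is that in the Markov step the factor $|S_\mu(z)|^{-2}$ cancels, yielding the cleaner, parameter-free estimate $\mu(\{|1+S_\mu(z)x|\leq\delta\})\leq 4\delta^2/\phi$. A minor aside: your Cauchy--Schwarz remark ($S_\mu^2\leq \partial_z S_\mu$) is not actually needed, since one only requires $\phi\int\frac{x^2}{(1+sx)^2}d\mu\leq s^{-2}$, which is immediate from $\partial_z s>0$; the non-negativity of the left-hand side comes for free from the integrand being non-negative. Your version also sidesteps a small imprecision in the paper's proof: Lemma~\ref{l:Sd_est} controls $\bigl|\phi\int\frac{x^2}{(1+sx)^2}d\mu\bigr|$, not $\phi\int\frac{x^2}{|1+sx|^2}d\mu$, and when $\Im z\neq 0$ these are not a priori the same; your argument works with the modulus-squared integral throughout, which is the quantity the Markov step actually requires.
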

\begin{proof}
Fix $z\in E_\eps$, with $\abs{z}\leq K$.
Note that by Lemma~\ref{l:Sd_est}, we have that
\[
\int \frac{x^2}{(1+S_\mu(z) x)^2 }\mathbf{1}{}(|1+S_\mu(z)x|\leq \delta )\mu(dx) \lesssim_{\eps,\tau,\phi,K} 1 
\]
Using the lower bound on $S_\mu$ from Lemma~\ref{l:Sd_est}, we see that for $|1+S_\mu |x||\leq \delta$ we have
$|x|\gtrsim_{\phi,\tau,K} 1-\delta$
so that
\[
\frac{(1-\delta)^2}{\delta^2}\mu(\{x:\abs{1+S_\mu(z) x}\leq \delta\})\lesssim_{\tau,\phi,K} 1.
\]
which yields the desired for $\delta<1/2$.
\end{proof}

\subsection{Limiting spectral measure and a ``No Outliers'' theorem}
In this section and the next, we will study properties of random matrices of the form $WDW^\top/n$, where $W$ is a $d\times n$ matrix with i.i.d. standard Gaussian entires,  $D$ is a matrix satisfying Assumptions~\ref{assumption:meas}--\ref{assumption:strong-convergence}. and $W$ and $D$ are taken to be \textbf{mutually independent}.   Let $\sigma_d=\sigma(WDW/n)$ denote this spectrum. The goal of this section is to prove that in this setting the limiting empirical spectral distribution is $\nu$ and that eventually a.s. there are no outliers in $\sigma_d$.

Let us start with the first claim. The following is a classical result of Marchenko--Pastur \cite{marchenko1967distribution}, specifically the version proved by Bai--Silverstein \cite{silverstein1995empirical}. 
\begin{prop}\label{p:free-mult-conv-wishart}
Suppose that $D= \diag(D_{\ell\ell})$ has independent entries that satisfy Assumptions~\ref{assumption:meas} and \ref{assumption:strong-convergence} and  let $W$ be an independent $d \times n$ random matrix with i.i.d. $\cN(0,1)$ entries.  Then if $\hat\mu_d$ denotes the the empirical spectral measure of $(1/n) WDW$, we have that
\[
\hat\mu_d \to \rho \boxtimes \nu_{\MP}
\]
weakly almost surely.
\end{prop}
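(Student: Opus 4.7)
The plan is to reduce the statement to the classical Marchenko--Pastur / Bai--Silverstein theorem on sample covariance matrices with a general (not necessarily scalar) population matrix, which after proper hypothesis-checking delivers exactly the desired free multiplicative convolution. The two ingredients I will need are: (i) weak a.s.\ convergence of the empirical spectral distribution of $D$ to $\rho$; and (ii) a formulation of the Silverstein theorem that allows $D$ to be a general Hermitian (here diagonal but potentially sign-indefinite) matrix independent of $W$. Granting these, the theorem identifies the Stieltjes transform of the limit of $\hat\mu_d$ as the unique solution of the self-consistent equation \eqref{eq:def-FMC} with $\mu=\rho$, which by definition characterizes $\rho\boxtimes\nu_{\MP}$.

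For the verification of (i), observe that under Assumption~\ref{assumption:meas} together with the independence of the underlying Gaussians $\vec g_\ell$ across $\ell$ (and independence from the hidden labels $b(\ell)$), the diagonal entries $D_{\ell\ell}=f_{b(\ell)}(\lambda^{-1/2}\vec g_\ell;\bfG^{(d)})$ are i.i.d.\ with common law $\rho_{\bfG^{(d)}}$. Therefore the classical Glivenko--Cantelli / strong law of large numbers for empirical measures gives $\hat\rho^{(d)}\to\rho_{\bfG^{(d)}}$ weakly a.s., and Assumption~\ref{assumption:strong-convergence} furnishes $\rho_{\bfG^{(d)}}\to\rho$ weakly. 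Composing these two convergences yields $\hat\rho^{(d)}\to\rho$ weakly a.s. The uniform compact support condition in Assumption~\ref{assumption:D} controls the tails, ensuring no mass escapes to infinity as $d\to\infty$, which is needed both for the a.s.\ weak convergence to pass through and for the standard moment hypotheses in Bai--Silverstein.

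Given (i), the plan for (ii) is to invoke Silverstein's theorem in the form given in \cite{silverstein1995empirical} (or, equivalently, the version in the Bai--Silverstein book referenced in the paper). Its hypotheses are: $W$ is $d\times n$ with i.i.d.\ entries of mean zero, variance one, and finite second moment (satisfied here since the entries are $\cN(0,1)$); $D$ is Hermitian and independent of $W$; the empirical spectral distribution of $D$ converges weakly a.s.\ to a compactly supported measure (here $\rho$); and $n/d\to\phi\in(0,\infty)$. The conclusion is that the empirical spectral distribution of $(1/n)WDW^\top$ converges weakly a.s.\ to the probability measure whose Stieltjes transform satisfies \eqref{eq:def-FMC} with $\mu=\rho$, which is precisely $\rho\boxtimes\nu_{\MP}$.

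The one subtlety I would flag as the main technical point is that $D$ is not assumed positive semi-definite; several textbook statements of Marchenko--Pastur require a positive ``population'' matrix. For general Hermitian $D$ one should cite the extension of Silverstein--Choi type (see the discussion in \cite{BaiSilversteinBook}), which proves the same fixed-point characterization of the Stieltjes transform on the upper half plane and identifies the limit as $\rho\boxtimes\nu_{\MP}$ via the functional equation \eqref{eq:def-FMC}; uniqueness of this Stieltjes transform together with the standard Stieltjes continuity theorem then gives weak convergence of the measures and hence of $\hat\mu_d$.
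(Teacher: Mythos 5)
The paper's proof is a one-liner: Silverstein's 1995 theorem \cite{silverstein1995empirical} requires that the empirical spectral measure $\hat\rho^{(d)}$ of $D$ converge weakly a.s.\ to a limit, and Assumption~\ref{assumption:strong-convergence} is stated to provide exactly that convergence $\hat\rho^{(d)}\to\rho$ a.s. Your proposal takes the same route (reduce to Silverstein), which is correct, but you over-engineer the verification of the hypothesis in a way that reveals a misreading of Assumption~\ref{assumption:strong-convergence}: you treat it as asserting $\rho_{\bfG^{(d)}}\to\rho$ (convergence of \emph{population} laws) and try to supply the rest via an SLLN for $\hat\rho^{(d)}\to\rho_{\bfG^{(d)}}$, whereas the assumption already directly asserts $\hat\rho_{\bfG^{(d)}}\to\rho_{\bfG}$ weakly a.s.\ — the conclusion you want with no detour. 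Your SLLN step is also not quite cleanly stated, since the entries $D_{\ell\ell}$ form a triangular array whose common law changes with $d$, so ``$\hat\rho^{(d)}\to\rho_{\bfG^{(d)}}$ weakly a.s.'' is a moving target rather than a limit; this is precisely the gap Assumption~\ref{assumption:strong-convergence} is designed to paper over. Minor further points: you invoke Assumption~\ref{assumption:D} for compact support, but the proposition does not assume it (Silverstein's theorem needs only a.s.\ weak convergence of the ESD of $D$, not compact support); and your flag about sign-indefinite $D$ is a fair thing to notice, but the version in \cite{silverstein1995empirical} already allows Hermitian $T_n$ with both signs — the non-negativity issue only genuinely arises in the paper's proof of Proposition~\ref{prop:no-outliers}, where the authors explicitly modify the Bai--Silverstein argument.
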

\begin{proof}
    In \cite{silverstein1995empirical}, this is proved under the assumption that $\hat\rho^{(d)}\to \rho$ weakly a.s. This is the exactly what is guaranteed by Assumption~\ref{assumption:strong-convergence}.
\end{proof}

We now turn to showing that $\sigma_d$ has no outliers.  Here and in the rest of this section, let $E_\eps=E_\eps(\nu_\bfG)$. We also have here the following result whose proof follows a modification of a classical result of Bai--Silverstein~\cite{bai1998no}. 
\begin{prop}\label{prop:no-outliers}
Let $W$ be a $d \times n$ matrix with i.i.d. $\cN(0,1)$ entries and suppose that $D= \diag((D_{\ell\ell})_{\ell \le n})$ has independent entries which satisfy Assumptions~\ref{assumption:meas} and \ref{assumption:strong-convergence}. Suppose that $W$  and $D$ are independent, $n/d\to\phi$, and $\bfGd\to\bfG$. Then if $\sigma_d=\sigma(WDW/n)$ denotes the the spectrum of $(1/n) WDW$, we have that for any $\eps>0$
    \[
    \mathbb P( \sigma_d\cap E_\eps =\emptyset \text{ eventually }) = 1
    \]
    \end{prop}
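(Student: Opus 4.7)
The plan is to adapt the classical ``no outliers'' strategy of Bai--Silverstein~\cite{bai1998no}, originally developed for sample covariance matrices with nonnegative $D$, to our signed setting in which $D$ is only known to be $\tau$-regular with high probability by Lemma~\ref{lem:tau-reg}. The core idea is to show that the normalized resolvent trace concentrates around $\tS(z)$ uniformly on $E_\eps$, and then translate this into an eigenvalue count via Stieltjes inversion.

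\textbf{Step 1: Self-consistent resolvent equation.} For $z$ off the real axis, let $G(z)=(WDW^\top/n - zI_d)^{-1}$. Writing $WDW^\top/n=\sum_\ell (D_{\ell\ell}/n)w_\ell w_\ell^\top$ where $w_\ell$ is the $\ell$-th column of $W$, one applies the Sherman--Morrison identity to each rank-one summand to express $\operatorname{tr} G(z)/d$ in terms of the scalar quantities $w_\ell^\top G^{(\ell)}(z)w_\ell/n$, where $G^{(\ell)}$ omits the $\ell$-th rank-one term. Since the columns are independent Gaussians independent of $G^{(\ell)}$, the Hanson--Wright inequality yields $w_\ell^\top G^{(\ell)} w_\ell/n \approx \operatorname{tr}(G^{(\ell)})/n \approx \operatorname{tr}(G)/n$ with polynomially small error. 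Averaging over $\ell$, and using the $\tau$-regularity of $D$ together with Lemma~\ref{l:num-small} to bound the denominators $|1+\tS(z)D_{\ell\ell}|$ from below, one shows that $\tSd(z):=\operatorname{tr} G(z)/d$ satisfies \eqref{eq:defmz-sec2} up to a perturbation that is $o(1)$ with high probability, uniformly in $z$ on suitable regions of $E_\eps$.

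\textbf{Step 2: Stability and uniform convergence.} The lower bound on $|\partial_z S_\mu|$ from Lemma~\ref{l:partial_S_bound} together with the bounds of Lemma~\ref{l:Sd_est} make \eqref{eq:defmz-sec2} stable under simultaneous perturbation of $\hat\rho^{(d)}$ and $\tSd$. Combined with Assumption~\ref{assumption:strong-convergence} (which provides both weak convergence and Hausdorff convergence of the support of $\hat\rho^{(d)}$), Step~1 then yields uniform convergence $\tSd(z)\to\tS(z)$ on compact subsets of $E_\eps$, down to a polynomially small distance from the real axis.

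\textbf{Step 3 and main obstacle.} Fix a bounded interval $[a,b]\subset E_\eps\cap\R$ at positive distance from $\cZ_{\bfG}$. For small $\eta>0$, a standard Helffer--Sj\"ostrand or Stieltjes-inversion estimate bounds the number of eigenvalues of $WDW^\top/n$ in $[a,b]$ in terms of $\int_a^b \Im \tSd(x+i\eta)\,dx$ plus boundary contributions controlled by $\|G(\cdot+i\eta)\|^2$. Since $\nu_\bfG$ places no mass on $E_\eps$ we have $\Im\tS(x+i\eta)=O(\eta)$ uniformly, and by Step~2 the same estimate transfers to $\tSd$ eventually a.s.; as eigenvalue counts are integer-valued, they must vanish. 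A union bound over a finite cover of $E_\eps\cap[-K,K]$, together with a Bai--Yin-type argument ruling out eigenvalues larger than the uniform support bound \eqref{e:support-bound}, completes the proof. The principal technical difficulty lies in pushing the concentration of $\tSd(z)$ down to values of $\Im z$ of polynomially small order, since near the real axis the naive Hanson--Wright estimates degrade; because $D$ has signed entries, the monotonicity shortcuts available for PSD sample covariance matrices are unavailable, and the quantitative regularity bounds in Lemmas~\ref{l:partial_S_bound}--\ref{l:num-small} are precisely what replace them by controlling the denominators in the self-consistent equation close to the real axis.
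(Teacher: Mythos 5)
You and the paper both lean on Bai--Silverstein \cite{bai1998no}, but the paper invokes that result essentially as a black box, isolating the two places where nonnegativity of $D$ is used (the lemma cited from \cite{silverstein1995strong}, and the positivity of $\Im[r_j^*((1/z)B_{(j)}-I)^{-1}r_j]$) and showing both survive with signed $D$, then verifies the remaining hypotheses; you instead propose to re-derive the result from scratch. Your re-derivation has a concrete gap at the decisive step. In Step~3, a Stieltjes-inversion bound at scale $\Im z = \eta$ yields at best $N([a,b])/d \lesssim \int_a^b \Im \tSd(x+i\eta)\,dx + O(\eta)$, which gives $N([a,b]) = O(d\eta)$; saying that ``eigenvalue counts are integer-valued, so they must vanish'' restates the goal rather than closing the argument. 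To force the count to zero at fixed (or even polynomially small) $\eta$ you need precision $o(1/d)$ on $\tSd(z)-\tS(z)$, which is exactly the difficulty you flag as the ``principal technical obstacle'' but do not resolve. Hanson--Wright at fixed $\eta$ controls the fluctuation of $w_\ell^\top G^{(\ell)} w_\ell/n$ around its conditional mean at order $d^{-1/2}/\eta$, not $o(1/d)$; the high-moment/martingale argument of Bai--Silverstein is precisely what supplies the missing factor, and it is absent from your sketch.

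Two further ingredients that constitute most of the paper's actual proof content are also missing from your proposal. First, the Bai--Silverstein mechanism requires that $[a,b]$ is eventually a gap of the pre-limit measure $\nu_d = \hat\rho^{(d)}\boxtimes\nu_{\MP}$, not merely of the limit $\nu_\bfG$; the paper establishes this via the uniform convergence $\psi_d\to\psi$ and the sign of $\psi_d'$ on $[a,b]$, and your mention of Hausdorff convergence of $\supp\hat\rho^{(d)}$ in Step~2 does not by itself deliver this. Second, the explicit verification that $\Im[r_j^*((1/z)B_{(j)}-I)^{-1}r_j]\geq 0$ is the genuinely nontrivial place where signed $D$ is handled in the resolvent analysis; a from-scratch argument in the BS mold would encounter the same need when exploiting positivity or monotonicity structure in the self-consistent equation, and your proposal is silent on it.
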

\begin{proof}
In \cite{bai1998no}, the concentration of the extreme eigenvalues is proven under an extra assumption that $D$ is non-negative definite, which is the matrix $T_n$ in \cite{bai1998no}. The proof in \cite{bai1998no} also carries out when $D$ is not non-negative definite, by taking $D^{1/2}=\diag\{\sqrt{D_{\ell \ell}}\}_{1\leq \ell\leq n}$, which is a complex valued matrix. There are two places that non-negativity is assumed in \cite{bai1998no}. The first place is \cite[Lemma 2.11]{bai1998no}. But in fact its proof as given in \cite[Lemma 2.3]{silverstein1995strong} does not require that the matrix is non-negative definite. 
The second place is the claim $\Im[r_j^*((1/z)B_{(j)}-I)^{-1}r_j]\geq 0$ above \cite[Equation (3.4)]{bai1998no}. If we translate that statement to our setting, we have
\begin{align*}
    &\phantom{{}={}}\frac{1}{n}\Im\left[ W_{i\cdot} D^{1/2}((1/z)D^{1/2}\sum_{j\neq i} W^\top _{j\cdot} W_{j\cdot} D^{1/2}-I)^{-1} D^{1/2} W^\top_{i\cdot}\right]\\
    &=\frac{1}{n}\Im\left[ W_{i\cdot} ((1/z)\sum_{j\neq i} W^\top _{j\cdot} W_{j\cdot} -D^{-1})^{-1} W^\top_{i\cdot}\right]\\
    &
  =-\frac{1}{n} W_{i\cdot}((1/z)\sum_{j\neq i} W^\top _{j\cdot} W_{j\cdot} -D^{-1})^{-1} \Im\left[(1/z)\sum_{j\neq i} W^\top _{j\cdot} W_{j\cdot}\right]\overline{((1/z)\sum_{j\neq i} W^\top _{j\cdot} W_{j\cdot} -D^{-1})^{-1}} W^\top_{i\cdot}\\
  &=\frac{\Im[z]}{|z|^2n} W_{i\cdot}((1/z)\sum_{j\neq i} W^\top _{j\cdot} W_{j\cdot} -D^{-1})^{-1} \left(\sum_{j\neq i} W^\top _{j\cdot} W_{j\cdot}\right)\overline{((1/z)\sum_{j\neq i} W^\top _{j\cdot} W_{j\cdot} -D^{-1})^{-1}} W^\top_{i\cdot}\geq 0.
\end{align*}

It remains to check the conditions of that theorem. Specifically, it remains to show that, for any interval $[a,b]\in E_\tau$, we have that $\nu(E_\tau)=0$ and eventually almost surely $\nu^{(d)}([a,b])=0$ where $\nu^{(d)}=\hat\rho^{(d)}\boxtimes\nu_\bfG$. This will follow immediately from Assumption~\ref{assumption:strong-convergence}.

Indeed, first observe that by Assumption~\ref{assumption:strong-convergence} we have
\begin{equation}\label{eq:strong-convergence}
\max_{\ell\leq n} d_H(\Dll,\supp(\rho))\to 0.
\end{equation}
Now, fix $[a,b]\subseteq \supp(\rho)^c$ and let
\begin{align*}
    \psi_d(y) &= y+\phi^{-1}y\int(y-t)^{-1}t\hat\rho^{(d)}(dt)
\end{align*}
and define $\psi(y)$ analogously with $\rho$ in place of $\hat\rho^{(d)}$. Observe that $\psi(y)=\tS^{-1}(-1/y)$ (and $\psi_d(y)=(\tSd)^{-1}(-1/y)$) so that $\psi$ is well-defined on $[a,b]$ and by \eqref{eq:strong-convergence}, $\psi_d$ is as well for $d$ sufficiently large. 
Recall now that, by standard properties of Stieltjes transforms (see, e.g., \cite{silverstein1995analysis}), $[a,b]$ is a gap of $\tilde S$ if and only if $\psi'>0$ on that interval and the same holds for $\psi_d$ and $\tSd$ respectively. Thus it suffices to prove that $\psi_d'>0$ on $[a,b]$

To see this, observe that the integrands are uniformly bounded there as well. Thus, $\psi_d\to \psi$ pointwise by the bounded convergence theorem. Since  $\psi_d$ and $\psi$ are twice differentiable and convex there, we further more have $\psi_d\to\psi$ and $\psi_d'\to\psi'$ uniformly. Thus eventually $\psi_d'>0$ as desired.
\end{proof}

\subsection{Deterministic equivalent}
For the proof of our main theorem, we will need to provide a ``deterministic equivalent'' of a certain important matrix. To this end, we need to prove a modification of the Knowles--Yin anisotropic local law \cite{knowles2017anisotropic} to the setting of matrices of the form $WDW/n$, where $D$ can be negative and satisfies weaker regularity assumptions. 

We start first by proving a coarse estimate on the Green's function which follows from the arguments of \cite[Section 5]{knowles2017anisotropic}, but with some important modifications to account for the new setting. As this proof is somewhat lengthy, we only describe here what changes in the argument. In the remainder of this section, we then extend this bound all the way up to $\R$.

\begin{thm}\label{t:local_law-0}
Fix any $\tau>0$ independent of $n,d$. Suppose that $D$ is invertible and its empirical spectral measure is $\tau$-regular. Let $W$ denote an independent $d \times  n$ matrix with i.i.d.\ standard Gaussian entries.
Denote the Green's function and its approximation as
\begin{align}\label{eq:Pi-def}
 G(z)=\left[
\begin{array}{cc}
-z &  W\\
 W^\top  & -D^{-1}
\end{array}
\right]^{-1},
\quad
\Pi(z)=\left[
\begin{array}{cc}
\widetilde S^{(d)}(z)I_d & 0\\
0  & -D(I+\widetilde S^{(d)}(z)D)^{-1}
\end{array}
\right],
\end{align}
where $\widetilde S^{(d)}(z)$ solves \eqref{eq:defmz-sec2}.
For any unit vectors $\bm v, \bm u\in \mathbb R^{d+n-q}$, the following holds with probability $1-O(n^{-100})$ 
\begin{align}\label{eq:deterministic-equivalent}
\langle \bm v, (G(z)-\Pi(z))\bm u \rangle\leq \frac{C}{d^{1/4}\Im[z]^5}, 
\end{align}
uniformly over all $|z|\leq \tau^{-1}$.
We denote this by ${G}(z)\approx \Pi(z)$.
\end{thm}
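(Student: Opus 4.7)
The plan is to adapt the anisotropic local law of Knowles--Yin \cite{knowles2017anisotropic} to the present setting, where $D$ is only required to be invertible and $\tau$-regular (in particular, possibly indefinite) rather than positive semidefinite. Since the target error $d^{-1/4}\Im[z]^{-5}$ is strictly weaker than a sharp local-law bound, one can afford losses at several intermediate steps. I would work directly with the linearized resolvent $G(z)$ from \eqref{eq:Pi-def}: the block structure makes Schur complement reductions transparent, and placing $D^{-1}$ (rather than $D$ or $D^{1/2}$) in the lower-right block turns the indefiniteness of $D$ into a mere operator bound $\|D^{-1}\|_{\mathrm{op}}\le \tau^{-1}$ coming from $\tau$-regularity.

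The first step is to derive approximate self-consistent equations for the diagonal entries of $G(z)$. Applying the Schur complement formula row-by-row produces, for $i$ in the top block, an expression of the form $G_{ii}(z)^{-1} = -z - \langle W_{i,\cdot}, \widetilde G^{(i)}(z) W_{i,\cdot}\rangle$, where $\widetilde G^{(i)}(z)$ is the $n\times n$ bottom-right block of the minor resolvent obtained by deleting row/column $i$ and hence is independent of $W_{i,\cdot}$; an analogous identity holds for bottom-block indices. Conditionally on the minor and on $D$, the quadratic form is a Gaussian chaos, so the Hanson--Wright inequality shows that it concentrates sharply around $\operatorname{tr} \widetilde G^{(i)}(z)$. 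Standard interlacing replaces this minor trace by the full trace, and Proposition~\ref{p:free-mult-conv-wishart} combined with the self-consistent equation \eqref{eq:defmz-sec2} identifies this trace with $\widetilde S^{(d)}(z)$ up to permissible error.

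The second and principal step is to invert this approximate relation stably in order to conclude $G\approx\Pi$. Here lies the main departure from Knowles--Yin: when $D$ is indefinite, one cannot use positivity of the self-consistent map to control its derivative. Instead I would use the $\tau$-regularity estimates of Lemmas~\ref{l:partial_S_bound}--\ref{l:num-small}, which provide quantitative lower bounds on $|\widetilde S^{(d)}(z)|$ and control the $D$-mass near the singular locus of $(I+\widetilde S^{(d)} D)^{-1}$, both uniformly over $|z|\le \tau^{-1}$ with $\Im[z]$ not too small. A standard contraction argument then promotes approximate self-consistency into an entrywise bound $|G_{ii}-\Pi_{ii}|\lesssim d^{-1/4}\Im[z]^{-5}$, with analogous smallness for off-diagonal entries; the powers of $\Im[z]^{-1}$ accumulate through the repeated use of resolvent identities, and the factor $d^{-1/4}$ reflects a Cauchy--Schwarz loss absorbed at the stability step rather than the sharper estimate available in the positive-definite case. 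The entrywise bounds are then promoted to the anisotropic form $|\langle v,(G-\Pi)u\rangle|$ by expanding in the standard basis and exploiting smallness of the Hilbert--Schmidt norm of the fluctuation part. Uniformity in $z$ over $|z|\le \tau^{-1}$ is obtained by taking a net of mesh $n^{-10}$ and applying deterministic Lipschitz bounds $\|\partial_z G\|,\|\partial_z \Pi\|\lesssim \Im[z]^{-2}$, with a union bound absorbing the $O(n^{-100})$ failure probability per point. I expect the stability step in indefinite $D$ to be the single real technical obstacle; everything else is a careful reprise of the Knowles--Yin machinery.
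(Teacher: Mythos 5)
Your proposal follows the same broad framework as the paper — the entry-wise mechanism of Knowles--Yin \cite{knowles2017anisotropic} (Section~5), followed by an anisotropic upgrade — and you correctly pinpoint indefiniteness of $D$ as the obstruction. However, the specific ingredients you propose to fill the two key gaps do not work. For stability, you propose Lemmas~\ref{l:partial_S_bound}--\ref{l:num-small}, but those are deterministic estimates on $E_\eps(\mu)$, i.e.\ for $z$ near the real axis and at distance $\ge\eps$ from the bulk support with $\eps$ fixed; the theorem requires uniformity over all $|z|\le\tau^{-1}$, including $\Re[z]$ inside the bulk and $\Im[z]$ shrinking with $d$ (the error $d^{-1/4}\Im[z]^{-5}$ is nontrivial down to $\Im[z]\sim d^{-1/20}$), a regime where those lemmas either do not apply or incur $\eps^{-1}$ blow-up. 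The paper resolves this by citing the classical stability estimate of Bai--Silverstein \cite[Eqs.~3.20--3.21]{bai1998no}, valid for any $\Im[z]>0$ irrespective of the sign of $D$, which directly gives $|m_d(z)-\widetilde S^{(d)}(z)|\le C\delta(z)/\Im[z]^4$ from the approximate self-consistency. The two tools are not interchangeable: the $\tau$-regularity lemmas serve the outlier analysis away from the bulk, not stability at small $\Im[z]$ over the bulk.

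The promotion from entry-wise to anisotropic also cannot go via ``expanding in the standard basis and exploiting smallness of the Hilbert--Schmidt norm of the fluctuation part.'' With the max-entry bound $\max_{ij}|G_{ij}-\Pi_{ij}|\le E$, the Hilbert--Schmidt norm of $G-\Pi$ is only $O(nE)\gg 1$, which yields nothing for $\langle \bm v,(G-\Pi)\bm u\rangle$; the needed cancellation is probabilistic and comes from the moment/polynomialization argument of \cite[Lemma~6.2]{knowles2017anisotropic}, which is exactly the citation used as the paper's final step. One further small point on your first step: Proposition~\ref{p:free-mult-conv-wishart} is an asymptotic statement and does not identify the finite-$d$ trace with $\widetilde S^{(d)}(z)$; at finite $d$, $\widetilde S^{(d)}$ is defined as the exact solution of \eqref{eq:defmz-sec2}, and the comparison of $m_d$ to $\widetilde S^{(d)}$ is precisely what the missing stability step must supply.
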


\begin{proof}
The paper \cite{knowles2017anisotropic} studies sample covariance matrices with population covariance matrix $D=TT^\top$, which are by definition positive semi-definite. Under some extra assumption on the limiting empirical eigenvalue distribution of $D$, they prove a much stronger version of \eqref{eq:deterministic-equivalent} down to optimal scale, i.e., inside the bulk one can take $z$ such that $\Im[z]=O(d^{-1+o(1)})$, and close to the edge, $\Im[z]=O(d^{-2/3+o(1)})$. In particular they obtain a much tighter bound than \eqref{eq:deterministic-equivalent}. As we are only interested in this weaker bound, we can do away with the assumptions of positivity and, more importantly, regularity of the spectral edge. As \eqref{eq:deterministic-equivalent} is trivial on the real axis, we will work with $\Im[z]=\eta(d)>0$.

To prove Theorem~\ref{t:local_law}, we follow the same argument as in \cite[Section 5]{knowles2017anisotropic}, where a weak version of entry-wise local law is proven. The key step in this argument is to prove the stability of the self-consistent equation for the Stieltjes transform of the empirical spectral distribution. 
In our setting a self-consistent equation for the Stieltjes transform of the empirical eigenvalue distribution of $ W D  W^{\top}$
\begin{align}
    m_d(z):= \frac{1}{d}{\rm Tr}[( W D  W^{\top}-z)^{-1}]
\end{align}
can be derived as follows. Introduce a control parameter
\begin{align}
    \Lambda=\max_{1\leq i,j\leq d} \frac{| G_{ij}-\Pi_{ij}|}{|d_i d_j|}\,,
\end{align}
and the event $\Xi=\{\Lambda\leq (\log N)^{-1}\}$. By the same argument as in \cite[Lemma 5.3]{knowles2017anisotropic}, for any $C>0$ and small $a>0$, the following holds with probability $1-O(d^{-C})$ 
\begin{align}\label{e:conditional-stability}
   \bm1\{\Xi\}\left(-\frac{1}{m_d(z)}+\phi \int \frac{x}{1+m_d(z) x}\hat\rho^{(d)} (x)d x -z\right)\leq \bm1\{\Xi\}d^a \sqrt{\frac{(1/\Im[z])+\Lambda}{d\Im[z]}}\,,
\end{align}
provided $d$ is large enough. On the event $\Xi$, the above estimate can be viewed as an approximate self-consistent equation for $m_d(z)$ of the form
\begin{align}\label{e:self_consistent}
\left|
-\frac{1}{m_d(z)}+\phi \int \frac{x}{1+m_d(z) x}\hat\rho^{(d)} (x)d x -z\right|\leq \delta(z),
\end{align}
where $\delta(z)$ is the right hand side of \eqref{e:conditional-stability}.
(We pause here to note that all of the assumptions used in \cite{knowles2017anisotropic} on the empirical spectral distribution of $D$ were used to obtain a stronger estimate on this self-consistent equation, that we do not require here.)

When $z$ is bounded away from the real axis, the stability of \eqref{e:self_consistent} is also studied in \cite[Eqs 3.20, 3.21]{bai1998no}. By taking ($(m_n, m_n^0, \omega)$ there is our $(m_d, \wt S^{(d)}, \delta(z))$, they obtain that when $|\delta(z)|\leq \Im[z]$, then there exists some constant $C>0$
\begin{align}
    \Im[z]^2|m_d(z)-\widetilde S^{(d)}(z)|
    \leq C m_d(z)\widetilde S^{(d)}(z)\delta(z)\,,
\end{align}
for the Stieltjes transforms, $|\widetilde S^{(d)}(z)|, |m_d(z)|\leq 1/\Im[z]$, then it follows that 
\begin{align}\label{e:stability2}
    |m_d(z)-\widetilde S^{(d)}(z)|\leq \frac{C\delta(z)}{\Im[z]^4}\,.
\end{align}
Using the stability \eqref{e:stability2} as input, together with \cite[Lemma 5.3]{knowles2017anisotropic}, by the same argument as in \cite[Proposition 5.1]{knowles2017anisotropic} one can show that with probability at least $1-O(d^{-100})$
\begin{align}\label{eq:deterministic-equivalent1} 
  \max_{ij}|G_{ij}(z)-\Pi_{ij}(z)|,  |m_d(z)-\widetilde S^{(d)}(z)|\leq \frac{C}{d^{1/4}\Im[z]^5}.
\end{align}
Then the deterministic equivalent \eqref{eq:deterministic-equivalent} follows from the entry-wise deterministic equivalent \eqref{eq:deterministic-equivalent1} and  \cite[Lemma 6.2]{knowles2017anisotropic}.
\end{proof}
We now extend this result to the real line for our choice of $D$. We start with the following basic estimates. 

Recall that we let $\sigma_d$ denote spectrum of $(1/n) WDW$ and $B(K)$ is the ball of radius $K$ in $\mathbb C$. 

\begin{lem}\label{l:everyone-is-lipschitz}
 Assume that $\sigma_d\cap E_{\eps/2}(\nu)=\emptyset$ and that the empirical spectral measure of $D$ is $\tau$-regular. Then the functions $\wt S^{(d)}(z)$, $G_{ij}(z), \Pi_{ij}(z)$ are all Lipschitz with constant $C(\epsilon)$ for $z\in E_\eps$. 
\end{lem}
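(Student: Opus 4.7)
The plan is to obtain uniform derivative bounds $|\partial_z f(z)| \le C(\eps)$ for each of the three functions $f \in \{\widetilde S^{(d)}, G_{ij}, \Pi_{ij}\}$ on $E_\eps$. By \eqref{e:support-bound} one may restrict to $E_\eps \cap B(K)$ for some $K = K(\tau,\phi)$. A key observation is that under the hypothesis $\sigma_d \cap E_{\eps/2}(\nu) = \emptyset$, the triangle inequality immediately yields $\dist(z,\sigma_d) \ge \eps/2$ for every $z \in E_\eps$.

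First I would handle $G_{ij}$, which is the easiest case. Writing $M(z) = M_0 - zP$ with $P$ the orthogonal projection onto the first $d$ coordinates, we get $\partial_z G(z) = G(z)\,P\,G(z)$, so $|\partial_z G_{ij}(z)| \le \|G(z)\|^2$. By the Schur complement, the singular locus of $G$ is exactly $\sigma_d$, hence $\|G(z)\| \lesssim 1/\dist(z,\sigma_d) \le 2/\eps$, giving $|\partial_z G_{ij}(z)| \lesssim 1/\eps^2$. Next I would turn to $\widetilde S^{(d)}$, which is the Stieltjes transform of $\nu_d = \hat\rho^{(d)}\boxtimes\nu_{\MP}$. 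Since $\hat\rho^{(d)}$ is $\tau$-regular by hypothesis, Lemma~\ref{l:partial_S_bound} applied to $\mu = \hat\rho^{(d)}$ yields $|\partial_z \widetilde S^{(d)}(z)| \lesssim (\eps')^{-2}$ uniformly on $E_{\eps'}(\hat\rho^{(d)}) \cap B(K)$. It thus remains to show $E_\eps(\nu) \subseteq E_{c\eps}(\hat\rho^{(d)})$ for some $c = c(\eps,\tau,\phi) > 0$, i.e.\ that $\supp(\nu_d)$ sits within an $(1-c)\eps$-neighborhood of $\supp(\nu)$. Finally, for $\Pi_{ij}$, the top-left block $\widetilde S^{(d)}(z) I_d$ is handled by the previous step, while the bottom-right diagonal entries $-D_{\ell\ell}/(1+\widetilde S^{(d)}(z) D_{\ell\ell})$ have derivative $D_{\ell\ell}^2\,\partial_z \widetilde S^{(d)}(z)/(1+\widetilde S^{(d)}(z) D_{\ell\ell})^2$; once a uniform lower bound $|1+\widetilde S^{(d)}(z) D_{\ell\ell}| \ge c(\eps,\tau,\phi)$ over $\ell \le n$ and $z \in E_\eps(\nu)$ is established, the bound $|D_{\ell\ell}| \le \tau^{-1}$ from $\tau$-regularity closes the argument.

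The main obstacle is the assertion appearing in both the $\widetilde S^{(d)}$ and $\Pi_{ij}$ steps: that control of the random spectrum $\sigma_d$ transfers to the deterministic quantities $\supp(\nu_d)$ and $\{1+\widetilde S^{(d)}(z) D_{\ell\ell}\}_\ell$ on $E_\eps(\nu)$. The natural bridge is the entry-wise approximation $m_d \approx \widetilde S^{(d)}$ from \eqref{eq:deterministic-equivalent1}: a hypothetical pole of $\widetilde S^{(d)}$ or of $\Pi_{\ell\ell}$ at some $z_0 \in E_\eps(\nu)$ would, through the local law, force $m_d$ and hence an eigenvalue in $\sigma_d$ to approach $z_0 \in E_{\eps/2}(\nu)$, contradicting the hypothesis. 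Making this propagation quantitative, and in particular passing the local law down from $\Im z > 0$ to the real axis at scale independent of $d$, is the crux of the argument and is presumably the reason the lemma is placed as a preliminary to the extension of Theorem~\ref{t:local_law-0} to the real line.
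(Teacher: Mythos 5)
Your plan tracks the paper's proof quite closely: for $G_{ij}$ the argument (via $\partial_z G = GPG$ and the Schur-complement identification of the singular locus with $\sigma_d$) is correct and essentially what the paper does, and your use of Lemma~\ref{l:partial_S_bound} for $\tSd$ is a valid alternative to the paper's direct computation of $\int |z-z'| / (|z-x||z'-x|)\,\nu_d(dx)$. You have also correctly identified the subtle point that both the $\tSd$ step and the $\Pi_{\ell\ell}$ step require information that the stated hypothesis does not immediately supply: the hypothesis constrains the \emph{random} spectrum $\sigma_d$, whereas $\tSd$ and $\Pi$ are deterministic (conditional on $D$) and their Lipschitz bounds require control of $\supp(\nu_d)$ and of $\min_\ell |1+\tSd(z)D_{\ell\ell}|$ on $E_\eps(\nu)$. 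The paper's proof in fact glosses over exactly this point — it bounds $|z-x|\ge\eps/2$ for $x\in\sigma_d$ but then integrates against the measure $\hat\rho^{(d)}$ (which should be $\nu_d$), and the $\Pi$ Lipschitz claim is asserted with no justification of the lower bound on $|1+\tSd D_{\ell\ell}|$ — so your reconstruction faithfully reproduces the paper's argument \emph{and} honestly flags its implicit step.

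Where you go astray is in the proposed bridge. Invoking the preliminary local law \eqref{eq:deterministic-equivalent1} to force an eigenvalue near a hypothetical singularity of $\tSd$ on $E_\eps(\nu)\cap\R$ does not close the gap: the error in \eqref{eq:deterministic-equivalent1} is $\sim d^{-1/4}/\Im z^5$, so it only controls $m_d-\tSd$ at spectral scales tied to $d$, and pushing it to the real axis at a fixed scale $\eps$ is precisely what Lemma~\ref{l:greens-function-lipschitz} (which \emph{uses} the present lemma) accomplishes — so this route is effectively circular. The intended resolution is simpler and purely deterministic: the Bai--Silverstein argument underlying Proposition~\ref{prop:no-outliers} already shows (via the criterion $\psi_d'>0$ on gap intervals) that $\supp(\nu_d)\cap E_{\eps/2}(\nu)=\emptyset$ for $d$ large, giving the containment $E_\eps(\nu)\subseteq E_{\eps/2}(\nu_d)$ your $\tSd$ step needs; and for $\Pi_{\ell\ell}$, the classical Silverstein--Choi characterization of $\supp(\nu_d)$ shows $1+\tSd(z)x\ne 0$ for every $x\in\supp\hat\rho^{(d)}$ whenever $z\notin\supp\nu_d$, which combined with compactness of $E_\eps(\nu)\cap B(K)$, $\supp\hat\rho^{(d)}\subset[-\tau^{-1},\tau^{-1}]$, and continuity of $\tSd$ yields the required uniform lower bound. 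In short: the missing ingredients are deterministic consequences of $\tau$-regularity plus the support convergence of Proposition~\ref{prop:no-outliers}, not a consequence of the local law.
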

\begin{proof}
    We start with $\wt S^{(d)}(z)$. By our assumption on the spectrum, for $z,z'\in E_\eps $ and $x\in \sigma_d$, we have $|z-x|, |z'-x|\geq \eps/2$ so that 
    \begin{align*}
        \left|\wt S^{(d)}(z)-\wt S^{(d)}(z')\right|\leq \int \frac{|z-z'|\hat\rho^{(d)}(dx)}{|z-x||z'-x|}
        \leq |z-z'| \int \frac{\hat\rho^{(d)}(d x)}{\eps^2}\leq \frac{|z-z'| }{\eps^2}.
    \end{align*}
    Thus $\wt S^{(d)}(z)$ is Lipschitz with Lipschitz constant bounded by $1/\eps^2$. The quantity $\Pi_{ij}(z)$ as in \eqref{eq:Pi-def} is defined in terms of $\wt S^{(d)}(z)$, so is also Lipschitz. 

    For the Green's function $G_{ij}(z)$, we check the case that $1\leq i,j\leq d$, and the other cases can be proven in the same way
    \begin{align*}
        G_{ij}(z)=((1/n)WDW^\top-z)_{ij}^{-1}.
    \end{align*}
   For $z,z'\in E_\eps$, we have $|z-x|, |z'-x|\geq \eps/2$ for $x\in \sigma_d$, and $\|(-z-(1/n)WDW^\top)^{-1}\|\leq 2/\eps$, and 
\[
|\partial_z G_{ij}(z)|=|((1/n)WDW^\top-z)_{ij}^{-2}|\lesssim 4/\eps. \qedhere
\]
\end{proof}

\begin{lem}\label{l:greens-function-lipschitz}
Assume that $\sigma_d\cap E_{\eps/2}=\emptyset$ and that the empirical spectral measure for $D$ is $\tau$-regular. Then for $z\in E_\eps$
 the following holds: 
if for any unit vectors $\bm v, \bm u\in \mathbb R^{d+n-q}$ we have
 \begin{align}\label{eq:prelim-bound}
      \langle \bm v, (G(z)-\Pi(z))\bm u \rangle\lesssim \frac{1}{d^{1/4}\Im[z]^5}
  \end{align}
then in fact, we have
  \begin{align}
      \langle \bm v, (G(z)-\Pi(z))\bm u \rangle\lesssim_\eps \frac{1}{d^{1/24}}
  \end{align}
\end{lem}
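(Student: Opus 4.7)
The plan is a standard two-scale Stieltjes-transform argument. The hypothesis \eqref{eq:prelim-bound} is effective only when $\Im[z]$ is not too small, because of the $\Im[z]^{-5}$ penalty, so I would first apply it at a point $z'$ shifted above the real axis by some $\eta>0$, and then transfer the bound back to the original $z$ via the Lipschitz continuity of $G$ and $\Pi$ guaranteed by Lemma~\ref{l:everyone-is-lipschitz}. The two error sources to balance are $d^{-1/4}\eta^{-5}$ from the hypothesis at $z'$, and $C(\eps)\eta$ from the Lipschitz transfer. Equating them yields the optimal choice $\eta = d^{-1/24}$ and the stated rate $d^{-1/24}$.

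Concretely, I would split into two cases on $\Im[z]$. If $\Im[z]\geq \eta := d^{-1/24}$, the hypothesis at $z$ directly gives
\[
|\langle \bm v,(G(z)-\Pi(z))\bm u\rangle|\lesssim d^{-1/4}\eta^{-5}=d^{-1/24}.
\]
Otherwise let $z'=\Re[z]+i\eta$, so $\Im[z']=\eta$ and $|z-z'|\leq\eta$. The hypothesis at $z'$ produces the same $d^{-1/24}$ bound, and Lemma~\ref{l:everyone-is-lipschitz}, applied along the vertical segment $\{z+t(z'-z):t\in[0,1]\}$, gives
\[
|\langle \bm v,(G(z)-G(z'))\bm u\rangle|+|\langle \bm v,(\Pi(z)-\Pi(z'))\bm u\rangle|\leq 2C(\eps)|z-z'|\leq 2C(\eps)\eta.
\]
The triangle inequality combines the two contributions to give the $O_\eps(d^{-1/24})$ bound claimed.

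The only point requiring care, which I expect to be the main (mild) obstacle, is verifying that the Lipschitz estimate of Lemma~\ref{l:everyone-is-lipschitz} applies uniformly on the segment between $z$ and $z'$, since $z'$ may lie slightly outside $E_\eps$. Writing $w=\Re[z]+is$ with $s\in[\Im[z],\eta]$, one has $\dist(w,\supp\nu)\geq\eps-\eta\geq\eps/2$ once $d$ is large. For $x\in\sigma_d\subseteq\R$, using $\dist(z,\supp\nu)\geq\eps$ together with $\Im[z]<\eta\leq\eps/2$ gives $|\Re[z]-y|\geq\sqrt{\eps^2-\eta^2}$ for every $y\in\supp\nu$. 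Since any $x\in\sigma_d$ lies within $\eps/2$ of $\supp\nu$ by the assumption $\sigma_d\cap E_{\eps/2}=\emptyset$, a triangle inequality yields $|w-x|\geq|\Re[z]-x|\geq\sqrt{\eps^2-\eta^2}-\eps/2\gtrsim\eps$ uniformly on the segment. This reproduces the key resolvent bound underlying Lemma~\ref{l:everyone-is-lipschitz} with an $\eps$-dependent Lipschitz constant valid on the segment, and the rest of the argument is purely mechanical.
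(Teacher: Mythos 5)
Your argument is the same as the paper's: shift $z$ upward by $\eta$, apply \eqref{eq:prelim-bound} at $z' $, transfer back via the Lipschitz bound of Lemma~\ref{l:everyone-is-lipschitz}, and optimize to get $\eta = d^{-1/24}$. The paper takes $z'=z+i\eps'$ (rather than $\Re[z]+i\eta$) with no case split, but this is an immaterial difference. The only thing I would flag is that your final paragraph, devoted to verifying that the segment from $z$ to $z'$ stays in a region where the Lipschitz bound applies, is dealing with a non-issue: since $\supp(\nu_\bfG)\subset\R$, for $\Im[z]\geq 0$ the map $t\mapsto \dist(\Re[z]+it,\supp(\nu_\bfG))=\bigl(\dist(\Re[z],\supp(\nu_\bfG))^2+t^2\bigr)^{1/2}$ is nondecreasing, so moving upward can only increase the distance and the whole segment automatically remains in $E_\eps$ (and one reduces to $\Im[z]\geq 0$ by the resolvent symmetry $G(\bar z)=\overline{G(z)}$). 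Your $\sqrt{\eps^2-\eta^2}-\eps/2$ argument is harmless but unnecessary.
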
 

\begin{proof}
Take small $\eps'>0$, which will be optimized later.
    Let $z'=z+i \eps'$, then \eqref{eq:prelim-bound} gives
    \begin{align}\label{e:t1}
\langle \bm v, (G(z')-\Pi(z'))\bm u \rangle\leq \frac{C}{d^{1/4}(\eps')^5}.
\end{align}
By the Lipschitz property of $\langle \bm v, (G(z)-\Pi(z))\bm u \rangle$ from Lemma~\ref{l:everyone-is-lipschitz},
\begin{align}\label{e:t2}
    |\langle \bm v, (G(z')-\Pi(z'))\bm u \rangle-\langle \bm v, (G(z)-\Pi(z))\bm u \rangle|\lesssim_\eps |z'-z|\lesssim_\eps \eps'.
\end{align}
Combining \eqref{e:t1} and \eqref{e:t2}, we conclude 
\begin{align*}
   |\langle \bm v, (G(z)-\Pi(z))\bm u \rangle|\lesssim_\eps  \frac{1}{d^{1/4}(\eps')^5}+\eps'\lesssim_\eps \frac{1}{d^{1/24}},
\end{align*}
provided we take $\eps'=1/d^{1/24}$. 
\end{proof}

As an immediate corollary of  the above we obtain the following.
\begin{thm}\label{t:local_law}
Fix any $\tau>0$ independent of $n,d$. Suppose that $D$ has independent entries which satisfy Assumptions~\ref{assumption:meas}-\ref{assumption:strong-convergence}, $\bfGd\to\bfG$, and that $W$ is an independent $d \times  n$ matrix with i.i.d. standard Gaussian entries. Denote the Green's function $G(z)$ and its approximation $\Pi(z)$ as in~\eqref{eq:Pi-def}. 
For any unit vectors $\bm v, \bm u\in \mathbb R^{d+n-q}$
\begin{align}\label{eq:deterministic-equivalent-1}
\langle \bm v, (G(z)-\Pi(z))\bm u \rangle\lesssim_{\eps,\tau} \frac{1}{d^{1/24}}, 
\end{align}
uniformly over all $z\in E_\eps\cap B(\tau^{-1})$  eventually a.s.  We denote this by ${G}\approx \Pi$. In particular, we may take $\bm{v},\bm{u}$ to be random, provided they are independent of $W$.
\end{thm}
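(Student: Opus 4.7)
The plan is to combine three already-established ingredients: the off-axis anisotropic local law of Theorem~\ref{t:local_law-0}, the no-outliers result of Proposition~\ref{prop:no-outliers}, and the Lipschitz extension of Lemma~\ref{l:greens-function-lipschitz}. All the hard analytic work has been done in the preceding subsections; this final statement is essentially a clean repackaging, together with a Borel--Cantelli argument to upgrade from ``with probability $1-O(n^{-100})$'' to ``eventually almost surely.''

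First, I would set up the good events. Fix $\epsilon,\tau>0$. By Lemma~\ref{lem:tau-reg} there is some $\tau_0>0$, independent of $d$, such that $\hat\rho^{(d)}$ is $\tau_0$-regular with probability at least $1-Ce^{-cd}$; by Borel--Cantelli this holds eventually almost surely. By Proposition~\ref{prop:no-outliers}, which uses exactly Assumptions~\ref{assumption:meas}--\ref{assumption:strong-convergence} and the hypothesis $\bfGd\to\bfG$, the event $\{\sigma_d \cap E_{\epsilon/2} = \emptyset\}$ also holds eventually almost surely. Let $\mathcal{A}_d$ denote the intersection of these two events.

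Second, conditional on $D$ (and on $\bm v,\bm u$, which are independent of $W$), Theorem~\ref{t:local_law-0} applies on the $\tau_0$-regularity event and yields, with $W$-conditional probability at least $1-O(n^{-100})$, the bound
\begin{equation*}
|\langle \bm v, (G(z') - \Pi(z')) \bm u \rangle| \leq \frac{C}{d^{1/4} \Im[z']^5}
\end{equation*}
uniformly over $|z'|\leq \tau^{-1}$ with $\Im[z']>0$. Since $O(n^{-100})$ is summable in $d$, another application of Borel--Cantelli shows this uniform off-axis bound holds eventually almost surely. The extension from deterministic to random $(\bm v,\bm u)$ independent of $W$ is the standard one: condition on $(\bm v,\bm u,D)$, invoke the theorem fiberwise, then integrate.

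Third, on the intersection of $\mathcal{A}_d$ with the above Borel--Cantelli event, the hypotheses of Lemma~\ref{l:greens-function-lipschitz} hold (no outliers in $E_{\epsilon/2}$ and $\tau_0$-regularity of $\hat\rho^{(d)}$), and its preliminary input~\eqref{eq:prelim-bound} is furnished by the off-axis bound just established. For each $z\in E_\epsilon\cap B(\tau^{-1})$, the lemma sets $z'=z+id^{-1/24}$ and concludes
\begin{equation*}
|\langle \bm v, (G(z) - \Pi(z)) \bm u\rangle| \lesssim_{\epsilon,\tau} d^{-1/24}.
\end{equation*}
Crucially, the imaginary shift $id^{-1/24}$ is independent of $z$, and both the off-axis bound and the Lipschitz constants in the proof of Lemma~\ref{l:greens-function-lipschitz} are uniform in $z\in E_\epsilon\cap B(\tau^{-1})$, so the resulting inequality is also uniform in that region. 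This is precisely the claim.

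There is no genuine obstacle in this step: the delicate matters (constructing the approximate self-consistent equation for $WDW^\top/n$ without positivity of $D$, verifying its stability with merely $\tau$-regular spectral data, and adapting the Knowles--Yin argument) were already absorbed into Theorem~\ref{t:local_law-0} and Proposition~\ref{prop:no-outliers}. The main thing to be careful about here is simply that the hypotheses of Lemma~\ref{l:greens-function-lipschitz} are verified uniformly across $z\in E_\epsilon\cap B(\tau^{-1})$ on a single good event, and that the probabilistic tails of the two inputs are both summable so Borel--Cantelli delivers the ``eventually almost surely'' statement.
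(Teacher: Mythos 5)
Your proposal is correct and follows the same route as the paper: the paper's own proof is the one-line statement that the result follows by combining Lemma~\ref{lem:tau-reg} ($\tau$-regularity of $\hat\rho^{(d)}$), Proposition~\ref{prop:no-outliers} (no outliers in $E_{\eps/2}$), Theorem~\ref{t:local_law-0} (off-axis bound), and Lemma~\ref{l:greens-function-lipschitz} (Lipschitz extension to the real axis), and you have correctly identified and assembled these ingredients, including the Borel--Cantelli upgrade and the handling of random $\bm v, \bm u$ by conditioning.
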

\begin{proof}
    This follows by combining Proposition~\ref{prop:no-outliers} with Theorem~\ref{t:local_law-0} and Lemmas \ref{lem:tau-reg} and \ref{l:greens-function-lipschitz}.
\end{proof}

We also obtain the following useful lemma.
\begin{lem}\label{lem:Stieltjes-ucas}
    For each $\eps>0$, We have that $\tilde{S}^{(d)}(z)\to \tS(z)$ uniformly on compacta of $E_\eps$  a.s. where $\tS$ is the Stieltjes transform of $\rho\boxtimes\nu_{\MP}$. 
\end{lem}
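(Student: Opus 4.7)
The plan is to deduce uniform convergence on compacta of $E_\epsilon$ from three ingredients: (i) strong convergence $\hat\rho^{(d)}\to\rho$ together with Hausdorff convergence of supports (Assumption~\ref{assumption:strong-convergence}); (ii) uniform a priori bounds on $\tSd$ coming from the Stieltjes integral representation; and (iii) the self-consistent equations \eqref{eq:defmz-sec2} and \eqref{e:ADA_eig}.

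First I would fix a compact $K\subset E_\epsilon(\nu)$ and show that eventually a.s.\ one has $K\subset E_{\epsilon/2}(\hat\rho^{(d)})$; equivalently, that the support of $\nu_d=\hat\rho^{(d)}\boxtimes\nu_{\MP}$ converges in Hausdorff distance to the support of $\nu=\rho\boxtimes\nu_{\MP}$. The argument in the proof of Proposition~\ref{prop:no-outliers} already gives this on the real line via the auxiliary function $\psi(y)=y+\phi^{-1}y\int(y-t)^{-1}t\,\rho(dt)$ and its prelimit $\psi_d$: Hausdorff convergence of $\supp(\hat\rho^{(d)})$ to $\supp(\rho)$ together with convexity on gap intervals yields $\psi_d\to\psi$ and $\psi_d'\to\psi'$ uniformly on closed subintervals of any gap, hence identical gap structure for $\tSd$ and $\tS$ eventually. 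Extending from the real line into a small complex neighborhood is routine since $\tSd(z)$ is holomorphic on $\mathbb{C}\setminus\supp(\nu_d)$.

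Once $K\subset E_{\epsilon/2}(\hat\rho^{(d)})$ eventually, the Stieltjes integral representation immediately gives deterministic bounds $|\tSd(z)|\leq 2/\epsilon$ and $|\partial_z\tSd(z)|\leq 4/\epsilon^2$ for $z\in K$, so $\{\tSd\}$ is equicontinuous and uniformly bounded on $K$. By Montel's theorem (each $\tSd$ is holomorphic on a neighborhood of $K$), every subsequence has a further subsequence converging uniformly on $K$ to some holomorphic limit $\tS^{*}$. Passing to the limit in \eqref{eq:defmz-sec2} along such a subsequence, and using weak convergence $\hat\rho^{(d)}\to\rho$ tested against the bounded continuous integrand $x\mapsto x/(1+w x)$ with $w$ near $\tS^{*}(z)$, identifies $\tS^{*}$ as a solution of \eqref{e:ADA_eig} on $K$. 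Uniqueness of the Stieltjes transform of $\rho\boxtimes\nu_{\MP}$ solving that equation (classical Marchenko--Pastur \cite{marchenko1967distribution}) then forces $\tS^{*}=\tS$, and since this is the only possible subsequential limit, the full sequence converges uniformly on $K$ almost surely.

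The main obstacle is this limit identification: when passing to the limit in the integral in \eqref{eq:defmz-sec2}, one must control the discrepancy
\[
\int\frac{x}{1+\tSd(z)\,x}\,d\hat\rho^{(d)}(x)\;-\;\int\frac{x}{1+\tS^{*}(z)\,x}\,d\hat\rho^{(d)}(x),
\]
which reduces to a uniform lower bound on $|1+\tSd(z)\,x|$ over $x\in\supp(\hat\rho^{(d)})$ and $z\in K$. The cleanest route is by contradiction: if $1+\tSd(z)\,x_0=0$ for some $x_0\in\supp(\hat\rho^{(d)})$, then by the inverse analysis via $\psi_d$ the point $z$ would lie in $\supp(\nu_d)$, contradicting $z\in K\subset E_{\epsilon/2}(\hat\rho^{(d)})$. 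Quantifying this contradiction in the spirit of Lemma~\ref{l:num-small} (using that $\hat\rho^{(d)}$ is eventually $\tau$-regular by Lemma~\ref{lem:tau-reg}) yields the required uniform lower bound and closes the argument.
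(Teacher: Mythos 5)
Your approach is a genuinely different route from the paper's: you work directly on $E_\eps$, establish a priori bounds from the Stieltjes representation once the supports have converged, extract a subsequential limit by Montel, and then try to identify the limit by passing to the limit in the self-consistent equation \eqref{eq:defmz-sec2}. The paper instead first obtains the convergence $\tSd\to\tS$ on $\mathbb{C}_+$ (from weak convergence of $\hat\rho^{(d)}$ together with the classical equivalence of weak convergence of measures and pointwise convergence of Stieltjes transforms, then upgraded to locally uniform convergence via Montel), and only afterwards extends to $E_\eps$ by the uniform Lipschitz estimate of Lemma~\ref{l:everyone-is-lipschitz} combined with Proposition~\ref{prop:no-outliers}. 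Working in $\mathbb{C}_+$ first has a concrete advantage: there, $\Im \tSd(z)>0$ gives a trivial, $d$-uniform lower bound on $|1+\tSd(z)x|$ for real $x$ (consider separately $|x|$ small, where $|\tSd(z)||x|\le |x|/\Im z$ is small, and $|x|$ bounded below, where $|\Im(1+\tSd(z)x)|=|x|\,\Im\tSd(z)$ is bounded below), so the issue you flag simply does not arise.

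The gap in your proposal is in the resolution of that very issue. You propose to obtain ``a uniform lower bound on $|1+\tSd(z)x|$ over $x\in\supp(\hat\rho^{(d)})$ and $z\in K$'' by combining the $\psi_d$-inversion contradiction with Lemma~\ref{l:num-small}. But Lemma~\ref{l:num-small} only asserts that $\hat\rho^{(d)}(\{x:|1+\tSd(z)x|\le\delta\})\lesssim\delta^2$; it bounds the \emph{measure} of the near-resonant set, not the minimum of $|1+\tSd(z)x|$ over the support. A small-measure set can still be nonempty, and since $\hat\rho^{(d)}$ places mass $1/n$ on atoms, the best one can extract this way is a lower bound that degenerates like $n^{-1/2}$, which is useless for a $d$-uniform estimate. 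The $\psi_d$-contradiction likewise gives non-vanishing for fixed $d$ but not a $d$-uniform gap. A correct split would bound the near-resonant contribution to the integral by $O(\delta/\tau)$ via Lemma~\ref{l:num-small} and the crude $|x/(1+\tSd(z)x)|\le 1/(\tau\delta)$, and handle the complementary region by weak convergence and uniform continuity of the integrand; but the simplest repair of your argument is to identify the subsequential limit on $K'\cap\mathbb{C}_+$ for a slightly enlarged compact $K'$ (where the denominators are uniformly controlled as above) and then propagate the identity $\tS^{*}=\tS$ to all of $K$ by holomorphicity — which essentially recovers the paper's two-stage structure.
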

\begin{proof}
By Assumption~\ref{assumption:strong-convergence}, we have $\hat\rho^{(d)}\to\rho$ weakly a.s. Thus $\tSd\to \wt S$ pointwise on $\C_+$ so that in fact, $\nu_d\to\nu$ weakly a.s. We use here the classical fact that convergence of Stieltjes transforms is equivalent to their pointwise convergence on $\C_+$  \cite[Corollary 1]{geronimo2003necessary}. In fact, pointwise convergence of Stieltjes transforms is equivalent to their uniform convergence on compacta. (To see this, simply note that they must be locally bounded on compacta due to the usual bound $\abs{S(z)}\leq |\Im[z]|^{-1}$ for any Stieltjes transform $S$  so that the convergence follows from Montel's theorem.) Thus the convergence is uniform on compacta of $\C_+$. It remains to extend this to $E_\eps$. By combining Proposition~\ref{prop:no-outliers} with Lemma~\ref{l:everyone-is-lipschitz}, we see that these objects are uniformly lipschitz on $E_\eps$ from which the result follows.
\end{proof}

\subsection{Isolating dependencies}\label{sec:isolating-dependences}
In the following, it helps to decompose the matrix $A$ into two parts, one of which is correlated to $D$ and one of which is independent of it. 
To this end, let $L=[l_1 \cdots l_q]$ be as in \eqref{e:L-defining-property}. That is, let $L$ be a matrix whose columns are orthonormal
and satisfy the property that
\begin{equation*}
L^\top[x^1 \cdots x^\cC \mu_1 \cdots \mu_k] = \sqrt{\bfGd}.   
\end{equation*}
To see that such a matrix exists, simply compute the compact singular value decomposition of this matrix as
\begin{align}\label{e:decomp}
[x^1 \cdots x^\cC \,\,\mu_1 \cdots \mu_k] = U \Lambda V^\top,    
\end{align}
where $U$ is a $d \times q$ matrix with orthonormal columns, $\Lambda$ is diagonal with non-negative entries, and $V^\top$ is a $q \times q$ orthogonal matrix. The matrix $L= UV^\top$ satisfies the desired properties. It is important to note that in this construction, if $q>\bar q$ there is an arbitrary choice of the basis vectors $\{l_i\}_{i>\barq}$.
We can then decompose the matrix $A$  as a sum of its projection on ${\rm col}(L)$ and its orthogonal complement via
\begin{align}\label{e:Adecomp}
\sqrt{\lambda/d} A = LR^\top + UW\,,
\end{align}
where $U\in \R^{d\times d}$ is an orthogonal matrix whose first $q$ columns are given by $L$, and $W$ is an $d\times n$ matrix whose first $q$ rows are $0$, and other entries i.i.d.\ Gaussian $\mathcal N(0,1/d)$, and $R$ is $n \times q$ with $i$-th row, $r_i$, given by 
\begin{align}\label{e:ri}
r_i=
\widetilde r_i+ \frac{1}{\sqrt{d}}
\left[
\begin{array}{c}
g_1^{(i)}\\
\vdots\\
g_q^{(i)}
\end{array}
\right]\,
\qquad \text{where} \qquad 
\widetilde r_i
=
\frac{\sqrt{\lambda}}{\sqrt{d}}\left[
\begin{array}{c}
\langle l_1, \mu_{y_i}\rangle\\
\vdots\\
\langle l_q, \mu_{y_i}\rangle
\end{array}
\right], 
\end{align}
and where $g_i^{(q)}$ are independent standard Gaussian. Note that Assumption~\ref{assumption:meas} implies that $W$ and $D$ are independent. Also note that by construction, 
\begin{align}\label{e:UL}
U^\top L=
\left[
\begin{array}{c}
I_q\\
0
\end{array}
\right], 
\quad
W=\left[
\begin{array}{c}
0\\
\widetilde W
\end{array}
\right],
\end{align}
where $\widetilde W\in \mathbb R^{(d-q)\times n}$ is an Gaussian matrix with i.i.d.\ $\mathcal N(0,1/d)$ entries. Observe that \eqref{eq:Y-g-coupling} follows from this decomposition after recalling the above defining property of $L$, \eqref{e:L-defining-property}. Finally observe that by Gaussianity, $R$ is full rank a.s., so $LR^\top$ is rank $q$ a.s.

\subsection{Concentration of a reduced matrix}
With this decomposition in hand, we now turn to estimates on certain proxy matrices that will be useful in the following.
In particular, recall $\wt F$ from \eqref{eq:general-f-def} and define
\begin{equation}
    \label{eq:f-tilde-def}
    \widetilde F^{(d)}(z) =R^\top D(I+\widetilde S^{(d)}(z)D)^{-1}R,
\end{equation}
where $R$ is as in \eqref{e:Adecomp}. Here and in the remaining subsections, we let, $E_\eps=E_\eps(\nu_\bfG)$ and $E_{\eps,K,\tau}=E_{\eps,K,\tau,8}(\nu_\bfG)$.
The main goal of this subsection is to prove the following.
\begin{thm}\label{t:tfd-to-f}
    For every $\eps,K,\tau>0$ there is an $\eta$ such that 
    $\tFd\to \wt F$ uniformly
    over $E_{\eps,K,\tau}$ a.s.
\end{thm}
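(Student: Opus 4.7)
The plan is to expand $\wt F^{(d)}(z)$ as an empirical average of $n$ i.i.d.\ matrix-valued summands (for each fixed $z$), pass $\wt S^{(d)}$ to the limit $\wt S$ using Lemma~\ref{lem:Stieltjes-ucas}, apply a pointwise strong law of large numbers, and then promote to uniform convergence in $z$ via analyticity. From the decomposition in Section~\ref{sec:isolating-dependences}, the coupling~\eqref{eq:Y-g-coupling}, and \eqref{eq:f-in-terms-of-G},
\begin{equation*}
\wt F^{(d)}(z) = \sum_{\ell=1}^n r_\ell r_\ell^\top \frac{D_{\ell\ell}}{1+\wt S^{(d)}(z)D_{\ell\ell}},\qquad r_\ell = \tfrac{1}{\sqrt d}\bigl(\sqrt\lambda\,\sqrt{\bfGd}_{\cdot, b(\ell)} + \vec g_\ell\bigr),\qquad D_{\ell\ell} = f_{b(\ell)}(\lambda^{-1/2}\vec g_\ell;\bfGd),
\end{equation*}
and the pairs $(b(\ell),\vec g_\ell)$ are i.i.d.\ across $\ell$, so for any constant $t$ the summands $r_\ell r_\ell^\top D_{\ell\ell}/(1+tD_{\ell\ell})$ are i.i.d.

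Let $\wt F^{(d),\mathrm{id}}(z)$ denote the same expression but with $\wt S^{(d)}(z)$ replaced by $\wt S(z)$. The resolvent identity yields
\begin{equation*}
\wt F^{(d)}(z) - \wt F^{(d),\mathrm{id}}(z) = \bigl(\wt S(z) - \wt S^{(d)}(z)\bigr)\sum_\ell r_\ell r_\ell^\top \frac{D_{\ell\ell}^2}{(1+\wt S^{(d)}(z)D_{\ell\ell})(1+\wt S(z)D_{\ell\ell})}.
\end{equation*}
The scalar prefactor is $o(1)$ uniformly on $E_{\eps,K,\tau}$ a.s.\ by Lemma~\ref{lem:Stieltjes-ucas}, and the matrix sum is $O(1)$ uniformly on $E_{\eps,K,\tau}$ a.s.: the factors $\|r_\ell\|^2 \lesssim d^{-1}(1+\|\vec g_\ell\|^2)$ are controlled by standard Gaussian concentration, while Lemma~\ref{lem:tau-reg} together with Lemma~\ref{l:num-small} bound the contribution of the ``dangerous'' indices where $|1+\wt S(z)D_{\ell\ell}|$ is small, uniformly in $z$.

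For the idealized object, a direct computation (conditioning on $b(\ell)$ and using the Gaussian law of $\vec g_\ell$) identifies $\mathbb E[\wt F^{(d),\mathrm{id}}(z)] = \wt F(z;\bfGd)$, and $\wt F(z;\bfGd)\to\wt F(z;\bfG)$ by continuity in the summary statistics. Kolmogorov's strong law of large numbers, applied entrywise, therefore gives $\wt F^{(d),\mathrm{id}}(z)\to\wt F(z)$ a.s.\ for each fixed $z\in E_{\eps,K,\tau}$. To upgrade this to uniform a.s.\ convergence, both $\wt F^{(d),\mathrm{id}}$ and $\wt F$ extend to holomorphic functions of $z$ on a complex neighborhood of $E_{\eps,K,\tau}$ (e.g., on $E_{\eps/2}$), with entries uniformly bounded there by the same small-denominator estimates; Vitali's convergence theorem, applied entrywise with pointwise a.s.\ convergence on a countable dense subset, then yields uniform a.s.\ convergence on $E_{\eps,K,\tau}$. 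The principal obstacle is the uniform-in-$z$ control of the displayed sums near the real axis, where $\wt S(z)$ is real and $1+\wt S(z)D_{\ell\ell}$ can pass near zero; combining the $\tau$-regularity of $\hat\rho^{(d)}$ (Lemma~\ref{lem:tau-reg}) with Lemma~\ref{l:num-small}, which shows that only an $O(\delta^2)$ fraction of indices satisfy $|1+\wt S(z)D_{\ell\ell}|\le\delta$ uniformly in $z$, resolves this.
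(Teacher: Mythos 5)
Your high-level plan (replace $\widetilde S^{(d)}$ by $\widetilde S$, apply an SLLN, identify the expectation as $\widetilde F(z;\bfG^{(d)})$, upgrade pointwise to uniform) is the right skeleton, but the step where you promote pointwise to uniform convergence via Vitali's theorem has a real gap, and that gap is precisely why the paper introduces the cutoff machinery.

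Vitali's theorem requires the family $\widetilde F^{(d),\mathrm{id}}(\cdot)$ to be holomorphic and locally uniformly bounded on an open domain containing $E_{\eps,K,\tau}$. Neither holds near the real axis. For $z \in E_\eps \cap \R$, the denominator $1 + \widetilde S(z) D_{\ell\ell}$ vanishes at $z$ with $\widetilde S(z) = -1/D_{\ell\ell}$, and for a fixed realization and $n$ large, some $D_{\ell\ell}$ will land arbitrarily close to the critical set $\{-1/\widetilde S(z) : z \in E_\eps\cap\R\}$, producing summands of unbounded size and genuine poles in $E_{\eps,K,\tau}$. Lemma~\ref{l:num-small} is a measure bound — it says only an $O(\delta^2)$ \emph{fraction} of the $D_{\ell\ell}$ satisfy $|1+\widetilde S(z)D_{\ell\ell}| \le \delta$ — not a support bound, so it does not prevent a handful of terms from blowing up, and hence it does not, by itself, deliver the uniform boundedness that Vitali needs. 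Similarly, your claim that the matrix sum multiplying $(\widetilde S - \widetilde S^{(d)})$ in the resolvent-identity step is ``$O(1)$ uniformly on $E_{\eps,K,\tau}$'' is exactly the kind of statement that requires the paper's Cauchy--Schwarz argument in Lemma~\ref{l:lambda-control}, and that argument is only clean once the near-critical terms have been truncated.

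The paper's fix is to introduce the smooth cutoff $\chi_\delta$ excising the dangerous indices, yielding $\widetilde F^{(d)}_\delta$ with uniformly bounded summands. Boundedness makes the triangular-array SLLN routine, and — more to the point — makes the $z$-derivative of each summand uniformly bounded (using Lemma~\ref{l:partial_S_bound}), so Arzel\`a--Ascoli gives locally uniform convergence all the way up to $\R\setminus\supp(\nu)$. One then shows $\widetilde F^{(d)}-\widetilde F^{(d)}_\delta$ is $O(\delta)$ in a suitable norm (Lemmas~\ref{l:lambda-control}--\ref{l:tfd-control}), and sends $\delta\to0$ at the end. If you want to avoid the cutoff entirely, you would need to argue that the \emph{supports} (not just measures) stay away from the critical set eventually a.s., which does follow from the Hausdorff part of Assumption~\ref{assumption:strong-convergence} but would have to be made explicit — and even then you'd still need a careful uniform-in-$z$ bound to keep the $\eta$-strip; that is a different proof than the one you've sketched.
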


To prove this, we start by introducing the following cut-off.
Let $\chi_\delta$ be a smooth function on $\R$ which is supported on $(-\delta/2,\delta/2)^c$ and such that $\mathbf{1}\{(-\delta,\delta)^c\} \leq \chi_\delta \leq \mathbf{1}\{{(-\delta/2,\delta/2)^c}\}$, and let $\chi_\delta^c= 1-\chi_\delta$ which satisfies $\mathbf{1}\{[-\delta/2,\delta/2]\}\leq \chi_\delta^c \leq \mathbf{1}\{[-\delta,\delta]\}$.
Define
\begin{align}\label{eq:lambda-tilde-def}
    \Lambda^{(d)}(z) &=D(I+\widetilde S^{(d)}(z)D)^{-1}\\
    \Lambda^{(d)}_\delta(z) &=  D_\delta(I+\tS(z)D)^{-1} \label{eq:lambda-check-def} 
\end{align}
where $ D_\delta = \diag(\Dll\chi_\delta(1+\tS(z)\Dll))$. Then we have the following.

\begin{lem}\label{l:lambda-control}
    For every $\eps,K>0$, $0<\tau<1$ and $\delta<1/2$, we have that 
    \[
    \frac{1}{n}\norm{\Lambda^{(d)}(z)- \Lambda^{(d)}_\delta(z)}_F^2 \lesssim_{K,\epsilon,\tau,\phi} \delta
    \]
    uniformly on $\Eekt$ eventually a.s..
\end{lem}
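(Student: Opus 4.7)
The plan is to separate the two sources of error in $\Lambda^{(d)}-\Lambda^{(d)}_\delta$---the mismatch between $\tSd$ and $\tS$ in the denominators, and the truncation by $\chi_\delta$---via a simple algebraic identity, and then to bound each contribution using the moment estimate of Lemma~\ref{l:Sd_est} (with $k=4$) together with the small-denominator estimate of Lemma~\ref{l:num-small}. Writing $u_\ell:=1+\tS(z)\Dll$ and $v_\ell:=1+\tSd(z)\Dll$, adding and subtracting $\Dll/u_\ell$ in the $(\ell,\ell)$-entry and using $u_\ell-v_\ell=(\tS-\tSd)\Dll$ yields
\begin{equation*}
(\Lambda^{(d)}-\Lambda^{(d)}_\delta)_{\ell\ell} \;=\; \frac{\Dll}{v_\ell}-\frac{\Dll\chi_\delta(u_\ell)}{u_\ell} \;=\; \frac{(\tS-\tSd)\Dll^2}{u_\ell v_\ell} \;+\; \frac{\Dll\,\chi_\delta^c(u_\ell)}{u_\ell}.
\end{equation*}
Applying $|a+b|^2\leq 2|a|^2+2|b|^2$ reduces the task to bounding the two resulting Frobenius-norm-squared averages separately.

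For the first term, AM--GM gives $|u_\ell v_\ell|^{-2}\leq \tfrac12(|u_\ell|^{-4}+|v_\ell|^{-4})$, so the contribution is bounded by $|\tS-\tSd|^2$ times averages of $\Dll^4/|u_\ell|^4$ and $\Dll^4/|v_\ell|^4$. Lemma~\ref{l:Sd_est} with $k=4$, together with the eventual $\tau$-regularity of $\hat\rho^{(d)}$ from Lemma~\ref{lem:tau-reg}, gives $\int x^4/|1+\tSd x|^4\, d\hat\rho^{(d)}\lesssim 1$ uniformly on $\Eekt$; the analogous bound with $\tS$ in place of $\tSd$ is obtained by perturbation, using the uniform convergence $\tSd\to\tS$ (Lemma~\ref{lem:Stieltjes-ucas}) and Assumption~\ref{assumption:strong-convergence}. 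Since $|\tS-\tSd|\to 0$ uniformly, this piece is $o(1)$ and hence $\leq \delta$ eventually almost surely. For the second term, Cauchy--Schwarz yields
\begin{equation*}
\frac{1}{n}\sum_\ell \frac{\Dll^2\,\chi_\delta^c(u_\ell)^2}{|u_\ell|^2} \;\leq\; \Bigl(\frac{1}{n}\sum_\ell\frac{\Dll^4}{|u_\ell|^4}\Bigr)^{1/2}\Bigl(\frac{1}{n}\sum_\ell \chi_\delta^c(u_\ell)^4\Bigr)^{1/2}.
\end{equation*}
The first factor is $O(1)$ as above. For the second, $\chi_\delta^c\leq\mathbf{1}\{|\cdot|\leq\delta\}$ together with Lemma~\ref{l:num-small} (applied to $\hat\rho^{(d)}$ with $\tSd$, and using $\{|1+\tS x|\leq\delta\}\subseteq\{|1+\tSd x|\leq 2\delta\}$ for $d$ large since $|\tS-\tSd||x|=o(1)$) gives a bound of order $\delta^2$, so the product is of order $\delta$.

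The main technical obstacle is that Lemma~\ref{l:Sd_est} and Lemma~\ref{l:num-small} are naturally stated for the self-consistent pair $(\mu,S_\mu)$, whereas our decomposition pairs the empirical measure $\hat\rho^{(d)}$ with the limit Stieltjes transform $\tS$. Resolving this requires uniform perturbative comparisons on $\Eekt$: Lemma~\ref{lem:Stieltjes-ucas} provides uniform convergence $\tSd\to\tS$, and Assumption~\ref{assumption:strong-convergence} provides strong convergence of the empirical measure and its support, so that moment integrals and small-denominator sets for $(\hat\rho^{(d)},\tSd)$ transfer to those for $(\hat\rho^{(d)},\tS)$ up to $o(1)$ corrections that do not spoil the final $\delta$-bound.
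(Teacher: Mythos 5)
Your decomposition is a genuinely different route from the paper's three-stage argument, and it contains a gap that the paper's ordering is specifically designed to avoid. The paper introduces intermediaries $\tilde\Lambda^{(d)}_\delta$ (truncate using $\chi_\delta(1+\tSd\Dll)$, denominator still $\tSd$) and $\bar\Lambda^{(d)}_\delta$ (same truncation, denominator switched to $\tS$), and only at the last step compares truncation sets. The crucial feature is that the truncation is applied \emph{before} the Stieltjes transform in the denominator is changed: consequently, every time the ``mismatched'' pair $(\Dll,\tS)$ appears, it comes pre-equipped with an indicator $\mathbf{1}\{|1+\tSd\Dll|\gtrsim\delta\}$, which (after using $|\tS-\tSd|/\tau\lesssim\delta$) yields $|1+\tS\Dll|\gtrsim\delta$, so the mismatched fourth moment is bounded trivially by $\delta^{-4}$, harmless because it is multiplied by $|\tS-\tSd|^2\to 0$.

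Your two-term split performs the change $\tSd\to\tS$ first, \emph{without} truncation. Both of your terms then require a bound of the form
\[
\frac{1}{n}\sum_{\ell}\frac{\Dll^4}{|1+\tS(z)\Dll|^4}\lesssim 1 \quad\text{uniformly on }\Eekt,
\]
which you defer to ``perturbation, using Lemma~\ref{lem:Stieltjes-ucas} and Assumption~\ref{assumption:strong-convergence}.'' That step does not go through. Lemma~\ref{l:Sd_est} only applies to the \emph{matched} pair $(\hat\rho^{(d)},\tSd)$, since it is derived by differentiating the self-consistent equation that ties a measure to its own Stieltjes transform. And the hoped-for transfer does not follow from uniform convergence: the map $x\mapsto x^4/|1+\tS(z)x|^4$ is unbounded near $x=-1/\tS(z)$, so it is not uniformly continuous, and neither weak convergence $\hat\rho^{(d)}\to\rho$, Hausdorff convergence of supports, nor uniform convergence $\tSd\to\tS$ prevents a few $\Dll$ from sitting arbitrarily close to $-1/\tS(z)$ and blowing up the sum. (Lemma~\ref{l:num-small}, even after the inclusion $\{|1+\tS x|\leq\delta\}\subseteq\{|1+\tSd x|\leq 2\delta\}$ you invoke, only bounds the \emph{count} of such $\ell$ by $O(\delta^2 n)$; it gives no lower bound on $|1+\tS\Dll|$ and hence no bound on the moment.) To close the gap you would either need to prove a separate statement that $-1/\tS(z)$ is uniformly bounded away from $\supp(\rho)$ on $\Eekt$ — which is plausible but nontrivial and not asserted anywhere in the paper — or restructure the decomposition so that the truncation is applied before the $\tSd\to\tS$ replacement, which is precisely what the paper's three-step argument does. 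As written, your proposal does not yield the lemma.
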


\begin{proof}
Let $\tilde{D}_\delta$ be such that $\tilde D_{\ell\ell,\delta} = \Dll\cdot \chi_\delta(1+\tSd(z)\Dll)$. Let $\tilde{\Lambda}^{(d)}_\delta=\tilde{D}_\delta (I+\tSd D)^{-1}.$ We see that 
\begin{equation}\label{eq:lambda-control-1}
\frac{1}{n}\norm{\Lambda^{(d)}(z)-\tilde\Lambda^{(d)}_\delta(z)}^2\leq
\frac{1}{n}\sum_\ell \frac{\Dll^2}{(1+\tSd\Dll)^2}\mathbf{1}\{(|1+\tSd\Dll|\leq \delta)\}
\lesssim_{\eps,\phi,\tau} \delta
\end{equation}
on $\Eekt$ where the last inequality follows by Cauchy-Schwarz, Lemma~\ref{l:Sd_est}, and  Lemma~\ref{l:num-small}.

Let $\bar\Lambda^{(d)}_\delta= \tilde D_\delta(I+\tS(z)D)^{-1}$. Then
$\frac{1}{n}\norm{\tilde\Lambda^{(d)}-\bar\Lambda^{(d)}}^2$ is bounded by
\[
(\tS-\tSd)^2\cdot \frac{1}{n}\sum \frac{\Dll^4}{(1+\tSd\Dll)^2(1+\tS\Dll)^2}\mathbf{1}\{\abs{1+\tSd\Dll}\geq\delta/2\}\,.
\]
By Lemma~\ref{lem:Stieltjes-ucas}, $\tSd\to \tS$ uniformly on $E_\eps$ a.s. Thus this vanishes uniformly on $\Eekt$ provided we can show that the second term is bounded there a.s. To this end, observe that we can bound the summand by 
\[
\frac{\Dll^4}{(1+\widetilde S\Dll)^2(1+\tSd\Dll)^2} \lesssim
\frac{\Dll^4}{1+\tSd\Dll^4}+\frac{D^4}{(1+\widetilde S\Dll)^4}\,,
\]
by Young's inequality. Summing in $\ell$, and dividing by $n$, we have that the first term is bounded on $\Eekt$ by Lemma~\ref{l:Sd_est}. For the second term, note that we have that $\abs{\tS-\tSd}\leq \delta\cdot \tau /4$  on $\Eekt$ eventually almost surely. Thus eventually a.s., $\mathbf{1}\{\abs{1+\tSd\Dll}\geq \delta/2\}\leq\mathbf{1}\{\abs{1+\tS\Dll}\geq\delta/4\}$, so the second term satisfies, for  $z\in \Eekt$,
\[
\frac{1}{n}\sum \frac{D_{\ell\ell}^4}{(1+\tS\Dll)^4} \mathbf{1}\{(\abs{1+\tS\Dll}\geq \delta/4)\}\lesssim_{K,\eps,\phi,\tau} \frac{1}{\delta^4}
\]
eventually almost surely.
Combining this with the above we see 
\begin{equation}\label{eq:lambda-control-2}
\frac{1}{n}\norm{\tilde\Lambda^{(d)}_\delta-\bar \Lambda^{(d)}_\delta }^2\to 0    
\end{equation}
uniformly on $\Eekt$ for each $K>0$ almost surely. 

Now we relate this to  $\Lambda^{(d)}_\delta$. To this end, first observe that by definition of $\chi_\delta$, we have
\begin{align*}
\abs{\chi_\delta(1+\tSd\Dll)-\chi_\delta(1+\tS\Dll)} &\leq \mathbf 1\{\{\abs{1+\tSd\Dll}\geq\delta/2\}\triangle\{|1+\tS\Dll|\geq\delta/2\}\}
\end{align*}
Using again the convergence $\tSd\to \tS$ and that $\Dll \leq 1/\tau$,  we see that for $\delta\leq \delta_1$ for some $\delta_1$ non-random depending on $\tau$, this indicator satisfies 
\[
\frac{1}{n}\sum_\ell\mathbf 1\{\delta/4 \leq \abs{1+\tSd(z)\Dll}\leq\delta\} \lesssim_{\tau,\eps,\phi,K} \delta^2
\]
on $\Eekt$ eventually a.s., where the last equality follows again by Lemma~\ref{l:num-small}.
Thus by a similar argument as before 
\begin{equation}\label{eq:lambda-control-3}
\frac{1}{n}\norm{\bar\Lambda^{(d)}_\delta  - {\Lambda}^{(d)}_\delta}^2_F \lesssim_{\tau,\eps,\phi,K} \delta
\end{equation}
on $\Eekt$
eventually a.s. The result then follows by combining \eqref{eq:lambda-control-1}-\eqref{eq:lambda-control-3}.
\end{proof}

In the below, recall $\widetilde F^{(d)}$ from~\eqref{eq:f-tilde-def} and let 
\begin{align}
    \widetilde{F}^{(d)}_\delta &= R^\top  D_\delta(I+\tS(z)D)^{-1}R \label{eq:f-check-def} \,.
\end{align}

\begin{lem}\label{l:tfd-control}
    For every $\tau,\eps>0$ there is a $\delta_0$ such that for all $K>0$ and $\delta<\delta_0$,we have
    \[
    \limsup_{d\to\infty}\norm{\tFd-\tFd_\delta}\lesssim_{\eps,\tau,K} \delta
    \]
    uniformly on $\Eekt$ almost surely
\end{lem}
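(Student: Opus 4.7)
The plan is to reduce the operator-norm bound to a trace estimate, then control the latter by sharpening Lemma~\ref{l:lambda-control} to an $\ell^1$-bound and handling Gaussian moments of the rows of $R$. Writing $M:=\Lambda^{(d)}(z)-\Lambda^{(d)}_\delta(z)$, which is diagonal, we have $\tFd-\tFd_\delta=R^\top M R$. Since $R^\top|M|R$ (with $|M|:=\diag(|M_{\ell\ell}|)$) is positive semidefinite and dominates $\pm R^\top M R$, we obtain
\begin{equation*}
\|R^\top M R\|\leq \operatorname{tr}(R^\top|M|R)=\sum_{\ell}|M_{\ell\ell}|\,\|r_\ell\|^2,
\end{equation*}
so it suffices to bound this sum by $\lesssim_{\eps,K,\tau}\delta$ uniformly on $\Eekt$ eventually almost surely.

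Decompose $M=M_1+M_2$, where $M_1=(\tS-\tSd)D^2(I+\tSd D)^{-1}(I+\tS D)^{-1}$ accounts for the shift from $\tSd$ to $\tS$ and $M_2=D(I+\tS D)^{-1}(I-\chi_\delta(I+\tS D))$ is the cutoff piece. For $M_1$, Cauchy--Schwarz with Lemma~\ref{l:Sd_est} (applied with $k=2$) gives $\sum_\ell|M_{1,\ell\ell}|\lesssim n|\tS-\tSd|$, and combining with $\|r_\ell\|^2\lesssim (1+\|\vec g_\ell\|^2)/d$ (from \eqref{e:ri} and boundedness of the means) and $\tSd\to\tS$ uniformly on $\Eekt$ (Lemma~\ref{lem:Stieltjes-ucas}) yields $\sum|M_{1,\ell\ell}|\|r_\ell\|^2\to 0$ a.s., which is certainly $\leq\delta$ eventually. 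For $M_2$, a first key step is the sharper $\ell^1$-bound $(1/n)\sum|M_{2,\ell\ell}|\lesssim_{\eps,K,\tau}\delta$: noting that $|M_{2,\ell\ell}|\lesssim 1/(\tau|1+\tS\Dll|)$ on the support $\{|1+\tS\Dll|\leq\delta\}$, dyadically decomposing into shells $\{|1+\tS\Dll|\in[2^{-k-1}\delta,2^{-k}\delta]\}$ gives at most $\lesssim n(2^{-k}\delta)^2$ indices per shell (by Lemma~\ref{l:num-small} applied to $\hat\rho^{(d)}$ with $\tSd$, transferred to $\tS$ using Lemma~\ref{lem:Stieltjes-ucas}), each contributing $\lesssim 2^{k}/(\tau\delta)$, so the geometric series sums to $\lesssim n\delta$.

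With $\|r_\ell\|^2\leq C(1+\|\vec g_\ell\|^2)/d$, the ``constant'' piece combined with the $\ell^1$ bound gives $(1/d)\sum|M_{2,\ell\ell}|\lesssim\phi\delta$. The ``Gaussian'' piece $(1/d)\sum|M_{2,\ell\ell}|\|\vec g_\ell\|^2$ requires handling the correlation between $M_{2,\ell\ell}$ and $\vec g_\ell$ (both functionals of the same Gaussian vector). The observation is that $M_{2,\ell\ell}$ is supported on $B_\ell:=\{|1+\tS\Dll|\leq\delta\}$ with $\prob(B_\ell)\lesssim \delta^2$ (Lemma~\ref{l:num-small}), while $\|\vec g_\ell\|^2$ has all Gaussian moments; combining via H\"older's inequality with an optimized exponent yields $\mathbb E[|M_{2,\ell\ell}|\|\vec g_\ell\|^2]\lesssim_{\eps,\tau}\delta$ (up to a $|\log\delta|$ factor that can be absorbed into constants for $\delta<\delta_0$ small enough). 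Independence of the pairs $(\vec g_\ell,\Dll)$ across $\ell$ (Assumption~\ref{assumption:meas} together with \eqref{eq:Y-g-coupling}) then promotes this to the eventually a.s.\ bound via standard sub-exponential concentration. Uniformity in $z\in\Eekt$ follows from covering by a polynomial-in-$d$ net, using Lipschitz continuity of $\tFd$ and $\tFd_\delta$ in $z$ from Lemma~\ref{l:everyone-is-lipschitz} combined with the smoothness of $\chi_\delta$. The main obstacle is the Gaussian piece: naive Cauchy--Schwarz via the $L^2$ bound of Lemma~\ref{l:lambda-control} only yields $\sqrt\delta$, so the linear $\delta$ scaling requires exploiting simultaneously both the quadratic decay $\prob(B_\ell)\lesssim\delta^2$ and the Gaussian integrability of $\|\vec g_\ell\|^2$ in an optimized H\"older step.
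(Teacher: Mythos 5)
The paper's own proof is a one-liner: Cauchy--Schwarz in the Frobenius norm gives
\[
\norm{\tFd-\tFd_\delta}_F^2 \le \frac{n}{d}\cdot\frac1n\norm{\Lambda^{(d)}-\Lambda_\delta^{(d)}}_F^2\cdot\sum_{i,j}\frac1d\sum_\ell(\sqrt d R_{\ell i})^2(\sqrt d R_{\ell j})^2,
\]
and the first factor is $\lesssim\delta$ by Lemma~\ref{l:lambda-control} while the second is eventually a.s.\ bounded by the strong law of large numbers. Note this actually yields only $\norm{\tFd-\tFd_\delta}\lesssim\sqrt\delta$, not the linear-in-$\delta$ bound the lemma states; but $\sqrt\delta\to 0$ as $\delta\downarrow0$ is all the downstream application in Theorem~\ref{t:tfd-to-f} requires. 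You have correctly diagnosed this square-root loss and aim instead for the sharper $\lesssim\delta$ via a trace estimate, a dyadic sharpening of the $\ell^2$ estimate of Lemma~\ref{l:lambda-control} to an $\ell^1$ estimate, and an optimized H\"older step. The trace step, the $M=M_1+M_2$ split, and the dyadic scheme are sound in spirit, and represent a genuinely different (and more ambitious) route than the paper's.

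However, there are concrete gaps. First, the transfer of the per-shell count from $\tSd$ to $\tS$ is unjustified at small scales. Lemma~\ref{l:num-small} applied to $\hat\rho^{(d)}$ controls $\#\{\ell:|1+\tSd\Dll|\le\delta'\}\lesssim n(\delta')^2$, but your shells are defined via $\tS$; the inclusion $\{|1+\tS\Dll|\le\delta'\}\subseteq\{|1+\tSd\Dll|\le\delta'+|\tS-\tSd|/\tau\}$ only yields the needed count when $\delta'\gtrsim|\tS-\tSd|/\tau$. Lemma~\ref{lem:Stieltjes-ucas} is purely qualitative and gives no rate on $|\tS-\tSd|$, while the dyadic sum runs down to scales of order $n^{-1/2}$ (below which the empirical count vanishes). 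At scales below $|\tS-\tSd|/\tau$ you have no lower bound on $\min_\ell|1+\tS\Dll|$, so the per-index weight $1/(\tau|1+\tS\Dll|)$ can blow up and even the $\ell^1$ bound fails to close. (Rephrasing everything with the cutoff in $\tSd$ and accounting separately for the difference of cutoffs, as in the proof of Lemma~\ref{l:lambda-control}, would avoid this, but your argument as written does not.) Second, the H\"older step does not give $\delta$. With $\E|M_{2,\ell\ell}|^p\lesssim_p\delta^{2-p}/\tau^p$ for $1\le p<2$ and Gaussian moments $\E\|\vec g\|^{2q}\asymp q^q$, the optimized exponent gives $\E[|M_{2,\ell\ell}|\|\vec g_\ell\|^2]\lesssim\delta\log(1/\delta)$; since $\log(1/\delta)\to\infty$ as $\delta\downarrow0$, this cannot be ``absorbed into constants for $\delta<\delta_0$ small enough.'' The bound $\delta\log(1/\delta)$ would still suffice for Theorem~\ref{t:tfd-to-f}, but it is not $\lesssim\delta$. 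Finally, the promotion from expectation to eventually-a.s.\ via ``standard sub-exponential concentration'' is not warranted, since $|M_{2,\ell\ell}|$ inherits heavy tails from $1/|1+\tS\Dll|$; a truncation or variance/Borel--Cantelli argument is needed, and the $z$-uniformity via a net also needs the $\delta$-dependent Lipschitz constant of $\chi_\delta$ tracked.
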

\begin{proof}
    By Cauchy--Schwarz we have
    \[
    \norm{\tFd-\tFd_\delta}^2_F \leq \frac{n}{d}\frac{1}{n}\norm{\Lambda^{(d)}-\Lambda_\delta^{(d)}}^2_F\cdot \sum_{i,j}\frac{1}{d}\sum_\ell (\sqrt{d}R_{\ell i})^2(\sqrt{d}R_{\ell j})^2
    \]
    The first term satisfies the bound we desire by Lemma~\ref{l:lambda-control} and the second term is bounded eventually a.s. by the strong law of large numbers for triangular arrays. 
\end{proof}
We now show that $\tilde{F}^{(d)}_\delta$ concentrates about its mean. 
\begin{lem}\label{l:tfd-delta-conc}
    For every $\tau,\eps>0$ there is an $\delta_0$ such that for all $\delta<\delta_0$, we have that 
    \[
    \tilde{F}^{(d)}_\delta-\E\tilde{F}^{(d)}_\delta\to 0\,,
    \]
    uniformly on compacta of $E_\eps$ a.s. In particular, $\E\tilde F^{(d)}_\delta$ is equicontinuous on compacta a.s.
\end{lem}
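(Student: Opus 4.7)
The plan is to decompose $\tilde F^{(d)}_\delta(z)$ as a sum of independent summands indexed by the samples $\ell$, establish pointwise almost sure convergence to its mean, and then use an equicontinuity estimate in $z$ to upgrade to uniform convergence on compacta of $E_\eps$.

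First I would write entrywise
\[
[\tilde F^{(d)}_\delta(z)]_{ij} = \sum_{\ell=1}^n X_\ell^{ij}(z), \qquad X_\ell^{ij}(z) := R_{\ell i} R_{\ell j}\,\frac{D_{\ell\ell}\,\chi_\delta(1+\tS(z)D_{\ell\ell})}{1+\tS(z)D_{\ell\ell}},
\]
and observe that by Assumption~\ref{assumption:meas} together with the decomposition \eqref{e:ri}, the pair $(r_\ell, D_{\ell\ell})$ is a measurable function of the i.i.d.\ datum $(\vec g_\ell, b(\ell))$; hence the summands $X_\ell^{ij}(z)$ are i.i.d.\ across $\ell$ for each fixed $z$. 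The cutoff $\chi_\delta$ enforces $|1+\tS(z)D_{\ell\ell}|\geq\delta/2$ whenever the summand is nonzero, and $|D_{\ell\ell}|\leq\tau^{-1}$ by Assumption~\ref{assumption:D}. Together with $R_{\ell i} = \tilde r_{\ell,i} + g_i^{(\ell)}/\sqrt d$ having sub-Gaussian tails on scale $1/\sqrt d$, this gives that each $X_\ell^{ij}(z)$ has all moments finite and variance of order $C(\eps,\tau,\delta)/d^2$.

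For fixed $z \in E_\eps$, summing gives $\mathrm{Var}\bigl([\tilde F^{(d)}_\delta(z)]_{ij}\bigr) = O\bigl(1/(d\delta^2)\bigr)$, so a Chebyshev bound at the fourth moment together with Borel--Cantelli yields pointwise almost sure convergence $[\tilde F^{(d)}_\delta(z)]_{ij} - \mathbb E[\tilde F^{(d)}_\delta(z)]_{ij} \to 0$ for each $z$. To promote this to uniform convergence, I will establish uniform Lipschitz estimates for $[\tilde F^{(d)}_\delta(z)]_{ij}$ on $E_\eps$. By Lemma~\ref{l:partial_S_bound}, $\tS'(z)$ is bounded on $E_\eps$; combining this with $|\chi_\delta'|\leq C/\delta$ and the cutoff bound $|1+\tS(z)D_{\ell\ell}|^{-1}\leq 2/\delta$ on the support of $\chi_\delta$, a direct differentiation gives
\[
|\partial_z X_\ell^{ij}(z)| \leq \frac{C(\eps,\tau)}{\delta^2}\,|R_{\ell i}||R_{\ell j}|.
\]
Summing in $\ell$ and using that $\sum_\ell |R_{\ell i}||R_{\ell j}|$ is $O(1)$ almost surely (by the strong law for the triangular array with summands of mean $O(1/d)$ over $n=\phi d$ indices) yields a uniform Lipschitz constant for $[\tilde F^{(d)}_\delta(z)]_{ij}$ on $E_\eps$, eventually almost surely.

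Combining pointwise a.s.\ convergence on a countable dense subset of $E_\eps$ with the a.s.\ uniform Lipschitz estimate, an Arzel\`a--Ascoli argument then yields uniform convergence on compacta of $E_\eps$ almost surely. The equicontinuity of the deterministic family $\mathbb E \tilde F^{(d)}_\delta$ follows as the uniform limit of an equicontinuous family. The main subtlety is the coupling of $D$ to $R$ through the shared randomness $\vec g_\ell$: this precludes treating $\tilde F^{(d)}_\delta$ as a quadratic form in independent Gaussians (which would allow direct Hanson--Wright arguments), so the i.i.d.\ decomposition in $\ell$ combined with the cutoff-induced Lipschitz control in $z$ is the natural route. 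The restriction to $\delta<\delta_0$ enters only to ensure that the cutoff-region bounds used in Lemmas~\ref{l:num-small}--\ref{l:lambda-control} remain available, so the constants in the Lipschitz estimate can be taken uniformly in $d$.
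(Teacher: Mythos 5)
Your proof follows essentially the same route as the paper: entrywise i.i.d.\ decomposition in $\ell$, pointwise almost sure convergence for each fixed $z$, a uniform Lipschitz bound in $z$ obtained by differentiating the summand and using the cutoff together with Lemma~\ref{l:partial_S_bound} and Lemma~\ref{l:Sd_est}, and an Arzel\`a--Ascoli upgrade. The only cosmetic difference is that you prove the pointwise convergence via a fourth-moment Chebyshev bound and Borel--Cantelli, whereas the paper invokes the strong law for triangular arrays directly; these are the same argument.
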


\begin{proof}
Fix $i,j$, and suppress the dependence when clear from context.
By definition,
\[
\tilde F^{(d)}_\delta(z)_{ij}=\frac{n}{d}\frac{1}{n} \sum_{\ell} \frac{\Dll}{1+\tS(z)\Dll}(\sqrt{d}R_{\ell i})(\sqrt{d}R_{\ell j})\chi_\delta(1+S(z)\Dll)\,.
\]
Observe that this is a triangular array of i.i.d. sums and that, since $\Dll\leq 1/\tau$ a.s., and $\chi_\delta(t) \leq\mathbf{1}\{(-\delta/2,\delta/2)^c\}(t)$ each summand has all moments finite. Thus by the strong law of the large numbers, 
\[
\norm{\tilde{F}^{(d)}_\delta-\E\tilde F^{(d)}_\delta}\to0\,,
\]
almost surely for each $z\in E_\eps$. It remains to upgrade this to local uniform convergence. To see this, note that it suffices to prove that the family
$(\tilde{F}^{(d)}_\delta-\E\tilde F^{(d)}_\delta)_{d}$
is equi-continuous in $d$ a.s. for fixed $\delta$. Observe that 
\begin{align*}
\frac{d}{dz}\frac{\Dll}{1+\tS(z)\Dll}\chi_\delta(1+\tS(z)\Dll)
&= -\frac{\Dll \tS'(z)}{(1+\tS(z)\Dll)^2}\chi_\delta(1+\tS(z)\Dll)\\
&\qquad+\frac{\Dll}{1+\tS(z)\Dll}\chi'_\delta(1+\tS(z)\Dll) \tS'(z)\Dll\,.
\end{align*}
Note that $\tS'(z)$ is locally uniformly bounded on $E_\eps$ by Lemma~\ref{l:partial_S_bound},
and that $\chi_d$ is Lipschitz.  Thus each term is  uniformly bounded by a similar argument to before upon appealing to Lemma~\ref{l:Sd_est}. Call this upper bound $M(K,\delta)$ for $z\in E_\eps$ with $|z|\leq K$. Then, for such $z$, since $n/d\to \phi$
\[
|\frac{d}{dz}(\tilde{F}^{(d)}_\delta(z)-\E\tilde F^{(d)}_\delta(z))|\leq C(\phi)M(K,\delta)\frac{1}{n}\sum_\ell \abs{\sqrt{d}R_{\ell i}}\abs{\sqrt{d}R_{\ell j}}.
\]
The last summand is bounded eventually almost surely by the strong law of large numbers. Thus this family is  locally uniformly equicontinuous a.s. Taking expectations, yields the same bound. The result then follows by the Arzela--Ascoli theorem.
\end{proof}

\begin{lem}
We have that 
\begin{equation}
    \E\wt F^{(d)}_\delta(z) =\lambda\phi \sum_{a} p_a \E \left[\frac{f_a(\lambda^{-1/2}\vec g, \bfG^{(d)})}{1+\tS(z)f_a(\lambda^{-1/2}\vec g, \bfG^{(d)})}(\frac{\vec g}{\sqrt{\lambda}} +\sqrt{\bfGd}_{\cdot,a})^{\tensor 2}\chi_\delta(1+\tS(z)f_a(\lambda^{-1/2}\vec{g},\bfGd))
\right]\label{eq:F^d-1}
\end{equation}
Here $\sqrt{\bfGd}_{\cdot,a}$ refers to the $\cC+a$th column of $\sqrt{\bfGd}$,  containing inner products with $\mu_a$.
\end{lem}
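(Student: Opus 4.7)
The plan is a direct computation: unfold the definition of $\wt F^{(d)}_\delta(z)$ entry-by-entry, substitute the decomposition of $R$ from \eqref{e:ri}, use the explicit representation $D_{\ell\ell}=f_{y_\ell}(\lambda^{-1/2}\vec g_\ell;\bfGd)$ coming from Assumption~\ref{assumption:meas} and \eqref{eq:f-in-terms-of-G}, and then exploit the i.i.d.\ structure of the pairs $(y_\ell,\vec g_\ell)$ across $\ell$.

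Concretely, from \eqref{eq:f-check-def} and \eqref{eq:lambda-check-def} the $(i,j)$ entry is
\begin{equation*}
(\wt F^{(d)}_\delta(z))_{ij}=\sum_{\ell=1}^n \frac{D_{\ell\ell}\,\chi_\delta(1+\tS(z)D_{\ell\ell})}{1+\tS(z)D_{\ell\ell}}\,R_{\ell i}R_{\ell j}.
\end{equation*}
Using \eqref{e:ri} together with the defining identity \eqref{e:L-defining-property-2}, which gives $\langle l_i,\mu_{y_\ell}\rangle = (\sqrt{\bfGd})_{i,\,\cC+y_\ell}$, one has
\begin{equation*}
R_{\ell i}=\frac{\sqrt{\lambda}}{\sqrt{d}}\Bigl((\sqrt{\bfGd})_{\cdot,y_\ell}+\lambda^{-1/2}\vec g_\ell\Bigr)_i,
\end{equation*}
where $(\sqrt{\bfGd})_{\cdot,y_\ell}$ is the column of $\sqrt{\bfGd}$ corresponding to $\mu_{y_\ell}$ (per the convention stated in the lemma). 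Thus
\begin{equation*}
R_{\ell i}R_{\ell j}=\frac{\lambda}{d}\Bigl((\sqrt{\bfGd})_{\cdot,y_\ell}+\lambda^{-1/2}\vec g_\ell\Bigr)_i\Bigl((\sqrt{\bfGd})_{\cdot,y_\ell}+\lambda^{-1/2}\vec g_\ell\Bigr)_j.
\end{equation*}

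At this point every summand depends on the sample only through the pair $(y_\ell,\vec g_\ell)$, since by \eqref{eq:f-in-terms-of-G} we can replace $D_{\ell\ell}$ by $f_{y_\ell}(\lambda^{-1/2}\vec g_\ell;\bfGd)$. These pairs are i.i.d.\ with $y_\ell\sim\mathrm{Cat}((p_a)_{a\in[k]})$, $\vec g_\ell\sim\cN(0,I_q)$, and $y_\ell\perp \vec g_\ell$, so the expectation of each term is identical. Taking expectation, conditioning on $y_\ell=a$, and summing over $a$ with weights $p_a$ produces the factor $\lambda(n/d)\sum_a p_a\E[\,\cdot\,]$. Identifying $n/d$ with $\phi$ in the prelimit statement yields exactly the expression in~\eqref{eq:F^d-1}, written in outer-product notation $(\cdot)^{\otimes 2}$.

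There is essentially no analytic obstacle here: the argument is a rearrangement of sums and an application of the i.i.d.\ distributional structure of the rows of $R$ conditional on the labels. The only matter requiring care is the bookkeeping of indices — in particular, confirming that the index $a$ on $\sqrt{\bfGd}_{\cdot,a}$ in the lemma refers to the hidden label and picks out the $(\cC+a)$-th column of $\sqrt{\bfGd}$, so that the decomposition from \eqref{e:ri} matches the shift $\vec g/\sqrt{\lambda}+\sqrt{\bfGd}_{\cdot,a}$ appearing on the right-hand side of~\eqref{eq:F^d-1}. Fubini is justified because the cutoff $\chi_\delta$ keeps $1+\tS(z)f_a$ bounded away from $0$ and $f_a$ is bounded by Assumption~\ref{assumption:D}, so the integrand is uniformly bounded.
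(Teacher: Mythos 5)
Your proposal is correct and takes essentially the same route as the paper: write $\wt F^{(d)}_\delta$ as a sum over $\ell$, use the representation $\sqrt d R_\ell = \vec g_\ell + \sqrt\lambda L^\top\mu_{y_\ell}$ together with $L^\top\mu_a = (\sqrt{\bfGd})_{\cdot,\cC+a}$ from \eqref{e:L-defining-property-2}, replace $D_{\ell\ell}$ by $f_{y_\ell}(\lambda^{-1/2}\vec g_\ell;\bfGd)$, and condition on the hidden label to produce the weighted sum. One small imprecision: the integrand is not \emph{uniformly} bounded (the quadratic $\vec g^{\otimes 2}$ factor is unbounded), but it is integrable, which is all that linearity of expectation over the finite $\ell$-sum requires; no Fubini argument is actually needed.
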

\begin{proof}
Recall from Assumption~\ref{assumption:meas}, that $D$ is a diagonal matrix whose entries equal, conditionally on  the hidden label $a(\ell)$ to a function of i.i.d. Gaussians and the summary statistic matrix $\bfG^{(d)}$, 
\begin{align}\label{e:defDll}
   D_{\ell \ell}= f_{a(\ell)}\left( 
\vec g_\ell /\sqrt{\lambda}; \mathbf G^{(d)}\right),
\end{align}
where $\vec g_{\ell} \sim \cN(0,I_q)$ and $f_y$ is as in \eqref{eq:f-in-terms-of-G}. Moreover, recalling \eqref{e:Adecomp}, we have that, conditionally on the hidden labels, $\sqrt d R_{\ell }= \vec g_\ell +\sqrt{\lambda}(L^\top \mu_{a(\ell)})$, for the same $\vec g$. Notice that $\sqrt d R_{\ell}$ is a Gaussian random vector with mean $\sqrt{\lambda}(L^\top \mu_{a})$ and variance $I$. Thus we have the desired:
\begin{align*}
\E\wt F^{(d)}_\delta(z) &= \phi \E\left[ \frac{D_{11}}{1+\tS(z)D_{11}} (\sqrt d R_{1 j})(\sqrt{d} R_{1 i})\chi_\delta(1+\tS(z)D_{11})\right]\nonumber\\
&= \phi \sum_{a} p_a \E \left[\frac{f_a(\lambda^{-1/2}\vec g, \bfG^{d})}{1+\tS(z)f_a(\lambda^{-1/2}\vec g, \bfG^{d})}(\vec g +\sqrt{\lambda}L^{\top}\mu_a)^{\tensor 2}\chi_\delta(1+\tS(z)f_a(\lambda^{-1/2}\vec{g},\bfGd))
\right]\\
&=\phi \lambda \sum_{a} p_a \E \left[\frac{f_a(\lambda^{-1/2}\vec g, \bfG^{d})}{1+\tS(z)f_a(\lambda^{-1/2}\vec g, \bfG^{d})}(\frac{\vec g}{\sqrt{\lambda}} +\sqrt{\bfGd}_{\cdot,a})^{\tensor 2}\chi_\delta(1+\tS(z)f_a(\lambda^{-1/2}\vec{g},\bfGd))
\right]
\end{align*}
where in the last line we have used \eqref{e:L-defining-property}. 
\end{proof}
Let
\[
\wt F_\delta(z)=\phi \sum_{a} p_a \E \left[\frac{f_a(\lambda^{-1/2}\vec g, \bfG)}{1+\tS(z)f_a(\lambda^{-1/2}\vec g, \bfG^{d})}(\vec g +\sqrt{\bfGd}_{\cdot,a})^{\tensor 2}\chi_\delta(1+\tS(z)f_a(\lambda^{-1/2}\vec{g},\bfG))
\right]\,.
\]

\begin{lem}\label{l:uniform-compacta}
For every $\eps,\tau,\delta>0$,  $\wt F^{(d)}_\delta(z)\to \wt F_\delta (z)$
uniformly on compacta of $E_\epsilon$  a.s.
\end{lem}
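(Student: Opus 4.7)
The plan is to decompose
\[
\wt F^{(d)}_\delta - \wt F_\delta = \bigl(\wt F^{(d)}_\delta - \E\wt F^{(d)}_\delta\bigr) + \bigl(\E\wt F^{(d)}_\delta - \wt F_\delta\bigr),
\]
and handle the two terms separately. The first (random) piece tends to zero uniformly on compacta of $E_\eps$ almost surely by Lemma~\ref{l:tfd-delta-conc}. Thus it suffices to show that the deterministic difference $\E\wt F^{(d)}_\delta(z)-\wt F_\delta(z)$ vanishes uniformly on compacta of $E_\eps$.

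For pointwise vanishing at each fixed $z\in E_\eps$, I would use the explicit expression for $\E\wt F^{(d)}_\delta$ from the preceding lemma. The integrands defining $\E\wt F^{(d)}_\delta$ and $\wt F_\delta$ differ only through replacement of $\bfGd$ by $\bfG$ in the three occurrences $f_a(\lambda^{-1/2}\vec g,\cdot)$, $\sqrt{\bfG}_{\cdot,a}$, and $\chi_\delta(1+\tS(z)f_a(\lambda^{-1/2}\vec g,\cdot))$. Since $\bfGd\to\bfG$, the matrix square root is continuous so $\sqrt{\bfGd}_{\cdot,a}\to\sqrt{\bfG}_{\cdot,a}$, and Assumption~\ref{assumption:strong-convergence} combined with continuity of $\chi_\delta$ permits passing to the limit in the $\vec g$-integral pointwise a.e. The cutoff $\chi_\delta$ forces $|1+\tS(z)f_a|\geq \delta/2$ on its support, so the quotient $f_a/(1+\tS(z)f_a)$ is uniformly bounded by $2\|f_a\|_\infty/\delta$, which is $O_{\tau,\delta}(1)$ by Assumption~\ref{assumption:D}; multiplying against $(\vec g/\sqrt\lambda+\sqrt{\bfG}_{\cdot,a})^{\tensor 2}$ produces a quadratic polynomial in $\vec g$ that is uniformly bounded (in $d$) against the Gaussian measure, because $\sqrt{\bfGd}_{\cdot,a}$ is eventually bounded. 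Dominated convergence then yields $\E\wt F^{(d)}_\delta(z)\to\wt F_\delta(z)$.

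To upgrade to uniform convergence on compacta of $E_\eps$, I would establish equicontinuity in $z$ of $\{\E\wt F^{(d)}_\delta(\cdot)\}_d$, by the same mechanism as in the proof of Lemma~\ref{l:tfd-delta-conc}. Differentiating the integrand in $z$ produces factors $\tS'(z)$, $\chi_\delta'(1+\tS(z)f_a)$, and $(1+\tS(z)f_a)^{-2}$, each of which is bounded on compacta of $E_\eps$ uniformly in $d$ using Lemma~\ref{l:partial_S_bound}, the compact support and boundedness of $\chi_\delta'$, and the cutoff. Taking expectations preserves these bounds, so the family $\{\E\wt F^{(d)}_\delta\}$ is locally equi-Lipschitz; the Arzel\`a--Ascoli theorem then upgrades pointwise to uniform convergence on compacta.

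The main obstacle I anticipate is justifying the pointwise (in $\vec g$) continuity of $f_a(\lambda^{-1/2}\vec g,\bfG)$ in $\bfG$ needed for dominated convergence, since Assumption~\ref{assumption:meas} only gives measurability. In the concrete examples of interest, continuity in $\bfG$ is manifest (e.g.\ in~\eqref{eq:f_a-kGMM-case}, $f_a$ depends on $\bfG$ only through $\bfG_{[\cC]b}$ and $\bfG_{[\cC,\cC]}^{1/2}$, both continuous in $\bfG$). In the general setting, the weak-convergence clause of Assumption~\ref{assumption:strong-convergence} applied to the law of $f_a(\lambda^{-1/2}\vec g,\bfGd)$ is the right tool, but care is required to identify the appropriate bounded continuous test functionals that encode the joint dependence on $\vec g$ and $\bfG$ simultaneously.
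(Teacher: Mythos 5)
Your proposal is correct and follows essentially the same route as the paper: split off the mean via Lemma~\ref{l:tfd-delta-conc} for the random fluctuation, establish pointwise convergence of $\E\wt F^{(d)}_\delta$ to $\wt F_\delta$ by dominated convergence, and upgrade to uniform convergence on compacta via the equicontinuity already proved in Lemma~\ref{l:tfd-delta-conc}.

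The obstacle you flag at the end is a real one, and the paper's own treatment of it is slightly glib: the paper asserts that weak convergence from Assumption~\ref{assumption:strong-convergence} together with essential boundedness upgrades $f_a(\lambda^{-1/2}\vec g,\bfGd)\to f_a(\lambda^{-1/2}\vec g,\bfG)$ to convergence \emph{in probability} with the same underlying $\vec g$, which is not automatic from those hypotheses alone (convergence in law of a sequence of deterministic functions of a fixed random variable does not in general give convergence in probability). The resolution the paper actually relies on is exactly the one you gesture at: in all applications, $f_a(\lambda^{-1/2}\vec g,\cdot)$ is continuous in $\bfG$ (Lemma~\ref{lem:bounded-differentiable-implies-strong-convergence} imposes uniform continuity of $h_{J}$), which gives pointwise a.e.\ convergence of the integrand directly, and the cutoff $\chi_\delta$ together with $\|f_a\|_\infty\le\tau^{-1}$ and boundedness of $\sqrt{\bfGd}$ furnishes the dominating function. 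Once you accept that, the joint dependence on $\vec g$ and $f_a$ in the tensor product $(\vec g/\sqrt\lambda+\sqrt{\bfGd}_{\cdot,a})^{\tensor2}$ poses no difficulty, and your DCT step goes through exactly as the paper's does.
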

\begin{proof}
By Lemma~\ref{lem:tau-reg}, $f_a(\lambda^{-1/2}\vec g, \bfGd)$ is essentially bounded by $\tau^{-1}$. This combined with the weak convergence from Assumption~\ref{assumption:strong-convergence}, implies that 
\[
f_a(\lambda^{-1/2}\vec g, \bfGd)\to f_a(\lambda^{-1/2}\vec g, \bfG)
\]
not only in distribution but in fact in probability. 
Furthermore, this boundedness implies that 
\[
\frac{f_a(\lambda^{-1/2}\vec g, \bfG^{d})}{1+\tS(z)f_a(\lambda^{-1/2}\vec g, \bfG^{d})}\chi_\delta(1+\tS(z)f_a(\lambda^{-1},\bfGd)) 
\]
is essentially bounded by a constant depending on $\tau,\eps,\delta$. Similarly the sequence $\sqrt{\bfGd}$ is bounded as it is convergent. Thus the integrand is dominated by an integrable random variable. Using this convergence in probability, we have
$\wt F^{(d)}_\delta \to \wt F_\delta$ pointwise by the dominated convergence theorem.  Recall from Lemma~\ref{l:tfd-delta-conc}, that this family is equicontinuous on compacta of $E_\eps$. Thus this convergence is in fact uniform. 
\end{proof}

\begin{proof}[Proof of Theorem~\ref{t:tfd-to-f}]
Combining Lemmas~\ref{l:tfd-delta-conc}--\ref{l:uniform-compacta}, we have that
\[
\tFd_\delta - \wt F_\delta \to 0
\]
uniformly on compacta of $E_\eps$ a.s. Combining this with Lemma~\ref{l:tfd-control}, we see that for each $K$ and for all $\delta<1$,  
\[
\limsup_{d\to\infty}\norm{\tFd-\wt F}\leq \limsup_{d}\norm{\tFd-\tFd_\delta}+\norm{\wt F_\delta-\wt F}
\]
uniformly on $\Eekt$. The second term vanishes as $\delta\to0$ by the dominated convergence theorem. Similarly the first term is $O(\delta)$ by Lemma~\ref{l:tfd-control}. Taking $\delta\to0$ on the righthand side yields the desired.
\end{proof}

\subsection{Proof of Theorem~\ref{thm:deterministic-equivalent}}
The proof of this theorem  has three parts: proving the bulk, computing the outlier eigenvalues, and studying the outlier eigenpairs. To make this proof more readable we will break up the proof in to three subsections. We prove the bulk equation \eqref{eq:main-prop-bulk} in Section~\ref{sec:bulk-proof}; we compute the outlier eigenvalues and study their convergence in Section~\ref{sec:outlier-eval-proof}; we then end by showing that the corresponding eigenvectors are such that their projections converge to solutions of the effective eigenvector equations \eqref{eq:main-prop-eigenpairs-1}--\eqref{eq:main-prop-eigenpairs-2} in Section~\ref{sec:eff-evect-proof}. With these the proof will be complete. \qed

\subsection{Proof of \eqref{eq:main-prop-bulk}}\label{sec:bulk-proof}
We begin by identifying the ``bulk" distribution. Recall the decomposition 
$A = LR^\top + UW$ 
from \eqref{e:Adecomp}, where $LR^\top$ has rank $q$  a.s. Then
$$
\frac{\lambda}{d} A D A^\top 
= \bigl(LR^\top + UW\bigr)\,D\,\bigl(LR^\top + UW\bigr)^\top
= U W \,D\,W^\top U^\top + \text{(low rank)},
$$
where the low-rank part has rank at most $O(q)$.

By the Cauchy interlacing theorem, a rank-1 perturbation of a symmetric matrix results in a new matrix whose eigenvalues interlace with those of the original. Repeating this argument shows that a rank-$O(q)$ perturbation does not change the ``bulk" of the eigenvalues. Namely, the cumulative distribution function of the empirical spectral distribution shifts by at most $O(q/n)$ and therefore the limit of the empirical spectral distribution is the same as that of 
$\tfrac{1}{d}\,U W D W^\top U^\top$.
Since $U$ is an orthogonal matrix, 
$\tfrac{1}{d}\,U W D W^\top U^\top$ 
has the same eigenvalues as 
$\tfrac{1}{d}\,W D W^\top.$

As shown in Proposition~\ref{p:free-mult-conv-wishart}, the limit of the empirical eigenvalue distribution of 
$\tfrac{1}{d}\,W D W^\top$ 
is given by $\nu$, whose Stieltjes transform solves \eqref{e:ADA_eig}. This establishes the ``bulk" distribution \eqref{eq:main-prop-bulk}. \qed

\subsection{Proof of \eqref{eq:main-prop-outliers}}\label{sec:outlier-eval-proof}
We now turn to studying the outliers. 
We begin by expressing the characteristic polynomial as a product of two polynomials, one of which will give the ``bulk'' and the rest which will characterize the outliers. 
We follow a linearization argument.

By Lemma~\ref{lem:tau-reg}, $D$ is invertible. Thus by Schur's formula, the eigenvalues of $(\lambda/d)ADA^\top$ are given by the roots of 
\begin{align}\label{c:eq}
\det
\left[
\begin{array}{cc}
z & \sqrt{\lambda/d}A\\
\sqrt{\lambda/d}A^\top & D^{-1}
\end{array}
\right].
\end{align}
Throughout the following we refer to the matrix within this determinant as \emph{the linearization} of $\lambda ADA^\top /d$, or the linearization for short.
Decomposing $A$ as in \eqref{e:Adecomp}, we see that this is of the form  
\begin{align}
\det
\left(\left[
\begin{array}{cc}
z & UW\\
W^\top U^\top & D^{-1}
\end{array}
\right]
+\left[
\begin{array}{cc}
0 & LR^\top\\
RL^\top & 0
\end{array}
\right]\right).
\end{align}
As determinants are unchanged by orthogonal transformations, let us conjugate both matrices in the above by $U\oplus I$ so that, upon recalling \eqref{e:UL},
this is  equal to 
\begin{align}\label{eq:next-step}
\det
\left[
\begin{array}{ccc}
zI_q & 0 & R^\top\\
0& z I_{d-q} & \wt W\\ 
R & \wt W^\top & D^{-1}
\end{array}
\right].
\end{align}
If we define the matrix 
\[
M(z) = \left[
\begin{array}{cc}
zI_{d-q} & \wt W\\
\wt W^\top & D^{-1}
\end{array}
\right]
\]
then, provided $M$ is invertible, we have that \eqref{eq:next-step} is equal to
$\mfrp_n(z)\cdot \mfrq_n(z)$,
where $\mfrp_n(z)=\det M(z)$ and
\[
\mfrq_n(z)=\det(zI_q - \bar R^\top M^{-1}\bar R),
\]
where $\bar R^\top = [0_{q\times d-q}\, R^\top ]$ which is a $q\times (d-q+n)$ matrix.
Observe that $\mfrp_n(z)$ is the characteristic polynomial of $\wt WD\wt W/d$ times $\det(D)$. Thus, by Proposition~\ref{prop:no-outliers}, $\mfrp_n(z)$ is non-zero on $E_\eps$ and $M$ is invertible there. Thus it suffices to study the zeros of $\mfrq_n$ on this set.

Recalling Theorem~\ref{t:local_law} (and the Borel-Cantelli lemma), we see that the inner block matrix satisfies 
\begin{equation}\label{e:invert}
\left[
\begin{array}{cc}
z I_{d-q} & \widetilde W\\
\widetilde W^\top  & D^{-1}
\end{array}
\right]^{-1}
\approx 
\left[
\begin{array}{cc}
-\widetilde S^{(d)}(z)I_{d-q} & 0\\
0  & D(I_n+\widetilde S^{(d)}(z)D)^{-1}
\end{array}
\right]
\end{equation}
for $z\in E_\eps\cap B(\tau^{-1})$ eventually a.s.
where $\widetilde S^{(d)}(z)$ solves \eqref{eq:defmz-sec2} and where the precise definition of $\approx$ is as stated in Theorem~\ref{t:local_law}. Since $R$ is independent  of $\wt W$ and has finitely many columns, this implies that $\mfrq_n(z)$ is  uniformly close on compacta of $E_\eps$ to 
\begin{align}\label{e:mastereq3}
\mathfrak{h}_n(z) =\det(z-\widetilde F^{(d)}(z)).
\end{align}
where $\widetilde F^{(d)}=R^\top D(I+\widetilde S^{(d)}(z)D)^{-1}R$ is as in \eqref{eq:f-tilde-def}. That is, $\mfrq_n-\mathfrak h_n \to 0$ uniformly on compacta  of $E_\eps$ a.s.
Observe that by Theorem~\ref{t:tfd-to-f} we have that for each $K>0$
$\tFd  \to \wt F$
uniformly on compacta of $\Eekt$ almost surely. Thus, $\mfrq_n\to\mfrq$ with
\[
\mathfrak{q}(z)=\det(z-\wt F(z)).
\]
uniformly on $\Eekt$ for each $K>0$. 
Since $\mfrq_n$ and $\wt\mfrq_n$ are holomorphic there, the roots and number of roots on $\Eekt$ converge for each $K>0$ a.s.\qed

\subsection{Proof of \eqref{eq:main-prop-eigenpairs-1} }\label{sec:eff-evect-proof}
Next we characterize the eigenvectors corresponding to the outliers, specifically we are interested in understanding the projection in to the top space. 

We begin by recalling the following property of linearizations: We have that $v,w$ solve
\begin{align}
\left[
\begin{array}{cc}
-z^\prime & A/\sqrt d\\
A^\top/\sqrt d & -D^{-1}
\end{array}
\right]
\left[
\begin{array}{c}
v\\
w
\end{array}
\right]
=0
\end{align}
if and only if $z^\prime$ is an eigenvalue of $(1/d) ADA^\top$, $v$ is an associated eigenvector, and $w,v$ satisfy
\begin{align}
\frac{1}{d} ADA^\top v=z^\prime v,\quad w=D\frac{A^\top }{\sqrt d}v.
\end{align}
Now recall that if $A$ is a matrix and $P$ is orthogonal, then  $v$
is an eigenvector of $A$ with eigenvalue $z^\prime$ iff $Pv$ is an eigenvector of $PAP^\top$ with the same eigenvalue.  Thus, conjugating the linearization by $U\oplus I$ as above, it suffices to understand  solutions  $z^\prime,(v,w)$ with $v=(v_1,v_2)$ to

\begin{align}\label{e:eigrelation-1}
\left[
\begin{array}{ccc}
-z^\prime I_q & 0 & R^\top\\
0& -z^\prime I_{d-q} & \wt W\\ 
R & \wt W^\top & -D^{-1}
\end{array}
\right].
\left[
\begin{array}{c}
v_1\\
v_2 \\
u
\end{array}
\right]
=0
\end{align}
Here $v_1$ is the vector we are interested in, corresponding to the projection of the eigenvector into the relevant space $\text{Span}(x_1,...,x_\cC,\mu_1,...,\mu_k)$. To see this observe that if $u$ is the eigenvector of interest then $v=Uu$ and, by construction of $U$ from \eqref{e:Adecomp}, we have $v_1 =L^\top u$.

Let us now take $z^\prime = z^{(d)}$ be the eigenvalue as in the statement of the theorem and let us denote $v^{(d)}=(v_1^{(d)},v_2^{(d)})$ the corresponding eigenvector and $w^{(d)}$ the corresponding vector from the linearization. Observe that if we let $\wt w^{(d)}=(v_2^{(d)},w^{(d)})$, and recall $\bar R^\top = [0_{q\times d-q}\, R^\top ]$, then $(v_1^{(d)},\wt w^{(d)})$ solve
\begin{equation}
\begin{aligned}\label{eq:evect-relation}
    z^{(d)} v_1 &= \bar R^\top \wt w^{(d)}\\
    \bar R v_1^{(d)} & = M\wt w^{(d)}\,.
\end{aligned}
\end{equation}
Re-arranging this, we see that
\begin{equation}\label{e:evect-relation-reduced}
(z^{(d)}-\bar R^\top M^{-1}\bar R)v_1^{(d)}= 0.
\end{equation}
Applying the deterministic equivalent as in \eqref{e:invert} and using that $\tFd\to\wt F$  uniformly on $\Eekt$ by Theorem~\ref{t:tfd-to-f}, 
we get  
\[
\lim_{d\to\infty}  (z^{(d)}-\tFd(z^{(d)}))v_1^{(d)} = 0
\]
a.s.
In particular, any limit point of the sequence $v^{(d)}_1$ satisfies $
(z-\wt F(z))v_1 = 0$ as claimed. \qed

\subsection{Proof of \eqref{eq:main-prop-eigenpairs-2}}
It remains to control the norm of $L^\top u$. We consider only the case $z\neq 0$. In this case, if we let $\bar R$ as above, then we may rewrite \eqref{e:eigrelation-1} with $z^\prime =z^{(d)}$ as

\begin{align}
0=
\left(\left[
\begin{array}{cc}
-z^{(d)} I_q & 0\\
0  & M(z^{(d)})
\end{array}
\right]+\left[
\begin{array}{cc}
0 & \bar R^\top\\
\bar R & 0
\end{array}
\right]\right)\left[
\begin{array}{c}
v^{(d)}\\
u^{(d)}
\end{array}
\right].
\end{align}

As $z^{(d)}\to z\neq 0$, eventually $z^{(d)}\neq 0$. Since also $M$ is invertible on $E_\eps$, we have
\begin{align}
\left[
\begin{array}{c}
v^{(d)}\\
u^{(d)}
\end{array}
\right]=
\left[
\begin{array}{cc}
-z^{(d)}I_q & 0\\
0  & M(z^{(d)})
\end{array}
\right]^{-1}\left[
\begin{array}{cc}
0 & \bar R^\top\\
\bar R & 0
\end{array}
\right]\left[
\begin{array}{c}
v^{(d)}\\
u^{(d)}
\end{array}
\right].
\end{align}
Since we are working with unit eigenvectors, we have that $1=\norm{v^{(d)}}^2$ which is also equal to
\begin{align*}
 [(v^{(d)})^\top \,\, (u^{(d)})^\top ]
\left[\begin{array}{cc}
0 & \bar R^\top\\
\bar R & 0
\end{array}
\right]
&\left[
\begin{array}{cc}
-z^{(d)}I_q & 0\\
0  & M(z^{(d)})
\end{array}
\right]^{-1}
\left[
\begin{array}{cc}
I_d  & 0\\
0  & 0
\end{array}
\right]
\left[
\begin{array}{cc}
-z^{(d)}I_q & 0\\
0  & M(z^{(d)})
\end{array}
\right]^{-1}\left[
\begin{array}{cc}
0 & \bar R^\top\\
\bar R & 0
\end{array}
\right]\left[
\begin{array}{c}
v^{(d)}\\
u^{(d)}
\end{array}
\right]\\
&
 =\frac{1}{(z^{(d)})^2}\norm{R^\top u^{(d)}}^2+ (Rv_1^{(d)})^\top M^{-1}I_{d+q}\oplus M^{-1}
Rv_1^{(d)}
\end{align*}

The first term is $\norm{v_1}^2$ by \eqref{eq:evect-relation}. 
For the second term, note that it is equal to $-v_1^{(d)}\partial_z \psi_d(z^{(d)})v_1^{(d)}$ 
where
\[
\psi_d(z^\prime)= (z^\prime I_q-\bar R^\top M^{-1}(z^\prime)\bar R).
\]
which we view as a $q\times q$ matrix valued function of $z^\prime$. Observe that $\psi_d(z^\prime)$ is holomorphic on $\Eekt$. Furthermore, applying the deterministic equivalent \eqref{e:invert} as above we have that 
\[
\psi_d(z^\prime)- \wt F^{(d)}(z^\prime)\to 0,
\]
uniformly on $\Eekt$ eventually almost surely. As these are both holomorphic, their derivatives converge uniformly there as well, i.e.,  
\[
\partial_z\psi(z^\prime)-\partial_z\tFd(z^\prime)\to0
\]
uniformly on $\Eekt$ eventually a.s. Working on this probability 1 event, passing again to a subsequence along which $v_1^{(d)}\to u$, and using that $v_1^{(d)}$ is bounded in norm by 1 since it is a block of a unit vector, we have that 
\[
(v^{(d)}_1)^\top \psi_d^\prime(z^{(d)})v^{(d)}_1 \to u^\top \partial_z\wt F(z) u.
\]
eventually a.s. 
Combining these results, we see that  a.s. any limit point $u$ of $v_1^{(d)}$ must satisfy
\[
1 = \norm{u}^2- u^\top \partial_z \wt F(z)u
\]
as desired.  \qed

\subsection{Proof of invariance under choice of basis}\label{sec:invariant-under-change-of-basis}
In the above, it is natural to ask whether the functions above, specifically the effective outlier equations, truly depend only on $\bfG$, or also on the choice of basis $L$ in which $F$ is expressed. Indeed, while $\sqrt{\bfG}$ is uniquely defined, it is to some extent an arbitrary choice and there are multiple alternatives for this square root corresponding to points in the orbit of $\sqrt{\bfG}$ under the action of the orthogonal group $\text{O}(q)$. In particular, one may naturally ask about how the solutions to the equations \eqref{e:defmz-with-rho} and \eqref{eq:outlier-equation-general}-\eqref{eq:effective-evect-proj} vary along this orbit. As one might expect, the first two equations are invariant and the second two transform in the natural way. 

To see this, fix any $\mathbf O\in \text{O}(q)$, and consider $\bfO\sqrt{\bfG}$. Then $\bfG$ is of course unchanged,
\begin{align}
    \bfG=(\mathbf O \sqrt{\bfG})^\top (\mathbf O \sqrt{\bfG})\,.
\end{align}
We can define the $F$ matrix analogous to \eqref{eq:F-matrix} using $(\mathbf O \sqrt{\bfG})$:
\begin{align*}
     F_{ij}(z;\bfO)= \lambda\phi\sum_{a=1}^k p_a \mathbb E\left[\frac{f_a(\lambda^{-1/2}\mathbf O^\top\vec g; \mathbf{G})}{\lambda \phi+S(z)f_a(\lambda^{-1/2}\mathbf O^\top\vec g;\mathbf{G})} \left(\frac{g_i}{\sqrt{\lambda}}+(\bfO\sqrt{\bfG})_{ia}\right)\left(\frac{g_j}{\sqrt{\lambda}}+(\bfO\sqrt{\mathbf G})_{ja}\right)\right]
\end{align*}
Since the law of $\cN(0,I_d)$ is rotation invariant, namely $\bfO^\top \vec g$ and $\vec g$ have the same law, we have 
\begin{align*}
    (\bfO F(z)\bfO^\top)_{ij}=F_{ij}(z;\bfO)\,.
\end{align*}
In particular, rotating $\sqrt{\bfG}$ by $\bfO$ does not change the solutions of \eqref{eq:outlier-equation-general}
\begin{align}
    \det(zI_{q}-F( z))=\det(zI_q- \bfO F(z)\bfO^\top)=\det(zI_q- F(z;\bfO))=0
\end{align}
and transforms the solutions to \eqref{eq:effective-evect}-\eqref{eq:effective-evect-proj} in the natural way: if we let $U$ denote the space of solutions to this pair of equations for $\sqrt{\bfG}$ and $\tilde U$ denote the space of solutions for $\bfO\sqrt{\bfG}$, then $\bfO U =\tilde U$.  

An immediate consequence of this observation is the following lemma, regarding the number of solutions to \eqref{eq:outlier-equation-general}.
\begin{lem}\label{l:num-solutions}
    Recalling $\bar q \le q$ from~\eqref{eq:q-qbar}, the equation $\det(zI_{q}-F( z))=0$ has at most $\bar q$ real solutions in any connected component of $\mathbb R \setminus \supp(\nu_{\bfG})$. 
\end{lem}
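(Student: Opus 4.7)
The plan is to exploit a block decomposition of $F(z)$ adapted to $\mathrm{Col}(\sqrt{\bfG})$, reducing the count of real solutions to a $\bar q$-dimensional spectral problem, and then conclude with a monotonicity argument on the eigenvalue branches.

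Let $P_\parallel$ denote the orthogonal projection onto $\mathrm{Col}(\sqrt{\bfG})$, which has dimension exactly $\bar q$, and set $P_\perp = I_q - P_\parallel$. Decompose $\vec g = \vec g_\parallel + \vec g_\perp$ into independent standard Gaussians on these two subspaces. The key observation is that $f_b(\lambda^{-1/2}\vec g; \bfG)$ depends on $\vec g$ only through $\sqrt{\bfG}\vec g = \sqrt{\bfG}\vec g_\parallel$ and is therefore independent of $\vec g_\perp$. Expanding the outer product in \eqref{eq:F-matrix} as $Y_b Y_b^\top = (Y_b^\parallel + \vec g_\perp/\sqrt{\lambda})(Y_b^\parallel + \vec g_\perp/\sqrt{\lambda})^\top$ with $Y_b^\parallel = \vec g_\parallel/\sqrt{\lambda} + \sqrt{\bfG}_{\cdot, b} \in \mathrm{Col}(\sqrt{\bfG})$, the cross terms vanish in expectation and the $\vec g_\perp \vec g_\perp^\top$ piece averages to $P_\perp/\lambda$. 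This will yield
\[ F(z) = F_\parallel(z) + c(z) P_\perp, \qquad c(z) = \phi \sum_b p_b \mathbb E\!\left[\frac{f_b(\lambda^{-1/2}\vec g;\bfG)}{\lambda\phi + S(z)\, f_b(\lambda^{-1/2}\vec g;\bfG)}\right], \]
with $F_\parallel(z) = P_\parallel F(z) P_\parallel$ supported on $\mathrm{Col}(\sqrt{\bfG})$. Using the self-consistent equation \eqref{e:defmz-with-rho} (or its analogue for general $f_b$) I identify $c(z) = z + 1/S(z)$, so in an orthonormal basis adapted to $\mathrm{Col}(\sqrt{\bfG}) \oplus \mathrm{Col}(\sqrt{\bfG})^\perp$ we obtain the factorization
\[ \det(zI_q - F(z)) = \det(zI_{\bar q} - F_\parallel(z)) \cdot \bigl(-1/S(z)\bigr)^{q - \bar q}. \]
Since $S(z)$ is real and finite on $\mathbb R \setminus \supp(\nu_\bfG)$, the second factor never vanishes there, so every real solution in any given connected component is a zero of $\det(zI_{\bar q} - F_\parallel(z))$.

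To bound the number of such zeros per component by $\bar q$, I will show that $F_\parallel(z)$ is nonincreasing in $z$ in the Loewner order. Differentiating under the expectation,
\[ \frac{dF(z)}{dz} = -\lambda\phi\, S'(z) \sum_b p_b \mathbb E\!\left[\frac{f_b^2}{(\lambda\phi + S f_b)^2}\, Y_b Y_b^\top\right] \preceq 0, \]
since $S'(z) = \int (t-z)^{-2}\, d\nu_\bfG(t) > 0$ on $\mathbb R \setminus \supp(\nu_\bfG)$ and the matrix expectation is positive semidefinite; compression to the parallel block preserves this inequality. By Rellich's theorem the eigenvalues $\mu_1(z) \le \cdots \le \mu_{\bar q}(z)$ of $F_\parallel(z)$ admit real-analytic branches on each component, each nonincreasing in $z$. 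Hence $g_j(z) := z - \mu_j(z)$ satisfies $g_j'(z) \ge 1$, so each $g_j$ has at most one zero per component, and by this strict monotonicity each such zero is simple. The multiplicity of a root $z^*$ of $\det(zI_{\bar q} - F_\parallel(z)) = \prod_j g_j(z)$ therefore equals $\#\{j : \mu_j(z^*) = z^*\}$, and summing over the $\bar q$ branches bounds the total count (with multiplicity) by $\bar q$ in each component. The main technical point to be carried out carefully is the vanishing of the $\vec g_\perp$ cross terms, which rests on the independence of $\vec g_\perp$ from both $f_b$ and $Y_b^\parallel$; the selection of analytic branches through possible eigenvalue crossings and the differentiation under the expectation are standard.
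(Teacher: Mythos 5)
Your proof is correct and takes essentially the same route as the paper's. The paper realizes the same block structure via an explicit rotation $\mathbf O\in \mathrm O(q)$ that zeroes out the last $q-\bar q$ rows of $\sqrt{\bfG}$, while you work coordinate-free with the projector $P_\parallel$ onto $\mathrm{Col}(\sqrt{\bfG})$ and a decomposition $\vec g=\vec g_\parallel+\vec g_\perp$; both identify the complementary block as $(z+1/S(z))I_{q-\bar q}$ and both conclude by observing that $\partial_z\bigl(zI-F_\parallel(z)\bigr)\succeq I\succ 0$ (equivalently, $F_\parallel$ Loewner nonincreasing), so each eigenvalue of $zI-F_\parallel(z)$ is strictly increasing and can vanish at most once per component.
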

\begin{proof}
By definition, $\sqrt{\bfG}$ has rank $\bar q$. In the following we take the orthogonal matrix $\mathbf O$ such that 
\begin{align}
    (\mathbf O\sqrt{\bfG})_{ia}=0, \text{ for } i=\bar q+1, \bar q+2,\cdots, q,
\end{align}
More crucially, $f_a(\lambda^{-1/2}\mathbf O^\top\vec g; \mathbf{G})$ does not depend on $g_i$ for $i=\bar q+1, \bar q+2,\cdots, q$.
In this way, the $q\times q$ matrix has two blocks:
for $i\neq j$ and $\max\{i,j\}> \bar q$ 
\begin{align}
     F_{ij}(z;\bfO)=0,
\end{align}
for $i=j>\bar q$ we have
\begin{align}
    F_{ii}(z;\bfO)
    &=\lambda\phi\sum_{a=1}^k p_a \mathbb E\left[\frac{f_a(\lambda^{-1/2}\vec g; \mathbf{G})}{\lambda \phi+S(z)f_a(\lambda^{-1/2}\vec g;\mathbf{G})} \frac{(\bfO \vec g)_i}{\sqrt{\lambda}}\frac{(\bfO \vec g)_j}{\sqrt{\lambda}}\right]\\
    &=\phi\sum_{a=1}^k p_a \mathbb E\left[\frac{f_a(\lambda^{-1/2}\vec g; \mathbf{G})}{\lambda \phi+S(z)f_a(\lambda^{-1/2}\vec g;\mathbf{G})} \right]
    =z+\frac{1}{S(z)} 
\end{align}
where in the last equality we used \eqref{e:defmz-with-rho}.
In summary, we have
\begin{align}\label{e:blockform}
     F(z;\bfO)
     =\left[
     \begin{array}{cc}
      F(z;\bfO)_{[{\bar q},\bar q]} & 0\\
      0 & (z+1/S(z))I_{q-\bar q}
\end{array}
\right]
\end{align}
By plugging \eqref{e:blockform} into \eqref{eq:outlier-equation-general}, and noticing that $1/S(z)\neq 0$, we can reduce \eqref{eq:outlier-equation-general} to the following equation,
\begin{align}\label{e:reducedeq}
    \det ( z I_{\bar q} -F(z;\bfO)_{[{\bar q},\bar q]}) =0 \,.
\end{align}
This has at most $\bar q$ solutions  in each connected component of $\mathbb R\setminus \supp(\nu_{\bfG})$. To see this, $B(z) = z I_{\bar q} -F(z;\bfO)_{[{\bar q},\bar q]}$ and observe that in every connected component of $\mathbb R \setminus \supp(\nu_{\bfG})$ this forms a 1-parameter family of matrices that are strictly increasing (in $z$) in the PSD sense. 
Indeed, differentiating in  $z$  
\begin{align}
    \partial_z B(z) = I_{\bar q} + \lambda \phi \sum_{a} p_a \mathbb E\Big[ \frac{S'(z) f_a^2}{(\lambda \phi + S(z) f_a)^2} (\frac{\bfO \vec{g}}{\sqrt{\lambda}} + \sqrt{\bfG})_{[\bar q\bar q]}^{\otimes 2}\Big]
\end{align}
is a strictly positive definite matrix. 
This implies each eigenvalue of $B(z)$ can cross $0$ at most for one value of $z$, so there are at most $\bar q$ solutions of $\det(B(z)) =0$.  
\end{proof}

\section{Detailed analysis of high-dimensional logistic regression}\label{sec:logistic-regression}
In this section, we begin by establishing Theorem~\ref{mainthm:logistic-regression}--\ref{mainthm:logistic-regression-outliers} as a direct consequence of Theorem~\ref{thm:main-A-D-A^T} by verifying that Assumptions~\ref{assumption:meas}--\ref{assumption:strong-convergence} apply. We then proceed to explicitly analyze the effective bulk and effective outlier equations at different points in the parameter space, with a particular focus on evaluating them in the early stages of training along the trajectory taken by the SGD as it learns a good classifier.

\subsection{Empirical matrices for high-dimensional logistic regression}\label{subsec:logistic-regression-empirical-matrices}
We begin with some computations giving explicit expressions for the empirical Hessian and Gradient matrices of the $k$-GMM logistic regression loss function. 
Recall $\varphi_\alpha$ \eqref{eq:f_a-kGMM-case}.
Differentiating~\eqref{eq:loss-function-1-layer} observe that the $(x^b ,x^c)$-block of the Gradient matrix and Hessian are, respectively,   
\begin{align}\label{eq:G-matrix-as-A-D-A^t}
        \widehat G_{bc} &=  \frac{1}{n} A D A^\top \qquad \text{where} \quad A = [Y_1 \cdots Y_n] \,, \quad D= \text{diag}((y_{\ell,b} - \varphi_b(\bfx^\top Y_\ell)  ) (y_{\ell,c} - \varphi_c(\bfx^\top Y_\ell))\,.\\
        \label{eq:Hessian-as-A-D-A^t}
    \widehat H_{bc} &=  \frac{1}{n} A D A^\top \qquad \text{where} \quad A = [Y_1 \cdots Y_n] \,, \quad D= \text{diag}(\varphi_b(\bfx^\top Y_\ell) \delta_{bc} - \varphi_b(\bfx^\top Y_\ell)\varphi_c(\bfx^\top Y_\ell))\,.
\end{align}
Note that the diagonals are, in law,  equivalent to \eqref{eq:f_a-kGMM-case}.

\subsection{Proof of Theorem~\ref{mainthm:logistic-regression}--\ref{mainthm:logistic-regression-outliers}}\label{subsec:applicability-to-logistic-regression}

We have to establish Assumptions~\ref{assumption:meas}--\ref{assumption:D}, and in the case where $\|\bfGd - \bfG\|\le o(1/(\log d))$ we have to show that Assumption~\ref{assumption:strong-convergence} also holds. Assumption~\ref{assumption:meas} is immediate upon recalling  the functions $(f_j^{M})$ for $M\in\{G,H\}$ from \eqref{eq:f_a-kGMM-case}. Since $\varphi_\alpha$ is smooth with bounded derivatives, Lemma~\ref{lem:bounded-differentiable-implies-strong-convergence} below, together with the assumption $\|\bfGd - \bfG\| = o(1/\log d)$ yields Assumption~\ref{assumption:strong-convergence}.

    Towards Assumption~\ref{assumption:D}, compact support follows from~\eqref{eq:f_a-kGMM-case} and the fact that $\varphi_\alpha\in [0,1]$. 
  To complete Assumption~\ref{assumption:D},  it remains to establish the uniform $O(\epsilon)$ bound on the probability of $f_j$ falling within $\epsilon$ of $0$. Fix any $\lambda$ and any $\mathbf{G}$ and consider $\mathbb P(|f_j(\lambda^{-1/2}\vec{g}, \mathbf{G})|<\epsilon)$ which since $f_j$ is non-negative a.s., is the same as $\mathbb P(f_j(\lambda^{-1/2}\vec{g}, \mathbf{G})\in [0,\epsilon))$. Beginning with the Hessian case, 
    \begin{align*}
        \mathbb P(f_j^H (\lambda^{-1/2}\vec{g} , \mathbf{G}) \in [0,\epsilon]) & \le \mathbb P(\varphi_\alpha (\bfG_{{[\cC]} j} +\bfG_{[\cC,\cC]}^{1/2}\lambda^{-1/2} \vec  g_{[\cC]}) \in [0,2\epsilon)) \\
        & \qquad+ \mathbb P(\varphi_\alpha (\bfG_{{[\cC]} j}+\bfG_{[\cC,\cC]}^{1/2}\lambda^{-1/2} \vec  g_{[\cC]}) \in (1-2\epsilon,1])\,.
    \end{align*}
    In turn, letting $\vec{z} = \bfG_{{[\cC]} j}+\bfG_{[\cC,\cC]}^{1/2}\lambda^{-1/2} \vec  g_{[\cC]}$, 
    \begin{align}\label{eq:softmax-prob-of-near-zero}
        \mathbb P(\varphi_\alpha (\vec{z}) \in [0,\epsilon))
       \le  \mathbb P(z_\alpha - \max_{b \ne \alpha} z_b < \log \epsilon) \le k \max_b \mathbb P(|z_b| > \tfrac{1}{2}|\log \epsilon|)\,.
    \end{align}
    This is a tail bound for a Gaussian with a finite mean and variance (depending only on $\mathbf{G} ,\lambda$) and therefore goes to zero as $e^{ - c_{\mathbf{G},\lambda} (\log \epsilon)^2} = \epsilon^{c_{\mathbf{G},\lambda} |\log \epsilon|}$ which is $O(\epsilon)$ as $\epsilon \downarrow 0$. An analogous bound applies to $\mathbb P(\varphi_\alpha(\vec{z}) \in [1-2\epsilon,1])$. 

    In the case of the gradient matrix, the function $f_j$ is given by $(\mathbf 1_{\alpha=j}- \varphi_\alpha(\vec{z}))^2$. The probability in this case satisfies for small $\epsilon$ that 
    \begin{align*}
        \mathbb P(f_j^G(\lambda^{-1/2} \vec{g};\mathbf{G})  \in [0,\epsilon)) \leq \mathbb P(\varphi_{\alpha}(\vec{z})\in [0,2\sqrt{\epsilon})) + \mathbb P(\varphi_{\alpha}(\vec{z}) \in (1-2\sqrt{\epsilon},1])\,.
    \end{align*}
    By the same reasoning as in the Hessian case, the Gaussian tails are bounded as $e^{ - c_{\mathbf{G},\lambda} (\log \sqrt{\epsilon})^2} = \epsilon^{-c_{\mathbf{G},\lambda}|\log \epsilon|/4}$, which is $O(\epsilon)$ as $\epsilon \downarrow 0$.  \qed

        \begin{lem}\label{lem:bounded-differentiable-implies-strong-convergence}
            Suppose that the function of Assumption~\ref{assumption:meas}, is of the form $D_{\ell \ell} = h_{J_\ell}(\mathbf{x}^\top Y)$ and that $(h_j)_{j=1}^{k}$ are  uniformly continuous. If $\|\bfG^{(d)} - \bfG\|\le o(1/\log d)$, then  Assumption~\ref{assumption:strong-convergence} holds. 
        \end{lem}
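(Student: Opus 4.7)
The plan is to couple $\hat\rho_{\bfG^{(d)}}$ to the empirical measure of $n$ genuine i.i.d.\ samples from $\rho_{\bfG}$, up to a uniformly vanishing error, and then invoke classical limit theorems for compactly supported samples. Conditional on the hidden label $J_\ell = j$, the vector $\mathbf{x}^{(d)\top} Y_\ell$ is Gaussian with mean $\bfGd_{[\cC]j}$ and covariance $\bfGd_{[\cC,\cC]}/\lambda$, so one may write
\[
\mathbf{x}^{(d)\top} Y_\ell = \bfGd_{[\cC]J_\ell} + \lambda^{-1/2}(\bfGd_{[\cC,\cC]})^{1/2}\eta_\ell, \qquad \eta_\ell \stackrel{\text{iid}}{\sim} \mathcal N(0,I_\cC),
\]
and define the auxiliary samples $\widetilde D_{\ell\ell} := h_{J_\ell}(\bfG_{[\cC]J_\ell} + \lambda^{-1/2}\bfG_{[\cC,\cC]}^{1/2}\eta_\ell)$, which are i.i.d.\ draws from $\rho_\bfG$ by construction.

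The key estimate is a uniform bound on $\max_\ell |D_{\ell\ell} - \widetilde D_{\ell\ell}|$. Letting $\omega_h$ denote a common modulus of uniform continuity for the finite family $(h_j)_{j\in[k]}$ and using that the matrix square root is $1/2$-H\"older on the PSD cone,
\[
|D_{\ell\ell} - \widetilde D_{\ell\ell}| \leq \omega_h\!\left(\|\bfGd - \bfG\| + C\lambda^{-1/2}\|\bfGd - \bfG\|^{1/2}\|\eta_\ell\|\right).
\]
Standard Gaussian tail bounds, a union bound, and Borel--Cantelli give $\max_{\ell\le n}\|\eta_\ell\| \lesssim \sqrt{\log n}$ eventually almost surely. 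Since $n\asymp d$, the hypothesis $\|\bfGd - \bfG\| = o(1/\log d)$ forces the argument of $\omega_h$ to be $o(1)$ uniformly in $\ell$, so $\max_\ell |D_{\ell\ell} - \widetilde D_{\ell\ell}| \to 0$ almost surely.

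To conclude, set $\widetilde\rho^{(d)} := \frac{1}{n}\sum_\ell \delta_{\widetilde D_{\ell\ell}}$. Since $\rho_\bfG$ is compactly supported (by the boundedness of the $h_j$ guaranteed by Assumption~\ref{assumption:D}), the classical strong law yields $\widetilde\rho^{(d)}\to \rho_\bfG$ weakly a.s., and a standard covering plus Borel--Cantelli argument (cover $\supp(\rho_\bfG)$ by finitely many balls, each of positive $\rho_\bfG$-measure, and verify each is eventually hit) gives $d_H(\supp(\widetilde\rho^{(d)}), \supp(\rho_\bfG)) \to 0$ a.s. The uniform discrepancy bound from the previous step transfers both convergences to $\hat\rho_{\bfGd}$ via the triangle inequality, establishing Assumption~\ref{assumption:strong-convergence}.

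The main obstacle is the interplay between the $1/2$-H\"older regularity of the matrix square root on general PSD matrices and the typical Gaussian scale $\|\eta_\ell\| = O(\sqrt{\log d})$: it is precisely this loss that pins down the rate $o(1/\log d)$ in the hypothesis. If $\bfG_{[\cC,\cC]}$ happens to be strictly positive definite, the square root is locally Lipschitz and the weaker rate $o(1/\sqrt{\log d})$ would suffice; in the singular case, the inverse defining $\eta_\ell$ should be read as a pseudo-inverse on the range of $\bfGd_{[\cC,\cC]}$, which does not alter the coupling argument since $\mathbf{x}^{(d)\top}Y_\ell - \bfGd_{[\cC]J_\ell}$ lies in that range a.s.
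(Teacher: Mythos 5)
Your proof is correct and takes essentially the same route as the paper's: couple $D_{\ell\ell}$ to genuine i.i.d.\ samples from $\rho_\bfG$ by using the same underlying Gaussians, control $\max_{\ell\le n}\|\eta_\ell\|\lesssim\sqrt{\log n}$ via a union bound, invoke the $1/2$-H\"older continuity of the PSD square root together with uniform continuity of $h$, and observe that the hypothesis $\|\bfGd-\bfG\|=o(1/\log d)$ kills the $\sqrt{\log n}$ factor. The only differences are cosmetic: the paper parametrizes the full $q$-dimensional vector of inner products via the $q\times q$ matrix $\sqrt{\bfGd}$ (which sidesteps any rank or pseudo-inverse discussion, since $\sqrt{\bfGd}\,\vec g$ is always well-defined), while you work with the $\cC\times\cC$ block; and you spell out the strong law and the covering/Borel--Cantelli step for the Hausdorff convergence, whereas the paper dispatches the weak convergence in one line and records only the one-sided bound $\sup_\ell\dist(D_{\ell\ell},\supp\rho)\to0$. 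Your closing remark that the $1/2$-H\"older loss is precisely what forces the rate $o(1/\log d)$, and that a locally Lipschitz square root would relax it to $o(1/\sqrt{\log d})$, is a correct and useful observation not made explicit in the paper.
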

        \begin{proof}
            Since $h$ is continuous, the first part of Assumption~\ref{assumption:strong-convergence} is immediate. 
            For the ``strong convergence in law'', we begin by writing 
            \[
            (\langle\bfx,Y_\ell\rangle)_\ell \eqdist (\bfGd+\sqrt \bfGd \vec g_\ell )_\ell ,
            \]
            where $\vec g_\ell \sim \cN(0,I_q)$ are iid.
            By standard Gaussian tail bounds, eventually almost surely, 
            \[
            \max_{\ell\leq n} \norm{g_\ell}\lesssim \sqrt{\log n}.
            \]
            Thus
            \[
            \sup_\ell \norm{G+\sqrt G g_\ell -(\bfGd+\sqrt\bfGd g_\ell)} \lesssim \norm{\bfGd-\bfG}^{1/2}\sqrt{\log n}\to 0,
            \]
            by assumption on the rate of convergence. Thus
            \[
            \limsup \sup_\ell \abs{h_{J_\ell}(\bfGd + \sqrt\bfGd\lambda^{-1/2}g_\ell)-h_{J_\ell}(\bfG + \sqrt\bfG\lambda^{-1/2}g_\ell)}=0\,,
            \]
            by uniform continuity of h, so that
            \[
            \sup_\ell d(\Dll,\supp\rho)\to 0\,,
            \]
            as desired. 
        \end{proof}

\subsection{Effective bulk of the Hessian at initialization}
We consider two different, natural, initializations for the SGD algorithm, under which we give exact expressions for the effective bulk and outliers. The first natural initialization, under which the calculations can be done most explicitly, is the case where all the training parameters $\mathbf{x}$ are initialized exactly at $0$. 

\begin{lem}\label{lem:bulk-Hessian-0-init}
            Consider the $k$-GMM with loss function~\eqref{eq:loss-function-1-layer}, with orthonormal means, at the initialization point given by $\mathbf{x} \equiv 0$. The effective bulk $\nu_{\mathbf{G}}$ has Stieltjes transform 
            \begin{align}\label{eq:S(z)-Hessian-0-initialization}
    S(z) 
    & = \frac{- (\lambda \phi z + \xi(1-\phi)) + \sqrt{(\lambda \phi z - \xi(1+\phi))^2 - 4 \xi^2 \phi}}{2 \xi z}\,.
\end{align}
That is, the effective bulk is a Marchenko--Pastur distribution 
whose  left and right edges are at $$z_{\pm} = \frac{k-1}{k^2}\cdot \frac{1}{\lambda} \cdot \Big(1 \pm \frac{1}{\sqrt{\phi}}\Big)^2\,.$$ In particular, \begin{equation}\label{eq:right-edge-value}
    \lim_{z\downarrow z_{*,+}} S(z)  = - \frac{\lambda \phi}{\xi (1+\sqrt{\phi})}\,.
\end{equation}
\end{lem}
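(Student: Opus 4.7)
The plan is to exploit the fact that at the initialization $\mathbf{x}\equiv 0$, the weight functions $f_b^H$ appearing in Definition~\ref{def:effective-bulk} collapse to a deterministic constant, which reduces the defining fixed-point equation \eqref{e:defmz-with-rho} to a pure quadratic with a closed-form Marchenko--Pastur solution.

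First, I would compute the summary statistics and the relevant slice of $\mathbf{G}$ at $\mathbf{x}\equiv 0$. Since every $x^a = 0$, the block $\mathbf{G}_{[\cC],\cdot}$ vanishes, so in particular $\mathbf{G}_{[\cC]b}=0$ and $\mathbf{G}_{[\cC,\cC]}=0$. Consequently, the argument $\mathbf{G}_{[\cC]b}+\mathbf{G}_{[\cC,\cC]}^{1/2}\lambda^{-1/2}\vec{g}_{[\cC]}$ entering the softmax in \eqref{eq:f_a-kGMM-case} is identically the zero vector, so $\varphi_\alpha\equiv 1/k$ and therefore
\[
f_b^H(\lambda^{-1/2}\vec{g};\mathbf{G})\;\equiv\;\xi := \tfrac{1}{k}\bigl(1-\tfrac{1}{k}\bigr)=\tfrac{k-1}{k^2}
\]
for every $b\in[k]$. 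Plugging this constant into \eqref{e:defmz-with-rho} and using $\sum_b p_b=1$ gives, after clearing the denominator,
\[
\xi z\,S(z)^2 + \bigl(\lambda\phi z + \xi(1-\phi)\bigr) S(z) + \lambda\phi \;=\; 0.
\]

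Next I would solve this quadratic. The discriminant expands as
\[
\bigl(\lambda\phi z+\xi(1-\phi)\bigr)^2 - 4\lambda\phi z\xi \;=\;\bigl(\lambda\phi z-\xi(1+\phi)\bigr)^2 - 4\xi^2\phi,
\]
which yields formula \eqref{eq:S(z)-Hessian-0-initialization} once the sign is chosen so that $S(z)\sim -1/z$ at infinity (equivalently, so $\Im S(z)>0$ when $\Im z>0$). This is exactly the Stieltjes transform of a Marchenko--Pastur distribution, rescaled so that entries of $\xi D$ replace the identity covariance; indeed, since $\hat\rho^{(d)}$ concentrates on the atom $\delta_\xi$, the free multiplicative convolution $\delta_\xi\boxtimes\nu_{\MP}$ is simply the pushforward of $\nu_{\MP}$ under $t\mapsto \xi t$, which is also what one reads off from the formula.

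Finally, the edges $z_\pm$ are obtained by setting the discriminant to zero: $\lambda\phi z = \xi(1+\phi)\pm 2\xi\sqrt{\phi}=\xi(1\pm\sqrt{\phi})^2$, i.e.\
\[
z_\pm \;=\; \frac{\xi}{\lambda\phi}(1\pm\sqrt{\phi})^2 \;=\; \frac{k-1}{k^2\lambda}\Bigl(1\pm \tfrac{1}{\sqrt{\phi}}\Bigr)^2,
\]
as claimed. Evaluating $S$ at the right edge, where the square root vanishes and $\lambda\phi z_+ + \xi(1-\phi)=2\xi(1+\sqrt{\phi})$, gives
\[
\lim_{z\downarrow z_+} S(z)\;=\;-\frac{2\xi(1+\sqrt{\phi})}{2z_+\xi}\;=\;-\frac{\lambda\phi}{\xi(1+\sqrt{\phi})},
\]
which is \eqref{eq:right-edge-value}. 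There are no real obstacles: the only content is the deterministic collapse of $f_b^H$ at $\mathbf{x}\equiv 0$, and the rest is routine algebra plus the standard correspondence between quadratic Stieltjes transforms and Marchenko--Pastur laws.
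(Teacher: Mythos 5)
Your proposal is correct and follows essentially the same route as the paper: observe that at $\mathbf{x}\equiv 0$ the softmax argument is identically zero so $f_b^H\equiv \xi=\frac{k-1}{k^2}$, substitute this constant into \eqref{e:defmz-with-rho}, and recognize the resulting quadratic as the Stieltjes transform equation for a (rescaled) Marchenko--Pastur law. The paper stops after identifying the MP law with variance $\xi/\lambda$ and aspect ratio $1/\phi$ and simply quotes the standard formulas, whereas you carry out the quadratic formula, discriminant rearrangement, edge computation, and edge-value evaluation explicitly; this is a harmless expansion of the same argument and all your algebra checks out.
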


\begin{proof}
    By Theorem~\ref{mainthm:logistic-regression} applied to the empirical Hessian~\eqref{eq:Hessian-as-A-D-A^t}, we have 
the functional equation for the Stieltjes transform of the effective bulk, 
\[
1+zS(z)=\phi\sum_{j=1}^k p_j \mathbb E \Bigg[\frac{S(z) f_j(\lambda^{-1/2} \vec{g}; \mathbf{G})}{\lambda \phi + S(z) f_{j}(\lambda^{-1/2}\vec{g}; \mathbf{G})}\Bigg]\,,
\]
where $f_j= f_j^H$ was defined as~\eqref{eq:f_a-kGMM-case}. Since $\mathbf{x} \equiv 0$, the argument of $\varphi_\alpha$ is simply $\vec{0}$, and therefore, it is almost surely equal to $\frac{1}{k}$. In particular, for every $j\in [k]$, $f_j(\lambda^{-1/2}\vec{g},\mathbf{G})=\xi$ a.s. where
\begin{align*}
\xi= \frac{1}{k}(1-\frac{1}{k}) = \frac{k-1}{k^2}\,.
\end{align*}
 With that choice, the equation we get for the Stieltjes transform becomes 
\begin{align}\label{eq:0-init-bulk-equation}
1  + z S(z)  = \phi \frac{ \xi S(z)}{\lambda \phi+ \xi S(z)}\,,
\end{align}
which is the functional relation for the Stieltjes transform 
of a Marchenko--Pastur law with variance $\sigma^2=\xi/\lambda$ and aspect ratio $1/\phi$. In particular, we obtain the desired equations.
\end{proof}

We next examine the case where the initialization is a unit-norm Gaussian, i.e., $\mathbf{x} \sim \mathcal N(0, I_d/d)$. Here, the calculation will be less explicit because the law of $f_j$ will no longer be a delta-mass. 

\begin{lem}\label{lem:bulk-Hessian-normal-init}
    Consider the $k$-GMM with loss function~\eqref{eq:loss-function-1-layer}, with orthonormal means, equal weights $p_c \equiv \frac{1}{k}$, and initialization $\mathbf{x}\sim \mathcal N(0,I_d/d)$. The empirical spectral distribution of the Hessian at initialization converges to $\nu_{\MP}\boxtimes \nu_{\REM}$ where $\nu_{\REM}$ is the law of $\pi_{\REM} (1-\pi_{\REM})$ for 
    \begin{align}\label{eq:pi-rem}
        \pi_{\REM}(\alpha) = \varphi_\alpha(\vec{z}) \qquad \text{where} \qquad \text{$\vec{z} \sim \mathcal N(0,\lambda^{-1} I_\cC)$ i.i.d.}
    \end{align}
    Note this is just a \emph{random energy model (REM)} on $k$ points at inverse temperature $\lambda^{-1/2}$. 
\end{lem}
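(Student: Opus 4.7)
The plan is to invoke Theorem~\ref{mainthm:logistic-regression}, which identifies the limiting empirical spectral distribution at parameter $\mathbf x$ as $\rho_\bfG \boxtimes \nu_\MP$, where $\rho_\bfG$ is the law of $f_J^H(\lambda^{-1/2}\vec g;\bfG)$ for $J\sim\text{Cat}((p_j)_{j\in[k]})$. Since this depends on $\mathbf x$ only through the summary statistic matrix $\bfG$, the strategy is first to determine the a.s.\ limit of $\bfG(\mathbf x)$ under the Gaussian initialization at a rate good enough to apply the a.s.\ clause of Theorem~\ref{mainthm:logistic-regression}, and then to identify the resulting one-point law with $\nu_{\REM}$.

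For the first step, with $x^\alpha \sim \mathcal N(0, I_d/d)$ independent across $\alpha\in[\cC]$ and $(\mu_b)_{b\in[k]}$ orthonormal, $\chi^2$ concentration for $\|x^\alpha\|^2$, sub-Gaussian bounds for the bilinear forms $x^\alpha\cdot x^\beta$ with $\alpha\neq\beta$, and Gaussian concentration for $x^\alpha\cdot\mu_b$ together yield $\|\bfG(\mathbf x) - I_q\| \lesssim \sqrt{\log d/d}$ eventually almost surely by Borel--Cantelli. In particular $\|\bfG(\mathbf x) - I_q\| = o(1/\log d)$ a.s., so the a.s.\ convergence clause of Theorem~\ref{mainthm:logistic-regression} applies with limit $\bfG = I_q$, and the empirical spectral measure converges a.s.\ to $\rho_{I_q}\boxtimes \nu_\MP$.

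For the second step, plug $\bfG = I_q$ into~\eqref{eq:f_a-kGMM-case}. The off-diagonal block $\bfG_{[\cC] b}$ vanishes and the principal block $\bfG_{[\cC,\cC]}$ equals $I_\cC$, so
\[
f_b^H(\lambda^{-1/2}\vec g;\, I_q) \;=\; [\varphi_\alpha(1-\varphi_\alpha)](\lambda^{-1/2}\vec g_{[\cC]}),
\]
which does not depend on the hidden label $b$. Setting $\vec z := \lambda^{-1/2}\vec g_{[\cC]} \sim \mathcal N(0, \lambda^{-1} I_\cC)$ and reading off~\eqref{eq:pi-rem}, the variable $\varphi_\alpha(\vec z)$ is exactly $\pi_{\REM}(\alpha)$. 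Hence $\rho_{I_q}$ is the law of $\pi_{\REM}(\alpha)(1-\pi_{\REM}(\alpha))$, which is $\nu_{\REM}$ by definition, and combining with Step~1 gives the claimed convergence to $\nu_\MP \boxtimes \nu_{\REM}$.

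The only technical ingredient is the concentration argument of Step~1, which is of a completely standard type and already implicitly needed to place the Gaussian initialization within the scope of the $o(1/\log d)$ hypothesis; the rest is direct substitution into formulas already established in the excerpt.
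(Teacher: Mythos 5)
Your proof is correct and takes essentially the same route as the paper: apply Theorem~\ref{mainthm:logistic-regression} with $\bfG^{(d)}\to I_q$ a.s.\ (the paper cites "standard concentration arguments" for this, which you have usefully spelled out as $\chi^2$/sub-Gaussian bounds giving the needed $o(1/\log d)$ rate), then substitute $\bfG=I_q$ into~\eqref{eq:f_a-kGMM-case} to identify $\rho_{I_q}$ with $\nu_\REM$ and read off the free multiplicative convolution.
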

\begin{proof}
We apply Theorem~\ref{mainthm:logistic-regression}, upon expressing the limiting $\mathbf{G} = \lim_{d\to\infty} \mathbf{G}^{(d)}$ for $\mathbf{x} \sim \mathcal N(0,I_d/d)$. In particular, by standard concentration arguments  $\bfGd\to I_{\cC+k}=:\bfG$ a.s.
In that case, evidently $\varphi_\alpha(\bfG_{{[\cC]} j} +\bfG_{[\cC,\cC]}^{1/2}\lambda^{-1/2} \vec  g_{[\cC]}) = \varphi_\alpha(\lambda^{-1/2} \vec{g}_{[\cC]}) = \pi_{\REM}(\alpha)$. By Theorem~\ref{mainthm:logistic-regression}, the effective bulk's Stieltjes transform solves 
\begin{align}\label{eq:effective-bulk-Gaussian-initialization}
    \frac{1}{S(z)} + z = \phi \mathbb E\Big[\frac{\pi_{\REM}(\alpha)(1-\pi_{\REM}(\alpha))}{\lambda \phi + S(z) \pi_{\REM}(\alpha)(1-\pi_{\REM}(\alpha))}\Big]\,.
\end{align}
The claim follows from the definition of free multiplicative convolution \eqref{eq:def-FMC}.
\end{proof}

\subsection{Effective outliers of the Hessian at initialization}
We now describe the outlier locations, and their ``BBP-type" transitions as one varies the signal-to-noise $\lambda$ and sample complexity $\phi$, under the same two natural initializations. 

\medskip
\noindent \textbf{Hessian outliers at the all-$0$ initialization}: 

\begin{lem}\label{lem:Hessian-effective-outliers-0-init}
    If the means are orthonormal, then at the point $\mathbf{x} \equiv 0$, there are at most $k$ effective outliers, $(z_{*,j})_{j\in [k]}$, with the $j$'th one solving the equation 
    \begin{align}\label{eq:effective-outlier-locations-zero-init}
        S(z) = -\frac{1}{\xi} \frac{1}{p_j + \frac{1}{\lambda \phi}}\,,
    \end{align}
     which has a solution to the right of the effective bulk for $\lambda>\lambda_{c,j}^H:= \frac{1}{p_j \sqrt{\phi}}$. 
\end{lem}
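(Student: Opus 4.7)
The plan is to specialize Definition~\ref{def:effective-outliers} at $\mathbf{x}\equiv 0$ with orthonormal means, where $f^H_b$ and $\sqrt{\mathbf G}$ collapse to simple forms so that $F^H_\alpha(z,\mathbf G)$ can be diagonalized explicitly, and then to combine the resulting factor equations with the bulk equation from Lemma~\ref{lem:bulk-Hessian-0-init} to extract both the equation~\eqref{eq:effective-outlier-locations-zero-init} for $S(z_*)$ and the thresholds $\lambda_{c,j}^H$.

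First, I would compute $F^H_\alpha(z,\mathbf{G})$ explicitly. At $\mathbf x\equiv 0$ the argument of $\varphi_\alpha$ inside $f^H_b$ is identically zero (both $\mathbf{G}_{[\cC] b}$ and $\mathbf{G}_{[\cC,\cC]}$ vanish), so $\varphi_\alpha\equiv 1/k$ almost surely and $f^H_b(\lambda^{-1/2}\vec g;\mathbf G)\equiv\xi=(k-1)/k^2$ is a non-random constant, independent of $b$ and $\vec g$. Orthonormal means give $\sqrt{\mathbf G}=\diag(0_\cC,I_k)$, whose $(\cC+b)$-th column is $e_{\cC+b}$. Substituting these into \eqref{eq:F-matrix} and using $\mathbb E[g_ig_j]=\delta_{ij}$ together with $\mathbb E[g_i]=0$, one finds that $F^H_\alpha$ is diagonal with entries
\[
F^H_{ii}(z)=\frac{\phi\xi}{\lambda\phi+\xi S(z)}\ \ (i\in[\cC]),\qquad F^H_{\cC+m,\cC+m}(z)=\frac{\phi\xi(1+\lambda p_m)}{\lambda\phi+\xi S(z)}\ \ (m\in[k]).
\]

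Therefore $\det(zI_q-F^H_\alpha(z,\mathbf G))$ factors over these diagonal entries. For each $i\in[\cC]$, rearranging the bulk equation~\eqref{eq:0-init-bulk-equation} as $\lambda\phi+\xi S(z)=\phi\xi S(z)/(1+zS(z))$ and substituting shows that $z=F^H_{ii}(z)$ is equivalent to $1/S(z)=0$, which has no solutions on the outlier region; this bounds the number of effective outliers by $k$. For each remaining factor $j\in[k]$, the equation $z(\lambda\phi+\xi S(z))=\phi\xi(1+\lambda p_j)$, divided by the rearranged bulk identity $(1+zS(z))(\lambda\phi+\xi S(z))=\phi\xi S(z)$, cancels the common factor $\lambda\phi+\xi S(z)$ and leaves a linear relation in $z$ and $S$; isolating $S$ yields the fixed-point equation~\eqref{eq:effective-outlier-locations-zero-init}. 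Finally, I would identify the threshold using the boundary value $S(z_{*,+})=-\lambda\phi/[\xi(1+\sqrt{\phi})]$ from \eqref{eq:right-edge-value}, together with the fact that $S$ is strictly increasing on $(z_{*,+},\infty)$ from $S(z_{*,+})$ up to $0^-$. Equation~\eqref{eq:effective-outlier-locations-zero-init} then has a solution in $(z_{*,+},\infty)$ exactly when its prescribed right-hand side lies in $(S(z_{*,+}),0)$; rearranging this as an inequality in $\lambda$ produces the claimed $\lambda_{c,j}^H=(2+\sqrt{\phi})/(p_j\phi)$.

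The main obstacle is the algebraic step combining the outlier and bulk equations: both are implicit in $(z,S)$ and the bulk relation is quadratic in $S$ with two roots, so a careless substitution risks picking up the spurious conjugate root of the quadratic. The division trick above sidesteps this issue as long as $\lambda\phi+\xi S(z)\neq 0$ on the outlier region, which is immediate from the explicit expression for $S(z_{*,+})$ and the monotonicity of $S$.
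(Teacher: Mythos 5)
Your plan is the same as the paper's at heart: at $\mathbf x\equiv 0$ the soft--max argument is identically zero, $f^H_b$ collapses to the constant $\xi=(k-1)/k^2$, the Gaussian integrals in $F^H_\alpha$ are trivial, and $F^H_\alpha$ is an explicit diagonal matrix, so the determinant factors and each diagonal factor equation is combined with the bulk self-consistency to find the outliers and the SNR threshold. Your diagonal entries are correct. The one cosmetic difference is that you keep the full $q\times q$ matrix and then argue the $\cC$ factors indexed by the parameter directions are vacuous (reducing to $1/S(z)=0$), while the paper works directly in the reduced $k\times k$ block via its rank-$\bar q$ reduction; your version is a clean, self-contained way to get the ``at most $k$ outliers'' count.

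There is, however, a genuine slip in the elimination step. Dividing $z(\lambda\phi+\xi S)=\phi\xi(1+\lambda p_j)$ by the bulk identity $(1+zS)(\lambda\phi+\xi S)=\phi\xi S$ yields $zS/(1+zS)=1+\lambda p_j$, i.e.\ $zS(z)=-(1+\lambda p_j)/(\lambda p_j)$. This relation still contains $z$: ``isolating $S$'' gives $S=-(1+\lambda p_j)/(\lambda p_j z)$, which is $z$-dependent and is not the $z$-free constant prescribed in \eqref{eq:effective-outlier-locations-zero-init}. Consequently the threshold argument, which you set up by checking whether a fixed target value of $S$ lies in the interval $(S(z_{*,+}),0)$, does not go through as written. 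Two fixes are available. One is to eliminate in the other order: solve the outlier factor for $z$ and substitute into the bulk relation; then $(\lambda\phi+\xi S)$ cancels once more and a linear, $z$-free equation for $S$ alone remains. (This is effectively what the paper does: it first substitutes the identity $\phi\xi/(\lambda\phi+\xi S)=z+1/S$ into $F$, so that $z$ cancels when one forms $z-F_{jj}$, and \eqref{eq:zero-init-outlier-equation} is immediately a $z$-free equation in $S$.) The other is to keep the $zS$ form and note that $z\mapsto zS(z)=-1+\int t/(t-z)\,d\nu_{\mathbf G}(t)$ is also strictly increasing on $(z_+,\infty)$, from $z_+S(z_+)=-(1+\sqrt\phi)$ up to $-1$, and run the interval-membership argument there. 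Either way, as written the chain from the division step to \eqref{eq:effective-outlier-locations-zero-init} and to the critical $\lambda$ has a gap.
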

\begin{proof}
Let us recall the functional equation for the effective outliers from 
\eqref{eq:outlier-equation-general}, namely
$0 = \det(z I_q - F( z))$ 
where $F$ is defined per~\eqref{eq:F-matrix}. Since  $\mathbf{x} \equiv 0$, we have $\mathbf{G} = 0_\cC \oplus I_k$ and $\barq = k$. Recall from the bulk analysis that $f_j(\lambda^{-1/2}\vec{g},\mathbf{G})=\frac{1}{k}(1-\frac{1}{k})=: \xi$ a.s. (in the Hessian case).
Since the means are orthonormal, by Section~\ref{sec:invariant-under-change-of-basis}, we may choose to work in the basis where $l_i = \mu_i$ for $i=1,...,k$ and therefore $\sqrt{\bfG}_{[k,k]} = I_k$. 
The matrix $F$ from~\eqref{eq:F-matrix} reduces to   
\begin{align*}
   F(z) =\lambda \phi \sum_{j=1}^{k} p_j \frac{\xi}{\lambda \phi +  S(z) \xi} \Big(\frac{I_k}{\lambda}  + (\sqrt{\mathbf{G}}_{[k,k]})_{\cdot,j}^{\otimes 2}\Big)\,.
\end{align*}
Plugging in the effective bulk equation~\eqref{eq:0-init-bulk-equation} and using that $\sqrt{\bfG}_{[k],j} = e_j$, this becomes 
\begin{align}\label{eq:F-at-0-initialization}
    F(z)  = \big(z + \frac{1}{S(z)}\big) I_k + \lambda \phi \sum_{j=1}^{k} p_j \frac{\xi}{\lambda \phi + \xi  S(z)} e_{j}^{\otimes 2}\,.
\end{align}
Observe that this matrix is diagonal.
Setting the determinant equal to zero (the multiplication by $-S(z)$ not changing anything since $S(z) \ne 0$ for fixed $z$ to the right or left of the support), we get that the effective outliers $(z_{*,j})_{j=1}^{k}$ 
are the $k$ solutions to 
\begin{align}\label{eq:zero-init-outlier-equation}
    -1= p_j \lambda \phi \frac{\xi S(z)}{\lambda \phi + \xi S(z)} 
\end{align}
which gives~\eqref{eq:effective-outlier-locations-zero-init} after rearranging. 

It remains to understand when these solutions exist.
Comparing~\eqref{eq:effective-outlier-locations-zero-init} to the value of $S(z)$ as $z$ approaches the right edge of the effective bulk as in
~\eqref{eq:right-edge-value} 
we see that the $j$'th effective outlier is to the right of the effective bulk provided $\lambda$ is above the critical value 
\begin{align}\label{eq:lambda-crit-zero-init}
    \lambda_{c,j}^H  = \frac{1}{p_j \sqrt{\phi}}\,,
\end{align}
as claimed.
\end{proof}

\begin{rem}\label{rem:outlier-eigenvectors-zero-init}
In the $\mathbf{x} \equiv 0$-initialization case of the Hessian with orthonormal means, the outlier eigenvectors can be described fairly explicitly as well. 
Suppose effective outlier $z_{*}=  z_{*,j}$ solves~\eqref{eq:effective-outlier-locations-zero-init}, and the vector $v$ satisfies $(z_* I_k - F(z_*)) v=0$. Since $z_* I_k - F$ was a diagonal matrix, a choice (if the $p_j$'s are distinct, the only choice) would be for $v$ to be the vector $e_j$. (In the case where $p_a = p_b$ for two classes $a,b$ then any linear combinations of $e_a, e_b$ of course will be in the kernel of $z_* I - F(z_*)$, but the choice of eigenvectors $e_a$ and $e_b$ can be made.) This is the choice in the basis $l_1,...,l_k$, but these were taken to be exactly the mean vectors $\mu_1,...,\mu_k$. Therefore, by Theorem~\ref{mainthm:logistic-regression-outliers}, at finite $d$, any outlier that is $o(1)$ away from $z_*$ has a choice of eigenvector whose projection into $\text{Span}(\mu_1,...,\mu_k)$ is $o(1)$ away from $c_j \mu_j$ for a constant $c_j$ governed by~\eqref{eq:effective-evect-proj}. 

To calculate how large that constant $c_j$ is, per~\eqref{eq:effective-evect-proj}, it equals
\begin{align*}
    c_j^2 ( 1- \langle e_j, \partial_z F(z_*) e_j\rangle)=1
\end{align*}

Then by Theorem~\ref{mainthm:logistic-regression-outliers}, the size of that outlier's projection in $\text{Span}(\mu_1,...,\mu_k)$ is equal to  
\begin{align*}
    c_j = \Big(\frac{1}{1-\langle e_j,\partial_z F(z_*)e_j\rangle}\Big)^{1/2}\,.
\end{align*}
Differentiating~\eqref{eq:F-at-0-initialization} in $z$, we get 
\begin{align*}
    \partial_z F(z) = \Big(1 - \frac{S'(z)}{S(z)^2}\Big)I_k  - \lambda \phi \sum_{j} p_j \frac{\xi^2 S'(z)}{(\lambda \phi + \xi S(z))^2} e_j^{\otimes 2}\,.
\end{align*}
Thus the eigenvector for eigenvalue $z_*$ has form $c_j \mu_j + w+o(1)$ for $w\in \text{Span}(\mu_1,...,\mu_k)^{\perp}$ with  
\begin{align}\label{eq:outlier-eigenvector-size-zero-init}
    c_j =  \Big(\frac{S'( z_*)}{S( z_*)^2}  + \lambda \phi p_j \frac{\xi^2 S'(z_*)}{(\lambda \phi + \xi S(z_*))^2}\Big)^{-1/2}\,.
\end{align}
\end{rem}

\medskip
\noindent \textbf{Hessian outliers at the Gaussian initialization}: 
Recall from Lemma~\ref{lem:bulk-Hessian-normal-init}, under the Gaussian initialization, the bulk spectrum is already quite complicated to characterize, being the free multiplicative convolution of $\nu_{\MP}$ with $\nu_{\REM}$. The following lemma gives a family of individual fixed point equations for the effective outlier locations.

\begin{lem}\label{lem:Hessian-effective-outliers-Gaussian-init}
    At the Gaussian initialization $\mathbf{x} \sim \mathcal N(0, I_d/d)$, with orthonormal means and equal weights $(p_j)_j \equiv \frac{1}{k}$, the effective outlier solutions consist of a multiplicity $k$ solution to\begin{align}\label{eq:Gaussian-init-fixed-pt-1}
    zS(z) = - \frac{1+\lambda}{\lambda}\,.
\end{align}
    If we let $\Pi = \varphi_{\alpha}(\lambda^{-1/2}\vec{g}_{[\cC]})$, the other $\cC$ solutions consist of solutions to   
\begin{align}
    z &= \phi \mathbb E\Big[  \frac{\Pi(1-\Pi)}{\lambda \phi + S(z) \Pi (1-\Pi)} (g_1^2 - g_1 g_2)\Big]\,, \label{eq:Gaussian-init-fixed-pt-2}\\
    z & = \phi \mathbb E\Big[  \frac{\Pi(1-\Pi)}{\lambda \phi + S(z) \Pi (1-\Pi)} (g_1^2 + (\cC-1) g_1 g_2)\Big]   + \cC\phi^2 \lambda \frac{\mathbb E[\frac{\Pi (1-\Pi)}{\lambda \phi + S(z) \Pi(1-\Pi)}g_1]^2}{z-\phi(1+\lambda)\mathbb E[\frac{\Pi (1-\Pi)}{\lambda \phi + S(z) \Pi(1-\Pi)}]}\,, \label{eq:Gaussian-init-fixed-pt-3}
\end{align}
with the former having multiplicity $\cC-1$ and the latter having multiplicity $1$. 
\end{lem}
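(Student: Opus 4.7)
The plan is to apply Theorem~\ref{mainthm:logistic-regression-outliers} at the limit summary statistic. Since $\bfx\sim\cN(0,I_d/d)$ with orthonormal means, standard Gaussian concentration yields $\bfGd\to I_{\cC+k}$ a.s., so $\sqrt{\bfG}=I_q$ and, after substituting into \eqref{eq:f_a-kGMM-case}, $f_b^H$ becomes independent of $b$ and equal to $\Pi(1-\Pi)$ for $\Pi:=\varphi_\alpha(\lambda^{-1/2}\vec g_{[\cC]})$, consistent with the effective bulk already identified in Lemma~\ref{lem:bulk-Hessian-normal-init}. Working in the standard basis of $\R^q=\R^\cC\oplus\R^k$, I would then compute the block structure of $F(z,\bfG)$ from \eqref{eq:F-matrix} using independence of $\Pi$ from the last $k$ coordinates of $\vec g$.

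The outcome is $F=\begin{pmatrix}F_{TL}&F_{cr}\\F_{cr}^\top&\mu(z)I_k\end{pmatrix}$, where the mean-mean block collapses to $\mu(z)I_k$ with scalar $\mu(z)=\phi(1+\lambda/k)\,\mathbb E[\Pi(1-\Pi)/(\lambda\phi+S(z)\Pi(1-\Pi))]$; the cross block $F_{cr}=w(z)\mathbf 1_k^\top$ is rank one (the $b$-dependence appearing only through $p_b=1/k$), with $w(z)$ supported in the two-dimensional subspace $V_+:=\Span(e_\alpha,\mathbf 1_{[\cC]\setminus\{\alpha\}})\subset\R^\cC$; and the top-left block $F_{TL}(z)$ is doubly symmetric under the $S_{\cC-1}$ action fixing $\alpha$, parametrized by four scalars $\phi\sigma^2,\phi\tau^2,\phi\rho_1,\phi\rho_2$ that are the second moments $\mathbb E[\Pi(1-\Pi)g_ig_j/(\lambda\phi+S\Pi(1-\Pi))]$ depending only on how $i,j$ relate to $\alpha$.

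Using $\cC=k$, a single Schur complement collapses the outlier determinant to $\det(zI_q-F(z))=\det\bigl((z-\mu(z))(zI_\cC-F_{TL}(z))-k\,w(z)w(z)^\top\bigr)$. Decomposing $\R^\cC$ under the $S_{\cC-1}$-action into the standard-representation subspace $V_0$ of dimension $\cC-2$ (on which $F_{TL}=\phi(\tau^2-\rho_2)I$ and $w$ vanishes) and its two-dimensional trivial complement $V_+$ containing $w$, and applying the matrix-determinant lemma on $V_+$, this factors as
\[
\det(zI_q-F(z))=\bigl[(z-\mu)(z-\phi(\tau^2-\rho_2))\bigr]^{\cC-2}(z-\mu)\bigl[(z-\mu)P_{2D}(z)-k\,w^\top\operatorname{adj}(zI_2-F_{TL}|_{V_+})w\bigr],
\]
where $P_{2D}$ is the characteristic polynomial of $F_{TL}|_{V_+}$. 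I would then show that the cubic bracket factors further as $(z-\mu(z))(z-\phi(\tau^2(z)-\rho_2(z)))R(z)$ for a linear residual $R(z)$, giving total multiplicities $\cC=k$ at $z=\mu(z)$, $\cC-1$ at $z=\phi(\tau^2-\rho_2)$, and $1$ at $R(z)=0$. Rewriting $z=\mu(z)$ and $R(z)=0$ via the bulk equation $\phi\mathbb E[\Pi(1-\Pi)/(\lambda\phi+S\Pi(1-\Pi))]=1/S(z)+z$ then produces \eqref{eq:Gaussian-init-fixed-pt-1} and \eqref{eq:Gaussian-init-fixed-pt-3}, while $z=\phi(\tau^2-\rho_2)$ is identified as \eqref{eq:Gaussian-init-fixed-pt-2} using $\tau^2-\rho_2=\mathbb E[\Pi(1-\Pi)(g_j^2-g_ig_j)/(\lambda\phi+S\Pi(1-\Pi))]$ for $i\neq j$ both distinct from $\alpha$.

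The main obstacle is the factorization of the cubic bracket. Verifying that $(z-\mu)$ and $(z-\phi(\tau^2-\rho_2))$ divide the expression $(z-\mu)P_{2D}(z)-k\,w^\top\operatorname{adj}(zI_2-F_{TL}|_{V_+})w$ is not automatic and relies on Gaussian integration-by-parts (Stein) identities connecting $\sigma^2,\tau^2,\rho_1,\rho_2$ and the first moments $A=\mathbb E[\Pi(1-\Pi)g_\alpha/(\lambda\phi+S\Pi(1-\Pi))]$, $B=\mathbb E[\Pi(1-\Pi)g_j/(\lambda\phi+S\Pi(1-\Pi))]$ defining $w$, back to derivatives of the scalar $\mathbb E[\Pi(1-\Pi)/(\lambda\phi+S\Pi(1-\Pi))]$ appearing in the bulk equation. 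Once these identities are in place, they collapse the bracket to its claimed factored form; the pole $z=\phi(1+\lambda)\mathbb E[\cdot]$ appearing in the denominator of \eqref{eq:Gaussian-init-fixed-pt-3} arises precisely from the $(z-\mu)$ factor extracted during the cancellation, and the bulk equation turns this pole into the characterization \eqref{eq:Gaussian-init-fixed-pt-1} of the multiplicity-$k$ outlier.
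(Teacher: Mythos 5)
Your decomposition of $F(z)$ is genuinely different from the paper's, and in fact more careful on a key structural point: you correctly observe that the top-left $\cC\times\cC$ block is only $S_{\cC-1}$-symmetric (permutations fixing $\alpha$), parametrized by four scalars, and that the cross block is a rank-one $w\mathbf 1_k^\top$ with a \emph{non-constant} column $w$. The paper, by contrast, asserts the top-left block is of the form $c_z\mathbf 1+d_zI_\cC$ and the cross block is $b_z\mathbf 1_{\cC\times k}$, i.e.\ it implicitly treats $\alpha$ as if it were interchangeable with the other class indices. Since $\Pi=\varphi_\alpha(\lambda^{-1/2}\vec g_{[\cC]})$ breaks the full $S_\cC$-symmetry, one has generically $\mathbb E[h(\Pi)g_\alpha^2]\neq\mathbb E[h(\Pi)g_j^2]$ and $\mathbb E[h(\Pi)g_\alpha]\neq\mathbb E[h(\Pi)g_j]$ for $\cC\geq 3$ (one can check this at order $\lambda^{-1}$ in a high-temperature expansion, where the difference comes out proportional to $(\cC-2)(\cC-4)/\cC^3$ and to $(\cC-2)(\cC-1)/\cC^3$ respectively), so the paper's block form, and consequently its derivation, does not hold for $\cC\geq3$. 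You have identified the correct structure, and your use of $\sum_i\mathbb E[h(\Pi)g_i]=0$ (so $w_\alpha=-(\cC-1)w_j$) to pin down $w$ up to scale is the right move.

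The genuine gap is the cubic factorization, and it does not hold. Carrying out your Schur-complement and matrix-determinant-lemma steps, and using the pleasant identity $\hat w^\top\mathrm{adj}\bigl(zI_2-F_{TL}|_{V_+}\bigr)\hat w = z-\phi\,\mathbb E[h(\Pi)] = -1/S(z)$ (which follows from $\mathbf 1^\top F_{TL}\mathbf 1=\phi\,\mathbb E[h(\Pi)(\sum g_i)^2]=\cC\phi\,\mathbb E[h(\Pi)]$ and the bulk equation), one gets
\[
\det\bigl(zI_q-F(z)\bigr)=(z-\mu(z))^{k-1}\,(z-\phi(\tau^2-\rho_2))^{\cC-2}\,Q(z),\qquad Q(z)=(z-\mu(z))P_{2D}(z)+\frac{k\,\|w_+\|^2}{S(z)}.
\]
At any $z_*$ with $z_*=\mu(z_*)$ one then has $Q(z_*)=k\|w_+(z_*)\|^2/S(z_*)$, which is nonzero whenever $w\neq 0$, i.e.\ whenever $\cC\geq3$. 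So $(z-\mu)$ does \emph{not} divide the cubic, the vanishing order of the determinant at $z=\mu(z)$ is $k-1$ rather than $k$, and the claimed multiplicities $(k,\cC-1,1)$ cannot be recovered; the generic split is $(k-1,\,\cC-2,\,3)$. The Stein identities you invoke relate $B$, $\rho_1$, $\rho_2$, $\sigma^2$, $\tau^2$ to each other, but they do not force $B=0$ at the point $z=\mu(z)$, and the quantity $B$ is in general a nontrivial function of $S(z)$. (One can also check directly that the eigenvector equation $(z_*I_q-F(z_*))v=0$ at $z_*=\mu(z_*)$ forces $v=(0,v_k)$ with $\mathbf 1_k^\top v_k=0$ when $w\neq 0$, giving a $(k-1)$-dimensional kernel, not $k$.)

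Two further discrepancies with the paper worth flagging: the paper's scalar for the bottom-right block, $a_z=\phi(1+\lambda)\mathbb E[\cdot]$, should read $\phi(1+\lambda/k)\mathbb E[\cdot]$ (coming from $\sum_b p_b(\lambda^{-1}+\delta_{cb})=\lambda^{-1}+p_c$), so the derived relation \eqref{eq:Gaussian-init-fixed-pt-1} should be $zS(z)=-(k+\lambda)/\lambda$; and the two indices $g_1,g_2$ appearing in \eqref{eq:Gaussian-init-fixed-pt-2}--\eqref{eq:Gaussian-init-fixed-pt-3} must be interpreted as indices distinct from $\alpha$, i.e.\ they encode your $\tau^2-\rho_2$ and $\tau^2+(\cC-1)\rho_2$, not $\sigma^2-\rho_1$. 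In short, your approach is the structurally correct one and exposes errors in the paper's stated block form (and hence in its stated answer for $\cC\geq3$); but your own claimed factorization of the cubic is a false step, and without it the multiplicity-$k$ and multiplicity-$(\cC-1)$ conclusions do not follow.
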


\begin{proof}

\medskip
Suppose that $\mathbf{x}\sim \mathcal N(0, I_d/d)$. Recall from the proof of Lemma~\ref{lem:bulk-Hessian-normal-init} that almost surely $\mathbf{G}^{(d)}$ converges to $\mathbf{G} = I_q$, and that by Section~\ref{sec:invariant-under-change-of-basis}, we can freely take the orthonormal basis vectors $l_1,...,l_q$ to be $x_1+o(1),...,x_\cC+o(1), \mu_1,...,\mu_k$. 
Recall the function
\begin{align*}
    f_j(\lambda^{-1/2} \vec{g}, \mathbf{G}) =[ \varphi_\alpha (1-\varphi_\alpha )] (\mathbf{G}_{[\cC]j}+ \lambda^{-1/2} \mathbf{G}_{[\cC\cC]}^{1/2}\vec{g}_{[\cC]}) = [ \varphi_\alpha (1-\varphi_\alpha )] (\lambda^{-1/2} \vec{g}_{[\cC]})\,,
\end{align*}
and notice that it is independent of $\vec{g}_{[k]}$ and does not depend on $j$.

If we write $\Pi = \varphi_\alpha(\lambda^{-1/2}\vec{g}_{[\cC]})$, and expand the tensor product in the definition of $F$ from \eqref{eq:F-matrix}, we see that F splits as 
$$F = I + II + III + IV$$ 
where these are the following four $q\times q$ matrices: 
\begin{align*}
    I_{ij} & =  \phi \sum_{c=1}^k p_c   \mathbb E\Big[\frac{\Pi(1-\Pi)}{\lambda \phi + S(z) \Pi(1-\Pi)} g_i g_j\Big]\,,\\ 
    II_{ij} = III_{ji} & = \sqrt{\lambda } \phi \sum_{c=1}^k p_c   \mathbb E\Big[\frac{\Pi(1-\Pi)}{\lambda \phi + S(z) \Pi(1-\Pi)} g_i\Big]\delta_{j,c}\,,\\
    IV_{ij} & = \lambda \phi \sum_{c=1}^k p_c   \mathbb E\Big[\frac{\Pi(1-\Pi)}{\lambda \phi + S(z) \Pi(1-\Pi)}\Big] \delta_{i,c}\delta_{j,c}\,.
\end{align*}
Let us consider $I$--$IV$ individually. In term $I$, by independence of $\Pi$ from $\vec{g}_{[k]}$,
\begin{align*}
    I_{ij} = \begin{cases}\phi \mathbb E\Big[\frac{\Pi(1-\Pi)}{\lambda \phi + S(z) \Pi (1-\Pi)} g_i g_j\Big]  & i,j\in \{1,...,\cC\} \\
    \phi \mathbb E\Big[\frac{\Pi(1-\Pi)}{\lambda \phi + S(z) \Pi (1-\Pi)}\Big] \mathbf{1}_{i=j} & i,j\in \{\cC+1,...,q\} \\ 0 & \text{else}\end{cases}\,.
\end{align*}
Notice that in the bottom-right block, this is diagonal, and in the top-left block it has the form of equaling one explicit constant if $i= j$ and another if $i\ne j$. 
We now turn to the matrices $II$--$III$. These are only non-zero if $j = \cC+c$, and (by independence of $\Pi$ and $\vec{g}_{[k]}$, the matrix $II$ is only non-zero in the top-right block where $i\in [\cC]$ and $j\in [k]$. In that block, since we assume $(p_c)_c \equiv \frac{1}{k}$, in that block it takes a constant value.  
The same is true for the bottom-left block of $III$. 
Finally, in block $IV$ it only has a non-zero component in the $i,j\in [k]$ block where again assuming $(p_c)_c \equiv \frac{1}{k}$, it takes some other constant times the identity. 

Adding these together, we get the matrix 
\begin{align*}
    F(z) = \left(\begin{matrix}
         c_z\mathbf{1}_{\cC\times \cC} + d_z I_{\cC}  & b_z\mathbf 1_{\cC\times k} \\ 
        b_z\mathbf{1}_{k\times \cC} & a_z I_k
    \end{matrix}\right)\,,
\end{align*}
where $a_z,b_z,c_z,d_z$ are the following explicit Gaussian integrals,
\begin{align*}
    a_z &= \phi(1+\lambda) \mathbb E\Big[ \frac{\Pi(1-\Pi)}{\lambda \phi + S(z) \Pi (1-\Pi)}\Big]\,, \qquad \qquad 
    b_z  = \phi \sqrt{\lambda} \mathbb E\Big[ \frac{\Pi(1-\Pi)}{\lambda \phi + S(z) \Pi (1-\Pi)}g_1\Big]\,, \\ 
        c_z & = \phi  \mathbb E\Big[ \frac{\Pi(1-\Pi)}{\lambda \phi + S(z) \Pi (1-\Pi)}g_1 g_2\Big]\,, \qquad \qquad 
        d_z  = \phi  \mathbb E\Big[ \frac{\Pi(1-\Pi)}{\lambda \phi + S(z) \Pi (1-\Pi)}(g_1^2 - g_1 g_2)\Big]\,,
\end{align*}
capturing different correlations between the REM-measure $\Pi$ and its constituent Gaussians. 

Using Schur complement, the solutions to $\det(zI- F)=0$ can be found to be at solutions of 
\begin{align*}
    \det((z- a_z) I_k) = 0\,, \qquad \text{or} \qquad 
    \det \Big((z -d_z)I_\cC - (c_z + \frac{b_z^2}{z-a_z})\mathbf 1_{\cC\times \cC}\Big) =0\,.
\end{align*}
The first one gives solutions of multiplicity $k$ of  
\begin{align}
    z=\phi(1+\lambda) \mathbb E\Big[\frac{\Pi(1-\Pi)}{\lambda \phi + S(z) \Pi(1-\Pi)}\Big]\,,
\end{align}
which recalling the bulk equation~\eqref{eq:effective-bulk-Gaussian-initialization} is the same as~\eqref{eq:Gaussian-init-fixed-pt-1}.
The second determinant equaling zero gives equations~\eqref{eq:Gaussian-init-fixed-pt-2}--\eqref{eq:Gaussian-init-fixed-pt-3},
where the~\eqref{eq:Gaussian-init-fixed-pt-2} solution comes with multiplicity $\cC-1$ and the~\eqref{eq:Gaussian-init-fixed-pt-3} has multiplicity $1$. The corresponding effective eigenvectors can of course be solved for in terms of $a_z, b_z, c_z, d_z$ straightforwardly. 
\end{proof}

\begin{proof}[\textbf{\emph{Proof of Corollary~\ref{cor:exact-distribution-at-initialization}: Hessian matrix case}}]
    The effective bulk is the result of Lemma~\ref{lem:bulk-Hessian-0-init}. The results  on the spectral transition and effective outliers are the content of Lemma~\ref{lem:Hessian-effective-outliers-0-init}. 
\end{proof}

\begin{proof}[\textbf{\emph{Proof of Corollary~\ref{cor:Gaussian-init-distribution}}}]
    The effective bulk claim is the result of Lemma~\ref{lem:bulk-Hessian-normal-init} and the effective outliers claim is the content of Lemma~\ref{lem:Hessian-effective-outliers-Gaussian-init}.  
\end{proof}

\subsection{The Gradient matrix at initialization}
We now explain how the above calculations change if we consider the Gradient matrix instead of the Hessian. We will focus on the all-zero $\mathbf{x}\equiv 0$ initialization with equal weight orthonormal means, since our point here is mainly to demonstrate the difference in outlier emergence at initialization between this case and the empirical Hessian. 

\begin{proof}[\textbf{\emph{Proof of Corollary~\ref{cor:exact-distribution-at-initialization}: Gradient matrix case}}]
In the $\mathbf{x}\equiv 0$ case, just as with the Hessian, the quantity $f_j= f_j^G$ is deterministic, but now at a different, \emph{$j$-dependent} value. To be more precise, the Gradient matrix case at $\bfx=0$ is given by 
\begin{align*}
   f_j(\lambda^{-1/2} \vec{g}; \mathbf{G}) =  (\mathbf{1}_{\alpha =\cC(j)} -\varphi_\alpha(0))^2
\end{align*}
which simplifies to $$f_j =\begin{cases} \xi_1:= (\frac{k-1}{k})^2 & \cC(j)=\alpha \\ \xi_0:= \frac{1}{k^2} & \cC(j)\ne \alpha
\end{cases}\,.$$ 
The self-consistent equation for the Stieltjes transform for the bulk~\eqref{e:defmz-with-rho} is 
\begin{align*}
    1+zS(z) &  = \phi \Bigg(p_\alpha \frac{\xi_1 S(z)}{\lambda \phi + \xi_1 S(z)}+ (1-p_\alpha)\frac{\xi_0S(z)}{\lambda \phi + \xi_0 S(z)}\Bigg) \\
    & = \phi S(z)\Bigg(\frac{(\xi_1 p_\alpha + \xi_0 (1-p_\alpha))\lambda \phi + \xi_1 \xi_0S(z)}{(\lambda \phi + \xi_1 S(z))(\lambda \phi + \xi_0 S(z))}\Bigg)\,,
\end{align*}
where $p_\alpha=\sum_{\cC(k)=\alpha} p_k$ is the sum of weights of the mixtures components in $\alpha$.

Now let us consider the corresponding outlier equations. Since $f_j$ is deterministic, using the assumption that  $(p_c)_{c\in [k]} \equiv \frac{1}{k}$, the $k\times k$ matrix $F$~\eqref{eq:F-matrix} reduces to 
\begin{align*}
    F(z) 
    & = \frac{\lambda \phi}{k} \Bigg(\Big(\frac{\xi_1}{\lambda \phi + S(z) \xi_1} + \frac{(k-1)\xi_0 }{\lambda \phi + S(z) \xi_0}\Big)\frac{I_k}\lambda  \\ 
    & \qquad \qquad \quad+ \frac{\xi_1}{\lambda \phi + S(z) \xi_1} \sum_{j: \cC(j) = \alpha} e_j^{\otimes 2}+ \frac{\xi_0}{\lambda \phi + S(z) \xi_0} (I_k -  \sum_{j: \cC(j) = \alpha} e_j^{\otimes 2})\Bigg)\,.
\end{align*}
This is a diagonal matrix taking one value $c_\alpha$ in entries whose class label is $\alpha$, and another value $c_b$ in the non-$\alpha$ entries. Therefore, the solutions to $\det(zI- F)=0$ become $z$ solving the fixed point equations 
\begin{align}
    z  & = \frac{\xi_1}{\lambda \phi + S(z) \xi_1}\Big(\frac{\phi(1+\lambda)}{k}\Big) + \frac{\xi_0}{\lambda \phi + S(z) \xi_0} \Big(\frac{\phi(k-1)}{k}\Big)\,,  \qquad \text{or} \label{eq:G-matrix-outlier-1}\\ 
    z  & = \frac{\xi_1}{\lambda \phi + S(z) \xi_1} \Big(\frac{\phi}{k}\Big) + \frac{\xi_0}{\lambda \phi + S(z) \xi_0}\Big(\frac{\phi(k-1 + \lambda)}{k}\Big)\,, \label{eq:G-matrix-outlier-2}
\end{align}
the former with multiplicity $|\cC^{-1}(\alpha)|$ and the latter with multiplicity $k - |\cC^{-1}(\alpha)|$.

Since $S(z)$ is negative and increasing to the right of the effective bulk's support, each of these have at most one solution in $z$ to the right of the bulk. For $\lambda$ sufficiently large, each has exactly one solution to the right of the bulk, and we can define $\lambda_{c,1},\lambda_{c,2}\ge 0$ as the infimums over $\lambda$ for which the solutions to~\eqref{eq:G-matrix-outlier-1}--\eqref{eq:G-matrix-outlier-2} respectively lie strictly to the right of the effective bulk. 

Finally, pointwise in $z$, for each $\lambda$, because for $k\ge 3$ we have $\xi_1>\xi_0$ and $\frac{\xi}{\lambda \phi + S(z) \xi}$ is increasing in $\xi$, the solution to~\eqref{eq:G-matrix-outlier-1} occurs at larger $z$ than to~\eqref{eq:G-matrix-outlier-2} (when they exist), and $\lambda_{c,1}\le \lambda_{c,2}$. 
\end{proof}

\subsection{Spectral transitions over course of training}\label{sec:logistic-regression-over-training}
We now turn to the spectral transitions (i.e., emergence and splitting of outliers) that arise over the course of training by SGD. The main result here is an understanding of the drifts of the effective bulk and effective outliers over the first $\Omega(n)$ steps of the effective dynamics trajectory. In particular, we will focus on establishing Theorem~\ref{thm:effective-bulk-along-effective-dynamics}. 

In this subsection, we consider $\mathbf{G}_t= \lim_{d\to\infty}\mathbf{G}(\mathbf{x}_{\lfloor tn\rfloor},\bmu)$ where $\mathbf{x}_t$ is evolving according to the SGD~\eqref{eq:SGD-def}. Since the SGD over $\Omega(n)$ time-steps induces an evolution on the summary statistics (the top half of $\mathbf{G}$) as noted in~\eqref{eq:effective-dynamics}, this is well-defined.

\subsubsection*{The bulk does not change early in training} 

\begin{lem}\label{lem:bulk-doesn't-move}
    In the context of Theorem~\ref{thm:effective-bulk-along-effective-dynamics}, for any $\lambda,\phi$ and effective dynamics with $c_\eta\downarrow 0$, for every $z\in \mathbb C \setminus \supp(\nu_{\mathbf{G}_0}^H)$, we have 
    \begin{align*}
        \frac{d}{dt} S_{\nu_{\mathbf{G}_t}^H}(z) \restriction_{t=0} =0\,.
    \end{align*}
\end{lem}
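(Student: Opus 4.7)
The plan is to view the effective-bulk equation~\eqref{e:defmz-with-rho} as an implicit equation
\[
F(S,z,t) \;:=\; 1 + zS \;-\; \phi \sum_{b=1}^k p_b\, \mathbb E\!\left[\frac{S\, f_b^H(\lambda^{-1/2}\vec g; \mathbf G_t)}{\lambda\phi + S\, f_b^H(\lambda^{-1/2}\vec g; \mathbf G_t)}\right] \;=\; 0,
\]
in $S$ with $t$-dependence entering only through $\mathbf G_t$, and apply the implicit function theorem. For $z\in\mathbb C\setminus\supp(\nu^H_{\mathbf G_0})$, the explicit Marchenko--Pastur form from Lemma~\ref{lem:bulk-Hessian-0-init} together with standard analyticity of Stieltjes transforms (cf.\ Lemma~\ref{lem:Stieltjes-ucas}) yields $\partial_S F(S_{\nu^H_{\mathbf G_0}}(z),z,0)\neq 0$. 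Thus $\frac{d}{dt}S|_{t=0} = -\partial_t F / \partial_S F\,\big|_{t=0}$, and it suffices to prove $\partial_t F|_{t=0}=0$.

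At $\mathbf x_0\equiv 0$, $f_b^H$ is the deterministic constant $\xi:=(k-1)/k^2$, independent of $b$ and $\vec g$, so
\[
\partial_t F|_{t=0} \;=\; -\phi\,\frac{\lambda\phi\,S}{(\lambda\phi + \xi S)^2}\,\sum_b p_b\, \mathbb E\!\left[\tfrac{d}{dt} f_b^H(\lambda^{-1/2}\vec g; \mathbf G_t)\big|_{t=0}\right].
\]
To sidestep the non-smoothness of $\mathbf G_{[\cC,\cC]}^{1/2}$ at $t=0$ (note $\mathbf G_{[\cC,\cC]}$ vanishes to order $t^2$), I would use the equality in law
\[
f_b^H(\lambda^{-1/2}\vec g;\mathbf G_t) \;\eqdist\; \varphi_\alpha(\mathbf x_t^\top Y_b)\bigl(1-\varphi_\alpha(\mathbf x_t^\top Y_b)\bigr), \qquad Y_b=\mu_b+Z_\lambda,\ Z_\lambda\sim\cN(0,I_d/\lambda),
\]
which is manifestly $C^1$ in $\mathbf x_t$. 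Using $\varphi_\alpha(0)=1/k$ and $\nabla_j\varphi_\alpha(0)=\tfrac{1}{k}(\delta_{j\alpha}-\tfrac{1}{k})$, differentiating inside $\mathbb E_{Z_\lambda}$ (valid by bounded convergence) gives
\[
\mathbb E\bigl[\tfrac{d}{dt} f_b^H\big|_{t=0}\bigr] \;=\; \frac{1-2/k}{k}\left[\langle \dot x^\alpha,\mu_b\rangle - \tfrac{1}{k}\sum_{j=1}^k \langle \dot x^j,\mu_b\rangle\right].
\]

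The final step is algebraic. In the $c_\eta\to 0$ regime, the effective drift $\mathsf F$ reduces to the population gradient flow, so $\dot x^c|_{t=0} = -\mathbb E[\nabla_{x^c} L]|_{\mathbf x=0} = \tfrac{1}{k}\mu_c - \tfrac{1}{k^2} M$, where $M:=\sum_a\mu_a$ and we used $\nabla_{x^c}L = (\varphi_c(\mathbf x^\top Y) - y_c)Y$ together with equal class weights. Summing the previous display against $p_b=1/k$ collapses all $\mu_b$'s into $M$, producing an expression linear in $\{\langle \dot x^c, M\rangle\}_c$. By orthonormality of the means, $\langle \mu_c, M\rangle = 1$ and $\|M\|^2 = k$, giving $\langle \dot x^c, M\rangle = \tfrac{1}{k} - \tfrac{k}{k^2} = 0$ for every $c$. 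Hence $\sum_b p_b\,\mathbb E[\tfrac{d}{dt} f_b^H|_{t=0}]=0$, so $\partial_t F|_{t=0}=0$ and the lemma follows.

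The main subtlety I expect is the non-differentiability of $\mathbf G_{[\cC,\cC]}^{1/2}$ at $t=0$; the change of variables above handles this cleanly. A secondary check is that the $c_\eta\to 0$ limit of the drift $\mathsf F$ is the population gradient flow, which can be read off directly from the form of $\mathsf F$ worked out in~\cite{BGJH23}.
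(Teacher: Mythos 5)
Your proof is correct and takes essentially the same route as the paper's: implicitly differentiate the bulk fixed-point equation in $t$ (your implicit-function-theorem framing is equivalent to the paper's explicit rearrangement via the identity $\eqref{e:important-identity}$, since $\partial_S F = -S/S'$), reduce to showing $\sum_b p_b\,\mathbb E[\dot f_b^H|_{t=0}]=0$, handle the non-differentiability of $\sqrt{\mathbf G_t}$ at $t=0$ via the equality in law with the smooth $\mathbf x_t$-parametrization, and conclude by the same algebra (your $\langle\dot x^c, M\rangle=0$ is a clean repackaging of the paper's repeated use of $\tfrac1k\xi+\tfrac{k-1}{k}\xi'=0$). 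The only small slip is citing Lemma~\ref{lem:Stieltjes-ucas} for nondegeneracy of $\partial_S F$---that lemma concerns convergence of $\widetilde S^{(d)}$ to $\widetilde S$, not nonvanishing---but the facts you actually need, $S(z)\neq 0$ and $S'(z)\neq 0$ off $\supp(\nu^H_{\mathbf G_0})$, are elementary for the Stieltjes transform of a compactly supported probability measure whose support is a single interval, so this is cosmetic.
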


\begin{proof}
As preparation for the calculations that will follow, it helps to compile some initial computations. We again work with the equal weight $p_c \equiv \frac{1}{k}$ case and with the Gram matrix of the means being $I_k$ to simplify computations. 
We use $S'(z)$ to denote the $z$ derivative of $S(z)$. 
Differentiating the bulk equation of~\eqref{e:defmz-with-rho} in $z$, using $\frac{d}{dx}\frac{ax}{c+ax} = \frac{ca}{(c+ax)^2}$, we get 
\begin{align*}
    S(z) + zS'(z) = \frac{\phi}{k} \sum_{j=1}^k p_j\mathbb E \Bigg[ \frac{\lambda \phi S'(z) f_j}{(\lambda\phi + S(z) f_j)^2} \Bigg]\,,
\end{align*}
where for readability we have dropped the $(\lambda^{-1/2} \vec{g},\mathbf{G})$ arguments from $f_j$ though they are implicit. 
Since $S'(z)\ne 0$, we can divide by it. Rearranging, we get the important identity, 
\begin{align}\label{e:important-identity}
    z + \frac{S(z)}{S'(z)} = \phi \mathbb E_{J,\vec{g}}\Bigg[\frac{\lambda \phi f_J}{(\lambda \phi + S(z) f_J)^2}\Bigg]\,,
\end{align}
where $\mathbb E_{J,\vec{g}}$ indicates the expectation is both over $J\sim \text{Unif}([k])$ and over $\vec{g}$. 
Notice that left-hand side here is positive because the right-hand side is ($f_J$ is positive almost surely, for any fixed $\mathbf{G}$).  

Implicitly differentiating the bulk equation~\eqref{e:defmz-with-rho} in time at $t>0$, using $\dot{h}$ to denote the time derivative of a function $h$, we get at each $z$,
\begin{align*}
    z\dot{S}(z)= \phi \mathbb E_{J,\vec{g}}\Bigg[\frac{\lambda \phi (\dot{S}(z) f_J + S(z) \dot f_J)}{(\lambda \phi + S(z) f_J)^2}\Bigg]\,.
\end{align*}
We use here that $\dot f_J$ is well-defined for $t>0$.
Plugging in the identity~\eqref{e:important-identity} and rearranging, we get 
\begin{align}\label{eq:time-derivative-bulk-equation}
   - \dot{S}(z) \frac{S(z)}{S'(z)} = \phi S(z) \mathbb E_{J,\vec{g}}\Bigg[\frac{ \lambda \phi \dot{f}_J}{(\lambda \phi + S(z) f_J)^2}\Bigg]\,.
\end{align}
Since $-S/S'>0$ to the right of the effective bulk, the quantity multiplying $\dot S$ is non-zero for $z$ to the right of the bulk support. In particular, $\dot S$ is well-defined and holomorphic on $\C_+$ for $t>0$, and extends continuously to the compliment of the support of $\nu_\bfG$. 

We now show that this is well-defined as $t\to0$. 
To this end recall the summary statistics evolution from~\cite[Theorem 5.7]{BGJH23} (using that in this subsection we have restricted to the case $\cC = k$ where the map from means to classes is one-to-one) and note that derivative at time of zero of  $\mathbf{G}_{t}$ is given by 
\begin{align}
    \partial_t \langle x^a_t, \mu_b\rangle\vert_{t=0} &  = \begin{cases} \frac{1}{k} (1-\frac{1}{k})=:\xi & a=b\\ -\frac{1}{k^2}= : \xi' & a\ne b
    \end{cases}\,,\label{eq:m-drifts}\\
    \partial_t \langle x_t^{a,\perp} ,x_t^{b,\perp} \rangle\vert_{t=0} & = \begin{cases} \frac{c_\eta p_a}{\lambda}& a=b \\ 0 & \text{else}
    \end{cases}\,, \label{eq:R-perp-drifts}
\end{align}
where we we use $x^{a,\perp}$ to denote the part of $x^a$ orthogonal to $\text{Span}(\mu_1,...,\mu_k)$. To evaluate 
    $\dot f_j$ fix $\bfx_t^1,...,\bfx_t^\cC,\mu_1,...,\mu_k\in \mathbb R^d$ differentiable such that $\partial_t\bfG(\bfx_t)\restriction_{t=0}=\bfG_t\restriction_{t=0}$ for some fixed $d$. If $c_\eta>0$ this can cause non-differentiability of the resulting $\bfx_t$ at $t=0$, but as we may take $c_\eta \to 0$ first, one can take, e.g., $x_t^a = \xi t \mu_a + \sum_{b\ne a} \xi' t \mu_b$. Then, by~\eqref{eq:f_a-kGMM-case}, in law
    \begin{align*}
        f_j(\lambda^{-1/2}\vec{g},\bfG_t) = [\varphi_\alpha (1-\varphi_\alpha) ] (\langle x^1_t,Y\rangle ,...,\langle x_t^\cC,Y\rangle) \qquad \text{for $t\ge 0$}
    \end{align*}
    where $Y \sim \mathcal N(\mu_j, I_d/\lambda)$, by differentiating, we get 
    \begin{align*}
        \dot f_j (\lambda^{-1/2} \vec{g};\mathbf{G}_t) = \langle \nabla \varphi_\alpha , (\langle \dot x_t^1,Y\rangle,...,\langle \dot x_t^\cC,Y\rangle )\rangle (1-2\varphi_\alpha)
    \end{align*}
    which by~\eqref{eq:m-drifts}--\eqref{eq:R-perp-drifts} with $c_\eta =0$, gives 
    \begin{align}\label{eq:partial_t-equation}
        \dot f_j (\lambda^{-1/2} \vec{g};\mathbf{G}_t) & = \langle \nabla \varphi_\alpha , \xi e_j + \xi'\sum_{l\ne j} e_l + \sum_l (\lambda^{-1/2}\sum_{m\in [k]} \xi \delta_{lm} g_{m} + \xi'(1-\delta_{lm})g_m)e_l )\rangle (1-2\varphi_\alpha)\nonumber \\ 
        & = \langle \nabla \varphi_\alpha , \xi e_j + \xi' \sum_{l\ne j} e_l + \sum_l \lambda^{-1/2} (\xi g_l + \sum_{m\ne l} \xi' g_m) e_l\rangle (1-2\varphi_\alpha)\,.
    \end{align}
    Note that at $t=0$, we have that $\varphi_\alpha= 1/k$, and $\nabla \varphi_\alpha= \varphi_\alpha e_\alpha - \varphi_\alpha \sum_{\beta} \varphi_\beta$. Thus taking the limit as first $c_\eta\to0$ then $t\to 0$ of \eqref{eq:time-derivative-bulk-equation}, we see that $\dot S$ is well defined at $t=0$. 
    
    Observe that at $\bfx = 0$,
$f_J = \xi$ where   
$\xi:= \frac{1}{k}(1-\frac{1}{k})$.  This $\dot S(z)=0$ at $t=0$ if we can show that $\mathbb E_{J,\vec{g}}[\dot{f}_J] =0$  there.
    
    Taking the expectation of \eqref{eq:partial_t-equation}, all the terms with $g_m$ in them become zero, leaving 
    \begin{align*}
        \mathbb E[\dot f_j\restriction_{t=0}] = \langle \frac{1}{k} e_\alpha - \frac{1}{k^2} \sum_\beta  e_\beta, \xi e_j + \xi' \sum_{l\ne j}e_l\rangle (1- \frac{2}{k}) = \frac{1}{k}(1-\frac{2}{k}) (\xi \delta_{\alpha j} + \xi'(1-\delta_{\alpha j}))\,,
    \end{align*}
    where we used that $\frac{1}{k} \xi + \frac{k-1}{k} \xi'=0$. 
Taking expected value of $J\sim \text{Unif}([k])$, again using $\frac{1}{k} \xi + \frac{k-1}{k} \xi'=0$, we get zero. Together, these imply $\mathbb E_{J,\vec{g}}[\dot{f}_J]=0$, in turn implying that the right-hand side of~\eqref{eq:time-derivative-bulk-equation} is zero, and therefore, at $t=0$, so is $\dot{S}(z)$ for $z$ to the right of the bulk support.  As $S$ and $\dot S$ is holomorphic on $\C_+$ continuous up to the boundary $\R\setminus \supp(\nu_\bfG)$, we have that for any $z\in\R \setminus \supp\nu_\bfG$, we may analytically extend it to a ball around $z$ by the Schwarz reflection principle. By the uniqueness principle for holomorphic functions, this implies that $\dot S =0$ on said ball, and thus on $\C_+$ as desired. 
\end{proof}

\subsubsection*{Separation of the $\mu_\alpha$ eigenvalue over training}

    We now examine the time derivative of the outlier solutions to show that the outlier corresponding to the mean $\alpha$ being learned grows infinitesimally in time. This implies that if $\lambda,\phi$ are such that there are outliers, this one effective outlier separates itself from the other $k-1$ effective outliers, and if $\lambda,\phi$ are such that there are marginally no outliers, one corresponding to $\mu_a$ will emerge from the bulk over the course of training. 

For this purpose, we let $z_* = z_*(t)$ be an effective outlier at $\mathbf{G} = \mathbf{G}_t$, i.e., a solution to $\det (z_* I - F(z_*))=0$ at $\mathbf{G} = \mathbf{G}_t$. In theory, we would implicitly differentiate $\det(z_* I -F)$ in $t$ to get an expression for $\dot z_*(t)$, but since that effective eigenvalue can have multiplicity $k$ at $t=0$ per Corollary~\ref{cor:exact-distribution-at-initialization} (that will then separate infinitesimally in time), this implicit differentiation is not well defined. Moreover, at $\mathbf{x}\equiv 0$ initialization, the fact that the Gram matrix is zero in its $[\cC,\cC]$ block causes problems for differentiating its square root $\sqrt{\mathbf{\bfG}}_t$. This is because outliers corresponding to orthogonal components of $\mathbf{x}_t$ will spontaneously appear rather than being perturbations around the $\mathbf{x}_0$ solutions. Still, we are interested in the evolution of the effective eigenvalues that correspond to the $k$-means being learned, and which are present even at initialization. This issue is partially resolved by taking the $c_\eta \downarrow 0$ limit of the effective dynamics as assumed in Theorem~\ref{thm:effective-bulk-along-effective-dynamics}. 

\begin{proof}[\textbf{\emph{Proof of Theorem~\ref{thm:effective-bulk-along-effective-dynamics}}}]
    The first bullet point was proved by Lemma~\ref{lem:bulk-doesn't-move}. Recall that under the summary statistic evolution~\eqref{eq:m-drifts}--\eqref{eq:R-perp-drifts} with $c_\eta \downarrow 0$, the arguments in $\dot f_j$ from~\eqref{eq:partial_t-equation} only consist of the Gaussians $\vec{g}_{[k]}$ and therefore the only order $t$ contribution to the set of outlier solutions must be from the $[k,k]$ block. 
    We are focusing on studying the effective outliers at short times perturbatively by Taylor expanding $F= F_{[k,k]}(z; \mathbf{G}_t)$ locally near $t=0$, and argue that we can read off the behavior of the $k$ possible solutions $z_*(t)$ from that. We begin by computing the set of solutions to the linear approximation to $F(z;\mathbf{G}_t)$ as, 
\begin{align}\label{eq:linear-approximation}
    \det(z I_k - (F_{[k,k]}(z;\mathbf{G}_0) + t\partial _t F_{[k,k]}(z;\mathbf{G}_0))=0\,,
\end{align}
where in defining $F$ and $\partial_t F_{[k,k]}$, we are using the consistent choice of $(l_i)_{i\in [k]} =(\mu_i)_{i=1}^k$. At the end of the proof, we will show that the error from higher order terms in the Taylor expansion to the effective outliers are $o(t)$, implying that the derivative at initialization is indeed captured by this linearization.  

By~\eqref{eq:F-matrix}, the 
derivative 
$\partial_t F\restriction_{t=0}$ satisfies 
\begin{align}\label{eq:F-time-derivative-1}
    \partial_t F_{[k,k]} (z, \mathbf{G}_t) = \lambda \phi \mathbb E_{J,\vec{g}} \Bigg[ \frac{\lambda \phi \partial_t f_J}{(\lambda \phi + S(z) f_J)^2} (\frac{\vec{g}_{[k]}}{\sqrt\lambda} + e_J)^{\otimes 2}\Bigg]\,.
\end{align}
Firstly, when we go to evaluate this at $t=0$, notice that $f_j = \frac{1}{k}(1-\frac{1}{k}) = \xi$ for all $j$. Now plugging in the expression from~\eqref{eq:partial_t-equation}, and evaluate at $t=0$, we end up with 
\begin{align*}
    \frac{\lambda \phi(1-2/k)}{(\lambda \phi + S(z) \xi)^2}  \mathbb E_{J,\vec{g}}\Bigg[\langle \nabla \varphi_\alpha , \xi e_J + \xi' \sum_{l\ne J} e_l + \sum_l e_l \lambda^{-1/2} (\xi g_l + \sum_{m\ne l} \xi' g_m) \rangle (\lambda^{-1/2} \vec{g}_{[k]}+ e_J)^{\otimes 2} \Big]\,.
\end{align*}Plugging in for $\nabla \varphi_\alpha =  \frac{1}{k} ( e_\alpha - \sum_{\beta \ne \alpha} e_\beta)$ at $t=0$, and using that $\sum_{\beta} (\xi g_\beta + \xi' \sum_{m\ne \beta} g_m) =0$ because $\xi + (k-1)\xi' =0$, this reduces to 
\begin{align}\label{eq:partial-t-F-equation}
     \frac{\lambda \phi(1-2/k)}{(\lambda \phi + S(z) \xi)^2} \mathbb E_{J,\vec{g}}\Big[ \big(\xi \delta_{\alpha J} + \xi'(1-\delta_{\alpha J}) + \lambda^{-1/2} (\xi g_\alpha + \xi' \sum_{\beta \ne \alpha} g_\beta)\big) \big(\lambda^{-1/2} \vec{g}_{[k]} + e_J)^{\otimes 2}\Big]\,.
\end{align}
Starting with the contribution from $$\mathbb E[ (\xi\delta_{\alpha,J} + \xi'(1-\delta_{\alpha,J}) )(\lambda^{-1/2} \vec{g}_{[k]} +e_J)^{\otimes 2}]\,,$$
     and expanding the $\otimes 2$, the first term gives zero due to the average over $J$ as $\sum_{j} (\xi \delta_{\alpha J} + \xi'(1-\delta_{\alpha,J})) =0$. The cross terms are zero due to the average over $\vec{g}$. The only non-zero term comes from 
    \begin{align}\label{eq:no-g-interaction-term}
        \mathbb E[ (\xi\delta_{\alpha,J} + \xi'(1-\delta_{\alpha,J}) )e_J^{\otimes 2}] = \frac{1}{k} \diag(\xi, \xi',...,\xi')\,,
    \end{align}
    where the $\xi$ is in the $\alpha$-slot. The other term to consider from~\eqref{eq:partial-t-F-equation} is 
    \begin{align*}
        \lambda^{-1/2} \mathbb E_{J,\vec{g}}[ (\xi g_\alpha + \xi' \sum_{\beta \ne \alpha} g_\beta) (\lambda^{-1/2} \vec{g}_{[k]} + e_J)^{\otimes 2}]\,.
    \end{align*}
    Expanding the $\otimes 2$, all terms with an odd number of Gaussians vanish, leaving only 
    \begin{align*}
        \frac{1}{\lambda k} \mathbb E\Big[(\xi g_\alpha + \xi' \sum_{\beta \ne \alpha}g_\beta) (\vec{g}_{[k]} \otimes \mathbf{1}_{[k]}+ \mathbf{1}_{[k]}\otimes \vec{g}_{[k]})\Big] = \frac{1}{\lambda k}\left(\begin{matrix}
           2 \xi  &  \xi + \xi' & \cdots&  \xi+ \xi'   \\ \xi + \xi' & 2\xi' & \cdots & 2\xi' \\ \vdots  & \vdots & \ddots & \vdots\\ 
           \xi + \xi' & 2\xi' & \cdots & 2\xi'
        \end{matrix}\right)\,,
    \end{align*}
    where we are using that without loss of generality $\alpha=1$ and so the first row/column is the $\alpha$'th one. 
    Putting all the above together, we are therefore looking for solutions of 
    \begin{align*}
        \det\big(zI_k - F_{[k,k]}(z;\mathbf{G}_0) - t \psi(z)\mathcal M\big) =0\,,
    \end{align*}
    where 
    \begin{align}\label{eq:psi-def}
        \psi(z) = \frac{\lambda \phi (k-2)}{k^3 (\lambda \phi + S(z) \xi)^2} \,,
    \end{align}
    and 
    \begin{align}
        \mathcal M =  \diag(\xi,\xi',...,\xi') + \frac{1}{\lambda}\left(\begin{matrix}
           2 \xi  &  \xi + \xi' & \cdots&  \xi+ \xi'   \\ \xi + \xi' & 2\xi' & \cdots & 2\xi' \\ \vdots  & \vdots & \ddots & \vdots\\ 
           \xi + \xi' & 2\xi' & \cdots & 2\xi'
        \end{matrix}\right)\,.
    \end{align}
    Recalling $F$ from~\eqref{eq:F-at-0-initialization}, (and multiplying through by $S(z)$ as it is non-zero) we are looking at $z$'s such that the determinant of the following matrix is zero: 
    \begin{align*}
           \big(1 + \frac{\lambda \phi \xi S(z)}{k(\lambda \phi + \xi S(z))} + t\psi(z) S(z)\xi'\big) I_k  + \frac{1}{\lambda}\psi(z)S(z) \left(\begin{matrix}
           2 \xi + \lambda (\xi - \xi')&  \xi + \xi' & \cdots&  \xi+ \xi'   \\ \xi + \xi' & 2\xi' & \cdots & 2\xi' \\ \vdots  & \vdots & \ddots & \vdots\\ 
           \xi + \xi' & 2\xi' & \cdots & 2\xi'
        \end{matrix}\right)\,.
    \end{align*}
    That is, $z$'s such that some eigenvalue of the matrix equals to zero. Call $\mathbf{b}(z)$ the first two terms multiplying the identity here. Explicit linear algebra gives the solutions by   
    \begin{align}\label{eq:the-decreasing-solutions}
        \mathbf{b}(z) + t\psi(z) S(z)\xi' =0 \qquad \text{with multiplicity $k-2$}\,,
    \end{align}
    or otherwise, by solutions to the following two equations with multiplicity one each: 
    \begin{align*}
        \mathbf{b}(z) + t\psi(z) S(z)\Big(\xi'  + \frac{1}{2\lambda} \Big(a + (k-1)c \pm \sqrt{(a- (k-1)c)^2 + 4(k-1)b^2 } \Big)\Big)=0\,,
    \end{align*}
    where we have written  $
        a = 2\xi + \lambda(\xi - \xi') $, $ b = \xi+\xi'$, and $c=2\xi'$. 
    Now observe that $a+(k-1)c = \lambda(\xi - \xi')$, and $a-(k-1)c = 4\xi + \lambda(\xi - \xi')$. Therefore the two solutions (each of multiplicity one) are given by 
    \begin{align*}
        \mathbf{b}(z) + t\psi(z)S(z)\mathfrak{c}_+ =0\,, \qquad \text{and} \qquad \mathbf{b}(z) + t\psi(z)S(z)\mathfrak{c}_- =0\,,
    \end{align*}
    where $$\mathfrak{c}_+ \ge \xi' + \frac{1}{2\lambda}(4\xi + 2\lambda (\xi - \xi')) \ge \xi\,, \qquad \text{and} \qquad \mathfrak c_- \le \xi' <0\,.$$ 
    Notice that $\mathbf{b}(z) =0$ is exactly the initialization outlier equation from~\eqref{eq:zero-init-outlier-equation}, and its solution is the unique solution of an increasing function in $z$ equaling zero. The additional term $t\psi(z)S(z)\zeta$ which arises for $\zeta \in \{\xi',\mathfrak{c}_+,\mathfrak{c}_-\}$ takes the opposite sign of $\zeta$ for $z$ to the right of the support of the initialization effective bulk, because $S(z)$ is negative there, and $\psi(z)$ is positive everywhere. 

    All told, therefore, the solution to $\mathbf{b}(z) + t\psi(z) S(z) \zeta =0$ is to the right of the time zero effective outlier by an order $t$ amount when $\zeta = \mathfrak{c}_+$ (as the zero moves to the right when adding a negative function to an increasing function) and to the left of the time zero effective outlier by an order $t$ amount when $\zeta \in \{\xi', \mathfrak c_-\}$.

    It remains to establish that this implies the effective eigenvectors at time $t$ are within $o(t)$ of the ones solving~\eqref{eq:linear-approximation} to conclude the proof. By differentiating~\eqref{eq:F-time-derivative-1} and using Cauchy--Schwarz and the moment estimates of Lemma~\ref{l:Sd_est}, for every $z$ at least $\epsilon$ away from the effective bulk, the difference between $F(t)$ and the linear approximation is at most $C_\epsilon t^2$.  The following claim translates this to control on the error on the solutions of the effective eigenvalue equations.

\begin{claim}\label{clm:stability-of-z-F-solutions}
    Suppose $\tilde z_{1}(t),...,\tilde z_{k}(t)\in \mathbb R$ are the solutions to~\eqref{eq:linear-approximation} and $z_{*,1}(t),...,z_{*,k}(t)$ are  solutions of $det(zI_k - F(t))$ for $F(t)$ that is entrywise at most $o(t)$ away from $F_{[k,k]}(\mathbf{G}_0) + t\partial_t F_{[k,k]}(\mathbf{G}_0)$. Then each solution $z_{*,i}$ is within $o(t)$ of a solution $\tilde z_i$. 
\end{claim}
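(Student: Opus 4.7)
The plan is to deduce this stability result by combining Rouch\'e's theorem for simple roots with an $S_{k-1}$-symmetry reduction to handle the degenerate root cluster. Set $P(z,t) := \det(zI_k - F(t))$ and $\tilde P(z,t) := \det(zI_k - F_0(z) - t F_1(z))$, where $F_0(z) := F_{[k,k]}(z,\mathbf{G}_0)$ and $F_1(z) := \partial_t F_{[k,k]}(z,\mathbf{G}_0)$. By multilinearity of the determinant and the local boundedness of $F_0$ and $F_1$ (cf.\ Lemmas~\ref{l:partial_S_bound}--\ref{l:Sd_est}), the hypothesis $F(t) - F_0 - tF_1 = o(t)$ entrywise immediately yields $|P(z,t) - \tilde P(z,t)| = o(t)$ uniformly on compact subsets of $\mathbb{C} \setminus \supp(\nu^H_{\mathbf{G}_0})$.

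Next, I would use the previously derived enumeration of roots of $\tilde P$, which identifies three branches emanating from $z_*(0)$: two simple branches $\tilde z_\pm(t)$ solving $\mathbf{b}(z) + t\psi(z)S(z)\mathfrak{c}_\pm = 0$, and a branch $\tilde z_{\mathrm{deg}}(t)$ of multiplicity $k-2$ solving $\mathbf{b}(z) + t\psi(z)S(z)\xi' = 0$. Since $\mathbf{b}$ is strictly increasing near its zero $z_*(0)$, the implicit function theorem gives each branch as $z_*(0) + tc_\bullet + o(t)$. For each of the two simple branches, a standard Rouch\'e argument on a circle of radius $\delta t$ about $\tilde z_\pm(t)$ suffices: the factorization of $\tilde P$ into its three branch factors yields $|\tilde P(z,t)| \gtrsim \delta t \cdot t^{k-1}$ on the circle, which dominates the $o(t)$ perturbation once $\delta$ is fixed and $t \to 0$. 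Sending $\delta \to 0$ then produces a unique root of $P$ within $o(t)$ of $\tilde z_\pm(t)$.

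The principal obstacle is the degenerate branch, because a \emph{generic} $o(t)$ perturbation of an eigenvalue of multiplicity $k-2$ can split it into roots only at distance $o(t)^{1/(k-2)}$, which fails to be $o(t)$ for $k \geq 4$. I would resolve this by invoking the $S_{k-1}$-symmetry permuting the $k-1$ classes distinct from $\alpha$: the loss~\eqref{eq:loss-function-1-layer}, the $k$-GMM with orthonormal equal-weight means, and the initialization $\mathbf{x}_0 \equiv 0$ are jointly $S_{k-1}$-equivariant, and this symmetry descends to the autonomous effective dynamics~\eqref{eq:effective-dynamics} with $c_\eta = 0$ (as is transparent in the symmetric form of~\eqref{eq:m-drifts}--\eqref{eq:R-perp-drifts}). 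Hence $F(z,\mathbf{G}_t)$ commutes with the permutation representation on $\mathbb{R}^k$ for every $t \geq 0$. Decomposing $\mathbb{R}^k$ into $S_{k-1}$-isotypic components yields a two-dimensional trivial-isotypic subspace $\Span(e_\alpha, \sum_{j\neq \alpha}e_j)$ and the $(k-2)$-dimensional standard irreducible representation $\{v : v_\alpha = 0, \sum_j v_j = 0\}$. By Schur's lemma, $F(z,\mathbf{G}_t)$ acts as a scalar $\mu(z,t)$ on the standard irrep, so the $k-2$ degenerate outliers of $P$ are \emph{all} equal to the unique root of the scalar equation $z = \mu(z,t)$. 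The same Rouch\'e argument applied to this \emph{reduced scalar} equation then produces a root within $o(t)$ of $\tilde z_{\mathrm{deg}}(t)$, which is automatically a $(k-2)$-fold root of $P$. The hardest step is precisely this symmetry-based reduction: without it, the multilinear-algebra estimate alone would only yield $o(t)^{1/(k-2)}$-closeness for the degenerate branch.
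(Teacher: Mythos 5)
You correctly identify the weakness in the paper's one-line invocation of ``analyticity'': continuity of polynomial roots in coefficients only gives H\"older-$1/m$ dependence near a root of multiplicity $m$, so the degenerate branch of multiplicity $k-2$ is the apparent obstacle. Your $S_{k-1}$-symmetry reduction of the degenerate cluster to a single scalar fixed-point equation is a valid and pretty observation.

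However, the Rouch\'e argument you propose for the \emph{simple} branches is backwards. On a circle of radius $\delta t$ about $\tilde z_\pm(t)$, you correctly estimate $|\tilde P(z,t)|\gtrsim \delta\, t^k$; but you then assert this ``dominates the $o(t)$ perturbation,'' whereas in fact $o(t)\gg \delta t^{k}$ as $t\to0$ for any $k\ge 2$, so Rouch\'e's hypothesis fails, not holds. The same obstruction persists after your symmetry reduction: the trivial-isotypic block is $2\times 2$, giving $|\tilde P_{\mathrm{trivial}}|\gtrsim \delta t^2$ versus a perturbation of size $o(t)$, and Rouch\'e still fails. A purely polynomial/analytic argument (Rouch\'e included) simply cannot deliver $o(t)$ root-stability from an $o(t)$ coefficient perturbation when the roots are themselves only $\Theta(t)$-separated.

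The missing ingredient is that $F(t)$ and $F_0 + tF_1$ are real \emph{symmetric} matrices, so Weyl's inequality applies: their ordered eigenvalues differ by at most the operator norm of the difference, which is $o(t)$. This gives a Lipschitz (not H\"older-$1/m$) bound, handles all multiplicities uniformly, and renders the $S_{k-1}$-reduction unnecessary for this claim. The paper's proof is also imprecise on this exact point --- it cites analyticity where it should cite Weyl (the later sentence ``they are eigenvalues of a real symmetric matrix'' is the real reason the conclusion is correct) --- so you are right that the proof as written in the paper glosses over the issue, but the fix is Weyl, not Rouch\'e plus representation theory. A final caveat that affects both your write-up and the paper's: in the actual application $F=F_{[k,k]}(z,\mathbf{G}_t)$ depends on $z$, so one should also verify a transversality condition (e.g., $\sup_z\|\partial_z F\|<1$ off the bulk support) in order to pass from ``eigenvalues of $F(z,\cdot)$ are $o(t)$-stable for each fixed $z$'' to ``solutions $z$ of the fixed-point equation $\det(zI-F(z,\cdot))=0$ are $o(t)$-stable.''
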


\begin{proof}
    By analyticity of $\det(z I  - F)$ in $z$ and the entries of $F$, if $F'$ is within $O(\epsilon)$ of $F$ entry-wise, then for each $z$, we have that the roots of the polynomial $\det(z I - F')$ are within a ball of radius $O(\epsilon)$ (in $\mathbb C$) of those of $\det(z I - F)$. 
\end{proof}

The proof is concluded from the observation that at $\lambda>\lambda_c^H$, at $t=0$, the effective eigenvalues are strictly bounded away from the effective bulk per Lemma~\ref{cor:exact-distribution-at-initialization}. Then, the solutions $z_{*,1}(t),...,z_{*,k}(t)$ not only are within $o(t)$ in $\mathbb C$ of the solutions to~\eqref{eq:linear-approximation}, they are also real because they are eigenvalues of a real symmetric matrix. When we take the derivative, the $o(t)$ error vanishes, leaving the same sign on the derivative as we obtained for solutions to~\eqref{eq:linear-approximation}. 
\end{proof}

\subsection{Existence of outliers throughout summary statistic space}
The last concrete consequence to establish is Corollary~\ref{cor:BBP-at-all-points}, that at \emph{every} point $\mathbf{x}$ of fixed summary statistic values $\mathbf{G}$, at $\lambda$ sufficiently large, there exists at least one outlier eigenvalue. Again, for concreteness, we take the means $\mu_1,...,\mu_k$ to be orthonormal. 

Our reasoning goes by perturbing the effective bulk and effective outliers about their $\lambda = \infty$ solutions. Along the typical SGD trajectory after a burn-in period, this type of result was established in~\cite{BGJH23} for $\phi$ large; the main distinctions here are that the following lemma applies at \emph{every} point in parameter space, and holds for any sample complexity $\phi>0$ (only $\lambda$ needs to be large).  

\begin{proof}[\textbf{\emph{Proof of Corollary~\ref{cor:BBP-at-all-points}}}]
    Let us consider the $\lambda = \infty$ limit of the effective bulk, which we can probe by examining the effective bulk equation~\eqref{e:defmz-with-rho} as $\lambda \to\infty$. Recall from Proposition~\ref{p:free-mult-conv-wishart} that the solution to~\eqref{e:defmz-with-rho} is also the limiting bulk distribution for matrices of the form $\frac{1}{n}W D W^\top$ where $W$ is formed out of independent Gaussians of variance $I_d/\lambda$, and $D$ is as before. Then, we can bound the spectral norm of $\frac{1}{n} W D W^\top$ by $\frac{1}{n}\|WW^\top \| \|D\|$. Bounding $\|D\|$ by the maximum of the support of $D_{\ell\ell}$ (which is $1$ in the logistic regression case), and bounding $\frac{1}{n}\|WW^\top\| = O(1/\lambda)$ as it is a Wishart matrix with  explicit Marchenko--Pastur limit, we see that the spectral norm of the effective bulk is eventually almost surely at most $C/\lambda$ for some $C$ that only depends on $\phi$.

We next show that the effective outliers stabilize as $\lambda\to\infty$, to values of $z$ that are strictly positive (uniformly in $\lambda$). Let us expand the matrix $F$~\eqref{eq:F-matrix} in the regime of $\lambda$ large. By the asymptotics above on $f_j^H$, expanding the $\otimes 2$ in $F$, we find that
\begin{align*}
	F =   \lambda \phi \sum_{j} p_j \Bigg(& \mathbb E\Bigg[ \frac{f_j^\infty}{\lambda \phi + S_\lambda(z) f_j^\infty} \Big( 1+ O(\lambda^{-1/2} \|\vec{g}\|)\Big)\Bigg]  e_j^{\otimes 2} \\
    & +  \mathbb E\Bigg[\frac{f_j}{\lambda \phi + S_\lambda(z) f_j} \Big( \lambda^{-1/2} \text{Sym}(\vec{g} \otimes \sqrt{\mathbf G}_{[q]j}) + \lambda^{-1} \vec{g}^{\otimes 2}\Big)\Bigg]\Bigg)\,,
\end{align*}
where the subscript $\lambda$ is to clarify the $\lambda$ dependence.
Using that $f_j^\infty$ is deterministic for the first term, and Cauchy--Schwarz and~\eqref{l:Sd_est} for the latter two terms, for $z\notin \supp(\nu_{\mathbf{G}}^H)$, we get       
\begin{align*}
	F = \lambda \phi \sum_j p_j \frac{f_j^\infty}{\lambda \phi + S_\lambda(z) f_j^\infty} \big(e_j^{\otimes 2} + O(\lambda^{-1/2})\big) = \sum_j p_j f_j^\infty \Big( 1- \frac{S_\lambda(z)f_j^\infty}{\lambda \phi} + \cdots \Big) \big(e_j^{\otimes 2} + O(\lambda^{-1/2})\big) \,.
\end{align*} 
Therefore, $k$ of the solutions to $\det(z I_q - F) = 0$ are close (as $\lambda \to\infty$) to the solutions to 
\begin{align*}
	\det (z I_q -  \sum_j p_j f_j^\infty e_j^{\otimes 2})=0
\end{align*}
which are clearly given by $(p_j f_j^\infty)_{j=1}^{k}$. Now recall again that for any $\mathbf{G}$, $f_j^\infty$ is strictly positive (independent of $\lambda$) for every $j\in [k]$. Since the right-edge of $\supp(\nu_{\mathbf{G}}^{H})$ goes to $0$ as $\lambda \to \infty$, these $k$ solutions will all exist for $\lambda$ sufficiently large. Since all we used about $f_j^H$ was boundedness of $f_j^\infty$ away from zero for $\lambda$ large, the same holds true for the Gradient matrix.
\end{proof}

\section{Other examples for which spectral characterization applies}\label{sec:other-examples}
In this section, we demonstrate how the Hessian and Gradient matrices for other well-studied problems described in Section~\ref{subsec:further-applications} fit into the framework of Theorem~\ref{thm:main-A-D-A^T}--\ref{thm:main-A-D-A^T-outliers}.

\subsection{First layer of $k$-GMM classification with two-layer network}

In this subsection, we verify that if we perform the $k$-GMM classification task with a multi-layer network (as would be required if the classes are not linearly separable, for instance as exemplified by the well-known example of XOR data points) of fixed intermediate layer width, then the first layer Hessian and Gradient matrix fall into our framework and Theorems~\ref{thm:main-A-D-A^T}--\ref{thm:main-A-D-A^T-limit} give their effective bulk distribution and outlier locations.  

\begin{proof}[\textbf{\emph{Proof of Corollary~\ref{cor:multi-layer-GMM}}}]
We begin with writing the derivatives of the loss function~\eqref{eq:loss-function-2-layer} to demonstrate that the diagonal matrix in the Hessian and Gradient matrices indeed only depends on $Y_\ell$ through its inner products with $\mathbf{W}^\alpha$ and the means $\mu_1,...,\mu_k$.  

Differentiating the loss function from~\eqref{eq:loss-function-2-layer} 
 in $W_i^\alpha \in \mathbb R^d$, gives the gradient in the first layer weights, 
\begin{align*}
    \nabla_{W_i^\alpha} L =  v^\alpha_i  g'(W^\alpha_i \cdot Y)  \Big(- y_\alpha  + \frac{ e^{v^\alpha \cdot g(\mathbf{W}^\alpha Y)}}{\sum_{c\in \cC}e^{v^c\cdot g(\mathbf{W}^c Y)}}\Big)  Y\,.
\end{align*}
Letting 
\begin{align}\label{eq:yhat}
\widehat y_\alpha = \frac{ e^{v^\alpha \cdot g(\mathbf{W}^\alpha Y)}}{\sum_{c\in \cC}e^{v^c\cdot g(\mathbf{W}^c Y)}}
\end{align}
this can be compactly expressed as 
\begin{align*}
     \nabla_{W_i^\alpha} L = - v^\alpha_i  g'(W^\alpha_i \cdot Y) (y_\alpha - \widehat y_\alpha )  Y\,,
\end{align*}
Differentiating again, we get the first-layer Hessian 
\begin{align*}
    \nabla_{W_i^\alpha  W_j^\beta}^2 L =\Big( - \delta_{ij} \delta_{\alpha \beta} v^\alpha_i  g''(W^\alpha_i \cdot Y)(y_\alpha -\widehat y_\alpha )    & + \delta_{ij} \delta_{\alpha\beta} (v^\alpha_i  g'(W^\alpha_i \cdot Y))^2  \widehat y_\alpha \\
    & \qquad \qquad - v^\alpha_i g'(W_i^\alpha \cdot Y) v^\beta_j g'(W_j^\beta \cdot Y)\widehat y_\alpha \widehat y_\beta\Big)Y^{\otimes 2}\,,
\end{align*}
and tensoring the gradient with itself, the first-layer Gradient matrix 
\begin{align*}
    \nabla_{W_i^\alpha} L \otimes \nabla_{W_j^\beta} L= v_i^\alpha v_j^\beta g'(W_i^\alpha \cdot Y)g'(W_j^\beta \cdot Y) (y_\alpha - \widehat y_\alpha)(y_\beta - \widehat y_\beta)Y^{\otimes 2}\,. 
\end{align*}
In particular, the on-diagonal blocks (which are the ones of most interest) look like 
\begin{align}
 \nabla_{W_i^\alpha} L^{\otimes 2} & = \Big( (v_{i}^\alpha g'(W_i^\alpha \cdot  Y) (y_\alpha -\widehat y_\alpha)\Big)^2 Y^{\otimes 2} \label{eq:multilayer-GMM-Gradient-matrix} \\
    \nabla_{W_i^\alpha}^2 L &  =\Big( - v^\alpha_i  g''(W^\alpha_i \cdot Y)(y_\alpha -\widehat y_\alpha )  +  (v^\alpha_i  g'(W^\alpha_i \cdot Y))^2  \widehat y_\alpha(1-\widehat y_\alpha)\Big)Y^{\otimes 2} \label{eq:multilayer-GMM-Hessian-matrix} \,.
\end{align}

Since the coefficients of $Y^{\otimes 2}$ only depend on inner products of $Y\sim \mathcal P_Y$ with the vectors of $\mathbf{W} = (W_i^c)_{c\in [\cC],i\in [K]}$, it is evident that this can be written as a function of $K\cC + k$ independent Gaussians, together with the $K\cC$ quantities $(v_i^\alpha)$ and the inner products of $(W_i^c)_{i,c}$ with each other and with the mean vectors $\mu_1,...,\mu_k$. In particular, for any fixed second-layer weights $(v_i^\alpha)_{i\in [K]}$, the quantity multiplying $Y^{\otimes 2}$ depends on $((\mathbf{W}_{i}^c)_{i\in[K],c\in [\cC]},(\mu_j)_{j\in [k]})$ through their summary statistic matrix $\mathbf{G}$. This yields that Assumption~\ref{assumption:meas} holds. 

In order to justify applicability of Theorems~\ref{thm:main-A-D-A^T}--\ref{thm:main-A-D-A^T-outliers}, it remains to verify Assumption~\ref{assumption:D}. Since $y_\alpha,\widehat y_\alpha \in [0,1]$, as long as $g'$ and $g''$ are bounded functions, for fixed values of $v$ we get that the coefficients of $Y^{\otimes 2}$ are uniformly compactly supported on $\mathbb R$. Next consider the probability that the coefficient of $Y^{\otimes 2}$ in~\eqref{eq:multilayer-GMM-Gradient-matrix}--\eqref{eq:multilayer-GMM-Hessian-matrix} are less than $\epsilon$ in absolute value. 

For~\eqref{eq:multilayer-GMM-Gradient-matrix}, for $v_i^\alpha \ne 0$, this is bounded by the probability that $|y_\alpha - \widehat y_\alpha |\le \epsilon^{1/4}$ plus the probability that $|g'(W_i^\alpha Y)|\le \epsilon^{1/4}$. The former probability reduces to that $\widehat y_\alpha$ is within $\epsilon^{1/4}$ to $0$ or $1$, which  is at most $O(\epsilon)$ so long as $g$ has polynomial growth at $\pm \infty$ by the argument in the proof of Theorem~\ref{mainthm:logistic-regression}, around~\eqref{eq:softmax-prob-of-near-zero}. The latter event has probability at most $O(\epsilon^{1/4})$ because of Assumption~\ref{assumption:Gaussian-non-degeneracy} on $g'$, and the assumption that $W_i^\alpha \ne 0$. 

The argument for the Hessian case,~\eqref{eq:multilayer-GMM-Hessian-matrix} is similar, with the differences being that the probability of $|\widehat y_\alpha(1 - \widehat y_\alpha)|\le \epsilon^{1/4}$ also per~\eqref{eq:softmax-prob-of-near-zero}, and then asking not only that $g''$ and $g'$ do not put too much mass near zero, but that they do not put too much mass near any single number (as that could lead to a cancellation between the two terms which shifts the mass near zero). This is again provided by Assumption~\ref{assumption:Gaussian-non-degeneracy} we make on $g', g''$ because if they have density and $W_i^\alpha\ne 0$, the probability that each falls within $\epsilon$ of a point $x \in \mathbb R$ is at most $O(\epsilon)$.  

Finally, if $\|\bfG^{(d)} -\bfG\|= o(1/\log d)$, then by Lemma~\ref{lem:bounded-differentiable-implies-strong-convergence}, one gets Assumption~\ref{assumption:strong-convergence} if the coefficients in~\eqref{eq:multilayer-GMM-Hessian-matrix}--\eqref{eq:multilayer-GMM-Gradient-matrix} are uniformly continuous as functions of the inner products $(\mathbf{W}_i^c Y)_{i,c}$; this evidently holds if the coefficients have bounded derivatives, which follows if $g$ is thrice bounded differentiable (since the softmax functions are clearly infinitely many times bounded differentiable). 
\end{proof}

\subsection{Second layer of $k$-GMM classification with two-layer network}
In this subsection, we verify Lemma~\ref{lem:second-layer-only-summary-statistic}, that even though the second layer Hessian/Gradient matrix in the fixed-width regime we consider is a $K\times K$ matrix in the $n,d\to\infty$ limit, this random matrix's law only depends on the parameters of the network through their summary statistic values. 

\begin{proof}[\textbf{\emph{Proof of Lemma~\ref{lem:second-layer-only-summary-statistic}}}]
   Differentiating the loss function from~\eqref{eq:loss-function-2-layer} in $v= (v^\alpha)_{\alpha \in [K]}$, gives the gradient in the second layer weights, 
   \begin{align*}
       \nabla_{v^\alpha} L = g(\mathbf{W}^\alpha Y) \Big( - y_\alpha + \frac{e^{ v^\alpha \cdot g(\mathbf{W}^\alpha Y)}}{\sum_{c\in [\cC] }e^{ v^c \cdot g(\mathbf{W}^c\cdot Y)}}\Big)\,.
   \end{align*}
   Let $\widehat{y}_{\alpha}$ be as in~\eqref{eq:yhat} to express this as $\nabla_{v^\alpha} L = - g(\mathbf{W}^\alpha Y)(y_\alpha - \widehat{y}_\alpha)$. Differentiating in $v^\alpha$ a second time (focusing only on the diagonal second layer blocks, though the same would hold for the second layer off-diagonal blocks), 
   \begin{align*}
       \nabla_{v^\alpha v^\alpha} L = \hat y_\alpha (1-\hat y_\alpha) g(\mathbf{W}^\alpha Y)^{\otimes 2}  \qquad \text{and} \qquad \nabla_{v^\alpha} L^{\otimes 2} = (y_\alpha - \widehat y_\alpha)^2 g(\mathbf{W}^\alpha Y)^{\otimes 2}\,.
   \end{align*}
   Evidently, this only depends on $\mathbf{W} = (\mathbf{W}^{\alpha}_{i})_{\alpha\in [\cC],i\in [K]}$ through inner products with $Y$ and therefore its law only depends on $\mathbf{W}$ through its Gram matrix, and its inner products with the hidden class means. 
   
   To push this to the $n,d\to\infty$ limit and be able to replace the dependency on $\mathbf{G}^{(d)}$ with the limiting summary statistic matrix $\mathbf{G}$, it suffices to observe continuity of the quantities $\nabla_{v^\alpha v^\alpha}$ and $\nabla_{v^\alpha} L^{\otimes 2}$ as functions of the summary statistics $\mathbf{G}$. This follows from our assumption that $g\in C_b^2$ and the continuity of the softmax function appearing in $y$ and $\widehat y$. 
\end{proof}

\subsection{Parametric regression for multi-index model}
We next move to the proof of Corollary~\ref{cor:multi-index-regression}, showing the applicability of Theorems~\ref{thm:main-A-D-A^T}--\ref{thm:main-A-D-A^T-limit} to the loss function of~\eqref{eq:multi-index-regression-loss}.  

\begin{proof}[\textbf{\emph{Proof of Corollary~\ref{cor:multi-index-regression}}}]
Let us first make some gradient and Hessian calculations. We are interested in the matrices' blocks in one of the $k$ chunks of $\mathbf{X} = (\mathbf X^1,...,\mathbf{X}^k)$, where each $\mathbf{X}^c\in \mathbb R^d$.   
\begin{align*}
    (\nabla_{c} L)^{\otimes 2}(\mathbf{X}) & = \Big(2(g(\mathbf{X} Y) - y) \partial_c g(\mathbf{X}Y) \Big)^2 Y^{\otimes 2}\,,\\
    \nabla^2_{cc} L (\mathbf{X})& =  \Big( 2\partial_c g(\mathbf{X} Y )^2 + 2(g(\mathbf{X} Y)- y) \partial_{cc}g(\mathbf{X} Y)\Big) Y^{\otimes 2}\,.
    \end{align*}
Taking the empirical Hessian or empirical Gradient matrix, this is of the form of~\eqref{eq:A-D-A^t} where $Y \sim \mathcal N(0, I_d)$ (which is a trivial case of the GMM distribution where the mean is zero and the signal-to-noise is $1$). Notice that $y$ only depends on $Y$ through its inner products with $\Theta_*$, and the other quantities appearing in the multiplier of $Y^{\otimes 2}$ in $(\nabla_{c} L)^{\otimes 2}$ and $\nabla_{cc}^2 L$ only depend on $Y$ through its inner products with $\mathbf{X}$. Indeed, if we form the Gram matrix of summary statistics out of $\mathbf{G}= (\mathbf{X},\Theta)^{\otimes 2}$ then in each of the Gradient matrix and Hessian cases above, the coefficient of $Y^{\otimes 2}$ is equal in law to a function $f(\vec{g},\mathbf{G})$ for a $q=2k$-dimensional i.i.d.\ Gaussian vector $\vec{g}$. Thus, Assumption~\ref{assumption:meas} holds. 

Turning to Assumption~\ref{assumption:D}, we first reason that under our assumptions on $g$, the support of the law of $f(\vec{g},\mathbf{G})$ is compact in $\mathbb R$. This is immediate if $g, \nabla g, \nabla^2 g$ are all bounded on $\mathbb R^k$. The other thing to check is it's behavior near zero, and in particular to show that the probability of being less than $\epsilon$ is $o(1)$ in $\epsilon$. It is sufficient to establish that the coefficient (which is a continuous function of $2k$ i.i.d.\ Gaussian random variables) has density at zero. Since $g$ satisfies the non-degeneracy assumption of Assumption~\ref{assumption:Gaussian-non-degeneracy}, and $\mathbf{G}$ is full rank, $g(\mathbf{X}Y) -y$ is non-zero almost surely. 
In turn, it is sufficient for each of $g$ and $\partial_c g$  to have density everywhere on $\mathbb R$ when applied to either of $\mathbf{X}$ or ${\Theta}_*$. This holds given Assumption~\ref{assumption:Gaussian-non-degeneracy} holds for each of $h\in \{g, \partial_c g, \partial_{cc}g\}$ so long as the laws of $\mathbf{X} Y, \Theta Y$ have full support, which requires $\mathbf{G}$ to be full rank. 

Last, if $\|\bfG^{(d)} -\bfG\|= o(1/\log d)$, then by Lemma~\ref{lem:bounded-differentiable-implies-strong-convergence}, one gets Assumption~\ref{assumption:strong-convergence} if the coefficients of $Y^{\otimes 2}$ above are uniformly continuous as functions of the inner products $\mathbf{X}Y, \Theta_* Y$; this evidently holds if the coefficients have bounded derivatives, which follows if $g$ is thrice bounded differentiable. 
\end{proof}

\subsection{First layer of two-layer learning for multi-index model}

Our last example to which Theorems~\ref{thm:main-A-D-A^T}--\ref{thm:main-A-D-A^T-outliers} apply is the first layer Hessian and Gradient matrix of learning multi-index models with two layer networks.   

\begin{proof}[\textbf{\emph{Proof of Corollary~\ref{cor:multi-layer-multi-index}}}]
Differentiating the loss in the $W_c$ direction for $c\in \{1,...,K\}$, we get 
\begin{align*}
    (\nabla_{W_c} L)^{\otimes 2} & = 4v_c^2 \sigma'(W_c Y)^2(h_{\NN}(Y) - h_*(Y))^2 \, Y^{\otimes 2}\,, \\ 
    \nabla_{W_cW_c}L  & = 2v_c \Big( \sigma''(W_c Y) (h_{\NN} (Y) - h_*(Y)) + v_c \sigma'(W_c Y)^2\Big) \, Y^{\otimes 2}\,.
\end{align*}
We claim that it fits into the paper's general framework with $q = K + k$ as the way the coefficients of $Y^{\otimes 2}$   depend on the data $Y$ is only through its projections on the $q$ vectors, $(W_c)_{c=1}^K$ and $\Theta_* = (\Theta_*^1,...,\Theta_*^k)$. This implies that Assumption~\ref{assumption:meas} holds. 

Turning to Assumption~\ref{assumption:D}, similar to the proofs in the preceding examples, for boundedness of the support of the coefficient of $Y^{\otimes 2}$, it is sufficient for $g$ to be bounded and $\sigma$ to be in $C^2_b(\mathbb R)$. For the second part of the assumption, the $(h_{\NN}(Y) - h_*(Y))$ part is at least $\epsilon$ away from zero with probability $1-o_\epsilon(1)$. It is sufficient to show that the laws of the coefficients to have bounded density everywhere on $\mathbb R$, which in this case would follow (presuming that $v_c \ne 0$) if they satisfy the non-degeneracy condition of Assumption~\ref{assumption:Gaussian-non-degeneracy}, and $\mathbf{W}$'s Gram matrix is of full rank (as that guarantees equivalence of absolute continuity when applied to $Z$ and when applied to $\mathbf{W} Y$. 

Finally, if $\|\bfG^{(d)} -\bfG\|= o(1/\log d)$, then by Lemma~\ref{lem:bounded-differentiable-implies-strong-convergence}, one gets Assumption~\ref{assumption:strong-convergence} if the coefficients of $Y^{\otimes 2}$ above are uniformly continuous as functions of the inner products $\mathbf{W}Y, \Theta_* Y$; this evidently holds if the coefficients have bounded derivatives, which follows if the link function $g$ is bounded differentiable, so that $h_{\NN}$ is, and if the activation $\sigma$ is thrice bounded differentiable. 
\end{proof}

\section{Supplementary figures}\label{sec:supplementary-figures}
We collect some additional figures further demonstrating the theorems in cases where test or train data is used to generate the empirical matrices. 
\vspace{-.1cm}

\vspace{-.1cm}
\begin{figure}[h]
\includegraphics[width = .32\textwidth]{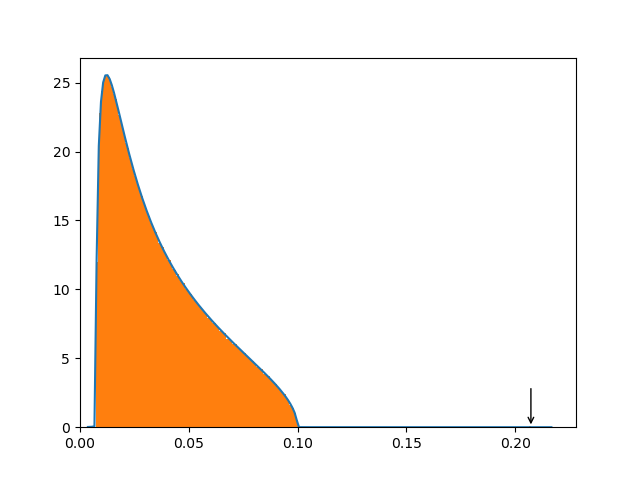}
\includegraphics[width = .32\textwidth]{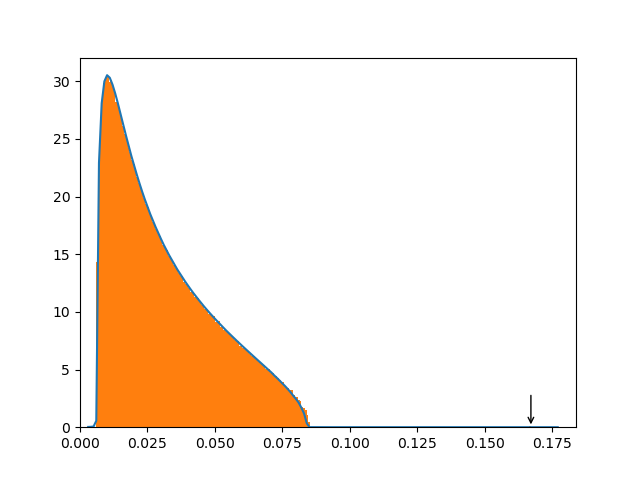}
\includegraphics[width = .32\textwidth]{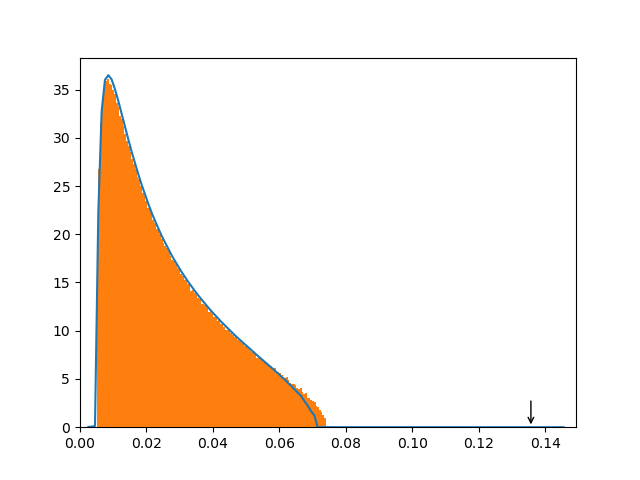}
    \caption{Analogue of Figure~\ref{fig:Hessian-spectrum-evolution-test}  with the empirical Gradient matrix rather than empirical Hessian.}\label{fig:G-spectrum-evolution-test}
\end{figure}

\vspace{-.2cm}
\begin{figure}[h]
\includegraphics[width = .32\textwidth]{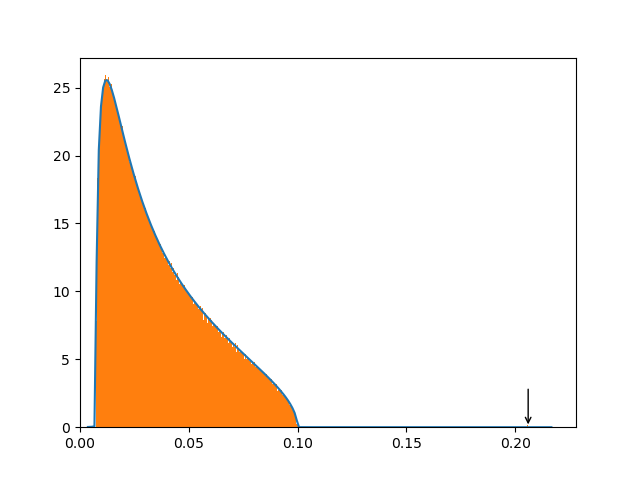}
\includegraphics[width = .32\textwidth]{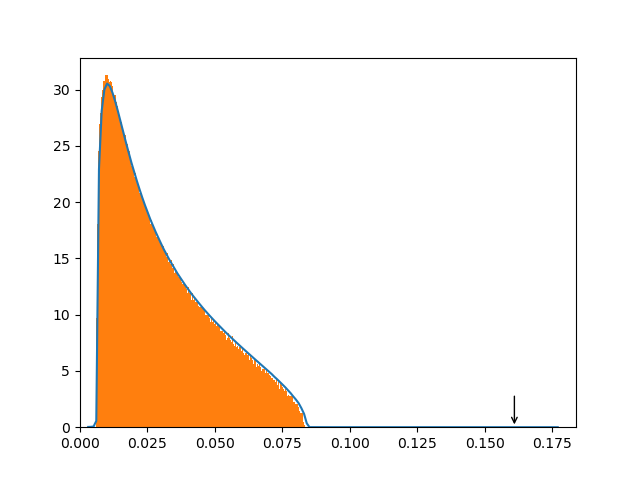}
\includegraphics[width = .32\textwidth]{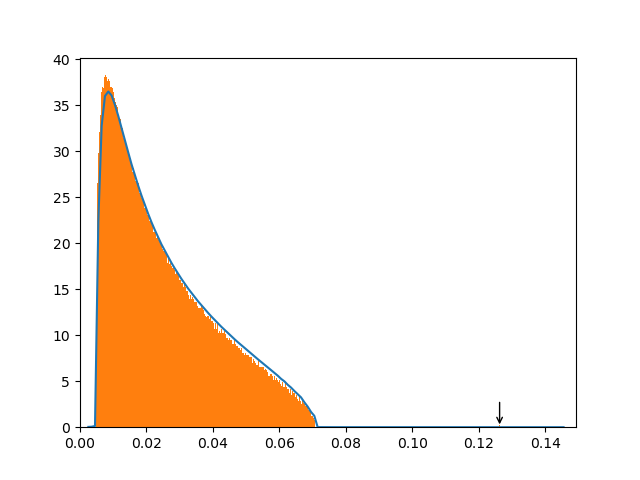}
    \caption{Analogue of Figure~\ref{fig:G-spectrum-evolution-test} but with  train data rather than test data.}\label{fig:G-spectrum-evolution-train}
\end{figure}

\bibliographystyle{plain}
\bibliography{references}
\end{document}